\newcounter{count}
\newtheorem{theorem}[count]{Theorem}
\newtheorem{lemma}[count]{Lemma}
\newtheorem{proposition}[count]{Proposition}
\theoremstyle{definition}
\newtheorem{definition}[count]{Definition}
\newtheorem{remark}[count]{Remark}
\newtheorem{notation}[count]{Notation}
\numberwithin{count}{section} 
\newcommand{\D}{\ensuremath{\mathcal{D}}}
\newcommand{\C}{\ensuremath{\mathcal{C}}}
\renewcommand{\O}{\ensuremath{\mathcal{O}}}
\renewcommand{\S}{\ensuremath{\mathcal{S}}}
\newcommand{\Sym}{\ensuremath{\mathsf{Sym}}}
\newcommand{\BSym}{\ensuremath{\mathsf{BSym}}}
\newcommand{\fgt}{\ensuremath{\mathsf{fgt}}}
\DeclareMathOperator{\colim}{colim}
\DeclareMathOperator{\Lan}{Lan}
\DeclareMathOperator{\Map}{Map}
\DeclareMathOperator{\Id}{Id}
\author{Lu\'is Alexandre Pereira}
\title{Cofibrancy of operadic constructions in positive symmetric spectra}
\begin{document}          \maketitle

\begin{abstract}
We show that when using the underlying positive model structure on symmetric spectra one obtains cofibrancy conditions for operadic constructions under much milder hypothesis than one would need for general categories. Our main result provides such an analysis for a key operation, the ``relative composition product'' $\circ_{\O}$ between right and left $\O$-modules over a spectral operad $\O$, and as a consequence we recover (and usually strengthen) previous results establishing the Quillen invariance of model structures on categories of algebras via weak equivalences of operads, compatibility of forgetful functors with cofibrations and Reedy cofibrancy of bar constructions.

Key to the results above are novel cofibrancy results for $n$-fold smash powers of positive cofibrant spectra (and the relative statement for maps). Roughly speaking, we show that such $n$-fold powers satisfy a (new) type of $\Sigma_n$-cofibrancy which can be viewed as ``lax $\Sigma_n$-free/projective cofibrancy'' in that it determines a larger class of cofibrations still satisfying key technical properties of ``true $\Sigma_n$-free/projective cofibrancy''.
\end{abstract}

\tableofcontents

\section{Introduction}

Operads provide a convenient way to codify many types of algebraic structures on a category $\C$, such as monoids, commutative monoids or, when $\C$ has extra structure, Lie algebras, $E_n$-algebras, among others. Indeed, any of these types of structures can be identified with the algebras in $\C$ over a specific operad. 

\vskip 5pt

When $\C$ is additionally a suitable model category it is then natural to ask whether the category of algebras over a fixed operad $\O$, denoted $\mathsf{Alg}_{\O}(\C)$, inherits a model structure from $\C$ and, moreover, just how compatible such a model structure on $\mathsf{Alg}_{\O}(\C)$ is with the underlying model structure on $\C$. Technical reasons then make it desirable for $\C$ and $\mathsf{Alg}_{\O}(\C)$ to be cofibrantly generated 
model categories (briefly, this means (trivial) cofibrations can be built via colimits from certain generating ones), and one quickly finds that the biggest obstacle to tackling the questions above is the fact that general colimits in $\mathsf{Alg}_{\O}(\C)$ are not underlying colimits in $\C$, so that proving properties of the (intended) cofibrations in $\mathsf{Alg}_{\O}(\C)$ requires a substantial amount of work.

More generally, related problems occur when studying other natural operadic constructions. Indeed, one of the most compact ways of describing operads is as the monoids over a certain monoidal structure $\circ$, the composition product, and many operadic constructions, such as right modules, left modules and algebras (which are special left modules ``concentrated in degree $0$''), are then derived from $\circ$. However, $\circ$ is an unusual monoidal structure which behaves quite differently with respect to each of its variables, in particular preserving colimits in the first variable but not in the second, and one then finds that studying operadic constructions in a model category context naturally requires answering the non obvious question of which cofibrations are actually preserved by $\circ$, and when.

\vskip 5pt

When dealing with a general model category $\C$ answering the questions above seems to require mild to severe cofibrancy conditions on the operad $\O$ itself (cf. \cite{WhYa15}). The main goal of this paper is to prove that for the category $\mathsf{Sp}^{\Sigma}$ of symmetric spectra, however, these questions can be answered while making minimal to no cofibrancy assumptions on $\O$, at least provided one uses the positive $S$ model structure as the underlying model structure on $\mathsf{Sp}^{\Sigma}$.

\subsection{Main results}

Positive model structures on spectra were first introduced by Mandell, May, Schwede and Shipley in \cite{MMSS} and soon after used by Shipley in \cite{Sh04} to establish the existence of a projective model structure of symmetric ring spectra where cofibrations are compatible with the forgetful functor from symmetric ring spectra to spectra. Since then, many other results have shown the usefulness of positive structures when studying algebras over an operad, such as the existence of projective model structures for algebras over any simplicial operad shown by Elmendorf and Mandell in \cite{ElmMan06}, strengthened to hold for any spectral operad by Harper in \cite{Ha09}, and the compatibility between cofibrations and the forgetful functor for more general operads shown by Harper and Hess in \cite{HaHe13}.

Our main result, Theorem \ref{CIRCO POS THM} below, follows this trend by establishing a quite thorough control of the way $\circ$ (or more generally, its relative version $\circ_{\O}$ for right and left $\O$-modules) interacts with cofibrations. We encourage the reader daunted by the technical nature of the result to first peruse Section \ref{CONS SEC}, where consequences of Theorem \ref{CIRCO POS THM} (including stronger versions of the results mentioned in the previous paragraph) are discussed.

\begin{theorem}\label{CIRCO POS THM}
Let $\O$ be an operad in $\mathsf{Sp}^{\Sigma}$ and consider the relative composition product
\[\mathsf{Mod}^r_{\O}\times \mathsf{Mod}^l_{\O}\xrightarrow{\minus \circ_{\O} \minus} \Sym.\]
Regard $\mathsf{Mod}^l_{\O}$ as equipped with the projective positive $S$ stable model structure and $\Sym$ as equipped with the $S$ stable model structure.

Suppose $f_2 \colon M \to \bar{M}$ is a cofibration between cofibrant objects in $\mathsf{Mod}^l_{\O}$. Then if the map $f_1 \colon N \to \bar{N}$ in $\mathsf{Mod}^r_{\O}$ is an underlying cofibration (resp. monomorphism) in $\Sym$, so is their pushout product with respect to $\circ_{\O}$,
\[
 M \circ_{\O} \bar{N} \bigvee_{M \circ_{\O} N} \bar{M} \circ_{\O} N \xrightarrow{f_1 \square^{\circ_{\O}} f_2 }\bar{M} \circ_{\O} \bar{N}
.\]
Further, $f_1 \square^{\circ_{\O}} f_2$ is also a weak equivalence if either $f_1$ or $f_2$ is.
\end{theorem}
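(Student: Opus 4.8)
The plan is to first trade the relative composition product $\circ_{\O}$ for the absolute one $\circ$, and then to analyze $\circ$ through its standard filtration by smash powers. For the first step, recall that for a fixed $N\in\mathsf{Mod}^r_{\O}$ the functor $N\circ_{\O}(-)\colon\mathsf{Mod}^l_{\O}\to\Sym$ is a left adjoint, hence preserves all colimits, and that it is compatible with the free--forgetful adjunction: for the free left $\O$-module $\O\circ Z$ on a symmetric sequence $Z$ there is a natural isomorphism $N\circ_{\O}(\O\circ Z)\cong N\circ Z$ (the familiar $M\otimes_R(R\otimes Z)\cong M\otimes Z$). Now $f_2$ is a cofibration between cofibrant objects in the projective positive structure on $\mathsf{Mod}^l_{\O}$, which is transferred from symmetric sequences along $\O\circ(-)$; hence $f_2$ is a retract of a relative cell complex whose cells are the generating cofibrations $\O\circ j$, with $j$ a generating positive cofibration of symmetric sequences. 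Since $N\circ_{\O}(-)$ commutes with the colimits that build such cell complexes, and since the classes of underlying cofibrations, underlying monomorphisms and weak equivalences in $\Sym$ are each closed under retract, cobase change and transfinite composition, the standard pushout-product bookkeeping reduces the theorem to the case $f_2=\O\circ j$; there the identification above turns $f_1\square^{\circ_{\O}}f_2$ into the pushout product $f_1\square^{\circ}j$ for the absolute composition product.

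It then remains to show: for $j\colon Z\to W$ a generating positive cofibration of symmetric sequences and $f_1\colon N\to\bar N$ an underlying cofibration (resp.\ monomorphism) of symmetric sequences, $f_1\square^{\circ}j$ is an underlying cofibration (resp.\ monomorphism) of $\Sym$, and a weak equivalence whenever $f_1$ or $j$ is. Here I would invoke the standard filtration of $N\circ(-)$: writing $N\circ X=\bigvee_{t\geq0}N(t)\wedge_{\Sigma_t}X^{\wedge t}$ (and its partitioned analogue for symmetric sequences), $f_1\square^{\circ}j$ is the transfinite composite of a tower whose $t$-th attaching map is a cobase change of a layer built, over $\Sigma_t$, from the component $f_1(t)\colon N(t)\to\bar N(t)$ and the $(t-1)$-st stage $Q^{t}_{t-1}(j)\hookrightarrow W^{\wedge t}$ of the iterated pushout-product cube of $j$ --- essentially an $N(t)\wedge_{\Sigma_t}(-)$-type pushout product of these two pieces of $\Sigma_t$-equivariant data. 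Granting that each such layer map is an underlying cofibration (resp.\ monomorphism), the closure properties above give the same for $f_1\square^{\circ}j$; and feeding in a trivial positive cofibration $j$, or a weak equivalence $f_1$ that is also an underlying cofibration or monomorphism, yields the weak-equivalence clause via the corresponding homotopy-invariance statement for layers.

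The heart of the argument --- and the step I expect to be the main obstacle --- is exactly the control of these layers: a cofibrancy statement for the smash powers $W^{\wedge t}$, together with its relative version for the latching maps $Q^{t}_{t-1}(j)\hookrightarrow W^{\wedge t}$ of a positive cofibration $j$. This is where using the \emph{positive} $S$ model structure is essential: for $m\geq1$ one has $(F_m V)^{\wedge t}\cong F_{mt}(V^{\wedge t})$, and $\Sigma_t$ acts not only on $V^{\wedge t}$ but also \emph{freely} on the $mt$ coordinates, in blocks of $m$; in the non-positive case $m=0$ this free action disappears (since $(F_0V)^{\wedge t}\cong F_0(V^{\wedge t})$), which is precisely the classical source of difficulty forcing $\Sigma$-cofibrancy hypotheses on $\O$. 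Accordingly I would isolate a class of $\Sigma_t$-equivariant cofibrations --- the ``lax $\Sigma_t$-free/projective cofibrations'' advertised in the abstract --- strictly larger than the genuinely $\Sigma_t$-free ones but still possessing the three properties the argument needs: (i) $N(t)\wedge_{\Sigma_t}(-)$ carries them to underlying cofibrations (resp.\ monomorphisms) of $\Sym$, for every symmetric sequence $N$; (ii) $N(t)\wedge_{\Sigma_t}(-)$ carries weak equivalences between objects of this class to weak equivalences; and (iii) the class contains the latching maps of positive cofibrations and is stable under the pushout-product operations that arise in the filtration, including the mixed filtration in which $f_1$ and $j$ vary simultaneously. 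Establishing (i)--(iii) for this enlarged ``lax'' class --- in particular checking that laxness does not spoil the flatness statement (ii) --- is the technical crux; once it is in hand, all assertions of the theorem follow from the reductions above.
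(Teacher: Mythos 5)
The reduction in your first paragraph fails at a decisive step: you assert that $N\circ_{\O}(-)\colon\mathsf{Mod}^l_{\O}\to\Sym$ is a left adjoint and hence preserves all colimits, but this is not so. Lemma \ref{CIRCCOLIM LEM} (and Proposition \ref{SYMMON PROP}) record that $\circ_{\O}$ commutes only with \emph{filtered} colimits and \emph{reflexive coequalizers} in the $\mathsf{Mod}^l_{\O}$ variable, not with pushouts. Already for $\O=\mathcal{I}$ one has $N\circ_{\O}(-)=N\circ(-)$, and the composition product is well known \emph{not} to preserve colimits in its second variable (take $N$ concentrated in degree $2$, so that $N\circ(-)$ is a symmetric square and sees coproducts via cross terms). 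Consequently the ``standard pushout-product bookkeeping'' does not reduce the theorem to the generating case $f_2=\O\circ j$: the intermediate objects in a cell structure for $f_2$ are pushouts in $\mathsf{Mod}^l_{\O}$ that $\circ_{\O}$ destroys, and this destruction is exactly the obstruction that makes Theorem \ref{CIRCO POS THM} non-trivial to begin with.

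The paper replaces your colimit-preservation step by a hand-built filtration (Proposition \ref{FILT PROP}): when $B$ is the pushout in $\mathsf{Alg}_{\O}$ of $\O\circ X\to\O\circ Y$ along a map to $A$, the object $M\circ_{\O}B$ is a transfinite composite whose $r$-th layer is a pushout of $M_A(r)\wedge_{\Sigma_r}\left(Q^r_{r-1}\to Y^{\wedge r}\right)$, where $M_A=M\circ_{\O}(\O\amalg A)$. This costs something your sketch does not anticipate: the layers depend on $M_A$ rather than on $M$ alone, and showing $M_A\to N_A$ is a cofibration is \emph{another instance of the very theorem being proved} (applied to $f_1\colon M\to N$ and the cofibration $\O\to\O\amalg A$ in $\mathsf{Mod}^l_{\O}$). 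The paper closes this loop with a second transfinite induction over a cell structure for $A$, using the identification $M_{\O\amalg A_\beta}(r,s)\cong M_{A_\beta}(r+s)$ from Proposition \ref{MOX PROP} to feed the inductive hypothesis back in. Your third paragraph correctly locates the equivariant engine (the lax $\Sigma_t$-cofibrancy of smash powers, Theorems \ref{SIGMANPUSHPROD THM} and \ref{BIQUILLEN THM}), but without the filtration and its recursive deployment you cannot pass from the generating case to a general cofibration between cofibrant objects.
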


Technically speaking, most of the ingredients needed for our proof of Theorem \ref{CIRCO POS THM} are adapted from arguments used in \cite{ElmMan06}, \cite{Ha09}, \cite{HaHe13} to prove the original versions of the results which we recover in Section \ref{CONS SEC}. However, two important new ingredients deserve special mention.

The first of these is found in Proposition \ref{FILT PROP}, which extends crucial filtrations of certain pushouts in $\mathsf{Alg}_{\O}(\C)$ used in \cite{ElmMan06}, \cite{Ha09}, \cite{HaHe13} by still providing such filtrations after composing with $M \circ_{\O} (\minus)$ for some $M \in \mathsf{Mod}^r_{\O}$. Note that as we do not assume $\C=\mathsf{Sp}^{\Sigma}$, these filtrations should be relevant in a general setting.

The second ingredient is a more thorough characterization of what makes positive model structures so convenient. It is well known that, for $A$ any $\Sigma_n$-spectrum and $X$ a positive $S$ cofibrant spectrum, there is a canonical weak equivalence
\begin{equation} \label{POS EQUIV EQ} 
(A \wedge X^{\wedge n})_{\Sigma_n} \sim (A \wedge X^{\wedge n})_{h \Sigma_n} 
\end{equation}
and, indeed, this key result essentially suffices to carry out the proofs in \cite{Sh04}, \cite{ElmMan06}. However, since (\ref{POS EQUIV EQ}) makes no explicit reference to cofibrations, one quickly finds it insufficient when trying to establish cofibrancy results in $\mathsf{Alg}_{\O}$. The natural way to fix this would be to guess that (\ref{POS EQUIV EQ}) ought to be a consequence of $X^{\wedge n}$ being built from free $\Sigma_n$-cells, or put in model category terminology, it being (genuinely) $\Sigma_n$-cofibrant (indeed, were that the case combining Remark \ref{PUSHPRODGEN REM} with \cite[Thm 5.3.7]{HSS} would yield (\ref{POS EQUIV EQ})). Unfortunately, this turns out to be false (cf. \cite{HaCo15}; also, check Remark \ref{TWOMON REM}), though fortunately not by much\footnote{In fact, such a result was ``proven'' in the author's thesis via an induction argument using incorrect base cofibrancy claims. A key impetus for this paper is to correct those base claims.}.
 Indeed, our second ingredient is a (new) type of ``lax $\Sigma_n$-cofibrancy'' in $(\mathsf{Sp}^{\Sigma})^{\Sigma_n}$, which we formally call $S$ $\Sigma$-inj $\Sigma_n$-proj cofibrancy, such that 
\begin{enumerate*}
\item[(i)] $X^{\wedge n}$ is ``lax $\Sigma_n$-cofibrant'' for positive cofibrant $X$; 
\item[(ii)] ``lax $\Sigma_n$-cofibrations'' share the key technical properties of (genuine) $\Sigma_n$-cofibrations.
\end{enumerate*}
The formal results follow.

\begin{theorem}\label{SIGMANPUSHPROD THM}
Let $\mathsf{Sp}^{\Sigma}$ be equipped with the {\it positive} $S$ stable model structure and 
$(\mathsf{Sp}^{\Sigma})^{\Sigma_n}$ with the $S$ $\Sigma$-inj $\Sigma_n$-proj stable model structure. 

Then for $f\colon A\to B$ a cofibration in $\mathsf{Sp}^{\Sigma}$ its $n$-fold pushout product \[f^{\square n} \colon Q_{n-1}^n(f) \to B^{\wedge n}\]
is a cofibration in $(\mathsf{Sp}^{\Sigma})^{\Sigma_n}$, which is a weak equivalence if $f$ is.

Furthermore, if $A$ is cofibrant in $\mathsf{Sp}^{\Sigma}$ then $Q_{n-1}^n(f)$ (resp. $f^{\wedge n} \colon A^{\wedge n} \to B^{\wedge n}$) is cofibrant (resp. cofibration between cofibrant objects) in $(\mathsf{Sp}^{\Sigma})^{\Sigma_n}$.
\end{theorem}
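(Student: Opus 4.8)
The plan is to reduce the cofibration assertion to the generating cofibrations of the positive $S$ stable model structure, to verify it on those by an explicit equivariant computation, to deduce the weak equivalence assertion from monoidality of the model structures on $\mathsf{Sp}^{\Sigma}$, and to obtain the ``furthermore'' clause by formal manipulation of iterated pushout products. For the reduction, one first records that although $f \mapsto f^{\square n}$ does not preserve colimits in $f$, the standard ``pushout product of pushout products'' analysis shows that when $f$ is built from generating cofibrations $\{i_\alpha\}$ by cobase change and transfinite composition, the $\Sigma_n$-object $f^{\square n}$ is built $\Sigma_n$-equivariantly, by cobase change and transfinite composition, from $\Sigma_n$-equivariantly induced copies of the $n$-fold pushout products $i_{\alpha_1} \square \cdots \square i_{\alpha_n}$ (compare Remark \ref{PUSHPRODGEN REM} and \cite[Thm 5.3.7]{HSS}). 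Since induction from a subgroup of $\Sigma_n$ is left Quillen for the $\Sigma_n$-proj part of these structures, it suffices to show that $i_1 \square \cdots \square i_n$ — and a fortiori $i^{\square n}$ — is a cofibration in $(\mathsf{Sp}^{\Sigma})^{\Sigma_n}$ whenever the $i_j$ are generating cofibrations of the positive $S$ stable model structure.

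For this core computation one uses that positive $S$-cofibrations are built (by pushout product and cobase change) from the shifted-free maps $G_m(i)$ with $m \geq 1$ and $i$ a generating cofibration of pointed simplicial sets, so by the closure properties it is enough to analyze the $n$-fold smash powers $G_m(K)^{\wedge n}$ with $m \geq 1$. The crucial point is the identification of $G_m(K)^{\wedge n}$, as an object of $(\mathsf{Sp}^{\Sigma})^{\Sigma_n}$, with $G_{mn}$ applied to a space of the form $\Sigma_{mn+} \wedge_{\Sigma_m^{\times n}} K^{\wedge n}$ on which $\Sigma_n$ simultaneously permutes the $n$ index blocks and the $n$ smash factors. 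Because $m \geq 1$ the resulting $(\Sigma_{mn} \times \Sigma_n)$-action on $\Sigma_{mn}/\Sigma_m^{\times n}$ is free ``in the $\Sigma_n$-direction relative to the internal $\Sigma$-direction'', which is exactly the condition packaged by $S$ $\Sigma$-inj $\Sigma_n$-proj cofibrancy; this is precisely where positivity is essential, since for $m = 0$ one obtains only the weaker conclusion (\ref{POS EQUIV EQ}) and not a cell structure. Propagating this through the pushout product expansion of $G_m(i)\square j$ and of its $n$-fold powers then exhibits $Q_{n-1}^n(f) \to B^{\wedge n}$ as a relative cell complex for the $S$ $\Sigma$-inj $\Sigma_n$-proj structure, hence a cofibration there.

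For the weak equivalence assertion, if $f$ is moreover a weak equivalence then it is a trivial cofibration in the positive $S$ — hence also in the (non-positive) $S$ — stable model structure, which is a symmetric monoidal model category; iterating the pushout product axiom shows that $f^{\square n}$, computed in $\mathsf{Sp}^{\Sigma}$, is again a trivial cofibration, in particular an $S$ stable equivalence, and since the weak equivalences of the $S$ $\Sigma$-inj $\Sigma_n$-proj structure are the maps that are $S$ stable equivalences on underlying symmetric spectra, $f^{\square n}$ is a weak equivalence in $(\mathsf{Sp}^{\Sigma})^{\Sigma_n}$. For the ``furthermore'' clause, applying the cofibration assertion to the cofibration $\varnothing \to A$ (and using $\varnothing \wedge X \cong \varnothing$) shows $A^{\wedge n}$ is cofibrant in $(\mathsf{Sp}^{\Sigma})^{\Sigma_n}$ when $A$ is, while for a cofibration $f \colon A \to B$ with $A$ cofibrant the ``number of $B$-factors'' filtration of the punctured cube expresses $A^{\wedge n} \to Q_{n-1}^n(f)$ as a composite of cobase changes of the maps $\Sigma_{n+} \wedge_{\Sigma_j \times \Sigma_{n-j}} \bigl( f^{\square j} \wedge \mathrm{id}_{A^{\wedge(n-j)}} \bigr)$ for $j < n$; since each $f^{\square j}$ is a cofibration in $(\mathsf{Sp}^{\Sigma})^{\Sigma_j}$ by the part already proved, smashing with the cofibrant $A^{\wedge(n-j)}$ preserves such cofibrations (a pushout product property of these structures), and induction is left Quillen, this composite is a cofibration, so $Q_{n-1}^n(f)$ is cofibrant; and as $f^{\wedge n}$ factors as $A^{\wedge n} \to Q_{n-1}^n(f) \xrightarrow{f^{\square n}} B^{\wedge n}$ with both maps cofibrations between cofibrant objects, so is $f^{\wedge n}$.

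The main obstacle is the equivariant analysis underlying the core computation: correctly describing the $(\Sigma_{mn} \times \Sigma_n)$-equivariant structure on $G_m(K)^{\wedge n}$, checking that $m \geq 1$ promotes it to an honest cell of the $S$ $\Sigma$-inj $\Sigma_n$-proj structure rather than merely to something satisfying (\ref{POS EQUIV EQ}), and then keeping cell structures compatible across the several nested layers of pushout product (powers of a pushout product of a shifted-free map). By comparison, the reduction to generators, the weak equivalence assertion, and the ``furthermore'' clause are routine once the pushout product bookkeeping and the basic properties of the $S$ $\Sigma$-inj $\Sigma_n$-proj structures are available.
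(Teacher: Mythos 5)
Your overall strategy — reduce to generating cofibrations, verify the base case by an explicit equivariant identification of $n$-fold smash powers of free spectra (using $m \geq 1$ to force $\Sigma_n$-freeness ``relative to $\Sigma$''), get weak equivalences from monoidality of the non-positive $S$ stable structure, and prove the ``furthermore'' clause by a filtration of the punctured cube — matches the paper's proof in its broad outline, and your base-case computation is essentially the one the paper carries out.

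The real gap is in your reduction step. You assert that the fact that ``$f^{\square n}$ is built $\Sigma_n$-equivariantly, by cobase change and transfinite composition, from $\Sigma_n$-equivariantly induced copies of the $n$-fold pushout products $i_{\alpha_1} \square \cdots \square i_{\alpha_n}$'' follows from a ``standard `pushout product of pushout products' analysis'', citing Remark \ref{PUSHPRODGEN REM} and \cite[Thm 5.3.7]{HSS}. But Remark \ref{PUSHPRODGEN REM} applies to a bifunctor $\C \times \D \to \E$ with independently varying cofibrations in $\C$ and $\D$; for a single $n$-fold power $f \mapsto f^{\square n}$ with its $\Sigma_n$-equivariance, that reduction does not transfer. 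The paper makes this explicit at the opening of Section \ref{QSECTION}: ``the main obstacle is the fact that the $n$-fold pushout product $\square^n$ does not respect compositions of maps.'' Cobase changes are fine — that is Lemma \ref{PUSH LEM}, attributed to \cite[Prop. 6.13]{Ha09} — but the transfinite-composition step is genuinely delicate and is the technical heart of the section. It requires the latching analysis of Lemma \ref{MAIN428} (comparing $Q^n_T(i)$ for varying convex symmetric subsets $T$ via Proposition \ref{LATCHPUSH PROP}) and its corollary Lemma \ref{COFSQUARE}, which controls the relative latching maps $Q^n_{\bar n}(f_2 f_1) \vee_{Q^n_{\bar n}(f_1)} Q^n_{\bar n + 1}(f_1) \to Q^n_{\bar n + 1}(f_2 f_1)$ appearing in the $\kappa$-projective cofibrancy check for the diagram (\ref{KAPPADIAG EQ}). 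These are precisely the results you would need to justify the sentence you call ``standard,'' and as stated your proof does not supply them. The same Lemma \ref{MAIN428} (its ``additional'' clause, which needs $A^{\wedge \bar n}$ cofibrant) is what actually underlies your ``number of $B$-factors'' filtration in the ``furthermore'' argument; the filtration picture you describe there is correct in spirit, but again it is not free — it is established in the paper by the latching-object computation, not by a routine bookkeeping argument.

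In short: correct strategy, correct base case, correct treatment of weak equivalences, but the passage from generating cofibrations to arbitrary positive $S$ cofibrations (the transfinite-composition step) is treated as routine when it is not; you need something like Lemmas \ref{MAIN428} and \ref{COFSQUARE} and the $\kappa$-projective-cofibration induction on (\ref{KAPPADIAG EQ}) to carry it out, which is exactly where the paper invests its effort.
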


\begin{theorem}\label{BIQUILLEN THM}
Consider the bifunctor 
\[(\mathsf{Sp}^{\Sigma})^G\times (\mathsf{Sp}^{\Sigma})^G \xrightarrow{\minus \wedge_{G} \minus} \mathsf{Sp}^{\Sigma},\] 
where the first copy of $(\mathsf{Sp}^{\Sigma})^G$ is regarded as equipped with the $S$ $\Sigma$-inj $G$-proj stable model structure.
Then $\wedge_{G}$ is a left Quillen bifunctor if either:
\begin{enumerate}
\item[(a)] Both the second $(\mathsf{Sp}^{\Sigma})^G$ and the target $\mathsf{Sp}^{\Sigma}$ are equipped with the respective monomorphism stable model structures;
\item[(b)] Both the second $(\mathsf{Sp}^{\Sigma})^G$ and the target $\mathsf{Sp}^{\Sigma}$ are equipped with the respective $S$ stable model structures.
\end{enumerate}
\end{theorem}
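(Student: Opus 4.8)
Both clauses assert that $\wedge_G$ satisfies the pushout-product axiom: for a (trivial) cofibration $f_1$ of the first $(\mathsf{Sp}^{\Sigma})^G$, taken with the $S$ $\Sigma$-inj $G$-proj structure, and a (trivial) cofibration $f_2$ of the second $(\mathsf{Sp}^{\Sigma})^G$, taken with the structure of (a) or (b), the map $f_1\,\square^{\wedge_G}\,f_2$ must be a cofibration of $\mathsf{Sp}^{\Sigma}$ (with the corresponding structure) which is a weak equivalence as soon as one of $f_1,f_2$ is. I would begin by recording that $\wedge_G$ is divisible in both variables --- fixing $Y$ in the second slot, $(\minus)\wedge_G Y$ is left adjoint to $Z\mapsto\Map_{\mathsf{Sp}^{\Sigma}}(Y,Z)$ with its conjugation $G$-action, and symmetrically --- so that, all model structures in sight being cofibrantly generated, the usual bootstrapping (the pushout products of cofibrations are built from those of generating cofibrations under cobase change, transfinite composition and retracts, and likewise in the trivial case) reduces the whole statement to the case in which $f_1$ and $f_2$ are \emph{generating} (trivial) cofibrations.

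The heart of the matter is then the computation of $f_1\,\square^{\wedge_G}\,f_2$ on such generators, which simplifies precisely because the first model structure is $G$-projective: its generating cells are semifree cells induced from $(\Sigma_n\times G)/\Gamma$-orbits with $\Gamma$ a graph subgroup over $G$, and every such orbit is $G$-free, hence also free over any subgroup $\bar\Lambda\le G$. Unwinding the coend defining $\wedge$, one is left with a pushout product of the underlying map of $f_2$ --- restricted along the symmetric-group isotropy $\bar\Lambda$ of the cell, then induced up via $\mathrm{Ind}_{\bar\Lambda}^{\Sigma_n}$ and fed through the semifree functor $G_n$ --- against a space-level generating cofibration. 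Carrying out this identification cleanly, tracking the interaction of the diagonal $G$-action with the internal symmetric-group actions and with the smash-product coend, is the step I expect to be the main obstacle; it is essentially the mixed-model-structure bookkeeping of the paper, parallel to what is needed for Theorem \ref{SIGMANPUSHPROD THM}. A more conceptual alternative factors $\wedge_G$ as the external smash $(\mathsf{Sp}^{\Sigma})^G\times(\mathsf{Sp}^{\Sigma})^G\to(\mathsf{Sp}^{\Sigma})^{G\times G}$, restriction along the diagonal $\Delta\colon G\to G\times G$, and $G$-orbits; there one must equip $(\mathsf{Sp}^{\Sigma})^{G\times G}$ with a mixed structure ($S$ $\Sigma$-inj, with graph-over-the-diagonal cells) into which the external smash is left Quillen and out of which both $\Delta^*$ and $(\minus)_G$ are left Quillen --- the point being that $G$-projectivity of the first variable forces external smashes of cells to be free over the diagonal $G$ --- and the obstacle is pinning that intermediate structure down.

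With the reduction in hand, both cases follow from standard monoidality properties of $\mathsf{Sp}^{\Sigma}$. For (b) one uses that the $S$ (flat) stable model structure on $\mathsf{Sp}^{\Sigma}$ is monoidal, together with the fact that $G_n$, $\mathrm{Ind}_{\bar\Lambda}^{\Sigma_n}$ and restriction are all left Quillen for the corresponding flat structures, to see that $f_1\,\square^{\wedge_G}\,f_2$ is an $S$-cofibration, trivial as soon as $f_2$ is. For (a) one uses instead that smashing with a flat-cofibrant spectrum (equivalently, with the source or target of an $S$-cofibration) preserves monomorphisms, so $f_1\,\square^{\wedge_G}\,f_2$ is a monomorphism; and, since flat-cofibrant spectra are homotopically flat (smashing with them preserves stable equivalences), a brief argument with flat replacements shows that the cofiber of this pushout product --- a smash of the flat cofiber coming from $f_1$ with the cofiber of $f_2$ --- is stably contractible whenever $f_2$ is a stable equivalence, so the pushout product is then a trivial monomorphism. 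The remaining part of the axiom, with $f_1$ rather than $f_2$ the trivial cofibration, is handled identically once one notes that the reduction sends a generating $S$-trivial cofibration of the $S$ $\Sigma$-inj $G$-proj structure to a trivial cofibration of $\mathsf{Sp}^{\Sigma}$ (an $S$-trivial cofibration in case (b), a trivial monomorphism in case (a)), and that trivial $S$-cofibrations likewise smash-preserve monomorphisms and $S$-cofibrations by monoidality and homotopical flatness.
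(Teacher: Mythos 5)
Your reduction to generating cofibrations and your identification of the coset decomposition of the pushout product on generators are in the same spirit as the paper, and for the non-trivial cofibration case the argument is essentially what the paper does for its \emph{level} model structures. However, your handling of the trivial cofibration case has a genuine gap, and it is precisely the place where the paper's strategy diverges.

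First, your claim that one can ``reduce to generating trivial cofibrations'' and then compute explicitly is not available for the \emph{stable} model structures: they are produced by left Bousfield localization, so the generating trivial cofibrations are not explicit and cannot be smashed against and computed with. This is exactly why the paper first proves the pushout-product axiom for the \emph{level} model structures (where generating trivial cofibrations are explicit), and then invokes Lemma \ref{LOCBIADJ LEM} — a bifunctor analogue of the universal property of Bousfield localization — to pass from level to stable; that lemma reduces the stable trivial-cofibration case to a single check that $f\otimes_G A$ is a stable equivalence for $f\in\mathcal{S}_G$ and $A$ a source or target of a generating cofibration, which in turn reduces via $\mathcal{S}_G=G\cdot\mathcal{S}_{\**}$ to \cite[Thm.~5.3.7(5)]{HSS}. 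Your alternative for case (a), arguing via cofibers and homotopical flatness, is a reasonable idea but is not a complete substitute: the cofiber of $f_1\,\square^{\wedge_G}\,f_2$ is $\mathrm{cof}(f_1)\wedge_G\mathrm{cof}(f_2)$, i.e.\ the \emph{$G$-orbits} of the diagonal smash, not the underlying smash — and the step you need, that these strict orbits are stably contractible when $\mathrm{cof}(f_2)$ is, hinges on the strict orbits agreeing with homotopy orbits, which is the non-trivial ``lax $G$-freeness'' point that the paper spends Sections \ref{SINJPROJMOD SEC}–\ref{PROPERTIES SEC} establishing and which the Bousfield-localization route handles uniformly. You acknowledge the $G$-freeness issue when discussing the cell decomposition, but your flatness argument silently drops the $(\minus)_G$ and so does not close that loop; the same omission affects the ``$f_1$ trivial'' case, which you say is ``handled identically'' but again would require an explicit handle on generating trivial cofibrations that one simply does not have.
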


In terms of the existent literature, Theorems \ref{SIGMANPUSHPROD THM} and \ref{BIQUILLEN THM}(a) are most closely related to \cite[Props. 4.28*, 4.29*]{HaCo15}, which they both significantly generalize and improve on from a technical standpoint (cf. Remark \ref{PUSHPRODADV REM}).
Further, Theorem \ref{BIQUILLEN THM} is strongly motivated by \cite[Thm 5.3.7]{HSS} (which, as hinted at above, implies the (genuine) $\Sigma_n$-cofibration analogue result).

\subsection{Consequences}\label{CONS SEC}

In this section we list a series of less technical results that can easily be deduced from Theorem \ref{CIRCO POS THM} (or, in the case of the first part of Theorem \ref{MODMODELEXIST THM}, its proof).

\begin{theorem}\label{MODMODELEXIST THM}
Let $\O$ be any operad in $\mathsf{Sp}^{\Sigma}$, and let $\mathsf{Sp}^{\Sigma}$, $\Sym$ be equipped with the respective positive $S$ stable model structures.

Then the respective \textbf{projective positive $S$ model structures} on $\mathsf{Alg}_{\O}$, $\mathsf{Mod}^l_{\O}$ exist and are simplicial model structures.

Further, if $\O\to \bar{\O}$ is a stable equivalence in each degree then the induce-forget adjunctions 
\[\bar{\O}\circ_{\O}(\minus) \colon \mathsf{Alg}_{\O} \rightleftarrows \mathsf{Alg}_{\bar{\O}}\colon \fgt,\qquad 
\bar{\O}\circ_{\O}(\minus) \colon \mathsf{Mod}^l_{\O} \rightleftarrows \mathsf{Mod}^l_{\bar{\O}}\colon \fgt \]
are Quillen equivalences.
\end{theorem}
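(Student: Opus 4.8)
The plan is to split the statement into two parts: the existence (and simpliciality) of the projective positive $S$ model structures, and the Quillen-equivalence claim for a degreewise stable equivalence $\O \to \bar{\O}$.

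For the first part, I would invoke the standard transfer (lifting) machinery for cofibrantly generated model structures along the free-forget adjunction $\O \circ (\minus) \colon \Sym \rightleftarrows \mathsf{Mod}^l_{\O} \colon \fgt$ (and likewise for $\mathsf{Alg}_{\O}$, using that algebras are left modules concentrated in degree $0$). The candidate generating (trivial) cofibrations are the images under $\O \circ (\minus)$ of the generating (trivial) cofibrations of the positive $S$ stable structure on $\Sym$. The only nontrivial hypothesis to check is that relative cell complexes built from the images of the generating trivial cofibrations are weak equivalences; for this one uses Proposition \ref{FILT PROP} to filter the relevant pushouts in $\mathsf{Mod}^l_{\O}$, reducing each filtration layer to a pushout in $\Sym$ of a map of the form $(\O[\mathbf{n}] \wedge (\minus))_{\Sigma_n}$ applied to an $n$-fold pushout product of a generating trivial cofibration. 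Here Theorem \ref{SIGMANPUSHPROD THM} is the crucial input: it guarantees that the $n$-fold pushout product of a (trivial) cofibration is a ``lax $\Sigma_n$-cofibration'' (resp. a weak equivalence), and then Theorem \ref{BIQUILLEN THM}(b) — applied with $G = \Sigma_n$ and the $\Sigma_n$-spectrum $\O[\mathbf{n}]$ placed in the second variable — ensures $(\O[\mathbf{n}] \wedge (\minus))_{\Sigma_n}$ carries it to a weak equivalence in $\Sym$. Summing over $n$ and passing to the colimit of the filtration shows the cell complex is a weak equivalence, so the transferred model structure exists. Cofibrant generation by compact objects, the smallness hypotheses, and simpliciality are then routine: the simplicial structure is lifted from $\Sym$ via the same adjunction, with the pushout-product/SM7 axiom again reduced through Proposition \ref{FILT PROP} and Theorem \ref{CIRCO POS THM} (taking $M = \O$, so $\circ_{\O}$ reduces to $\circ$ in the relevant variable).

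For the Quillen-equivalence part, fix a map $\O \to \bar{\O}$ that is a stable equivalence in each arity. The adjunction $\bar{\O} \circ_{\O} (\minus) \colon \mathsf{Mod}^l_{\O} \rightleftarrows \mathsf{Mod}^l_{\bar{\O}} \colon \fgt$ is a Quillen adjunction because $\fgt$ preserves fibrations and trivial fibrations by construction of the transferred structures. To see it is a Quillen equivalence I would check the standard criterion: for every cofibrant $M \in \mathsf{Mod}^l_{\O}$ the unit map $M \to \fgt(\bar{\O} \circ_{\O} M) = \bar{\O} \circ_{\O} M$ is a weak equivalence, and $\fgt$ reflects weak equivalences between fibrant objects (the latter is immediate since weak equivalences in both module categories are detected on underlying spectra). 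For the unit, consider the map $\O \circ_{\O} M \to \bar{\O} \circ_{\O} M$, i.e. the map induced on relative composition products by $f_1 \colon \O \to \bar{\O}$ in $\mathsf{Mod}^r_{\O}$ and the identity $f_2 \colon M \to M$; since $M$ is cofibrant in the projective positive structure on $\mathsf{Mod}^l_{\O}$ and $\O \to \bar{\O}$ is an underlying stable equivalence which is moreover an underlying cofibration — or, if not, factor it or replace it, and note that the ``weak equivalence'' clause of Theorem \ref{CIRCO POS THM} only needs $f_1$ to be a weak equivalence together with $f_1$ a monomorphism for the structural part — Theorem \ref{CIRCO POS THM} (applied with the roles arranged so that $f_1$ supplies the weak equivalence) gives that $f_1 \square^{\circ_{\O}} f_2$, which here is just $\O \circ_{\O} M \to \bar{\O} \circ_{\O} M$, is a weak equivalence. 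Identifying $\O \circ_{\O} M \cong M$ finishes the unit check. The case of $\mathsf{Alg}_{\O}$ is identical, restricting to left modules concentrated in degree $0$ and noting all the cited results specialize.

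The main obstacle I anticipate is the weak-equivalence clause of the transfer argument: verifying that pushouts of free trivial cofibrations are weak equivalences genuinely requires the full strength of the new ``lax $\Sigma_n$-cofibrancy'' package (Theorems \ref{SIGMANPUSHPROD THM} and \ref{BIQUILLEN THM}), since — as the introduction emphasizes — the naive guess that $X^{\wedge n}$ is genuinely $\Sigma_n$-cofibrant is false. A secondary but real technical point is making sure the monomorphism vs.\ underlying-cofibration dichotomy in Theorem \ref{CIRCO POS THM} is invoked correctly when $\O \to \bar{\O}$ is only assumed to be a degreewise stable equivalence (with no cofibrancy), which may require a small cofibrant-replacement argument for $\O \to \bar{\O}$ in $\mathsf{Mod}^r_{\O}$ before applying the theorem.
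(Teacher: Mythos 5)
Your overall skeleton is the same as the paper's (transfer via \cite[Lemma 2.3]{SS00}, then use Theorem~\ref{CIRCO POS THM} to verify both the transfer hypothesis and the Quillen-equivalence unit), but there are two real problems.

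\textbf{Quillen equivalence step.} You set $f_1 \colon \O \to \bar\O$ and $f_2 = \mathrm{id}_M$ and claim $f_1 \square^{\circ_{\O}} f_2$ ``is just $\O \circ_{\O} M \to \bar\O \circ_{\O} M$.'' This is not correct: the pushout product of any map with an identity is an isomorphism. Concretely, with $f_2 = \mathrm{id}_M$ the domain of $f_1 \square^{\circ_{\O}} f_2$ is $(N \circ_{\O} M) \amalg_{N \circ_{\O} M} (\bar N \circ_{\O} M) \cong \bar N \circ_{\O} M$, and the pushout product is $\mathrm{id}_{\bar N \circ_{\O} M}$, which tells you nothing about the unit map. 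What one needs instead is to take $f_2$ to be the initial-object map $\O(0) \to A$ in $\mathsf{Alg}_{\O}$ (or its $\mathsf{Mod}^l_{\O}$ analogue). This is a cofibration between cofibrant objects whenever $A$ is cofibrant, and then $N \circ_{\O} \O(0) = N(0)$ identifies the pushout product corner so that Theorem~\ref{CIRCO POS THM} yields (after a short 2-out-of-3) that $(\minus) \circ_{\O} A$ carries trivial monomorphisms in $\mathsf{Mod}^r_{\O}$ to weak equivalences. The paper then invokes the monomorphism stable model structure on $\mathsf{Mod}^r_{\O}$ from Theorem~\ref{OINJMODEL THM} (in which every object is cofibrant) together with Ken Brown's lemma to upgrade this to preservation of \emph{all} weak equivalences, including the map $\O \to \bar\O$ even though it is not assumed to be a monomorphism. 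Your remark ``if not, factor it or replace it'' gestures at this but neither identifies the needed auxiliary model structure nor actually carries out the reduction.

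\textbf{Existence step.} Your use of Theorem~\ref{BIQUILLEN THM}(b) is the wrong clause. After Proposition~\ref{FILT PROP} the coefficient objects appearing in the filtration are $\O_A(r)$ (not $\O(r)$), and for a general operad $\O$ and general algebra $A$ these are \emph{not} $S$-cofibrant, so the second variable cannot be put in the $S$ stable model structure required by (b). Clause (a) is what applies: $\O_A(r)$ is trivially cofibrant in the monomorphism model structure, and $Q^r_{r-1}(f) \to Y^{\wedge r}$ is a trivial $S$ $\Sigma$-inj $\Sigma_r$-proj cofibration by Theorem~\ref{SIGMANPUSHPROD THM}, so the filtration layers are trivial monomorphisms. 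Equivalently (and this is the packaging the paper uses), one repeats the first half of the proof of Theorem~\ref{CIRCO POS THM} with $f_1 = \** \to \O$ and observes that the only place the cofibrancy of the base algebra $A$ entered that proof was in establishing that $M_A \to N_A$ is a monomorphism; with $M = \**$ this is $\** \to \O_A$, which is a monomorphism for free, so the argument goes through for arbitrary $A$. Spelling this out is worth doing because it is exactly the point at which the ``no cofibrancy hypothesis on $\O$ or $A$'' part of the theorem is secured.

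Your observation that the genuine $\Sigma_n$-projective route fails and one needs the lax cofibrancy of Theorems~\ref{SIGMANPUSHPROD THM}/\ref{BIQUILLEN THM} is correct and does match the paper's motivation.
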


In the case of algebras over the commutative operad, Theorem \ref{MODMODELEXIST THM} was first proven in \cite{Sh04}, and for general simplicial operads in \cite{ElmMan06}. A result nearly identical to Theorem \ref{MODMODELEXIST THM} was the main result of \cite{Ha09}. Our result is a slight generalization of the latter in the sense that our model structure on $\mathsf{Mod}^l_{\O}$ has a larger class of cofibrations (cf. the discussion preceding \cite[Thm. 1.3]{Ha09}).

\begin{theorem}\label{FORGETFUL THM}
Let $\O$ be an operad in $\mathsf{Sp}^{\Sigma}$ which is $S$ cofibrant in $\Sym$. Then, equipping $\mathsf{Alg}_{\O}, \mathsf{Mod}^l_{\O}$
with their respective projective positive $S$ stable model structures and 
$\mathsf{Sp}^{\Sigma}, \Sym$
with their respective $S$ stable model structures, the forgetful functors
\[\fgt\colon \mathsf{Alg}_{\O} \to \mathsf{Sp}^{\Sigma},\qquad
  \fgt\colon \mathsf{Mod}^l_{\O} \to \Sym\]
send cofibrations between cofibrant objects to cofibrations between cofibrant objects.
\end{theorem}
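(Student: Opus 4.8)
The plan is to obtain Theorem \ref{FORGETFUL THM} as a formal consequence of Theorem \ref{CIRCO POS THM}, by feeding the map $\varnothing \to \O$ into the latter and using that the forgetful functor $\fgt \colon \mathsf{Mod}^l_{\O} \to \Sym$ is naturally identified with $\O \circ_{\O}(\minus)$, where $\O$ is viewed as a right module over itself. I would treat $\mathsf{Mod}^l_{\O}$ first. The initial right $\O$-module $\varnothing$ is the symmetric sequence that is trivial in every arity (it is also the initial object of $\Sym$), so that $\varnothing \circ_{\O}(\minus) \cong \varnothing$; and $\O \circ_{\O} M$ is the reflexive coequalizer of $\O \circ \O \circ M \rightrightarrows \O \circ M$, which is split by the operad unit and so computes $\fgt M$, naturally in $M$. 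Since $\O$ is $S$ cofibrant in $\Sym$ by hypothesis, $f_1 \colon \varnothing \to \O$ is an underlying cofibration in $\Sym$. Given a cofibration $f_2 \colon M \to \bar{M}$ between cofibrant objects of $\mathsf{Mod}^l_{\O}$, I would feed $f_1$ and $f_2$ into Theorem \ref{CIRCO POS THM} and simplify the four vertices of the pushout-product square using $\varnothing \circ_{\O}(\minus) \cong \varnothing$ and $\O \circ_{\O}(\minus) \cong \fgt$: the map $f_1 \square^{\circ_{\O}} f_2$ is then identified with $\fgt(f_2) \colon \fgt M \to \fgt \bar{M}$, which Theorem \ref{CIRCO POS THM} asserts is an underlying cofibration in $\Sym$. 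The same computation applied to $\varnothing \to M$ and to $\varnothing \to \bar{M}$ (both cofibrations between cofibrant objects, as $\varnothing$ is cofibrant) shows moreover that $\fgt M$ and $\fgt \bar{M}$ are $S$ cofibrant, which finishes this case.

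For $\mathsf{Alg}_{\O}$ I would reduce to the case just treated by factoring the forgetful functor as
\[
\mathsf{Alg}_{\O} \xrightarrow{\ \iota\ } \mathsf{Mod}^l_{\O} \xrightarrow{\ \fgt\ } \Sym \xrightarrow{\ (\minus)(0)\ } \mathsf{Sp}^{\Sigma},
\]
where $\iota$ is the fully faithful embedding of $\O$-algebras as the left $\O$-modules concentrated in arity $0$ (its right adjoint being evaluation at arity $0$, which lands in $\mathsf{Alg}_{\O}$ because the arity-$0$ part of a left module structure is an $\O$-algebra structure). Being a left adjoint, $\iota$ preserves colimits and the initial object; moreover it carries the free $\O$-algebra on a spectrum $X$ to the free left $\O$-module on the symmetric sequence which is $X$ in arity $0$ and trivial elsewhere, and that construction sends generating cofibrations of the positive $S$ structure on $\mathsf{Sp}^{\Sigma}$ to generating cofibrations of the positive $S$ structure on $\Sym$. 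Hence $\iota$ preserves cofibrations and cofibrant objects, in particular cofibrations between cofibrant objects; the same holds for $(\minus)(0) \colon \Sym \to \mathsf{Sp}^{\Sigma}$, which is likewise a left adjoint preserving generating cofibrations of the relevant $S$ structures. Composing with the $\mathsf{Mod}^l_{\O}$ case already established gives the claim for $\mathsf{Alg}_{\O}$.

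The genuine content is entirely contained in Theorem \ref{CIRCO POS THM}; granting it, the only steps requiring real attention are the natural identification $\fgt \cong \O \circ_{\O}(\minus)$ together with the collapse of the pushout-product square (routine, but it uses the explicit description of the initial right module and the coequalizer presentation of $\circ_{\O}$) and, more delicately, the assertions that $\iota$ and $(\minus)(0)$ respect the stated model structures --- i.e., that the positive $S$ and $S$ model structures on $\Sym$ restrict compatibly to $\mathsf{Sp}^{\Sigma}$ in arity $0$. I expect the latter to be the main obstacle: it is a ``the model structures line up as expected'' claim that must be checked against the precise generating sets rather than assumed.
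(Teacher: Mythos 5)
Your approach is exactly the paper's, which records this as a one-liner: apply Theorem~\ref{CIRCO POS THM} to $f_1 = \** \to \O$ and $f_2$ the intended cofibration between cofibrant objects. Your elaboration --- the collapse of the pushout-product square to $\fgt(f_2)$ via $\varnothing \circ_{\O}(\minus) \cong \varnothing$ and $\O \circ_{\O}(\minus) \cong \fgt$, and the reduction of the $\mathsf{Alg}_{\O}$ case through $\iota$ and $(\minus)(0)$ --- correctly supplies the details the paper treats as routine, and the model-structure compatibility you worry about at the end does hold for the stated reason (Definition~\ref{POSSYM DEF} puts the positive $S$ structure on $\mathsf{Sp}^{\Sigma}$ at arity $0$ of $\Sym$, so $\iota$ and $(\minus)(0)$ send generating cofibrations to generating cofibrations). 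One small point to make explicit: $\fgt$ applied to the initial left $\O$-module gives $\O(0)$ concentrated in arity~$0$, not the initial symmetric sequence, so its $S$-cofibrancy (needed for ``cofibration between \emph{cofibrant} objects'') is not automatic but is supplied by the hypothesis that $\O$ is $S$ cofibrant in $\Sym$, which in particular makes $\O(0)$ $S$ cofibrant in $\mathsf{Sp}^{\Sigma}$.
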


In the case of algebras over the commutative operad Theorem \ref{FORGETFUL THM} was first proven in \cite{Sh04}, and extended to algebras and modules over general operads satisfying some cofibrancy conditions in \cite{HaHe13}. Our result improves on the latter by relaxing the cofibrancy conditions on the operad.

\begin{theorem}\label{BARCONSTRUCTION THM}
Suppose $\Sym$ is equipped with the positive $S$ stable model structure and consider an operad $\O$ in $\mathsf{Sp}^\Sigma$, right $\O$-module $M$ and a left $\O$-module $N$ such that the unit map 
$\mathcal{I} \to \O$ (resp. $M$ and $N$) is an underlying cofibration (resp. are cofibrant objects) in $\Sym$. Then the bar construction
\[B_n(M,\O,N)=M \circ \O^{\circ n} \circ N\] 
is Reedy cofibrant with respect to the model structure on $\Sym$.
\end{theorem}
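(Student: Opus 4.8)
The plan is to exhibit the simplicial object $B_\bullet(M,\O,N)$ as a bar construction in the monoidal category $(\Sym,\circ)$ and then reduce Reedy cofibrancy to a statement about the latching maps, which I will handle using Theorem \ref{CIRCO POS THM}. Recall that for the two-sided bar construction $B_n(M,\O,N)=M\circ\O^{\circ n}\circ N$, the degeneracies are built from the unit $\mathcal{I}\to\O$ and the faces from the module/operad structure maps; consequently the $n$-th latching object $L_n B_\bullet(M,\O,N)$ is the colimit over the (images of the) degeneracy maps, and as is standard for bar constructions the latching map
\[
L_n B_\bullet(M,\O,N)\longrightarrow B_n(M,\O,N)
\]
is identified with $M\circ (\text{(a latching-type map built from }\mathcal{I}\to\O)\circ N)$; more precisely, writing $i\colon \mathcal{I}\to\O$ for the unit, the relevant map in the middle is the iterated pushout-product-type map whose target is $\O^{\circ n}$ and whose source is the subobject generated by all ways of inserting a copy of $\mathcal{I}$, i.e. the ``$(n-1)$-st cube'' associated to $i^{\circ n}$. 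So the first step is to make this identification precise: express $L_n B_\bullet \to B_n$ as $M\circ_{\mathcal{I}}(Q^n_{n-1}(i)\to\O^{\circ n})\circ_{\mathcal{I}} N$, or rather, since here the relevant operad for $\circ_{\O}$ is the initial operad $\mathcal{I}$ itself, simply as $M\circ (j_n) \circ N$ where $j_n\colon Q^n_{n-1}(i)\to \O^{\circ n}$ is the $n$-fold pushout-product of $i\colon\mathcal{I}\to\O$ in $(\Sym,\circ)$.

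The second step is to show $j_n$ is an underlying cofibration in $\Sym$. Since $i\colon\mathcal{I}\to\O$ is assumed to be an underlying cofibration in $\Sym$, I want to iterate a pushout-product statement for $\circ$. This is exactly where Theorem \ref{CIRCO POS THM} enters, applied with $\O$ there taken to be the initial operad $\mathcal{I}$ (so that $\circ_{\mathcal{I}}=\circ$, $\mathsf{Mod}^r_{\mathcal{I}}=\mathsf{Mod}^l_{\mathcal{I}}=\Sym$, and the projective positive $S$ model structure on $\mathsf{Mod}^l_{\mathcal{I}}$ is the positive $S$ model structure on $\Sym$): it tells us that pushout-product with a cofibration between cofibrant objects in $(\Sym, \text{positive }S)$ preserves underlying cofibrations. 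However, $i\colon\mathcal{I}\to\O$ need not be a map between cofibrant objects for the positive structure (indeed $\mathcal{I}=(S,*,*,\dots)$ is typically not positive cofibrant). I will get around this by instead iterating the monomorphism half of Theorem \ref{CIRCO POS THM} together with a separate direct argument, or — cleaner — by first establishing the one-variable fact that $(\minus)\circ X$ and $X\circ(\minus)$ preserve underlying cofibrations when $X$ is underlying cofibrant, and then feeding that into a standard inductive pushout-product bookkeeping (the ``cube lemma'' for $i^{\circ n}$) to conclude $j_n$ is an underlying cofibration; the hypothesis that $\O$ is an underlying cofibrant object in $\Sym$ — which follows from $\mathcal{I}$ cofibrant? no — is supplied by the assumption on $i$ together with $\mathcal{I}$ being underlying cofibrant in the $S$ structure. (One must be slightly careful here: $\mathcal{I}$ is $S$-cofibrant in $\Sym$, so $\O$ is too, given $i$ is an $S$-cofibration.)

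The third and final step is to propagate cofibrancy through $M\circ(\minus)\circ N$. Here I again invoke Theorem \ref{CIRCO POS THM} (or its one-variable consequences): since $N$ is an underlying cofibrant object of $\Sym$, $(\minus)\circ N$ sends underlying cofibrations to underlying cofibrations, so $j_n\circ N$ is an underlying cofibration; since $M$ is likewise underlying cofibrant, $M\circ(\minus)$ preserves this, giving that $M\circ j_n\circ N = \big(L_n B_\bullet(M,\O,N)\to B_n(M,\O,N)\big)$ is an underlying cofibration in $\Sym$. As this holds for every $n$, and $B_0(M,\O,N)=M\circ N$ is underlying cofibrant (again by the one-variable statement applied to $*\to M$ and $*\to N$), the simplicial object $B_\bullet(M,\O,N)$ is Reedy cofibrant for the positive $S$ model structure on $\Sym$. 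The main obstacle I anticipate is Step 2: the clash between the positive cofibrancy hypothesis required by Theorem \ref{CIRCO POS THM} and the fact that the unit $\mathcal{I}\to\O$ lives at the level of the plain $S$ (non-positive) structure, so some care is needed to route the argument through the correct one-variable preservation statements for $\circ$ rather than naively iterating the full pushout-product theorem — equivalently, to verify that the latching-object combinatorics for the bar construction really do assemble the iterated pushout-product $j_n$ and nothing more.
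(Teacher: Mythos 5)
There is a genuine gap, and its source is a misidentification of $\mathcal{I}$. You write ``$\mathcal{I}=(S,*,*,\dots)$ is typically not positive cofibrant,'' but the unit for the composition product $\circ$ is concentrated in operadic degree $1$: $\mathcal{I}(1)=S$ and $\mathcal{I}(r)=*$ for $r\neq 1$, so in particular $\mathcal{I}(0)=*$. Since the positive $S$ structure on $\Sym$ (Definition~\ref{POSSYM DEF}) only imposes positivity at degree $r=0$, and the positive $S$ stable model structure makes $*\in\mathsf{Sp}^\Sigma$ cofibrant, $\mathcal{I}$ \emph{is} positive $S$ cofibrant. (You seem to be thinking of $\check{\mathds{1}}$, the unit for $\check\otimes$, which does have $\check{\mathds{1}}(0)=S$.) This is the crucial observation in the paper's proof, and missing it causes you to abandon the direct route --- iterating Theorem~\ref{CIRCO POS THM} with $\O=\mathcal{I}$, which is precisely Lemma~\ref{CIRC POS LEMMA} --- in favor of a workaround that does not survive scrutiny.

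The workaround fails for two related reasons, both stemming from the fact that $\circ$ preserves colimits only in its \emph{first} variable (Proposition~\ref{SYMMON PROP}). First, your identification $L_nB_\bullet\to B_n\ \cong\ M\circ j_n\circ N$ is not valid: the latching object is $\colim_{S\subsetneq\underline{n}}\bigl(M\circ(\text{cube vertex at }S)\circ N\bigr)$, and $M\circ(-)$ cannot be pulled outside this colimit, so this is not $M\circ Q^n_{n-1}(\eta)\circ N$. Second, the ``one-variable preservation'' statement you rely on --- that $M\circ(-)$ sends underlying $S$ cofibrations to underlying $S$ cofibrations when $M$ is cofibrant --- is not a consequence of Theorem~\ref{CIRCO POS THM}; that theorem requires the argument in the $\mathsf{Mod}^l_{\O}$ slot (here the second variable of $\circ$) to be a \emph{positive} $S$ cofibration between positive cofibrant objects, and an object like $j_n\circ N$ is not visibly one. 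The paper circumvents both problems simultaneously by grouping the iterated pushout product from the left, the colimit-preserving side, via Lemma~\ref{CIRC POS LEMMA}: the key extra conclusion there is that the result stays \emph{positive} $S$ cofibrant between positive cofibrant objects, which is what lets the induction on the number of factors run. With the correct identification of $\mathcal{I}$ in hand, all the maps $\** \to M$, $\eta,\dots,\eta$, $\**\to N$ satisfy the hypotheses of that lemma and the proof collapses to a one-line application of it.
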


A very similar result to Theorem \ref{BARCONSTRUCTION THM} was first proven in \cite{HaHe13}. Our result improves it by using more general cofibrancy conditions and allowing $\O(0) \neq \**$.  

\begin{remark}
One advantage of Theorems \ref{MODMODELEXIST THM}, \ref{FORGETFUL THM} and \ref{BARCONSTRUCTION THM} versus the original results in \cite{Ha09}, \cite{HaHe13} that they generalize is that the cofibrancy conditions used are more \textit{consistent} across results. This makes it easier to use the results in tandem, a relevant feature in upcoming joint work between the author and Kuhn.
\end{remark}

\begin{theorem}\label{FIBERSEQ THM}
Suppose $A$ is projective positive $S$ cofibrant in $\mathsf{Alg}_{\O}$ or, more generally, in $\mathsf{Mod}^l_{\bar{\O}}$. Then the functor
\[\mathsf{Mod}^r_{\O}\xrightarrow{(\minus) \circ_{\O}A} \mathsf{Sp}^{\Sigma} 
\quad \text{or, more generally, } \quad 
\mathsf{Mod}^r_{\O}\xrightarrow{(\minus) \circ_{\O}A} \Sym\]
preserves homotopy fiber sequences.
\end{theorem}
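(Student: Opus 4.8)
The plan is to reduce the statement about homotopy fiber sequences to the pushout-product statement of Theorem \ref{CIRCO POS THM}, using the standard fact that in a stable model category a homotopy fiber sequence is the same datum as a homotopy cofiber sequence, and that both are detected by homotopy pushout (equivalently, homotopy pullback) squares. So let $N' \to N \to N''$ be a homotopy fiber sequence in $\mathsf{Mod}^r_{\O}$; since $\mathsf{Sp}^{\Sigma}$ (and hence $\Sym$ and the module categories) is stable, it is equivalent to exhibit that $N' \circ_{\O} A \to N \circ_{\O} A \to N'' \circ_{\O} A$ is a homotopy cofiber sequence in $\Sym$ (or $\mathsf{Sp}^{\Sigma}$). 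First I would replace the given fiber sequence by a levelwise-cofibration model: choose a cofibrant replacement and factor so that we have an underlying cofibration $\bar N' \to \bar N$ of right $\O$-modules whose cofiber is a model for $N''$, with all three objects underlying cofibrant in $\Sym$. This is where the hypothesis that $A$ is projective positive $S$ cofibrant in $\mathsf{Mod}^l_{\bar\O}$ enters: applying Theorem \ref{CIRCO POS THM} with $f_2$ the cofibration $\varnothing \to A$ (a cofibration between cofibrant objects since $A$ is cofibrant) and $f_1$ the underlying cofibration $\bar N' \to \bar N$, the pushout product $f_1 \square^{\circ_{\O}} f_2$ is just the map $\bar N' \circ_{\O} A \to \bar N \circ_{\O} A$, and the theorem guarantees it is an underlying cofibration (resp. monomorphism) in $\Sym$ — in particular a cofibration suitable for computing the strict cofiber.

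The key computational input is then that $\circ_{\O}$ commutes with the relevant pushouts in its first variable: since $-\circ_{\O} A$ preserves colimits in the right-module variable (the composition product preserves colimits in the first variable, and this passes to the relative version, as recalled in the introduction), the strict cofiber of $\bar N' \circ_{\O} A \to \bar N \circ_{\O} A$ is $(\bar N / \bar N') \circ_{\O} A \simeq N'' \circ_{\O} A$. Combined with the previous paragraph — the map $\bar N' \circ_{\O} A \to \bar N \circ_{\O} A$ is an honest cofibration between objects that are underlying cofibrant, so its strict cofiber computes the homotopy cofiber — we conclude that $N' \circ_{\O} A \to N \circ_{\O} A \to N'' \circ_{\O} A$ is a homotopy cofiber sequence, hence (by stability) a homotopy fiber sequence. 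The case where $A$ is merely projective positive $S$ cofibrant in $\mathsf{Alg}_{\O}$ rather than in a module category is subsumed, since algebras are left $\O$-modules concentrated in degree $0$ and a cofibrant algebra is underlying cofibrant as a left module (this is exactly the content of Theorem \ref{MODMODELEXIST THM} together with Theorem \ref{FORGETFUL THM}'s philosophy, or can be extracted directly from the proof of Theorem \ref{CIRCO POS THM}).

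The main obstacle I anticipate is the homotopy-invariance bookkeeping needed to pass from the given (arbitrary) homotopy fiber sequence to the rectified cofibration model while keeping control of underlying cofibrancy in $\Sym$ of all three terms: one needs that cofibrant objects of $\mathsf{Mod}^r_{\O}$, or at least the replacements one constructs, are underlying cofibrant, and that the weak equivalences produced by cofibrant replacement are still sent to weak equivalences by $-\circ_{\O} A$ — which is precisely the "weak equivalence" clause of Theorem \ref{CIRCO POS THM} applied with $f_1$ a trivial cofibration (after factorization) and $f_2 = \varnothing \to A$. Once one has arranged that $-\circ_{\O}A$ is homotopical on the subcategory of underlying-cofibrant right modules and sends underlying cofibrations between such to underlying cofibrations, the stability argument is formal. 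A secondary subtlety is making sure the conclusion lands in the correct model structure ($S$ stable, not positive) on the target $\Sym$ or $\mathsf{Sp}^{\Sigma}$, but Theorem \ref{CIRCO POS THM} is already phrased with exactly that target convention, so no extra work is required there.
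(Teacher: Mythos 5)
Your overall strategy is the same as the paper's: reduce to showing $(\minus)\circ_{\O}A$ preserves homotopy cofiber sequences and then invoke stability. However, there are two points worth flagging, one a genuine error and one a missed simplification.

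The concrete error is the claim that, taking $f_2 = \varnothing \to A$ and $f_1 = \bar N' \to \bar N$, the pushout product $f_1 \square^{\circ_{\O}} f_2$ \emph{is} the map $\bar N' \circ_{\O} A \to \bar N \circ_{\O} A$. The initial object of $\mathsf{Mod}^l_{\O}$ is not something that dies under $\circ_{\O}$: it is $\O(0) = \O \circ \**$ concentrated in degree $0$, and for a right module $R$ one has $R \circ_{\O} \O(0) \simeq R(0)$, nontrivially. Hence the pushout product is
\[
\bar N' \circ_{\O} A \vee_{\bar N'(0)} \bar N(0) \longrightarrow \bar N \circ_{\O} A,
\]
not $\bar N'\circ_{\O}A \to \bar N\circ_{\O}A$. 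To extract what you actually need, you must additionally observe that $\bar N'(0) \to \bar N(0)$ is itself a cofibration/monomorphism (it is the degree-$0$ part of $f_1$) and then factor $\bar N'\circ_{\O}A \to \bar N \circ_{\O}A$ as a cobase change of $\bar N'(0)\to\bar N(0)$ followed by the pushout product, both of which the relevant model-structure axioms close up under. This is precisely what the paper does, but one step removed: the proof of Theorem \ref{MODMODELEXIST THM} applies Theorem \ref{CIRCO POS THM} with $f_2 = \O(0)\to A$ and explicitly notes $f\circ_{\O}\O(0) = f(0)$ to conclude that $(\minus)\circ_{\O}A$ preserves weak equivalences that are monomorphisms.

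The missed simplification concerns your worry about ``homotopy-invariance bookkeeping'' and underlying cofibrancy of replacements in $\mathsf{Mod}^r_{\O}$. The paper dissolves this by using the monomorphism stable model structure on $\mathsf{Mod}^r_{\O}$ (Theorem \ref{OINJMODEL THM}), in which \emph{every} object is cofibrant and colimits are underlying; combined with the $\O$-projective $S$ stable structure (limits underlying) and stability, homotopy fiber and cofiber sequences in $\mathsf{Mod}^r_{\O}$ coincide and match those of $\Sym$. Then, rather than rectifying a single sequence at a time, the paper observes that $(\minus)\circ_{\O}A$ preserves weak equivalences that are monomorphisms (the point above) and applies Ken Brown's lemma in the monomorphism structure to conclude that $(\minus)\circ_{\O}A$ preserves \emph{all} weak equivalences, so it is already its own left derived functor. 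Being a left adjoint preserving colimits and all weak equivalences, it preserves homotopy cofiber sequences, and stability finishes. So the skeleton of your argument is right, but the pushout-product identification needs the extra degree-$0$ step, and the bookkeeping you were anticipating is made unnecessary by the injective model structure rather than by hand-crafted rectifications.
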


\subsection{Directions for future work}

A key motivation for the work in this paper  comes from upcoming joint work between the author and Kuhn where we study certain filtrations built using 
$M \circ_{\O} (\minus)$ type functors. Since we need to iterate such functors while obtaining homotopically meaningful constructions (cf. Theorem \ref{FIBERSEQ THM}), it becomes necessary to understand how those functors interact with cofibrancy conditions. 
 
Additionally, there are two natural directions in which to try to generalize the results in this paper.

The first direction would be to extend Theorem \ref{CIRCO POS THM} to multicategories/colo\-red operads in $\mathsf{Sp}^{\Sigma}$, following \cite{ElmMan06}. In light of a recent preprint (\cite{WhYa15}) by White and Yau investigating when analogues of Theorems \ref{MODMODELEXIST THM}, \ref{FORGETFUL THM} hold for general categories, this seems likely to be a formal question.

A second direction would be to extend our main results to other categories. A natural candidate for such a generalization is given by the (simplicial) genuine $G$-symmetric spectra of Hausmann (\cite{Ha14}), as those share the underlying categories used in this paper. Such a generalization is the subject of upcoming joint work between the author and Hausmann.

\subsection{Outline of the paper}

Section \ref{BASICDEF} introduces the required basic notation and terminology.

Section \ref{SINJPROJMOD SEC} defines and proves the existence (Theorems \ref{SGSPEC THM}, \ref{GSTABLE THM}, \ref{SUSPEC THM}) of the three model structures on $(\mathsf{Sp}^{\Sigma})^{G}$ necessary to formulate Theorems \ref{SIGMANPUSHPROD THM} and \ref{BIQUILLEN THM}.

Section \ref{PROPERTIES SEC} proves the key properties of the $S$ $\Sigma$-inj $G$-proj stable model structures featured in Theorems \ref{SIGMANPUSHPROD THM} and \ref{BIQUILLEN THM}, namely those results themselves as well as two minor but essential ``change of group'' results (Propositions \ref{PROPSMASH} and \ref{PROPINDUCE}).

Section \ref{APPLICATIONS SEC} deals with proving Theorem \ref{CIRCO POS THM} and its ``corollaries'' Theorems \ref{MODMODELEXIST THM}, \ref{FORGETFUL THM}, \ref{BARCONSTRUCTION THM} and \ref{FIBERSEQ THM}. Key to this is subsection \ref{FILTRATIONS SEC} and in particular Proposition \ref{FILT PROP}, which improves crucial filtration results used in \cite{ElmMan06}, \cite{Ha09}, \cite{HaHe13}, among others.

\vskip 5pt

Much of the paper, namely Sections \ref{SINJPROJMOD SEC} and \ref{PROPERTIES SEC}, is devoted to building the notion of ``lax $\Sigma_n$-cofibrancy'' needed to state Theorems \ref{SIGMANPUSHPROD THM} and \ref{BIQUILLEN THM} and then proving those results. However, the reader interested only in Theorem \ref{CIRCO POS THM} (or its consequences) should be able to skip ahead to Section \ref{APPLICATIONS SEC}, provided he is willing to accept Theorems \ref{SIGMANPUSHPROD THM} and \ref{BIQUILLEN THM} (and Propositions \ref{PROPSMASH} and \ref{PROPINDUCE}) as given.

\vskip 10pt

\noindent{\textbf{Acknowledgments}}

\vskip 3pt

The author would like to thank Nick Kuhn for his encouragement and advice as well as John Harper, Mark Behrens and David White for many useful conversations.

\section{Basic definitions and notation}\label{BASICDEF}

We assume the reader is familiar with the basics on symmetric spectra (as found in \cite{HSS} or \cite{Sch97}) and cover in Sections \ref{GSPACES SEC}, \ref{GSPECTRA SEC} and \ref{STABLEPOSSTABLE SEC} only the minimum needed to establish notation and some less standard basic results.

Likewise, we assume the reader is familiar with the basics on cofibrantly generated model categories (as in \cite{Hov98}) and recall in Sections \ref{INJPROJ SEC} and \ref{PUSHPROD SECSEC} only two notions that play a key role for us: injective/projective model structures and left Quillen bifunctors.

\subsection{Pointed $G$-simplicial sets}\label{GSPACES SEC}

Throughout we let $(\mathsf{S}_{\**},\wedge,S^0)$ denote the closed monoidal category of pointed simplicial sets together with its monoidal structure $\wedge$ and unit $S^0$.

We will make use of the following standard notation:
\begin{itemize}
\item for $A$ a set, $A \cdot (\minus)$ denotes the (constant) coproduct over $A$ (cf. \cite{McL});
\item $\Delta^k$, $\partial \Delta^k$ and $\Lambda^k_l$ denote the standard, boundary and horn (unpointed) simplicial sets (cf. \cite[I.1]{GJ});
\item $X_+$ denotes the pointed simplicial set obtained by adding a disjoint base point to the (unpointed) simplicial set $X$;
\item $S^n=(\Delta^1/\partial \Delta^1)^{\wedge n}$ denotes the pointed $n$-sphere.
\end{itemize}

\begin{definition}
Let $G$ be a finite group. The category $\mathsf{S}_{\**}^G$ of \textit{pointed $G$-simplicial sets} is the category of functors $G\to \mathsf{S}_{\**}$.
\end{definition}

Given $X,Y$ in $\mathsf{S}_{\**}^G$, $X\wedge Y$ has a diagonal $G$-action and, giving $S^0$ the trivial action, $\wedge$ becomes a monoidal structure in $\mathsf{S}_{\**}^G$. In fact, one has the following.

\begin{proposition}\label{GCLOSEDMON PROP}
$(\mathsf{S}_{\**}^G,\wedge,S^0)$ form a closed symmetric monoidal category.

Further, both the left and right adjoint in the trivial-fixed point adjunction
\[\mathsf{triv} \colon \mathsf{S}_{\**} \rightleftarrows  \mathsf{S}_{\**}^G \colon (\minus)^G\]
are monoidal functors.
\end{proposition}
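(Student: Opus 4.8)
The plan is to transport the entire structure from $(\mathsf{S}_{\**},\wedge,S^0)$ along the faithful ``underlying simplicial set'' functor $U\colon \mathsf{S}_{\**}^G \to \mathsf{S}_{\**}$ given by evaluation at the unique object of $G$. First recall that $\mathsf{S}_{\**}^G$ is bicomplete, with $U$ creating all (co)limits: the underlying object of a (co)limit is computed in $\mathsf{S}_{\**}$ and carries the induced diagonal $G$-action. Now equip $X \wedge Y$ with the diagonal action and $S^0$ with the trivial action; the associativity, unit and symmetry isomorphisms of $\mathsf{S}_{\**}$ are $G$-equivariant for these actions --- immediate from their naturality together with the diagonality of the action --- and hence define the corresponding isomorphisms in $\mathsf{S}_{\**}^G$. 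By construction $U$ is then a \emph{strict} symmetric monoidal functor, so every coherence diagram in $\mathsf{S}_{\**}^G$ (Mac Lane's pentagon, the unit triangles, the hexagons, $\sigma^2=\mathrm{id}$) is sent by $U$ to the corresponding commuting diagram in $\mathsf{S}_{\**}$; since $U$ is faithful, those diagrams commute in $\mathsf{S}_{\**}^G$ as well. This yields the symmetric monoidal structure.

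For closedness, I would define the internal hom $\underline{\Map}(Y,Z)$ to be the pointed mapping simplicial set $\Map_{\mathsf{S}_{\**}}(UY,UZ)$ equipped with the \emph{conjugation} action $g\cdot f = g_Z \circ f \circ g_Y^{-1}$, where $g_Y,g_Z$ denote the actions of $g$; a one-line computation shows this is a $G$-action. The key point is then that, under the tensor--hom adjunction bijection of $\mathsf{S}_{\**}$, a map $X\wedge Y \to Z$ is $G$-equivariant if and only if its adjoint $X \to \Map_{\mathsf{S}_{\**}}(UY,UZ)$ is $G$-equivariant for the conjugation action: unwinding, both conditions read $\phi(gx \wedge y) = g\cdot\phi(x\wedge g^{-1}y)$. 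Naturality of the resulting bijection in all three variables is inherited from $\mathsf{S}_{\**}$, which gives the adjunction $(\minus \wedge Y) \dashv \underline{\Map}(Y,\minus)$ and hence the closed structure. (Alternatively one can invoke the general fact that Day-convolution-type functor categories valued in a closed symmetric monoidal bicomplete category are again closed symmetric monoidal, applied to the diagonal $G \to G\times G$.)

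It remains to treat the two adjoints. The functor $\mathsf{triv}$ is strong symmetric monoidal essentially on the nose: $\mathsf{triv}(X)\wedge \mathsf{triv}(Y)$ is $X\wedge Y$ with the diagonal of two trivial actions, i.e. the trivial action, so it \emph{equals} $\mathsf{triv}(X\wedge Y)$, and $\mathsf{triv}(S^0)=S^0$; the structure constraints are thus identities, and their compatibility with associator/unit/symmetry is trivial. For the right adjoint $(\minus)^G$ the cleanest argument is doctrinal adjunction: the right adjoint of a strong (indeed oplax) symmetric monoidal functor is canonically \emph{lax} symmetric monoidal, with constraints $X^G \wedge Y^G \to (X\wedge Y)^G$ and $S^0 \to (S^0)^G$ mated to those of $\mathsf{triv}$. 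Concretely, $X^G\wedge Y^G \to (X\wedge Y)^G$ is the restriction of $U(X^G)\wedge U(Y^G)\hookrightarrow UX \wedge UY$, which lands in the diagonal fixed points since a smash of fixed points is fixed, and $S^0\to (S^0)^G$ is the identity because $(S^0)^G=S^0$; the lax coherence axioms again follow by applying the faithful $U$. (Note that $(\minus)^G$ is only lax, not strong, monoidal in general, so ``monoidal functor'' here is read in the lax sense.)

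The only step that calls for genuine --- if still routine --- care is the closedness claim, specifically the verification that the conjugation action is exactly what makes the underlying tensor--hom bijection $G$-equivariant on \emph{both} sides; everything else is a formal consequence of $U$ being a faithful strict symmetric monoidal functor out of $\mathsf{S}_{\**}^G$.
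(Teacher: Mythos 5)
The bulk of your argument is fine: transporting $\wedge$ along the faithful forgetful functor, the closedness via the conjugation action on mapping spaces (which the paper itself records in Remark \ref{SPAC SIMP}), and the observation that $\mathsf{triv}$ is strict symmetric monoidal. The genuine gap is the parenthetical assertion that $(\minus)^G$ is ``only lax, not strong, monoidal in general'' and hence that ``monoidal functor'' must be read in the lax sense. For $G$ finite acting on pointed simplicial sets that is false: the lax constraint $X^G \wedge Y^G \to (X\wedge Y)^G$ is in fact an isomorphism, so $(\minus)^G$ is \emph{strong} symmetric monoidal, and this stronger fact is exactly what the paper's proof establishes. The argument is the one the paper sketches just after the proposition: write $X\wedge Y$ as the pushout $\colim(\** \leftarrow X\vee Y \to X\times Y)$. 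Now $(\minus)^G$ is a right adjoint, so it preserves $X\times Y$; it preserves $X\vee Y$ since that is itself a pushout along the monomorphism $\** \to X$; and it preserves the pushout defining the smash because $X\vee Y \to X\times Y$ is a monomorphism and $(\minus)^G$ preserves pushouts along monomorphisms --- this is Proposition \ref{FIXEDPUSH PROP}, stated precisely for this purpose. Hence $(X\wedge Y)^G \cong X^G \wedge Y^G$ and $(S^0)^G = S^0$. (One can also just check this directly on pointed $G$-sets in each simplicial degree: a non-basepoint element of $X\wedge Y$ is an honest pair $(x,y)$, and the diagonal $G$-action fixes it iff it fixes both coordinates.)

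Your doctrinal-adjunction argument correctly produces the lax structure map as a mate, but stops short of showing it is invertible; that is exactly the ``less obvious half'' the paper flags. If ``monoidal functor'' were read merely as lax your proof would be complete, but the route the paper takes (via Proposition \ref{FIXEDPUSH PROP}) makes clear that the intended, and true, conclusion is strong monoidality. You should delete the incorrect parenthetical and add the pushout/monomorphism argument above.
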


The less obvious half of Proposition \ref{GCLOSEDMON PROP} follows by noting that 
$X \wedge Y = \colim (\** \leftarrow X \vee Y \to X \times Y)$ and using the following (which we will need later).

\begin{proposition}\label{FIXEDPUSH PROP}
Any pushout diagram in $\mathsf{S}_{\**}^G$
\begin{equation}\label{PUSH DIAGRD}
\xymatrix@1{A \ar[r] \ar[d]_f& X \ar[d] \\ B \ar[r] & Y}
\end{equation}
with $f$ a monomorphism remains a pushout diagram after applying $(\minus)^G$.
\end{proposition}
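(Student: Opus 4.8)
The plan is to reduce to an elementary statement about pointed $G$-sets and then to the standard description of a pushout along a monomorphism. Since in $\mathsf{S}_{\**}^G = \mathsf{Fun}(G,\mathsf{S}_{\**})$ both colimits and the fixed point functor $(-)^G$ are computed degreewise in the simplicial direction, and a monomorphism in $\mathsf{S}_{\**}^G$ is exactly a degreewise monomorphism, it suffices to prove the analogous assertion in the category $\mathsf{Set}^G_{\ast}$ of pointed $G$-sets: the pushout of a span $B \xleftarrow{f} A \xrightarrow{g} X$ of pointed $G$-sets with $f$ a monomorphism (equivalently, injective) remains a pushout after applying $(-)^G$.

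First I would recall the explicit form of such a pushout. For an injection $f\colon A \hookrightarrow B$ and an arbitrary map $g\colon A \to X$ of pointed sets, the pushout $Y = B \sqcup_A X$ has underlying pointed set $X \sqcup (B\setminus f(A))$, with $X \to Y$ the inclusion of the first summand and $B \to Y$ acting by $g\circ f^{-1}$ on $f(A)$ and by the identity on $B\setminus f(A)$; injectivity of $f$ enters precisely to ensure that no two distinct points of $X$ become identified. When $f$ and $g$ are $G$-equivariant this description is $G$-equivariant: $f$ being equivariant and injective makes $f(A)$ a $G$-subset of $B$, hence $B\setminus f(A)$ a $G$-subset, on which $G$ acts through its action on $B$.

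Next I would pass to fixed points on both sides. The key point --- and the only one genuinely using injectivity --- is the identity $f(A)^G = f(A^G)$: the inclusion $\supseteq$ is immediate, and conversely if $f(a)$ is $G$-fixed then $f(ga) = g\cdot f(a) = f(a)$ forces $ga = a$. Hence $(B\setminus f(A))^G = B^G \setminus f(A)^G = B^G \setminus f^G(A^G)$, where $f^G$ denotes the restriction of $f$. Since $(-)^G$ commutes with disjoint unions of $G$-sets, the description above gives $Y^G = X^G \sqcup \bigl(B^G\setminus f^G(A^G)\bigr)$ as a pointed set. But applying the same explicit description to the monomorphism $f^G\colon A^G \hookrightarrow B^G$ and the map $g^G\colon A^G \to X^G$ identifies the pushout $B^G \sqcup_{A^G} X^G$ with exactly this pointed set, and one checks that under these identifications the canonical comparison map $B^G \sqcup_{A^G} X^G \to Y^G$ --- induced by $X^G \to Y^G$ and $B^G \to Y^G$ --- is the identity. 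This proves the claim.

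I do not anticipate a real obstacle: the entire content is the elementary pushout formula together with the compatibility $f(A)^G = f(A^G)$. It is worth noting, however, that the injectivity hypothesis is essential --- without it the proposition is false, as already witnessed by the fold map $S^0\vee S^0 \to S^0$ with $\mathbb{Z}/2$ swapping the two summands (taken as both legs of the span), where passing to fixed points before forming the pushout produces an extra non-basepoint element. The only care required in the argument above is routine bookkeeping: verifying $G$-equivariance of the identification of $Y$ and confirming that the comparison map is the stated one.
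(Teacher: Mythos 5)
Your proof is correct, and it takes a genuinely different route from the paper's. The paper argues by cellularity: it writes a monomorphism of pointed $G$-simplicial sets as a transfinite composition of pushouts of the generating maps $G/H\cdot(\partial\Delta^k_+\to\Delta^k_+)$, reduces the claim to these generators, and then observes it is clear there (this implicitly uses that $(-)^G$ preserves filtered colimits, valid since $G$ is finite, together with the pasting lemma for pushouts). You instead work degreewise and directly in pointed $G$-sets, using the explicit formula $Y \cong X\sqcup(B\setminus f(A))$ for a pushout along an injection together with the identity $f(A)\cap B^G = f(A^G)$, which is exactly where injectivity and equivariance combine. Your approach is more elementary, avoids the small-object-argument machinery entirely, and is portable to any category of $G$-objects in sets; the paper's approach is slicker to state and fits the cell-complex vocabulary used throughout, but carries slightly more hidden overhead. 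Your closing counterexample (the fold $S^0\vee S^0\to S^0$ with $\Zz/2$ swapping summands) is a good sanity check showing the monomorphism hypothesis is not merely convenient.
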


\begin{proof}
Monomorphisms are transfinite compositions of maps adding a single orbit, so that one reduces to $f=G/H \cdot (\partial \Delta^k_+ \to \Delta^k_+)$. The claim is now clear.
\end{proof}

\begin{remark}\label{SPAC SIMP}
By the theory of enriched categories (see for example \cite[Chap. 3]{Ri14}), Proposition \ref{GCLOSEDMON PROP} implies that $\mathsf{S}_{\**}^G$ is a tensored and cotensored $\mathsf{S}_{\**}$-enriched category, and hence also simplicially enriched, tensored and cotensored. Explicitly, we note that the mapping space for $X,Y \in \mathsf{S}_{\**}^G$ is $\Map(X,Y)^G$, the $G$-fixed points of the conjugation action on the mapping space of the underlying $X,Y \in \mathsf{S}_{\**}$.
\end{remark}

\subsection{$G$-spectra}\label{GSPECTRA SEC}

Throughout $\Sigma$ will denote the usual skeleton of the category of finite sets and bijections. Explicitly, the objects of $\Sigma$ are the sets $\underline{m}=\{1,2,\cdots,m\}$ for $m\geq 0$.

\begin{definition}\label{SYMSEQ DEF1}
The category of \textit{symmetric sequences} in pointed simplicial sets is the category $\mathsf{S}_{\**}^\Sigma$ of functors from $\Sigma$ to $\mathsf{S}_{\**}$.
\end{definition}

\begin{remark}\label{UNPACK REM}
Unpacking Definition \ref{SYMSEQ DEF1}, a symmetric sequence $X$ consists of a sequence $X_m, m \geq 0$ of pointed simplicial sets, each with a left $\Sigma_m$-action.
 One then has inclusions $\mathsf{S}_{\**}^{\Sigma_m} \hookrightarrow \mathsf{S}_{\**}^{\Sigma}$,  which we often omit to simplify notation.
\end{remark}

\begin{definition}
The \textit{tensor product} $X \otimes Y$ of $X,Y \in \mathsf{S}_{\**}^{\Sigma}$ is defined by
\[(X\otimes Y)_m = \bigvee_{i+j=m}\Sigma_{m} \underset{\Sigma_i \times \Sigma_j}{\cdot} X_i \wedge Y_j\]
together with the obvious $\Sigma_m$-actions.
\end{definition}

The following is proven in \cite[Sec. 2.2]{HSS}.

\begin{proposition} $(\mathsf{S}_{\**}^{\Sigma},\otimes, \mathbbm{1})$ form a symmetric monoidal category where the unit $\mathbbm{1}$ is the sequence
$\mathbbm{1}$ such that $\mathbbm{1}_0=S^0$ and $\mathbbm{1}_m= \**$ for $m> 0$.
\end{proposition}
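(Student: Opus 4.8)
The plan is to recognize $\otimes$ as the Day convolution product induced by a symmetric monoidal structure on the indexing category $\Sigma$; the symmetric monoidal structure on $\mathsf{S}_{\**}^{\Sigma}$ then follows from general nonsense, sparing us a hands-on verification of the coherence axioms.

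First I would equip $\Sigma$ with its standard permutative (strictly associative, strictly unital) symmetric monoidal structure: the tensor of $\underline i$ and $\underline j$ is the concatenation $\underline i \oplus \underline j = \underline{i+j}$, the tensor of morphisms $(\sigma,\tau) \in \Sigma_i\times\Sigma_j$ is the block sum $\sigma\oplus\tau \in \Sigma_{i+j}$, the unit is $\underline 0$, and the symmetry $\underline i \oplus \underline j \xrightarrow{\sim} \underline j \oplus \underline i$ is the block transposition $\chi_{i,j} \in \Sigma_{i+j}$ that moves the first $i$ letters past the last $j$. Verifying that this is a symmetric monoidal category is an elementary computation with permutations: associativity and unitality hold on the nose, the pentagon is trivial, and both hexagon identities reduce to the braid-type relation $\chi_{i,j+k} = (\mathrm{id}_j\oplus\chi_{i,k})\circ(\chi_{i,j}\oplus\mathrm{id}_k)$ together with $\chi_{j,i}\circ\chi_{i,j} = \mathrm{id}$.

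Next I would identify the defining formula for $\otimes$ with the Day convolution coend. Viewing $\Sigma$ as discretely enriched in $\mathsf{S}_{\**}$ (hom-objects are finite sets with a disjoint basepoint), Day convolution reads
\[(X\otimes Y)_m \;=\; \int^{(\underline i,\underline j)\in\Sigma\times\Sigma} \Sigma(\underline i\oplus\underline j,\,\underline m)_+ \wedge X_i\wedge Y_j.\]
Since every morphism of $\Sigma$ is an isomorphism, $\Sigma(\underline i\oplus\underline j,\underline m)$ is empty unless $i+j=m$, in which case it is a copy of $\Sigma_m$ that is free both as a left $\Sigma_m$-set and as a right $(\Sigma_i\times\Sigma_j)$-set; the coend therefore collapses to $\bigvee_{i+j=m}\Sigma_{m} \underset{\Sigma_i \times \Sigma_j}{\cdot} X_i \wedge Y_j$, which is precisely the stated definition. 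Likewise the Day unit — the freely pointed representable $\underline m\mapsto\Sigma(\underline 0,\underline m)_+$ — equals $S^0$ in degree $0$ and $\ast$ in positive degrees, so it is exactly $\mathbbm 1$.

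Finally I would invoke the general fact that, for a small symmetric monoidal category $\mathcal C$ and the cocomplete closed symmetric monoidal category $\mathsf{S}_{\**}$, the functor category $[\mathcal C,\mathsf{S}_{\**}]$ equipped with Day convolution and the Day unit is a (closed) symmetric monoidal category, its associativity, unit and symmetry isomorphisms assembled coordinatewise from those of $\mathcal C$ and $\mathsf{S}_{\**}$ (this is what the argument of \cite[Sec. 2.2]{HSS} does in this case). Applied to $\mathcal C=\Sigma$ this gives the proposition, the structure isomorphisms on $X\otimes Y$ being built from the block-reassociating and block-transposing permutations above. I expect the only nonformal point to be the identification in the previous paragraph; the coherence for $\otimes$ is then automatic. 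Should a fully self-contained argument be preferred, one can instead write the associator $(X\otimes Y)\otimes Z\xrightarrow{\sim}X\otimes(Y\otimes Z)$ and the braiding directly via these block permutations and check pentagon and hexagon by hand — there the pentagon, unwound to an equality of composites of block-reassociating permutations in $\Sigma_{i+j+k+l}$, is the most tedious step, but it still follows from the trivial pentagon in $\Sigma$.
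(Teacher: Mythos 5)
Your Day convolution argument is correct: the permutative structure on $\Sigma$, the coend collapse (emptiness of $\Sigma(\underline i\oplus\underline j,\underline m)$ unless $i+j=m$, and freeness of the right $\Sigma_i\times\Sigma_j$-action on $\Sigma_m$ in that case), and the appeal to Day's theorem for coherence all check out. The paper offers no proof of its own and simply cites \cite[Sec.\ 2.2]{HSS}, whose treatment is essentially this same Day convolution identification over the permutative groupoid $\Sigma$, so your approach matches the cited one.
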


It is well known that the symmetric sequence $S$, the \textit{sphere spectrum}, defined by $S_m=S^m$ is a symmetric monoid with respect to $\otimes$. General theory then implies that modules over $S$ themselves form a symmetric monoidal category.

\begin{definition}
The category $\mathsf{Sp}^{\Sigma}$ of \textsl{symmetric spectra} is the category of modules over $S$ in $\mathsf{S}_{\**}^{\Sigma}$.
The {\it smash product} $X\wedge Y$ of $X,Y \in \mathsf{Sp}^{\Sigma}$ is the coequalizer
\[X\otimes S \otimes Y \rightrightarrows X \otimes Y \rightarrow X \wedge Y.\]
\end{definition}

\begin{remark}
Throughout we will need to consider spectra $X$ such that each level $X_m$ is acted on by multiple symmetric groups (e.g. when $X=Y^{\wedge n}$). To avoid confusion, we will reserve the letter $m$ for the structure index of spectra.
\end{remark}

\begin{definition}
Let $G$ be a finite group. The category $(\mathsf{Sp}^\Sigma)^G$ of \textit{$G$-spectra} is the category of functors $G\to \mathsf{Sp}^\Sigma$.
\end{definition}

Just as for pointed simplicial sets, the smash product $X\wedge Y$ of $X,Y\in (\mathsf{Sp}^\Sigma)^G$ has a diagonal $G$-action. The following is immediate.

\begin{proposition}\label{SPECTRAADJUNCTION PROP}
Both $(\mathsf{Sp}^\Sigma,\wedge,S)$ and $((\mathsf{Sp}^{\Sigma})^G,\wedge,S)$ form closed symmetric monoidal categories.
Further, all functors in the following adjunctions are monoidal
\[S \otimes (\minus) \colon \mathsf{S}_{\**} \rightleftarrows  \mathsf{Sp}^{\Sigma} \colon (\minus)_0, \phantom{i}
  S \otimes (\minus) \colon \mathsf{S}_{\**}^G \rightleftarrows  (\mathsf{Sp}^{\Sigma})^G \colon (\minus)_0, \phantom{i}
 \mathsf{triv}\colon \mathsf{Sp}^\Sigma \rightleftarrows  (\mathsf{Sp}^{\Sigma})^G \colon (\minus)^G.\]
\end{proposition}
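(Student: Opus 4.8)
The statement to prove is Proposition~\ref{SPECTRAADJUNCTION PROP}, asserting that $(\mathsf{Sp}^\Sigma,\wedge,S)$ and $((\mathsf{Sp}^\Sigma)^G,\wedge,S)$ are closed symmetric monoidal and that the displayed adjunctions are monoidal. Here is how I would proceed.

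\textbf{Closed symmetric monoidal structure.} For $\mathsf{Sp}^\Sigma$ itself, this is the standard fact that modules over a commutative monoid $S$ in a closed symmetric monoidal category $(\mathsf{S}_\**^\Sigma,\otimes,\mathbbm 1)$ again form a closed symmetric monoidal category, with $\wedge$ the coequalizer defining the relative tensor $-\otimes_S-$ and internal hom given by the equalizer computing $S$-module maps inside the internal hom of $\mathsf{S}_\**^\Sigma$; I would cite \cite{HSS} (or the general monoid-module formalism) for this rather than reprove it. For the $G$-equivariant version, the cleanest route is to observe that $(\mathsf{Sp}^\Sigma)^G$ is the category of modules over the constant-diagram monoid $\mathsf{triv}(S)$ in the functor category $(\mathsf{S}_\**^\Sigma)^G = (\mathsf{S}_\**^G)^\Sigma$, which is closed symmetric monoidal by Proposition~\ref{GCLOSEDMON PROP} applied levelwise (the Day-type convolution $\otimes$ on symmetric sequences valued in the closed symmetric monoidal $\mathsf{S}_\**^G$). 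Then the same monoid--module formalism gives that $S$-modules in there — i.e.\ $(\mathsf{Sp}^\Sigma)^G$ — are closed symmetric monoidal, with smash product the coequalizer $X\otimes S\otimes Y\rightrightarrows X\otimes Y\to X\wedge Y$ formed in $(\mathsf{S}_\**^\Sigma)^G$, which since colimits in diagram categories are computed pointwise is exactly the underlying smash product equipped with the diagonal $G$-action. So no new computation is needed: everything is inherited levelwise/pointwise from the non-equivariant case together with Proposition~\ref{GCLOSEDMON PROP}.

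\textbf{The adjunctions.} The three pairs $S\otimes(-)\dashv(-)_0$ (non-equivariant), $S\otimes(-)\dashv(-)_0$ (equivariant), and $\mathsf{triv}\dashv(-)^G$ are all adjunctions for elementary reasons: the first two because $S\otimes(-)$ is the free $S$-module functor left adjoint to the forgetful functor to symmetric sequences followed by evaluation at $\underline 0$ — or more directly, $(-)_0$ has the left adjoint $S\otimes(\minus)$ by the universal property of free modules composed with the free-symmetric-sequence adjunction — and the third because for the finite group $G$, $\mathsf{triv}\dashv(-)^G$ restricts the $\mathsf{S}_\**$-level adjunction of Proposition~\ref{GCLOSEDMON PROP} to $S$-modules (a monoidal left adjoint preserves the monoid $S$ and hence lifts to module categories, with the lifted right adjoint still computed as ordinary $G$-fixed points). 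I would state each adjunction and give the unit/counit or the natural bijection in a line.

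\textbf{Monoidality.} This is the only part with any content, and even it is light. For a lax symmetric monoidal functor $F$ with structure maps $FX\wedge FY\to F(X\wedge Y)$ and $\mathbf 1\to F\mathbf 1$, I would check: (i) $\mathsf{triv}$ is strong monoidal because $\mathsf{triv}$ on $\mathsf{S}_\**$ is strong monoidal (Proposition~\ref{GCLOSEDMON PROP}) and smash products and units in $(\mathsf{Sp}^\Sigma)^G$ are computed with the diagonal/trivial action, so $\mathsf{triv}(X)\wedge\mathsf{triv}(Y)=\mathsf{triv}(X\wedge Y)$ on the nose and $\mathsf{triv}(S)=S$; (ii) $(-)^G$ is lax monoidal as the right adjoint of a strong monoidal left adjoint (this is the standard ``doctrinal adjunction'' fact — the lax structure on $(-)^G$ is mated to the strong structure on $\mathsf{triv}$), which again covers both the simplicial-set and spectrum levels; (iii) $S\otimes(\minus)$ is strong monoidal: this is the general statement that the free-module functor over a commutative monoid $S$ is strong symmetric monoidal from $(\mathsf{S}_\**,\wedge,S^0)$ (resp.\ its $G$-version) to $S$-modules with the relative smash, the iso being $(S\otimes A)\wedge_S(S\otimes B)\cong S\otimes(A\wedge B)$; and (iv) $(-)_0$ is lax monoidal as the right adjoint of the strong monoidal $S\otimes(\minus)$, or directly because $\mathbbm 1_0=S^0$ and there is an evident natural map $X_0\wedge Y_0\to(X\wedge Y)_0$. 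I would present (i)--(iv) compactly, citing the doctrinal-adjunction principle for the ``right adjoint of strong monoidal is lax monoidal'' steps.

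\textbf{Main obstacle.} There is no serious obstacle; the proposition is labelled ``immediate'' in the excerpt and the proof is a bookkeeping exercise. The one place to be slightly careful is making sure the equivariant smash product and internal hom are genuinely the underlying ones with diagonal $G$-action — i.e.\ that forming $S$-modules inside $(\mathsf{S}_\**^\Sigma)^G$ commutes with ``forget to $\mathsf{Sp}^\Sigma$'' — which follows because the relevant (co)limits (coequalizers for $\wedge$, equalizers for the internal hom) are computed pointwise in the $G$-diagram category; I would note this explicitly rather than leave it implicit. Everything else is either cited from \cite{HSS} and Proposition~\ref{GCLOSEDMON PROP} or is the formal doctrinal-adjunction yoga.
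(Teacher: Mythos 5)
Your proof is correct, and it is essentially the expected argument. The paper offers no proof of this proposition at all (it is labelled ``immediate'' just before the statement), so there is no competing approach to compare against; your write-up --- obtaining both closed symmetric monoidal structures from the monoid--module formalism over $S$ (pointwise in $G$ for the equivariant case, using Proposition~\ref{GCLOSEDMON PROP} levelwise), identifying the adjunctions as composites of standard degree-zero-inclusion/evaluation and free/forgetful pairs together with the lifted $\mathsf{triv}\dashv(-)^G$, and then deducing monoidality from strong monoidality of the left adjoints plus doctrinal adjunction for the right adjoints --- is precisely the standard bookkeeping the paper is implicitly relying on.
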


\begin{remark}\label{SPEC SIMP}
The theory of enriched categories (\cite[Chap. 3]{Ri14}) implies that $(\mathsf{Sp}^\Sigma)^G$ is enriched, tensored and cotensored over both $\mathsf{S}_{\**}$ and $\mathsf{Sp}^{\Sigma}$, and hence also simplicially enriched, tensored and cotensored.

\end{remark}

\subsection{$S$ stable and positive $S$ stable model structures on $\mathsf{Sp}^{\Sigma}$} \label{STABLEPOSSTABLE SEC}

\begin{definition}\label{SSPEC DEF}
The {\it $S$ stable model structure} (resp. {\it positive $S$ stable model structure}) on $\mathsf{Sp}^{\Sigma}$
is the cofibrantly generated model structure such that 
\begin{itemize}
\item the generating cofibrations are the maps 
\[ S \otimes \left( \Sigma_m/H \cdot (\partial \Delta^k_+ \to \Delta^k_+)\right)\]
for $m\geq 0$ and any $H \leq \Sigma_m$ (resp. $m\geq 1$ and any $H \leq \Sigma_m$);
\item weak equivalences are the stable equivalences of spectra.
\end{itemize}
\end{definition}

\begin{remark}
Our terminology follows \cite{HSS}, \cite{Sh04} rather than \cite{Sch97}, \cite{Ha09} which refer to $S$ cofibrations as ``flat cofibrations''. 
However, we make no explicit use of the maps referred to in \cite{HSS}, \cite{Sh04}, \cite{Sch97}, \cite{Ha09} simply as ``cofibrations'', even though our results also apply to those given that they are a subclass of $S$ cofibrations.
\end{remark}

\begin{remark}
The proof of Proposition \ref{LATCHMON PROP} shows that a $S$ cofibration $A \to B$ is also a positive $S$ cofibration iff $A_0 \xrightarrow{\simeq} B_0$ is an isomorphism. We hence use positivity as a hypothesis in our results only if necessary and never as a conclusion, leaving it to the curious reader to check by direct calculation if positivity conclusions hold.
\end{remark}

\subsection{Injective and projective model structures}\label{INJPROJ SEC}

\begin{definition}
Let $\C$ be a model category, $M$ a monad on $\C$ and $\mathsf{Alg}_{M}$ the category of algebras over $M$.

The {\it injective model structure} on $\mathsf{Alg}_{M}$, if it exists, has as cofibrations (resp. weak equivalences) the underlying cofibrations (resp. weak equivalences) in $\C$.

The {\it projective model structure} on $\mathsf{Alg}_{M}$, if it exists, has as fibrations (resp. weak equivalences) the underlying fibrations (resp. weak equivalences) in $\C$.
\end{definition}

\begin{remark}\label{NATOC REM}
Since most usual model structures are cofibrantly generated, it is often easier to build projective structures (cf. \cite[Lemma 2.3]{SS00}) than injective ones. 
\end{remark}


\begin{remark}\label{TWOMON REM}
When in the presence of two monads, building injective structures does not in general commute with building projective structures.

A key example is given by comparing what we call the 
$\Sigma_m$-inj $G$-proj model structure on 
$\mathsf{S}_{\**}^{G\times \Sigma_m}$, 
built as the $\Sigma_m$-injective structure over the $G$-projective structure over the standard model structure on $\mathsf{S}_{\**}$, with the 
$G$-proj $\Sigma_m$-inj model structure on
$\mathsf{S}_{\**}^{G\times \Sigma_m}$, which reverses the two constructions.

The former is shown to exist in Proposition \ref{NGSPAC PROP}, and examining the generating cofibrations, listed when proving Proposition \ref{GSPAC PROP}, yields that cofibrations are those monomorphisms $A\hookrightarrow B$ adding only $G$-free simplices. Conversely, the latter is built by replacing the condition $H\cap G\times \**=\**$ in Proposition \ref{GSPAC PROP} with $H\subset \** \times \Sigma_m$, so that cofibrations are those monomorphisms adding only simplices with such $H$ as isotropies. 

The need to distinguish between these two types of cofibration was first discovered by Pavlov and Scholbach and pointed to the author by Harper (see \cite{HaCo15} for a discussion). In short, the fact that for $X_m \in \mathsf{S}_{\**}^{\Sigma_m} \subset \mathsf{S}_{\**}^{\Sigma}$, $m \geq 1$, then $(X_m)^{\otimes n} \in \mathsf{S}_{\**}^{\Sigma_{m n} \times \Sigma_n} \subset \mathsf{S}_{\**}^{\Sigma \times \Sigma_n}$ is only ``$\Sigma_n$-proj'' cofibrant in the first sense (compare this with the proof of Theorem \ref{SIGMANPUSHPROD THM}) is the key motivation for the $S$ $\Sigma$-inj $\Sigma_n$-proj stable model structure on $(\mathsf{Sp}^{\Sigma})^{\Sigma_n}$ introduced in this paper.
\end{remark}


\begin{remark}
Remark \ref{NATOC REM} notwithstanding, we will often produce and iterate injective and projective structures. In fact, cofibrations in any of our model structures are obtained by iterating such constructions, and the interested reader will find we often choose names accordingly. As a typical example, the $S$ $\Sigma$-inj $G$-proj cofibrations in $(\mathsf{Sp}^{\Sigma})^G$ of Theorems \ref{SIGMANPUSHPROD THM} and \ref{BIQUILLEN THM} can be built by building a 
$S$-projective structure (abbreviated to $S$ following \cite{HSS}, \cite{Sh04}) over a $\Sigma$-injective structure over a $G$-projective structure over the standard model structure in $\mathsf{S}_{\**}^{\mathbb{N}}$.
\end{remark}

\subsection{Pushout product and left Quillen bifunctors}\label{PUSHPROD SECSEC}

\begin{definition}
Consider a bifunctor $\otimes$ from categories $\mathcal{C}$, $\mathcal{D}$ to a category $\mathcal{E}$, i.e., a functor of the form
\[\mathcal{C} \times \mathcal{D} \xrightarrow{\minus \otimes \minus} \mathcal{E}.\]
Given maps $c \xrightarrow{f} \bar{c}$ in $\mathcal{C}$ and $d \xrightarrow{g} \bar{d}$ in $\mathcal{D}$, we define their \textit{pushout product} $f \square g$ (sometimes denoted $f \square^{\otimes} g$ to avoid confusion) to be the induced map
\[c \otimes \bar{d} \underset{c \otimes d}{\amalg} \bar{c} \otimes d 
\xrightarrow{f \square g} \bar{c} \otimes \bar{d}.\]
\end{definition}
For model categories $\mathcal{C}$, $\mathcal{D}$, $\mathcal{E}$ one defines the following (cf. \cite[Def. 4.2.1]{Hov98}).

\begin{definition}\label{QUILLEN BIFUNCTOR DEF}
A bifunctor 
$\mathcal{C} \times \mathcal{D} \xrightarrow{\minus \otimes \minus} \mathcal{E}$
between model categories is called a \textit{left Quillen bifunctor} if
\begin{itemize}
 \item for $c \in \mathcal{C}$ (resp. $d \in \mathcal{D}$), 
the functors $c \otimes (\minus) \colon \mathcal{D} \to \mathcal{E}$ 
(resp. $(\minus) \otimes d \colon \mathcal{C} \to \mathcal{E}$) have right adjoints;
 \item $\otimes$ satisfies the \textit{pushout product axiom}: for $f$ a cofibration in $\mathcal{C}$ and $g$ a cofibration in $\mathcal{D}$, $f \square g$ is a cofibration in $\mathcal{E}$, which is trivial if $f$ or $g$ is. 
\end{itemize}
\end{definition}

\begin{remark}\label{PUSHPRODGEN REM}
If $\mathcal{C}$, $\mathcal{D}$ are cofibrantly generated, a standard ``retract of a transfinite composition of pushouts'' argument (cf. \cite[Lemma 4.2.4]{Hov98}) shows that it suffices to check the pushout product axiom for generating (trivial) cofibrations.
\end{remark}

\begin{remark}\label{PUSHPRODADV REM}
It is immediate that if $\otimes$ is a left Quillen bifunctor then both:
\begin{enumerate*}
\item[(i)] $c \otimes (\minus)$, $(\minus)\otimes d$ are left Quillen for cofibrant $c \in \C$, $d \in \D$;
\item[(ii)] for $f$ in $\C$ and $g$ in $\D$ cofibrations between cofibrant objects, then so is $f \otimes g$.
\end{enumerate*}

However, as Remark \ref{PUSHPRODGEN REM} shows, it is technically preferable to verify the pushout product axiom rather than (i) or (ii). Indeed, that axiom is required to argue (i) via a filtration of $c$, $d$ and the analogue of Remark \ref{PUSHPRODGEN REM} fails for (ii).
\end{remark}

\section{Model structures on $G$-spectra} \label{SINJPROJMOD SEC}

In this section we build the model structures featured in Theorems \ref{SIGMANPUSHPROD THM} and \ref{BIQUILLEN THM}. 

Sections \ref{NEC SPACES} and \ref{DEFINITION} build the $S$ $\Sigma$-inj $G$-proj model structure on $(\mathsf{Sp}^{\Sigma})^G$, the new ``lax $G$-projective'' structure capturing (for $G=\Sigma_n$) the $\Sigma_n$-cofibrancy of $X^{\wedge n}$ when $X$ is positive cofibrant.
We closely follow the four model structures approach of \cite{Sh04}, Section \ref{NEC SPACES} dealing with $\mathsf{S}_{\**}^{G \times \Sigma_m}$ and Section \ref{DEFINITION} with $(\mathsf{Sp}^{\Sigma})^G$.

Section \ref{MONOS SEC} builds the auxiliary monomorphism stable and $S$ stable model structures on $(\mathsf{Sp}^{\Sigma})^G$ that appear in Theorem \ref{BIQUILLEN THM} and which, while technically novel, are just injective versions of the eponymous structures on $\mathsf{Sp}^{\Sigma}$ (cf. \cite{HSS}).

\subsection{$\Sigma$-inj $G$-proj model structure on $\mathsf{S}_{*}^{G \times \Sigma_m}$}\label{NEC SPACES}

The following is well known. Remark \ref{TWOMON REM} discusses the condition on $H$.

\begin{proposition}\label{GSPAC PROP}
For $G$ any finite group there exists a cofibrantly generated mo\-del structure on $\mathsf{S}_{\**}^{G\times \Sigma_m}$ such that weak equivalences (resp. fibrations) are the maps $A\to B$ such that $A^H\to B^H$ is a weak equivalence (resp. fibration) in $\mathsf{S}_{\**}$ for any $H \leq G \times \Sigma_m$ satisfying $H \cap G \times \** =\**$. 
Further, this is a left proper cellular simplicial model category. 
\end{proposition}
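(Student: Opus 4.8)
The plan is to realize this model structure as a transferred (cofibrantly generated) structure along the free-forgetful adjunction from $\mathsf{S}_{\**}^{\mathbb{N}}$ (equivalently, from products of $\mathsf{S}_{\**}$), and then verify the standard left-properness, cellularity and simplicial enrichment properties by hand using the explicit generating sets. First I would fix the collection $\mathcal{F}$ of subgroups $H \leq G \times \Sigma_m$ with $H \cap (G \times \**) = \**$; note $\mathcal{F}$ is closed under conjugation and under passing to subgroups, so the orbits $(G\times\Sigma_m)/H$ for $H \in \mathcal{F}$ are exactly the ``$G$-free'' orbits. I would then take as generating cofibrations the maps $(G\times\Sigma_m)/H \cdot (\partial\Delta^k_+ \to \Delta^k_+)$ and as generating trivial cofibrations the maps $(G\times\Sigma_m)/H \cdot (\Lambda^k_l{}_+ \to \Delta^k_+)$, ranging over $H \in \mathcal{F}$, $k \geq 0$ and $0\leq l\leq k$.

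The core step is the transfer argument: the desired weak equivalences and fibrations are precisely those maps that become weak equivalences and fibrations after applying each right adjoint $(\minus)^H = \Map((G\times\Sigma_m)/H, \minus)$ for $H\in\mathcal{F}$, i.e. after applying the right adjoint to the ``free on $(G\times\Sigma_m)/H$'' functor. One checks that $I$-injectives and $J$-injectives defined from the generating sets above coincide with these, and that the smallness hypotheses hold since everything is built from simplicial sets, which are small. I would then invoke Kan's recognition theorem (as in \cite[Thm 2.1.19]{Hov98} or the transfer criterion via path objects); the one genuine verification is the acyclicity condition — that relative $J$-cell complexes are weak equivalences. Here I would argue that for $H\in\mathcal{F}$ the functor $(\minus)^K$ for any $K\in\mathcal{F}$ sends a pushout of $(G\times\Sigma_m)/H\cdot(\Lambda^k_l{}_+\to\Delta^k_+)$ to a pushout of a coproduct of copies of $\Lambda^k_l{}_+\to\Delta^k_+$ (using that orbit fixed points of free orbits are free unpointed $G$-sets, hence disjoint unions of points), which is a trivial cofibration in $\mathsf{S}_{\**}$; transfinite composition and retracts preserve this, giving the acyclicity condition.

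For the remaining claims: left properness follows because cofibrations are in particular levelwise monomorphisms of simplicial sets and $(\minus)^H$ preserves pushouts along such monomorphisms (Proposition \ref{FIXEDPUSH PROP} applied $H$-fixed-pointwise) while preserving and reflecting the weak equivalences; cellularity reduces to checking that the domains and codomains of $I$ are compact, that $I$-cofibrations are effective monomorphisms, and that the generating cofibrations have the relevant smallness — all inherited from $\mathsf{S}_{\**}$ levelwise since the orbit-tensoring is a coproduct; and the simplicial model category axiom (the pushout-product / SM7 axiom with $\mathsf{S}_{\**}$) follows from Remark \ref{PUSHPRODGEN REM} by checking it on generators, where $\bigl((G\times\Sigma_m)/H\cdot(\partial\Delta^k_+\to\Delta^k_+)\bigr)\,\square\,(\partial\Delta^j_+\to\Delta^j_+)$ is again a free-orbit cell (the isotropy is unchanged by smashing with a $G$-trivial simplicial set) and is a trivial such cell when one factor is a horn inclusion, by the corresponding fact in $\mathsf{S}_{\**}$. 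I expect the main obstacle to be the acyclicity condition in the transfer argument — specifically, making precise that fixed points of free orbits behave well enough under the relevant pushouts and transfinite compositions — though since this is asserted to be ``well known'' I anticipate it can be dispatched by the standard orbit-decomposition bookkeeping sketched above; everything else is routine verification inherited levelwise from pointed simplicial sets.
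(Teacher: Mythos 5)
Your proposal is correct and follows essentially the same route as the paper: same generating sets $I$, $J$ obtained by inducing from $\mathsf{S}_{\**}$ along the $G$-free orbits, the same invocation of \cite[Thm.~2.1.19]{Hov98}, and the same verification of the acyclicity condition by checking that $(\minus)^K$ turns $J$-cell maps into trivial cofibrations (on generators by inspecting the discrete orbit $((G\times\Sigma_m)/H)^K$, on pushouts via Proposition~\ref{FIXEDPUSH PROP}, on transfinite compositions since $(\minus)^K$ commutes with them). One wording nit: you say ``orbit fixed points of free orbits are free unpointed $G$-sets'' — after applying $(\minus)^K$ nothing is acting anymore; the relevant point is simply that $((G\times\Sigma_m)/H)^K$ is a finite discrete set, so smashing with it commutes with the relevant colimits. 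Likewise in the SM7 check, the pushout product of two generating cofibrations of $\mathsf{S}_{\**}$ is a monomorphism (a relative cell complex) rather than a single cell, but tensoring it with a $G$-free orbit still yields a cofibration, so the conclusion stands.
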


\begin{proof}
We apply the usual small object argument in \cite[Thm. 2.1.19]{Hov98} with the generating sets $I$, $J$ built from those in $\mathsf{S}_{\**}$ by inducing along each $H$. Explicitly
\[I=\underset{ H\cap G \times \**=\**}{\bigcup}\{
  (G\times \Sigma_m)/H \cdot (\partial \Delta^k_+ \to \Delta^k_+) \},\phantom{|}
  J=\underset{ H\cap G \times \**=\**}{\bigcup}\{
  (G\times \Sigma_m)/H \cdot(\Lambda^k_{l+} \to \Delta^k_+) \}.\]
Only the claim that maps in $J$-cell are weak equivalences is non obvious. 
This follows for maps in $J$ by direct calculation, for pushouts of those by Proposition \ref{FIXEDPUSH PROP} and for transfinite compositions since those commute with $(\minus)^H$.
Left properness, cellularity, and the simplicial model structure axioms are clear.
\end{proof}

\begin{remark}
Analogous model structures can be built using more general conditions on $H$, such as families of subgroups.
\end{remark}

\begin{proposition}\label{NGSPAC PROP}
For $G$ any finite group there exists a cofibrantly generated mo\-del structure on $\mathsf{S}_{\**}^{G\times \Sigma_m}$, which we call the \textbf{$\Sigma_m$-inj $G$-proj model structure}, where 
\begin{itemize}
\item cofibrations are as in the model structure in Proposition \ref{GSPAC PROP};
\item weak equivalences are the underlying weak equivalences in $\mathsf{S}_{\**}$.
\end{itemize}
Further, this is a left proper cellular simplicial model category. 
\end{proposition}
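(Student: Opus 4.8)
The plan is to build the $\Sigma_m$-inj $G$-proj model structure as a left Bousfield-type coarsening of the model structure of Proposition \ref{GSPAC PROP}: we keep exactly the same generating cofibrations $I$ (the $(G\times\Sigma_m)/H\cdot(\partial\Delta^k_+\to\Delta^k_+)$ with $H\cap G\times\**=\**$), but enlarge the class of generating trivial cofibrations so that the new weak equivalences become the underlying weak equivalences in $\mathsf{S}_{\**}$ (forgetting all group actions), rather than the $H$-fixed-point equivalences used before. First I would set $J'$ to be the union of $I$-style induced horn inclusions $(G\times\Sigma_m)/H\cdot(\Lambda^k_{l+}\to\Delta^k_+)$ over the same $H$ together with enough maps to detect underlying trivial fibrations — concretely, following the standard ``mixing'' recipe one takes $J'$ to consist of the maps $f\square^{\wedge} i$ where $f$ ranges over $I$ and $i$ over the generating trivial cofibrations of $\mathsf{S}_{\**}$, or equivalently one appeals to the general transfer/localization machinery: the identity functor from the model structure of Proposition \ref{GSPAC PROP} to the (to-be-constructed) underlying-equivalence structure on the same cofibrations. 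The cleanest route is Hirschhorn's / Hovey's recognition theorem (\cite[Thm. 2.1.19]{Hov98}): one must verify the smallness hypotheses (automatic, as $\mathsf{S}_{\**}^{G\times\Sigma_m}$ is a locally presentable simplicial category), that $J'$-cell consists of underlying weak equivalences and cofibrations, and that the $I$-injectives coincide with the maps that are both underlying weak equivalences and $J'$-injective.

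The key steps, in order, are: (1) Fix the generating sets $I$ as in Proposition \ref{GSPAC PROP} and define $J'$ as above. (2) Identify $J'$-injective maps: show a map has the RLP with respect to $J'$ exactly when it is an underlying Kan fibration and an underlying weak equivalence — here the induction-adjunction $(G\times\Sigma_m)/H\cdot(\minus)\dashv (\minus)^H$ converts lifting against the induced maps into $H$-fixed-point lifting, and since every subgroup $H'\leq G\times\Sigma_m$ with $H'\cap G\times\**=\**$ maps isomorphically to a subgroup of $\Sigma_m$ one checks that collectively these fixed points detect underlying acyclic Kan fibrations. (3) Show $J'$-cell $\subseteq$ (underlying weak equivalences): for the generators this is the standard anodyne computation, for pushouts one invokes Proposition \ref{FIXEDPUSH PROP} (or simply that the forgetful functor to $\mathsf{S}_{\**}$ preserves pushouts and underlying weak equivalences are preserved under pushout along cofibrations, since everything in sight is a monomorphism of simplicial sets), and for transfinite composites the usual colimit argument. (4) Show $I$-injective maps are precisely the $J'$-injective underlying weak equivalences — one inclusion is immediate since $J'$-cell $\subseteq$ $I$-cofibrations and $J'$-injectives that are underlying equivalences are acyclic fibrations hence $I$-injective by Proposition \ref{GSPAC PROP}'s analysis; the other follows because $I$-injectives are already underlying acyclic fibrations. (5) Invoke \cite[Thm. 2.1.19]{Hov98} to conclude the model structure exists with the stated (co)fibrations and weak equivalences. (6) Left properness, cellularity, and the simplicial axioms transfer verbatim: cofibrations are unchanged from Proposition \ref{GSPAC PROP} so pushouts along them still behave well, weak equivalences are underlying ones hence closed under the relevant base change since all cofibrations are levelwise monomorphisms of simplicial sets, and the $\mathsf{S}_{\**}$-tensoring is the same.

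The main obstacle I expect is step (2)/(4): verifying that the chosen $J'$ genuinely ``sees'' underlying weak equivalences, i.e.\ that $I$-injective $=$ (underlying acyclic fibration) and that this equals $J'$-injective $\cap$ (underlying equivalence). The subtlety is that $I$ only involves subgroups $H$ with $H\cap G\times\**=\**$, so a priori lifting against $I$ only controls the fixed points for those $H$; one must argue that these nonetheless force the map to be an underlying acyclic Kan fibration — which works because the trivial subgroup is among them, so the underlying simplicial map already has the RLP against $\partial\Delta^k_+\to\Delta^k_+$. Once that is pinned down the rest is bookkeeping, and it is essentially identical to the argument already given for Proposition \ref{GSPAC PROP}, only with the replacement of ``$H$-fixed-point equivalence'' by ``underlying equivalence'' in the weak equivalence class.
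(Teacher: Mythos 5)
Your approach diverges from the paper's, and as written it has a genuine gap at exactly the step you flagged as the main obstacle. The paper does not attempt to guess generating trivial cofibrations and then verify Hovey's recognition theorem; instead it performs a left Bousfield localization of Proposition~\ref{GSPAC PROP} with respect to the set
\[\mathcal{S}=\left\{G \times \Sigma_m\cdot_H EH_+ \to \left((G\times \Sigma_m)/H\right)_+ : H \cap G \times \** = \**\right\}\]
and then checks, via the forget--free adjunction and the characterization of $\mathcal{S}$-local objects as those $X$ with $X^H\xrightarrow{\sim} X^{hH}$, that the $\mathcal{S}$-equivalences are exactly the underlying weak equivalences. The machinery of \cite[Thm.~4.1.1]{Hi03} then manufactures a valid set of generating trivial cofibrations automatically. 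You mention this localization route ``or equivalently,'' but it is not equivalent to your concrete proposal; it is the option that actually works.

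The gap is in steps (2) and (4). Your proposed $J'$ (either the induced horn inclusions $(G\times\Sigma_m)/H\cdot(\Lambda^k_{l+}\to\Delta^k_+)$ for cofree $H$, or their pushout-products $f\,\square^{\wedge}i$) consists entirely of maps of the form $(G\times\Sigma_m)/H\cdot h$ with $h$ a trivial cofibration of $\mathsf{S}_{\**}$. Hence, by adjunction, $J'\textrm{-inj}$ is \emph{exactly} the class of maps $p$ with $p^H$ a Kan fibration for all cofree $H$ --- i.e.\ the fibrations of Proposition~\ref{GSPAC PROP}. It is \emph{not} the class ``underlying Kan fibration that is an underlying weak equivalence,'' so step (2) is already off. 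More seriously, step (4) asserts that a $J'$-injective map that is an underlying weak equivalence is $I$-injective, i.e.\ that an old fibration which is an underlying equivalence is an old trivial fibration. This is false as soon as $m\geq 2$: the desired model structure has strictly more weak equivalences than Proposition~\ref{GSPAC PROP}, hence strictly fewer fibrations, so \emph{no} choice of $J'$ that produces the old fibrations can satisfy Hovey's condition~6. Concretely, take $G=\**$, so Proposition~\ref{GSPAC PROP} gives the genuine $\Sigma_m$-structure. Let $X$ be a genuinely fibrant replacement of the cofiber $\widetilde{E\Sigma_m}$ of $E\Sigma_{m+}\to S^0$. Then $X\to \**$ is $J'$-injective (an old fibration) and an underlying weak equivalence (since $X^{\**}\simeq\**$), but $X^{\Sigma_m}\simeq S^0\not\simeq\**$, so $X\to\**$ is not $I$-injective. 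Symmetrically, $\**\to X$ is an $I$-cofibration that is an underlying weak equivalence but not a genuine one, so the other alternative of condition~6 ($W\cap I\textrm{-cof}\subseteq J'\textrm{-cof}$) fails as well. The fix is precisely what the paper does: one must add to the generating trivial cofibrations maps that encode $\mathcal{S}$-locality (the $EH_+$-comparison maps), which the Bousfield localization theorem packages for you without requiring an explicit description of $J'$.

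Once one accepts that localization is the route, the remaining pieces of your proposal (smallness, closure of the cofibrations, left properness, cellularity, and the simplicial axioms being inherited) are fine and are essentially the same bookkeeping the paper carries out.
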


\begin{proof}
This follows by left Bousfield localization using \cite[Thm. 4.1.1]{Hi03} with respect to a suitably chosen set of maps $\mathcal{S}$. We set (cf. \cite[Prop. 1.3]{Sh04})
\[\mathcal{S}=\left\{G \times \Sigma_m\cdot_H EH_+ \to \left((G\times \Sigma_m)/H\right)_+ : H \cap G \times \** = \**\right\}\]
where $EH$ denotes a simplicial classifying space for $H$. It remains to show that the $\mathcal{S}$-equivalences are precisely the underlying weak equivalences.

Since the maps in $\mathcal{S}$ are underlying weak equivalences between cofibrant objects, \cite[Prop. 3.3.18(1)]{Hi03} applied to the forget-free power adjunction
\[\fgt \colon 
\mathsf{S}_{\**}^{G\times \Sigma_m} \rightleftarrows \mathsf{S}_{\**}\colon (\minus)^{\times (G\times \Sigma_m)}\]
yields that all $\mathcal{S}$-local equivalences are underlying weak equivalences. 

To prove the converse it suffices to show that between $\mathcal{S}$-local objects any levelwise weak equivalence is a weak equivalence in the sense of Proposition \ref{GSPAC PROP}. Since a fibrant object $X$ is $\mathcal{S}$-local precisely if one has induced weak equivalences 
\[X^H=\Map\left(\left(G\times \Sigma_m/H\right)_+,X\right)^{G \times \Sigma_m}
\xrightarrow{\sim} 
  \Map\left(G\times\Sigma_m\cdot_H EH_+,X\right)^{G \times \Sigma_m}=X^{h H},\]
the result follows due to $(\minus)^{h H}$ preserving underlying weak equivalences.
\end{proof}

\subsection{Existence of the $S$ $\Sigma$-inj $G$-proj stable model structure}\label{DEFINITION}

As in \cite[Prop. 2.2]{Sh04}, Proposition \ref{NGSPAC PROP} induces a level model structure on $(\mathsf{Sp}^{\Sigma})^G$.

\begin{proposition}\label{LGSPEC PROP}
For $G$ any finite group there exists a cofibrantly generated mo\-del structure on $(\mathsf{Sp}^{\Sigma})^G$, which we call the \textbf{$S$ $\Sigma$-inj $G$-proj level model structure}, where 
\begin{itemize}
\item weak equivalences are the maps $X\to Y$ such that $X_m\to Y_m, m\geq 0$ are underlying weak equivalences in $\mathsf{S}_{\**}$.
\item fibrations are the maps $X\to Y$ such that $X_m\to Y_m, m\geq 0$ are underlying fibrations in the $\Sigma_m$-inj $G$-proj model structure on $\mathsf{S}_{\**}^{G\times \Sigma_m}$.
\end{itemize}

Further, this is a left proper cellular simplicial model category. 
\end{proposition}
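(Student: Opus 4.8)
The plan is to build the $S$ $\Sigma$-inj $G$-proj level model structure on $(\mathsf{Sp}^{\Sigma})^G$ by transporting the $\Sigma_m$-inj $G$-proj model structures on $\mathsf{S}_{\**}^{G\times\Sigma_m}$ levelwise, exactly mimicking \cite[Prop. 2.2]{Sh04}. Concretely, note that a $G$-spectrum $X$ amounts to a sequence $X_m\in \mathsf{S}_{\**}^{G\times\Sigma_m}$, $m\geq 0$, equipped with $S$-module structure maps, i.e.\ $(\mathsf{Sp}^{\Sigma})^G$ is the category of $S$-modules inside the product category $\prod_{m\geq 0}\mathsf{S}_{\**}^{G\times\Sigma_m}$. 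The free-forgetful adjunction $S\otimes(\minus)\colon \prod_m \mathsf{S}_{\**}^{G\times\Sigma_m}\rightleftarrows (\mathsf{Sp}^{\Sigma})^G$ then lets me apply the standard ``lifting a cofibrantly generated model structure along a left adjoint'' machinery (\cite[Thm. 2.1.19]{Hov98}, as used in \cite[Lemma 2.3]{SS00}). Equip the product $\prod_m \mathsf{S}_{\**}^{G\times\Sigma_m}$ with the product of the $\Sigma_m$-inj $G$-proj model structures of Proposition \ref{NGSPAC PROP}; its generating (trivial) cofibrations are the images of the generating (trivial) cofibrations in each factor. Define $I^{\mathsf{lev}}$, $J^{\mathsf{lev}}$ in $(\mathsf{Sp}^{\Sigma})^G$ to be the sets obtained by applying $S\otimes(\minus)$ to those, and run the small object argument with these sets.

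The verification splits into the usual checks. Smallness of the domains of $I^{\mathsf{lev}}$, $J^{\mathsf{lev}}$ is automatic since $S\otimes(\minus)$ preserves it and the domains upstairs are small. The characterization of fibrations and trivial fibrations via right lifting against $J^{\mathsf{lev}}$, $I^{\mathsf{lev}}$ follows by adjunction from the corresponding characterizations in each $\mathsf{S}_{\**}^{G\times\Sigma_m}$: a map $X\to Y$ has the right lifting property against $J^{\mathsf{lev}}$ iff each $X_m\to Y_m$ is a fibration in the $\Sigma_m$-inj $G$-proj structure, which matches the stated description of fibrations, and similarly against $I^{\mathsf{lev}}$ one recovers ``levelwise trivial fibration''. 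The one genuinely non-formal point — and the main obstacle — is to show that relative $J^{\mathsf{lev}}$-cell complexes are weak equivalences, i.e.\ levelwise underlying weak equivalences in $\mathsf{S}_{\**}$. Here the subtlety is that $S\otimes(\minus)$ is not levelwise: the free functor mixes levels, so a pushout of a map $S\otimes(G\times\Sigma_m\cdot_H\Lambda^k_{l+}\to \Delta^k_+)$ changes $X_{m'}$ for all $m'\geq m$. However, because $\otimes$ with $S$ is built from smash products with spheres and induction/coproduct functors, in each level $m'$ the pushout is (up to the $G\times\Sigma_{m'}$-action bookkeeping) a pushout along a map induced up from a $J$-type map of $\mathsf{S}_{\**}^{G\times\Sigma_{m'-m}}$-objects smashed with a sphere; since induction and $(\minus)\wedge S^{m'-m}$ preserve the relevant trivial cofibrations and Proposition \ref{FIXEDPUSH PROP} handles the fixed-point behaviour of pushouts along monomorphisms, each such pushout is a levelwise underlying weak equivalence, and transfinite composites of these are too (as filtered colimits in $\mathsf{S}_{\**}$ preserve weak equivalences between the relevant cofibrant diagrams). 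This is precisely the argument of \cite[Prop. 2.2]{Sh04} and \cite{HSS}, and I would cite those for the detailed computation rather than reproduce it.

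Finally, the ``left proper, cellular, simplicial'' addendum: left properness follows because underlying-weak-equivalence is preserved by pushout along the (levelwise, hence underlying) cofibrations — again reducible to left properness of $\mathsf{S}_{\**}$ via Proposition \ref{FIXEDPUSH PROP} and the levelwise analysis above; cellularity holds since the generating cofibrations $I^{\mathsf{lev}}$ are effective monomorphisms with small domains and codomains (inherited from $\mathsf{S}_{\**}$ through $S\otimes(\minus)$); and the simplicial structure of Remark \ref{SPEC SIMP}, namely the tensoring and cotensoring of $(\mathsf{Sp}^{\Sigma})^G$ over $\mathsf{S}_{\**}$, satisfies the pushout-product (SM7) axiom because it does so levelwise in each $\mathsf{S}_{\**}^{G\times\Sigma_m}$ by Proposition \ref{NGSPAC PROP}. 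All three are ``clear'' once the model structure is in hand, so I would dispatch them in a sentence apiece.
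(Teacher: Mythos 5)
Your proposal is correct and follows essentially the same route as the paper's proof: both view $(\mathsf{Sp}^{\Sigma})^G$ as $S$-modules in $\mathsf{S}_{\**}^{G\times\Sigma}=\prod_m\mathsf{S}_{\**}^{G\times\Sigma_m}$, take $I=\bigcup_m S\otimes I_m$ and $J=\bigcup_m S\otimes J_m$ with $I_m,J_m$ the generating (trivial) cofibrations of the $\Sigma_m$-inj $G$-proj structures from Proposition \ref{NGSPAC PROP}, and then defer the non-formal ``relative $J$-cell complexes are weak equivalences'' verification to the argument of \cite[Prop.~2.2]{Sh04} (ultimately \cite{HSS}). The paper states all of this in two sentences and declares the addenda straightforward, so your additional sketch of how the levelwise analysis of $S\otimes(\minus)$ reduces to trivial cofibrations in $\mathsf{S}_{\**}$ is extra exposition rather than a different method.
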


\begin{proof}
Let $I_m$ (resp. $J_m$) denote the sets of generating (resp. trivial) cofibrations for each $\Sigma_m$-inj $G$-proj model structure regarded as maps in $\mathsf{S}_{\**}^{G \times \Sigma}$. The proof of the existence of the model structure follows exactly as in \cite[Prop. 2.2]{Sh04} by setting $I=\bigcup_{m \geq 0} S \otimes I_m$ (resp. $J=\bigcup_{m \geq 0} S \otimes J_m$) as the set of generating (resp. trivial) cofibrations in $(\mathsf{Sp}^{\Sigma})^G$.
The claims of left properness, cellularity, and the simplicial model structure axioms are again straightforward.
\end{proof}

\begin{remark}\label{SIGMAGENCOF REM}
Analyzing the proofs of Propositions \ref{GSPAC PROP} and \ref{LGSPEC PROP} yields an explicit description of the generating cofibrations in Proposition \ref{LGSPEC PROP} (and Theorem \ref{SGSPEC THM})
\[I = \left\{ 
    S \otimes \left( (G\times \Sigma_m)/H \cdot 
    \left( \partial \Delta^k_+ \to \Delta^k_+  \right) \right) :
     m\geq 0, H \cap G\times \**=\**
\right\}.\]
\end{remark}

\begin{theorem}\label{SGSPEC THM}
For $G$ any finite group there exists a cofibrantly generated model structure on $(\mathsf{Sp}^{\Sigma})^G$, which we call the \textbf{$S$ $\Sigma$-inj $G$-proj stable model structure}, where 
\begin{itemize}
\item cofibrations are as in the model structure in Proposition \ref{LGSPEC PROP};
\item weak equivalences are the underlying stable equivalences in $\mathsf{Sp}^{\Sigma}$.
\end{itemize}
Further, this is a left proper cellular simplicial model category. 
\end{theorem}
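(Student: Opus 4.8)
The plan is to realize this structure as a left Bousfield localization of the $S$ $\Sigma$-inj $G$-proj \emph{level} model structure of Proposition \ref{LGSPEC PROP}, mirroring the passage from level to stable model structures in \cite{Sh04}. That level structure is left proper, cellular and simplicial, so \cite[Thm.~4.1.1]{Hi03} shows that for any set $\mathcal{S}$ of maps one obtains a left proper cellular simplicial model structure on $(\mathsf{Sp}^{\Sigma})^G$ with the \emph{same cofibrations} and with the $\mathcal{S}$-local equivalences as weak equivalences. Thus every formal property asserted in the theorem is automatic, and the entire content of the proof is to choose $\mathcal{S}$ so that the $\mathcal{S}$-local equivalences are precisely the underlying stable equivalences of $\mathsf{Sp}^{\Sigma}$.

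For $\mathcal{S}$ I would take the free-$G$ functor $G_+ \wedge (\minus)$ applied to the usual stabilization maps of symmetric spectra (cf.~\cite{HSS}) --- equivalently, for present purposes, one may induce those maps up along the subgroups $H \leq G \times \Sigma_m$ with $H \cap G \times \** = \**$, in the spirit of Remark \ref{SIGMAGENCOF REM}. In either description $\mathcal{S}$ consists of cofibrations between cofibrant objects of the level structure, and the forgetful functor $\fgt$ carries $\mathcal{S}$ to coproducts of stabilization maps of $\mathsf{Sp}^{\Sigma}$, hence to stable equivalences between $S$-cofibrant spectra.

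The one substantive step is the identification of the weak equivalences, which I would route through $\fgt \colon (\mathsf{Sp}^{\Sigma})^G \to \mathsf{Sp}^{\Sigma}$. The key preliminary observation is that $\fgt$ is \emph{both} a left and a right Quillen functor between the level structures, $\mathsf{Sp}^{\Sigma}$ carrying its level $S$ structure: it is left Quillen since the generating cofibrations of Remark \ref{SIGMAGENCOF REM} forget to $S$-cofibrations (the underlying $\Sigma_m$-set of $(G \times \Sigma_m)/H$ being a disjoint union of orbits $\Sigma_m/H'$) and it evidently preserves underlying level equivalences; it is right Quillen since a levelwise $\Sigma_m$-inj $G$-proj fibration restricts, at each $H \leq \Sigma_m$ (viewed inside $G \times \Sigma_m$), to a Kan fibration on $H$-fixed points, which is exactly the $S$-level fibration condition. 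Consequently one has a chain of Quillen adjunctions $G_+ \wedge (\minus) \dashv \fgt \dashv (\minus)^{\times G}$, and because $\fgt$ sends $\mathcal{S}$ into the stable equivalences, all three descend to Quillen adjunctions once $\mathsf{Sp}^{\Sigma}$ is given its $S$ stable structure and $(\mathsf{Sp}^{\Sigma})^G$ the localized structure (\cite[Sec.~3.3]{Hi03}). From this: (i) $\fgt$, being left Quillen for the localized structures, sends $\mathcal{S}$-local equivalences to stable equivalences, so every $\mathcal{S}$-local equivalence is an underlying stable equivalence; (ii) conversely, since $G_+ \wedge (\minus)$ is left Quillen, a level-fibrant object $X$ is $\mathcal{S}$-local exactly when $\fgt X$ is stably fibrant in $\mathsf{Sp}^{\Sigma}$, i.e.\ a level $S$-fibrant $\Omega$-spectrum --- so given an underlying stable equivalence $f$ one takes a fibrant replacement $\hat{f}$ of $f$ in the localized structure, notes by (i) that $\fgt\hat{f}$ is a stable equivalence between $\Omega$-spectra, and invokes the non-equivariant fact (\cite{HSS}) that such a map is a level equivalence; hence $\hat{f}$ is a level equivalence and $f$ is an $\mathcal{S}$-local equivalence.

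I expect the genuine obstacle to be precisely part (ii): that underlying stable equivalences are $\mathcal{S}$-local equivalences is not a formal consequence of the localization machinery but depends on the structure theory of symmetric spectra, namely the identification of stably fibrant objects with $\Omega$-spectra and the fact that a stable equivalence between $\Omega$-spectra is a level equivalence. Since every $G$-equivariant ingredient above is ``underlying,'' this step reduces to the non-equivariant argument exactly as in \cite{Sh04}; the only genuinely new bookkeeping is the verification that $\fgt$ is simultaneously left and right Quillen for the level $\Sigma$-inj $G$-proj structure, which follows at once from the explicit generating cofibrations of Remark \ref{SIGMAGENCOF REM} and the fibration description of Proposition \ref{LGSPEC PROP}.
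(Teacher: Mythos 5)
Your proof is correct and reaches the stated conclusion, but the key step --- identifying the $\mathcal{S}$-local equivalences with the underlying stable equivalences --- is routed differently from the paper's proof, and the comparison is worth spelling out.

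Both arguments apply \cite[Thm.~4.1.1]{Hi03} to the level structure of Proposition \ref{LGSPEC PROP} with the same localizing set $\mathcal{S}_G=G\cdot\mathcal{S}_{\**}$, so all the formal properties (left properness, cellularity, simpliciality, preservation of cofibrations) come for free; the only content is the identification of the local equivalences, and both arguments reduce this to the $G=\**$ case via the forgetful functor. Where you diverge is in how that reduction is carried out. The paper introduces an auxiliary $S$ $G$-proj $\Sigma$-inj \emph{level} structure, shows its localization at $\mathcal{S}_G$ coincides with the already-known $G$-projective stable structure (this step is formal because $\mathcal{S}_G$ is $G$-induced), and then transports the description of the local equivalences to the $\Sigma$-inj $G$-proj side through the identity Quillen equivalence between the two level structures, using that one can compute homotopy mapping spaces with cofibrant replacements on the $G$-proj $\Sigma$-inj side and fibrant replacements on the $\Sigma$-inj $G$-proj side. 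Your argument instead observes directly that $\fgt$ is simultaneously left \emph{and} right Quillen for the level structures (sandwiched in the chain $G_+\wedge(\minus)\dashv\fgt\dashv(\minus)^{\times G}$), so that a level-fibrant $\mathcal{S}_G$-local $G$-spectrum is precisely one whose underlying spectrum is a level-fibrant $\Omega$-spectrum; you then close the 2-out-of-3 loop using the non-equivariant fact that a stable equivalence of $\Omega$-spectra is a level equivalence. Your route is more hands-on and self-contained --- it re-runs the Shipley/HSS argument with the equivariant bookkeeping inserted --- at the cost of opening up the structure theory of stably fibrant objects, whereas the paper's route treats the $G=\**$ stable structure strictly as a black box and pays for that with the extra scaffolding of the parallel $G$-proj $\Sigma$-inj structure. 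Both are valid; in fact your observation that $\fgt$ is two-sided Quillen for these level structures is a clean way to organize the reduction and could be stated as a lemma in its own right. Two small points of precision: (i) your step~(i) should be phrased via cofibrant replacement (which is a level, hence preserved, equivalence) and Ken Brown, since left Quillen functors do not preserve arbitrary weak equivalences; (ii) the maps in $\mathcal{S}_G$ as usually written are not literal cofibrations, so ``cofibrations between cofibrant objects'' should be read as ``after the standard mapping-cylinder replacement,'' exactly as in the non-equivariant case.
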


\begin{proof}
This again follows by left Bousfield localization using \cite[Thm. 4.1.1]{Hi03}, this time localizing with respect to the set (cf. \cite[Thm. 2.4]{Sh04})
\[\mathcal{S}_G=\{S \otimes 
\left( G \times \Sigma_{m+1} \cdot S^1 \to G\times \Sigma_m \cdot S^0 \right) : 
  m\geq 0\}.\]
It remains to check that $\mathcal{S}_G$-equivalences coincide with stable equivalences.

For $G=\**$ this is well known since then our model structure reduces to that of \cite[Thm. 2.4]{Sh04}. 

We will reduce the general case to the case $G=\**$. To do so, start by considering the $S$ $G$-proj $\Sigma$-inj stable model structure on $(\mathsf{Sp}^\Sigma)^G$, which is built as the $G$-projective model structure over the model structure on $\mathsf{Sp}^\Sigma$ in the previous paragraph. We claim this model structure can alternatively be built by first building its level version, then localizing with respect to $\mathcal{S}_G$. Since both procedures create localizations of said level version, it suffices to check that they lead to the same local objects, and that follows since $\mathcal{S}_G=G \cdot \mathcal{S}_{\**}$.

To relate this to our intended model structure, consider the identity Quillen equivalence
\begin{equation}\label{ID QUILLEN EQ}
\Id \colon (\mathsf{Sp}^{\Sigma})^G \rightleftarrows (\mathsf{Sp}^{\Sigma})^G \colon \Id
\end{equation}
where the left hand $(\mathsf{Sp}^{\Sigma})^G$ has the 
$S$ $G$-proj $\Sigma$-inj
\textit{level} model structure and the right hand $(\mathsf{Sp}^{\Sigma})^G$ has the 
$S$ $\Sigma$-inj $G$-proj
\textit{level} model structure. 
It now suffices to check both sides have the same $\mathcal{S}_G$-local equivalences, and this is clear since mapping spaces can be simultaneously computed using cofibrant replacements in the left hand side and fibrant replacements in the right hand side.
\end{proof}

\subsection{Monomorphism stable and $S$ stable model structures}\label{MONOS SEC}

When $G=\**$, the existence of the following model structure is asserted without proof in the discussion preceding \cite[Thm. 5.3.7]{HSS}. For the sake of completeness (and as a warm-up to Theorem \ref{OINJMODEL THM}), we include a proof sketch combining arguments of \cite[Sec. 5]{HSS} with the localization machinery of \cite[Thm. 4.1.1]{Hi03}.

\begin{theorem}\label{GSTABLE THM}
For $G$ any finite group there exists a cofibrantly generated model structure on $(\mathsf{Sp}^{\Sigma})^G$, which we call the {\bf monomorphism stable model structure}, where
\begin{itemize}
\item cofibrations are the maps $X\to Y$ such that $X_m\to Y_m$ is a monomorphism of pointed simplicial sets for each $m \geq 0$.
\item weak equivalences are the underlying stable equivalences in $\mathsf{Sp}^{\Sigma}$.
\end{itemize}
Further, this is a left proper cellular simplicial model category.
\end{theorem}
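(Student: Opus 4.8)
The plan is to follow the template of the proof of Theorem~\ref{SGSPEC THM}: first construct a monomorphism \emph{level} model structure on $(\mathsf{Sp}^{\Sigma})^G$, then left Bousfield localize it at the set $\mathcal{S}_G$, and finally identify the resulting weak equivalences with the underlying stable equivalences by comparison with Theorem~\ref{SGSPEC THM}.

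For the level structure I would first record the ``absolute monomorphism'' analogue of Propositions~\ref{GSPAC PROP} and~\ref{LGSPEC PROP}. At each level $m$ one wants the injective model structure on $\mathsf{S}_{\**}^{G\times\Sigma_m}$, whose cofibrations are the monomorphisms of underlying pointed simplicial sets and whose weak equivalences are the underlying weak equivalences; its generating cofibrations may be taken to be the maps $(G\times\Sigma_m)/H\cdot(\partial\Delta^k_+\to\Delta^k_+)$ over \emph{all} $H\leq G\times\Sigma_m$, and since a group acting on a simplicial set fixes every face of any fixed simplex there is no ``twisted simplex'' subtlety, so the small object argument with this set produces exactly the monomorphisms. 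In contrast to Proposition~\ref{GSPAC PROP}, however, the fibrations now form a strictly smaller class and the generating trivial cofibrations are correspondingly more delicate than the evident horn inclusions, which is the one genuinely technical point of the construction; for $G$ trivial the required structure --- both in this level form and, after the localization below, in its stable form --- is the one asserted (without proof) in the discussion preceding \cite[Thm.~5.3.7]{HSS}, and the general case only adds the bookkeeping of a second group. Tensoring up with $S$ exactly as in \cite[Prop.~2.2]{Sh04} then gives the \textbf{monomorphism level model structure} on $(\mathsf{Sp}^{\Sigma})^G$, left proper, cellular and simplicial, whose cofibrations are the maps $X\to Y$ with every $X_m\to Y_m$ a monomorphism and whose weak equivalences are the maps with every $X_m\to Y_m$ an underlying weak equivalence.

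Since this level structure is left proper and cellular, \cite[Thm.~4.1.1]{Hi03} produces its left Bousfield localization at the set $\mathcal{S}_G=\{S\otimes(G\times\Sigma_{m+1}\cdot S^1\to G\times\Sigma_m\cdot S^0):m\geq 0\}$ used in the proof of Theorem~\ref{SGSPEC THM}; localization leaves the cofibrations (the levelwise monomorphisms) unchanged, and left properness, cellularity and the simplicial axioms descend. What remains --- and this is the crux --- is to check that the $\mathcal{S}_G$-local equivalences are exactly the underlying stable equivalences, rather than some strictly coarser class. Here I would observe that the monomorphism level structure and the $S$ $\Sigma$-inj $G$-proj level structure of Proposition~\ref{LGSPEC PROP} have the \emph{same} class of weak equivalences (the levelwise underlying ones), the former merely having more cofibrations; since the $\mathcal{S}_G$-local equivalences of a left proper cellular model category depend only on its underlying category with weak equivalences together with $\mathcal{S}_G$ (the relevant homotopy function complexes being invariants of that data, cf.\ \cite[Ch.~17]{Hi03}), the two level structures have the same $\mathcal{S}_G$-local equivalences, and by Theorem~\ref{SGSPEC THM} these are precisely the underlying stable equivalences. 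The main obstacle is thus this identification of the localized weak equivalences, handled --- exactly as for Theorem~\ref{SGSPEC THM} --- by comparison with a model structure already understood; apart from it the argument is a transcription of \cite[Sec.~5]{HSS} and \cite{Sh04}, with the construction of the injective level structure in the previous paragraph being the only other place that calls for care.
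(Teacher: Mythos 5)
There is a genuine gap in your construction of the monomorphism \emph{level} model structure. The tensoring argument of \cite[Prop.~2.2]{Sh04} (i.e.\ the one used for Proposition~\ref{LGSPEC PROP}) produces a cofibrantly generated structure whose cofibrations are generated by the maps $S \otimes \bigl( (G \times \Sigma_m)/H \cdot ( \partial \Delta^k_+ \to \Delta^k_+ ) \bigr)$, and the class these maps generate in $(\mathsf{Sp}^{\Sigma})^G$ consists of the $S$-cofibrations (flat cofibrations), characterized as in Proposition~\ref{LATCHMON PROP} by the latching pushout-corner maps $A_m \vee_{L_m A} L_m B \to B_m$ being monomorphisms. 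This is a \emph{strictly smaller} class than the levelwise monomorphisms $A_m \to B_m$: the passage from simplicial sets to spectra breaks the equivalence you used at each level, because tensoring with $S$ propagates cells to higher spectrum degrees and thereby imposes nontrivial latching conditions. (Your observation that monomorphisms of pointed $G\times\Sigma_m$-simplicial sets are exactly generated by orbit cells is correct; the problem is that this property is not preserved by the ``tensor up with $S$'' step.) So the level structure you build is actually the $S$ level structure underlying Theorem~\ref{SUSPEC THM}, not the monomorphism level structure, and every spectrum would be forced to be flat cofibrant, which is false.

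The paper's proof avoids this by constructing the monomorphism level structure directly, without a tensoring step: it takes $I$ (resp.\ $J$) to be a set of representatives of \emph{all} monomorphisms (resp.\ monomorphisms that are level equivalences) between countable $G$-spectra, and verifies the recognition theorem \cite[Thm.~2.1.19]{Hov98}, with the hard step (part 6) reducing to $G$-equivariant generalizations of \cite[Lemmas 5.14, 5.17]{HSS}, where the $FC$ subspectra are chosen to be $G$-subspectra. Your instinct that the generating trivial cofibrations are the genuinely delicate point is right, but you never actually reach that difficulty in the correct form because the generating \emph{cofibrations} are already set up wrong.

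For completeness: your comparison argument at the end --- that the $\mathcal{S}_G$-local equivalences of the monomorphism level structure agree with those of the $S$ $\Sigma$-inj $G$-proj level structure, since both level structures share the same underlying category and weak equivalences and the localized equivalences depend only on these data together with $\mathcal{S}_G$ --- is sound and is essentially equivalent to what the paper does by invoking the identity Quillen equivalence from the last paragraph of the proof of Theorem~\ref{SGSPEC THM}. That part stands once the level structure is built correctly.
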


\begin{proof} We start by building the analogue level weak equivalence model structure. When $G=\**$, this is precisely the injective level structure in \cite[Thm. 5.1.2]{HSS}, and the interested reader can check that the somewhat lengthy proof there generalizes. Instead, we point out that much of the argument can be streamlined by instead verifying the conditions in \cite[Thm. 2.1.19]{Hov98}.

Setting $I$ (resp. $J$) to be a set of representatives of monomorphisms (resp. monomorphisms that are level weak equivalences) between countable $G$-spectra (cf. proof of \cite[Thm. 5.1.2]{HSS}), parts 1,2,3 of \cite[Thm. 2.1.19]{Hov98} are immediate, part 4 follows since $J \subset I$ and colimits are levelwise and part 5 follows by noting that $I$ contains the maps of the form 
$S \otimes \left( G \times \Sigma_m  \cdot (\partial \Delta^k_+\to \Delta^k_+)\right)$, 
so that $I$-inj consists of level equivalences. For the harder part 6, one needs to show a lift exists in any diagram  
\[\xymatrix@1{
   A \ar[r] \ar[d]_f & X \ar[d]^g \\ 
	 B \ar[r] \ar@{-->}[ru]    & Y 
}\]
with $f \in \mathcal{W} \cap I$-cof and $g \in J$-inj. This generalizes \cite[Lemma 5.14(6)]{HSS}, the proof of which applies without change once one generalizes \cite[Lemma 5.17]{HSS} to $G$-spectra. The latter can be done by simply choosing the $FC$ subspectra in the proof of \cite[Lemma 5.17]{HSS} to be  $G$-subspectra, finishing the existence argument for the level model structure. Left properness, cellularity and the simplicial model structure axioms are again straightforward.

To produce the stable version, one again applies \cite[Thm. 4.1.1]{Hi03} to the set $\mathcal{S}_G$ in the proof of Theorem \ref{SGSPEC THM}, showing that the weak equivalences are as described by arguing as in the last paragraph of the proof of that theorem.
\end{proof}

\begin{theorem}\label{SUSPEC THM}
For $G$ any finite group there exists a cofibrantly generated model structure on $(\mathsf{Sp}^{\Sigma})^G$, which we call the \textbf{$S$ stable model structure}, where 
\begin{itemize}
\item a set of generating cofibrations is  
\[I = \left\{ 
    S \otimes \left( (G\times \Sigma_m)/H \cdot 
    \left( \partial \Delta^k_+ \to \Delta^k_+  \right) \right) :
     m\geq 0, \text{ any }H \leq G \times \Sigma_m 
\right\}.\]
\item weak equivalences are the underlying stable equivalences in $\mathsf{Sp}^{\Sigma}$.
\end{itemize}
Further, this is a left proper cellular simplicial model category.
\end{theorem}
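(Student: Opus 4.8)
The plan is to mirror the strategy already used for Proposition \ref{GSPAC PROP} and Theorem \ref{SGSPEC THM}, the only change being that we now allow \emph{all} subgroups $H \leq G \times \Sigma_m$ rather than only those with $H \cap G \times \** = \**$. First I would build the level analogue: exactly as in Proposition \ref{GSPAC PROP}, one applies \cite[Thm. 2.1.19]{Hov98} to the generating sets
\[
I_m = \bigcup_{H \leq G \times \Sigma_m} \{ (G \times \Sigma_m)/H \cdot (\partial \Delta^k_+ \to \Delta^k_+) \},
\qquad
J_m = \bigcup_{H \leq G \times \Sigma_m} \{ (G \times \Sigma_m)/H \cdot (\Lambda^k_{l+} \to \Delta^k_+) \},
\]
obtaining the genuine ($=$ all subgroups) fixed-point model structure on $\mathsf{S}_{\**}^{G \times \Sigma_m}$; since this is completely standard equivariant homotopy theory (now with $J$-cell maps being weak equivalences on \emph{all} fixed points, checked via Proposition \ref{FIXEDPUSH PROP} and commutation of $(\minus)^H$ with transfinite composition), no new idea is required. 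Inducing these sets up along $S \otimes (\minus)$ levelwise — just as in Proposition \ref{LGSPEC PROP} — produces the \emph{level} $S$ stable model structure on $(\mathsf{Sp}^\Sigma)^G$, whose generating cofibrations are the displayed set $I$; left properness, cellularity and the simplicial axioms are inherited verbatim.

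Next I would stabilize. As in the proof of Theorem \ref{SGSPEC THM}, I apply the left Bousfield localization machinery \cite[Thm. 4.1.1]{Hi03} to the level structure with respect to the same set $\mathcal{S}_G = \{ S \otimes (G \times \Sigma_{m+1} \cdot S^1 \to G \times \Sigma_m \cdot S^0) : m \geq 0 \}$. The existence of the localized structure is immediate from \cite[Thm. 4.1.1]{Hi03} once one knows the level structure is left proper and cellular. What remains, again as before, is to identify the $\mathcal{S}_G$-local equivalences with the underlying stable equivalences. For this I would reduce to the case $G = \**$ by the same device used at the end of the proof of Theorem \ref{SGSPEC THM}: the $S$ stable model structure on $\mathsf{Sp}^\Sigma$ (the $G = \**$ instance) agrees with the ``$S$ stable'' structure of \cite[Thm. 2.4]{Sh04} after further adjoining the remaining $m=0$ cells — or more simply, it is the structure in which cofibrations are generated by $S \otimes (\Sigma_m/H \cdot (\partial \Delta^k_+ \to \Delta^k_+))$ over \emph{all} $m \geq 0$ and all $H$, which is the $S$ stable model structure of Definition \ref{SSPEC DEF}, already known to have stable equivalences as its weak equivalences. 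Then, as in Theorem \ref{SGSPEC THM}, I compare the $S$ $G$-proj structure (built as the $G$-projective structure over the $\mathsf{Sp}^\Sigma$ structure just discussed, which one checks equals the localization of its own level version at $\mathcal{S}_G = G \cdot \mathcal{S}_{\**}$) with our intended structure via an identity Quillen equivalence between the two level structures, observing that mapping spaces may be computed using cofibrant replacement on one side and fibrant replacement on the other, so the two localizations have the same local equivalences.

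I expect the main obstacle to be purely bookkeeping: making sure the ``all subgroups'' variant of the reduction-to-$G=\**$ argument goes through cleanly, i.e.\ that $\mathcal{S}_G = G \cdot \mathcal{S}_{\**}$ still detects the correct local objects when the ambient level structure uses genuine (rather than $G$-free) fixed points, and that the $S$ stable model structure on $\mathsf{Sp}^\Sigma$ itself — with $m = 0$ cells included — has already been shown (in Definition \ref{SSPEC DEF}) to be a stable model structure so that the base case is genuinely available. Everything else (left properness, cellularity, the simplicial structure, the behavior of $(\minus)^H$ under pushouts and transfinite composition) is identical to arguments already carried out in the excerpt, so the proof is essentially a citation to the proofs of Theorems \ref{SGSPEC THM} and \ref{GSTABLE THM} with the subgroup condition removed throughout.
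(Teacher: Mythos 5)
Your proposal follows the paper's route — build a chain of model structures culminating in a level structure on $(\mathsf{Sp}^\Sigma)^G$, then Bousfield-localize at $\mathcal{S}_G$ and identify the resulting weak equivalences by reducing to $G=\**$ as in Theorem \ref{SGSPEC THM}. However, there is one omitted step: you pass directly from the genuine fixed-point model structure on $\mathsf{S}_{\**}^{G\times\Sigma_m}$ (the analogue of Proposition \ref{GSPAC PROP}) to the level structure on $(\mathsf{Sp}^\Sigma)^G$ (the analogue of Proposition \ref{LGSPEC PROP}), but you skip the analogue of Proposition \ref{NGSPAC PROP}, which the paper explicitly requires (``The same proof, starting with analogues of Propositions \ref{GSPAC PROP}, \ref{NGSPAC PROP} and \ref{LGSPEC PROP}, applies'').

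This is not a cosmetic omission. Proposition \ref{LGSPEC PROP} is built on top of \ref{NGSPAC PROP}, not \ref{GSPAC PROP}: the level structure on $(\mathsf{Sp}^\Sigma)^G$ must have as weak equivalences the levelwise \emph{underlying} weak equivalences of pointed simplicial sets, not the levelwise genuine (all-fixed-points) weak equivalences. Concretely, if you induce up the generating sets $I_m, J_m$ of the genuine structure, your $J$ is too small: the genuine structure has strictly fewer trivial cofibrations than the localized structure, so the lifted level model category would have a finer class of weak equivalences. The $\mathcal{S}_G$-localization at the spectrum level then does not repair this, since the localizing maps $G\times\Sigma_m\cdot_H EH_+ \to ((G\times\Sigma_m)/H)_+$ (after applying $S\otimes(\minus)$) are not $\mathcal{S}_G$-local equivalences with respect to the genuine level structure: $\mathcal{S}_G$ consists only of free cells and hence only detects underlying $\Omega$-spectrum conditions, while your level structure would still distinguish nontrivial $H$-fixed points. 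The repair is routine and is exactly what the paper indicates: first Bousfield-localize the genuine structure on $\mathsf{S}_{\**}^{G\times\Sigma_m}$ at the set $\set{G\times\Sigma_m\cdot_H EH_+ \to ((G\times\Sigma_m)/H)_+ : H\leq G\times\Sigma_m}$ (now over \emph{all} subgroups $H$), obtaining the all-subgroups analogue of Proposition \ref{NGSPAC PROP} with underlying weak equivalences, and only then lift to spectra. With that step inserted, the remainder of your argument matches the paper's.
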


\begin{proof}
This is an analogue of Theorem \ref{SGSPEC THM}, now without conditions on $H$. The same proof, starting with analogues of Propositions \ref{GSPAC PROP}, \ref{NGSPAC PROP} and \ref{LGSPEC PROP}, applies.
\end{proof}

\begin{proposition}\label{INJECTMOD PROP}
The $S$ stable model structure on $(\mathsf{Sp}^{\Sigma})^G$ is the injective model structure over the $S$ stable model structure on $\mathsf{Sp}^{\Sigma}$.

More explicitly, the $S$ stable cofibrations (resp. weak equivalences) in $(\mathsf{Sp}^{\Sigma})^G$ are the underlying $S$ stable cofibrations (resp. weak equivalences) in $\mathsf{Sp}^{\Sigma}$.
\end{proposition}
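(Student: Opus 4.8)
The plan is to check that the model structure of Theorem~\ref{SUSPEC THM} has precisely the weak equivalences and the cofibrations that, by definition, an injective model structure over the $S$ stable model structure on $\mathsf{Sp}^{\Sigma}$ must have; since any two model structures on a fixed category with the same cofibrations and the same weak equivalences coincide, this both produces the injective structure and identifies it. There is nothing to prove for weak equivalences: Theorem~\ref{SUSPEC THM} declares them to be the underlying stable equivalences, which are exactly the weak equivalences of the $S$ stable model structure on $\mathsf{Sp}^{\Sigma}$. Hence the content is entirely in the cofibrations, i.e.\ that the $S$ stable cofibrations in $(\mathsf{Sp}^{\Sigma})^{G}$ are exactly the $G$-equivariant maps whose underlying map in $\mathsf{Sp}^{\Sigma}$ is an $S$ cofibration.

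For the ``only if'' inclusion I would work with the generating cofibrations. Restricting the $G\times\Sigma_m$-action to $\Sigma_m$, a coset $(G\times\Sigma_m)/H$ becomes a $\Sigma_m$-set, hence a coproduct of orbits $\Sigma_m/H_i$ with $H_i\le\Sigma_m$; so each generating cofibration $S\otimes\big((G\times\Sigma_m)/H\cdot(\partial\Delta^k_+\to\Delta^k_+)\big)$ of Theorem~\ref{SUSPEC THM} has underlying map a coproduct of generating $S$ cofibrations of $\mathsf{Sp}^{\Sigma}$. Since $\fgt\colon(\mathsf{Sp}^{\Sigma})^{G}\to\mathsf{Sp}^{\Sigma}$ preserves colimits (it has a right adjoint) and preserves retracts, it sends every relative $I$-cell complex --- and therefore every $S$ stable cofibration --- to an $S$ cofibration.

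For the reverse inclusion I would follow the template of the proof of Theorem~\ref{GSTABLE THM}. First, unwinding the free $S$-module adjunction together with the levelwise induction/fixed-point adjunctions for the groups $G\times\Sigma_m$ identifies the $I$-injective maps $p$ (for $I$ the generating set of Theorem~\ref{SUSPEC THM}) as exactly those for which $(p_m)^{H}\colon(E_m)^{H}\to(B_m)^{H}$ is an acyclic Kan fibration for every $m\ge 0$ and every $H\le G\times\Sigma_m$; equivalently, those for which each $p_m$ has the right lifting property against all monomorphisms of $(G\times\Sigma_m)$-simplicial sets. Now let $f\colon X\to Y$ be $G$-equivariant with $\fgt(f)$ an $S$ cofibration, and factor $f=p\circ j$ with $j$ an $I$-cell complex and $p$ an $I$-injective map. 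Since $\fgt(f)$ is an $S$ cofibration its relative latching maps $X_m\amalg_{L_m X}L_m Y\to Y_m$ are monomorphisms, and because the latching objects are formed by $G$-equivariant colimits these are monomorphisms of $(G\times\Sigma_m)$-simplicial sets; an induction over the spectrum level, at each stage solving the lifting problem of the relative latching map against $p_m$, then yields a $G$-equivariant lift in the square exhibiting $f$ as a retract of $j$, so $f$ is an $S$ stable cofibration.

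I expect the reverse inclusion to be the only genuine point. The $\fgt$ half and the weak-equivalence half are formal, whereas the fact that one needs the lift to be $G$-equivariant --- a lift exists in $\mathsf{Sp}^{\Sigma}$ essentially for free, but equivariance cannot be arranged by averaging --- is exactly what forces the level-by-level induction and the relative-latching-map description of $S$ cofibrations in $\mathsf{Sp}^{\Sigma}$.
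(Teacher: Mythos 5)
Your proof is correct and follows essentially the same route as the paper. The paper packages the key step as a separate statement (Proposition~\ref{LATCHMON PROP}): $S$ stable cofibrations in $(\mathsf{Sp}^{\Sigma})^G$ are characterized as the maps whose relative latching maps $X_m\amalg_{L_mX}L_mY\to Y_m$ are monomorphisms, proved via the inductive lifting device of Lemma~\ref{INDLIFT LEM}; since this condition is blind to the $G$-action, Proposition~\ref{INJECTMOD PROP} is then immediate. Your ``reverse inclusion'' argument is exactly that latching-map lifting induction, and your ``only if'' direction (observing that $\fgt$ sends generating cofibrations to coproducts of generating cofibrations and preserves cell complexes and retracts) is a fine, if slightly more explicit, substitute for specializing the latching-map characterization to $G=\**$.
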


To prove Proposition \ref{INJECTMOD PROP} we start by recalling a well known inductive procedure to build maps of spectra (cf. \cite[II.5]{Sch97}, \cite[Sec. 5.2]{HSS}).

\begin{definition}
Define $\bar{S} \in \mathsf{Sp}^{\Sigma}$ by $\bar{S}_0=\**$, $\bar{S}_m=S^m$ together with the obvious structure maps and let $i\colon \bar{S} \to S$ be the inclusion. 
For $A \in \mathsf{Sp}^{\Sigma}$, define its \textit{$m$-th latching object} to be $L_m A=(\bar{S} \wedge A)_m$ for $m \geq 0$. Note that $i$ induces a \textit{$m$-th latching map}
\[
L_m A \xrightarrow{l_m A} A_m.
\]
\end{definition}

Given spectra $A,B$ define a \textit{map up to degree $m$ from $A$ to $B$} to be a list of maps 
$\{f_{\bar{m}}\colon A_{\bar{m}} \to B_{\bar{m}}\}_{0 \leq \bar{m} \leq m}$ compatible with the spectra structure maps up to degree $m$.
The importance of latching maps comes from the following result (used implicitly in \cite[Sec. 5.2.2]{HSS}. Also, compare with \cite[Obs. 3.9]{RiVe13}).

\begin{lemma}\label{INDLIFT LEM}
A map $\{f_{\bar{m}}\colon A_{\bar{m}} \to B_{\bar{m}}\}_{0 \leq \bar{m} \leq m-1}$ up to degree $m-1$ naturally induces a map $L_{m}A \to L_{m}B$. Further, extensions to a map $\{f_{\bar{m}}\colon A_{\bar{m}} \to B_{\bar{m}}\}_{0 \leq \bar{m} \leq m}$ up to degree $m$ are in natural bijection with dashed arrows
\[
  \xymatrix@1{
      L_m A \ar[r] \ar[d]_{l_m A} & L_m B \ar[d]^{l_m B}  \\
			A_m.\ar@{-->}[r]         &  B_m.                
	}\]
\end{lemma}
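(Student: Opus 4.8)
The plan is to unwind the definition of the latching object $L_m A = (\bar{S} \wedge A)_m$ and observe that it depends only on the levels $A_{\bar{m}}$ for $\bar{m} < m$ together with the structure maps among them. Concretely, $\bar{S} \wedge A$ is the coequalizer of $\bar{S} \otimes S \otimes A \rightrightarrows \bar{S} \otimes A$, and since $\bar{S}_0 = \**$, the level-$m$ piece of this coequalizer is built entirely out of wedge summands $\Sigma_m \cdot_{\Sigma_i \times \Sigma_j} \bar{S}_i \wedge A_j$ with $i \geq 1$, hence $j \leq m-1$ (and similarly for the level-$m$ piece of $\bar{S} \otimes S \otimes A$, where both the $\bar{S}$ and $S$ indices are $\geq 1$). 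Therefore a map $\{f_{\bar{m}}\}_{0 \leq \bar{m} \leq m-1}$ up to degree $m-1$, being compatible with structure maps in that range, induces compatible maps on all these wedge summands and hence a canonical map $L_m A \to L_m B$ making the evident square with $l_m A$, $l_m B$ commute once $f_m$ is chosen; this gives the ``forward'' direction of the claimed bijection.

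For the converse, I would argue that giving the datum $f_m \colon A_m \to B_m$ subject to compatibility with the structure maps up to degree $m$ is the same as giving a map $A_m \to B_m$ through which the two composites $L_m A \to A_m \xrightarrow{f_m} B_m$ and $L_m A \to L_m B \xrightarrow{l_m B} B_m$ agree. Indeed, the structure maps $S^k \wedge A_{m-k} \to A_m$ (for $1 \leq k \leq m$, together with their $\Sigma_m$-equivariance) assemble, after coequalizing the $S$-module relations, precisely into the latching map $l_m A \colon L_m A \to A_m$; so ``$f_m$ commutes with all structure maps landing in degree $m$'' is exactly the statement that $f_m \circ l_m A = l_m B \circ (\text{induced map})$. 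This identifies extensions of the degree-$(m-1)$ map to degree $m$ with dashed arrows filling the square, naturally in everything in sight.

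The only mildly delicate point — and the step I would spend the most care on — is verifying that the map $l_m A$ really is assembled from the structure maps in the way claimed, i.e.\ that coequalizing the $\bar{S} \otimes S \otimes A \rightrightarrows \bar{S} \otimes A$ relations in level $m$ produces exactly the same identifications among the $\bar{S}_i \wedge A_j$ summands as are imposed by iterated structure maps of $A$. This is the standard ``latching object of a module over the sphere'' computation (as in \cite[II.5]{Sch97} or \cite[Sec. 5.2]{HSS}); once it is in place, both directions of the bijection and its naturality are immediate from the universal property of the pushout/coequalizer, and there is nothing further to check.
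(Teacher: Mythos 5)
The paper does not include a proof of Lemma~\ref{INDLIFT LEM} -- it cites \cite[Sec.~5.2.2]{HSS} and \cite[Obs.~3.9]{RiVe13} and leaves the verification implicit -- so there is no proof to compare against. Your argument is correct and is exactly the standard one those references suggest: since $\bar{S}_0 = \**$, the level-$m$ part of the coequalizer $\bar{S}\otimes S\otimes A\rightrightarrows \bar{S}\otimes A$ only involves $A_0,\dots,A_{m-1}$ and the structure maps among them, which gives the induced map $L_mA\to L_mB$; and the summand-by-summand identification of $l_mA$ with the assembled degree-$m$ structure maps identifies the compatibility conditions on $f_m$ with commutativity of the square, giving the claimed bijection.

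One small inaccuracy worth flagging: in describing $(\bar{S}\otimes S\otimes A)_m$ you write that ``both the $\bar{S}$ and $S$ indices are $\geq 1$.'' That is false for $S$, since $S^0 = S^0 \neq \**$ (indeed the unit $S^0\to S$ is what makes the coequalizer reflexive). Only the $\bar{S}$ index is forced to be $\geq 1$. Fortunately this is all you actually use -- the $\bar{S}$ index being positive already forces the $A$-level to be $\leq m-1$ -- so the slip does not affect the argument. You should also state explicitly that the dashed arrow is required to be $\Sigma_m$-equivariant (you allude to this parenthetically), since that is what makes the correspondence with ``maps up to degree $m$'' a genuine bijection rather than merely a surjection onto equivariant data.
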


\begin{remark}
By naturality Lemma \ref{INDLIFT LEM} generalizes to $(\mathsf{Sp}^{\Sigma})^G$.
\end{remark}

Proposition \ref{INJECTMOD PROP} will follow from the following analogue of \cite[Sec. 5.2.2]{HSS}.

\begin{proposition}\label{LATCHMON PROP}
The $S$ stable cofibrations in $(\mathsf{Sp}^{\Sigma})^G$ are those maps $f \colon A \to B$ such that $(f \square i)_m \colon A_m \vee_{L_m A}L_m B\to B_m$ is a monomorphism for all $m \geq 0$.
\end{proposition}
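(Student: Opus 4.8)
The plan is to characterize $S$ stable cofibrations in $(\mathsf{Sp}^{\Sigma})^G$ by combining the generating set from Theorem \ref{SUSPEC THM} with the latching-object machinery of Lemma \ref{INDLIFT LEM}. First I would observe that a map $f\colon A \to B$ is an $S$ stable cofibration if and only if it has the left lifting property against all $S$ stable acyclic fibrations; by Proposition \ref{INJECTMOD PROP} (whose proof this proposition feeds into — so I would instead argue directly to avoid circularity) these are detected levelwise, or more precisely, I will instead characterize the class via the generating cofibrations $I$ of Theorem \ref{SUSPEC THM}. The key point is that $S \otimes \bigl((G\times\Sigma_m)/H \cdot (\partial\Delta^k_+\to\Delta^k_+)\bigr)$ is, in each level $\bar m$, a map built from the free $S$-module generators, and a transfinite composition of pushouts of such maps is exactly a map $f$ such that at each level the relative latching map $(f\square i)_m$ is a monomorphism in $\mathsf{S}_{\**}$ with $(G\times\Sigma_m)$-cells of \emph{all} isotropy types allowed.

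The core of the argument is an inductive reconstruction of $f$ degreewise. Using Lemma \ref{INDLIFT LEM} (in its $(\mathsf{Sp}^{\Sigma})^G$ form), giving a map $f$ of $G$-spectra is the same as giving, inductively in $m$, a factorization of the latching square, and the relevant datum at stage $m$ is the map $A_m \vee_{L_m A} L_m B \to B_m$ in $\mathsf{S}_{\**}^{G\times\Sigma_m}$. The claim is then: $f$ is a (retract of a transfinite composite of pushouts of maps in) $I$ if and only if each such map is a monomorphism of pointed simplicial sets. For the "only if" direction I would check that $f \square i$ commutes appropriately with pushouts, transfinite composites and retracts — using that $\bar S \wedge (\minus)$ preserves colimits and that $L_m(\minus)$ is built from it — and that for each generating cofibration the relative latching map in each level is a monomorphism (indeed a relative cell inclusion of $G\times\Sigma_m$-simplicial sets), which is a direct calculation as in \cite[Sec. 5.2.2]{HSS}. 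For the "if" direction, given $f$ with all $(f\square i)_m$ monomorphisms, I would build a cell presentation of $f$ by attaching, degree by degree and then within each degree cell by cell (choosing orbit-cell decompositions of the monomorphisms $(f\square i)_m$ in $\mathsf{S}_{\**}^{G\times\Sigma_m}$, which exist with arbitrary isotropy since no isotropy restriction is imposed), the corresponding generating cofibrations $S\otimes(\ldots)$; Lemma \ref{INDLIFT LEM} guarantees these attachments assemble into an actual map of $G$-spectra and that the colimit is $B$.

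A secondary point worth isolating, and the one flagged in the remark after Definition \ref{SSPEC DEF}, is the positivity refinement: the same argument shows an $S$ cofibration is a positive $S$ cofibration iff additionally $A_0 \to B_0$ is an isomorphism, since the only difference in the generating set is the omission of $m=0$ cells, and $(f\square i)_0 = f_0$ because $L_0 A = \bar S_0 \wedge \cdots = \**$.

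The main obstacle I expect is the bookkeeping in the "if" direction: one must decompose each monomorphism $(f\square i)_m$ in $\mathsf{S}_{\**}^{G\times\Sigma_m}$ into orbit cells \emph{compatibly} across the tower (so that the attaching maps for degree-$m$ cells land in the partial spectrum already built out of lower degrees plus earlier degree-$m$ cells), and then verify that the resulting transfinite composite of pushouts of maps in $I$ genuinely reproduces $f$ rather than merely some cofibration with the same latching data. This is where Lemma \ref{INDLIFT LEM} does the heavy lifting — it converts "extend $f$ one more degree" into "solve one lifting problem against $l_m B$" — so the argument is really a careful but routine unwinding once that lemma is in hand, exactly paralleling \cite[Sec. 5.2.2]{HSS}. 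I would therefore present the proof as: (1) relative latching commutes with the cellular constructions; (2) generating cofibrations have monic relative latching maps; (3) conversely, inductively build a cell structure using Lemma \ref{INDLIFT LEM}; (4) note the $m=0$ observation yields the positivity addendum.
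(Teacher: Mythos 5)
Your argument for the direction ``cofibration implies monic relative latching maps'' matches the paper's: reduce to generating cofibrations via \cite[Lemma 4.2.4]{Hov98} (closure under retracts, pushouts, transfinite composition) and compute directly — the paper streamlines the computation by noting $f \square i = f' \square^{\otimes} i$ when $f = S \otimes f'$ for $f'$ a generating cofibration of $G\times\Sigma_m$-simplicial sets. For the converse direction, however, you take a genuinely different route from the paper. You propose a constructive argument: decompose each monomorphism $(f \square i)_m$ into $G\times\Sigma_m$-orbit cells and attach the corresponding generating cofibrations degree by degree, appealing to Lemma \ref{INDLIFT LEM} to assemble the attachments. The paper instead argues via lifting properties: it first observes that $S$ stable trivial fibrations are exactly the maps $X \to Y$ with each $X_m \to Y_m$ a genuine $G\times\Sigma_m$-fibration (since the generating set in Theorem \ref{SUSPEC THM} allows \emph{all} isotropies), and then uses Lemma \ref{INDLIFT LEM} to reduce a lifting problem for $f$ against such a map to a sequence of levelwise lifting problems against the $(f\square i)_m$, each solvable because monomorphisms lift against genuine $G\times\Sigma_m$-fibrations. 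The lifting argument is markedly shorter because it sidesteps exactly the bookkeeping you flag as the main obstacle of your route — namely verifying that the degree-by-degree orbit-cell attachments cohere into a tower whose colimit is really $B$ and not merely some cofibrant replacement with the same latching data. Your remark that Lemma \ref{INDLIFT LEM} ``guarantees\ldots that the colimit is $B$'' overstates what the lemma delivers: it parametrizes degree-by-degree extensions of maps, but identifying the colimit of your cell tower with $B$ would require a separate (Reedy-style) argument. Since the goal is only to establish the left lifting property against trivial fibrations, the paper's dual argument reaches the same conclusion at lower cost; your constructive approach, if completed, would buy an explicit cell presentation, but that is not needed here. Your positivity addendum (that $(f\square i)_0 = f_0$ because $L_0 A = \**$, so positive $S$ cofibrations are $S$ cofibrations with $A_0 \xrightarrow{\simeq} B_0$) is correct and matches the remark following Definition \ref{SSPEC DEF}.
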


\begin{proof}
$X \to Y$ is a $S$ stable trivial fibration iff $X_m^H \to Y_m^H$ are trivial fibrations in $\mathsf{S}_{\**}$ for all $m \geq 0$, $H \leq G \times \Sigma_m$, i.e. iff $X_m \to Y_m$ are genuine $G \times \Sigma_m$ fibrations for $m \geq 0$. By Lemma \ref{INDLIFT LEM}, building a lift in the left hand diagram
\[\xymatrix@1{
   A \ar[d]_f \ar[r]                                       & X \ar[d]   & &
	 A_m \vee_{L_m A}L_m B \ar[r] \ar[d]_{(f \square i)_m}     & X_m \ar[d] \\
	 B   \ar[r]  \ar@{-->}[ru]                               & Y          & &
	 B_m \ar[r] \ar@{-->}[ru]                                & Y_m
}\] 
is the same as building successive lifts in the right hand diagrams for $m \geq 0$. Since monomorphisms have the left lifting property against genuine fibrations, the given condition is sufficient.

For the converse, by \cite[Lemma 4.2.4]{Hov98} it suffices to check $(f \square i)_m$ is a monomorphism when $f$ is a generating cofibration. Letting
\[ f = S \otimes ((G\times \Sigma_m)/H \cdot \partial \Delta^k_+ 
     \xrightarrow{f'} 
	(G\times \Sigma_m)/H \cdot \Delta^k_+ )\]
one has $f\square i=f'\square^{\otimes}i$ (where $\square^{\otimes}$ denotes the pushout product with respect to the bifunctor $\mathsf{S}_{\**}^{\Sigma}\times \mathsf{Sp}^{\Sigma} \xrightarrow{\otimes} \mathsf{Sp}^{\Sigma}$) so that the result is now clear.
\end{proof}

\begin{proof}[Proof of Proposition \ref{INJECTMOD PROP}] Cofibrations are underlying since forgetting the $G$-action does not change the characterization in Proposition \ref{LATCHMON PROP}. 
The case of weak equivalences is obvious.  
\end{proof}

\begin{remark}\label{HSIGMAINJGPROJ RMK}
In Section \ref{POSSYM SEC} we will need the $\Sigma_r$-injective model structure on $(\mathsf{Sp}^{\Sigma})^{G \times \Sigma_r}$ with regard to the 
$S$ $\Sigma$-inj $G$-proj model structure on $(\mathsf{Sp}^{\Sigma})^{G}$.
We call this the $S$ $\Sigma \times \Sigma_r$-inj $G$-proj stable model structure, and build it just as in Theorems \ref{SGSPEC THM} and \ref{SUSPEC THM} using as generating cofibrations
\[I = \left\{ 
    S \otimes \left( (G \times \Sigma_m \times \Sigma_r)/H \cdot 
    \left( \partial \Delta^k_+ \to \Delta^k_+  \right) \right) :
     m,r\geq 0, H \cap G \times \** \times \**=\**
\right\}.\]
The analogue of Proposition \ref{INJECTMOD PROP} proving $\Sigma_r$-injectiveness is shown in the same way by noting that $X \to Y$ is a trivial fibration iff $X_{m}^H \to Y_m^H$, $H \cap G \times \** \times \** = \**$ is a trivial fibration, so that $A\to B$ is a cofibration iff $(f \square i)_m \colon A_m \vee_{L_m A}L_m B\to B_m$ is built only out of simplices with isotropies $H$ satisfying $H \cap G \times \** \times \** = \**$.
\end{remark}

\section{Properties of $S$ $\Sigma$-inj $G$-proj cofibrations}\label{PROPERTIES SEC}

In this section we prove the key properties of $S$ $\Sigma$-inj $G$-proj cofibrations.

Subsection \ref{GPROJ TYPE} deals with those properties one would expect from genuine $G$-projective cofibrations, namely the ``change of group'' Propositions
\ref{PROPSMASH} and \ref{PROPINDUCE} as well as Theorem \ref{BIQUILLEN THM}.

Subsection \ref{QSECTION}, the technical heart of the paper, deals with the somewhat lengthier proof of Theorem \ref{SIGMANPUSHPROD THM}.

\subsection{$G$-projective type properties}\label{GPROJ TYPE}

\begin{proposition}\label{PROPSMASH} 
Suppose each category is equipped with its respective  $S$ $\Sigma$-inj $G$-proj stable model structure. Then the functor
\[(\mathsf{Sp}^{\Sigma})^G\times (\mathsf{Sp}^{\Sigma})^{\bar{G}} \xrightarrow{\minus \wedge \minus} (\mathsf{Sp}^{\Sigma})^{G \times \bar{G}} \]
is a left Quillen bifunctor.
\end{proposition}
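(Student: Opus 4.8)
The plan is to verify the pushout product axiom directly on generating cofibrations, invoking Remark \ref{PUSHPRODGEN REM} so that it suffices to treat generators. By Remark \ref{SIGMAGENCOF REM}, a generating cofibration of the $S$ $\Sigma$-inj $G$-proj stable structure on $(\mathsf{Sp}^{\Sigma})^G$ has the form $S \otimes\bigl((G\times\Sigma_m)/H \cdot(\partial\Delta^k_+\to\Delta^k_+)\bigr)$ with $H\cap G\times\**=\**$, and similarly for $(\mathsf{Sp}^{\Sigma})^{\bar G}$ with a subgroup $\bar H\leq \bar G\times\Sigma_{\bar m}$ satisfying $\bar H\cap\bar G\times\**=\**$. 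First I would record that, since $S$ is a commutative monoid, the smash product $\wedge$ on spectra is computed from $\otimes$ on symmetric sequences and one has the standard identity $(S\otimes f)\square^{\wedge}(S\otimes g)\cong S\otimes(f\square^{\otimes}g)$, reducing the computation to the level of symmetric sequences and the external $\otimes$. Then, using that $(X\cdot f')\square^{\otimes}(Y\cdot g') \cong (X\times Y)\cdot(f'\square^{\otimes} g')$ for the constant-coproduct functors, the pushout product of the two generators becomes (a shift of) $\bigl((G\times\Sigma_m)/H\times(\bar G\times\Sigma_{\bar m})/\bar H\bigr)\cdot\bigl((\partial\Delta^k_+\to\Delta^k_+)\square(\partial\Delta^l_+\to\Delta^l_+)\bigr)$, where the $\Sigma_m\times\Sigma_{\bar m}$-factor gets absorbed into $\Sigma_{m+\bar m}$ via the tensor formula.

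The content is then: such a map, viewed in $(\mathsf{Sp}^{\Sigma})^{G\times\bar G}$, is a transfinite composite of pushouts of generating cofibrations of the target's $S$ $\Sigma$-inj $(G\times\bar G)$-proj structure — which by Remark \ref{SIGMAGENCOF REM} are indexed by subgroups $K\leq (G\times\bar G)\times\Sigma_{m+\bar m}$ with $K\cap (G\times\bar G)\times\**=\**$. The key point is the group-theoretic observation that every isotropy subgroup $K$ arising as a subconjugate of $H\times\bar H$ inside $(G\times\Sigma_m\times\bar G\times\Sigma_{\bar m})$, after reindexing into $(G\times\bar G)\times\Sigma_{m+\bar m}$, satisfies $K\cap(G\times\bar G)\times\** = \**$: this follows because projecting $K\le H\times \bar H$ to $G\times\bar G$ and using $H\cap G\times\**=\**$, $\bar H\cap \bar G\times\**=\**$ forces any element of $K$ with trivial $\Sigma_{m+\bar m}$-component to be trivial. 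Hence the $(G\times\Sigma_m)/H\times(\bar G\times\Sigma_{\bar m})/\bar H$-orbit decomposes into orbits of permissible isotropy type, and the pushout product is a cofibration in the target structure. For the trivial-cofibration half, one observes the analogous fact with $\Lambda^k_l$ in place of $\partial\Delta^k$ on one factor (so that the space-level pushout product is a trivial cofibration of pointed simplicial sets), giving a map in $S\otimes J_{m+\bar m}$-cell, hence a level — and therefore stable — trivial cofibration; alternatively one localizes and argues as in the proof of Theorem \ref{SGSPEC THM}. The adjointness condition in Definition \ref{QUILLEN BIFUNCTOR DEF} is immediate from Proposition \ref{SPECTRAADJUNCTION PROP} together with the fact that these are all functor categories into a closed monoidal category.

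I expect the main obstacle to be the bookkeeping in the group-theoretic step: one must carefully match the ``external'' action of $\Sigma_m\times\Sigma_{\bar m}$ coming from the two tensor factors with the ``internal'' $\Sigma_{m+\bar m}$-action in the target, and verify the isotropy condition survives this identification as well as passage to subgroups and conjugates. Everything else — the pushout-product identities for $\otimes$ and constant coproducts, the reduction to generators, and the verification of the weak-equivalence clause — is routine and parallels the arguments already used for Theorems \ref{SGSPEC THM} and \ref{SUSPEC THM} and the proof of Proposition \ref{LATCHMON PROP}.
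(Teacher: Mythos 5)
Your verification of the cofibration clause is essentially the paper's argument: reduce to generators via Remark~\ref{PUSHPRODGEN REM}, then compute $f\square g$ explicitly using the identification $H\times\bar{H}\subset G\times\bar{G}\times\Sigma_{m+\bar{m}}$ and observe that the resulting orbit $(G\times\bar{G}\times\Sigma_{m+\bar{m}})/(H\times\bar{H})$ satisfies the required isotropy condition. You slightly overcomplicate this by discussing subconjugates of $H\times\bar{H}$ --- in fact the pushout product of the two simplicial generators is a relative CW complex built entirely out of free cells, so every orbit that appears is literally of type $(G\times\bar{G}\times\Sigma_{m+\bar{m}})/(H\times\bar{H})$ --- but that is harmless.

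The weak-equivalence clause is where you have a genuine gap. Your primary argument, replacing $\partial\Delta^k$ by $\Lambda^k_l$ on one factor, only covers the generating trivial cofibrations of the \emph{level} model structure of Proposition~\ref{LGSPEC PROP}, namely the sets $S\otimes J_m$. The $S$~$\Sigma$-inj $G$-proj \emph{stable} model structure is a left Bousfield localization of that level structure (Theorem~\ref{SGSPEC THM}), so its class of generating trivial cofibrations is strictly larger and does not admit such a clean horn-type description. Checking the pushout product axiom only against the horn-type maps therefore leaves the stable-localization piece unaddressed. Your ``alternatively one localizes and argues as in Theorem~\ref{SGSPEC THM}'' remark gestures at the right fix (the machinery is exactly what Lemma~\ref{LOCBIADJ LEM} is designed for) but is not developed. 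The paper's route avoids the issue entirely: for the weak-equivalence half one does not reduce to generators at all, but instead notes that if (say) $f$ is any trivial cofibration, then forgetting the $G$- and $\bar{G}$-actions both $f$ and $g$ become $S$ (trivial) cofibrations in $\mathsf{Sp}^{\Sigma}$, so $f\square g$ is a stable equivalence by \cite[Thm.~5.3.7(5)]{HSS}; since weak equivalences in the $G\times\bar{G}$-proj structure are underlying stable equivalences, this finishes the argument. That one-line observation is both complete and simpler than either of the routes you propose.
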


\begin{proof}
The existence of the right adjoints is formal. It suffices to check the pushout product axiom (cf. Definition \ref{QUILLEN BIFUNCTOR DEF}) between generating (trivial) cofibrations (cf. Remark \ref{PUSHPRODGEN REM}) and letting (cf. Remark \ref{SIGMAGENCOF REM})
\[f= S \otimes \left( (G \times \Sigma_m)/H \cdot \left( \partial \Delta^{k}_+  \to  \Delta^k_+  \right) \right), \quad
g= S \otimes \left( (\bar{G} \times \Sigma_{\bar{m}})/\bar{H} \cdot \left( \partial \Delta^{\bar{k}}_+  \to  \Delta^{\bar{k}}_+  \right) \right)
\]
one has (using the identification $H\times \bar{H} \subset G \times \Sigma_m\times \bar{G} \times \Sigma_{\bar{m}} \subset G \times \bar{G} \times \Sigma_{m+\bar{m}}$)
\[f \square g =
 S \otimes \left( (G\times \bar{G}\times \Sigma_{m+\bar{m}})/(H\times \bar{H}) \cdot \left((\partial (\Delta^k\times \Delta^{\bar{k}}))_+\to  (\Delta^k\times \Delta^{\bar{k}})_+ \right) \right),\]
which is a cofibration since Remark \ref{SIGMAGENCOF REM} implies $H\times \bar{H} \cap G\times \bar{G}\times \{\**\}=\**$.

The extra claim that $f \square g$ is a weak equivalence if either $f$ or $g$ is can be checked by forgetting the actions of $G,\bar{G}$, reducing to \cite[Thm. 5.3.7(5)]{HSS}.
\end{proof}

\begin{proposition}\label{PROPINDUCE}
Let $\bar{G}\subset G$ be finite groups, and suppose each category is equipped with its respective $S$ $\Sigma$-inj $G$-proj stable model structure. Then both 
\[\fgt\colon(\mathsf{Sp}^\Sigma)^G \rightleftarrows
 (\mathsf{Sp}^\Sigma)^{\bar{G}}\colon ((\minus)^{G \cdot S})^{\bar{G}} \quad \text{ and }\quad 
  G\cdot_{\bar{G}} (\minus) \colon (\mathsf{Sp}^\Sigma)^{\bar{G}} \rightleftarrows (\mathsf{Sp}^\Sigma)^G \colon \fgt\]
are Quillen adjunctions.
\end{proposition}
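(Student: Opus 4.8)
The plan is to show that in each of the two displayed adjunctions the left adjoint --- respectively $\fgt\colon(\mathsf{Sp}^\Sigma)^G\to(\mathsf{Sp}^\Sigma)^{\bar G}$ and $G\cdot_{\bar G}(\minus)\colon(\mathsf{Sp}^\Sigma)^{\bar G}\to(\mathsf{Sp}^\Sigma)^G$ --- is a left Quillen functor; the displayed right adjoints exist formally, since $\fgt$ preserves all colimits and is moreover the standard right adjoint of $G\cdot_{\bar G}(\minus)$. Because in any model category the trivial cofibrations are exactly those maps that are simultaneously cofibrations and weak equivalences, I would reduce the claim to checking that each of these two functors preserves cofibrations and weak equivalences. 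Preservation of weak equivalences should be immediate from Theorem \ref{SGSPEC THM}: in both the $S$ $\Sigma$-inj $G$-proj and the $S$ $\Sigma$-inj $\bar G$-proj stable structures the weak equivalences are the underlying stable equivalences of $\mathsf{Sp}^\Sigma$, the functor $\fgt$ does not change the underlying spectrum, and the underlying spectrum of $G\cdot_{\bar G}(X)$ is the finite wedge $\bigvee_{G/\bar G}X$, so both functors preserve stable equivalences.

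The real content is then preservation of cofibrations. Since both functors are left adjoints they commute with the colimits presenting any cofibration as a retract of a transfinite composite of pushouts of generating cofibrations (cf. Remark \ref{PUSHPRODGEN REM}), so it is enough to evaluate each functor on the generating cofibrations of Remark \ref{SIGMAGENCOF REM}. For $G\cdot_{\bar G}(\minus)$ I would apply it to $S\otimes\left((\bar G\times\Sigma_m)/H\cdot(\partial\Delta^k_+\to\Delta^k_+)\right)$ with $H\cap\bar G\times\**=\**$; using that $G\cdot_{\bar G}(\minus)$ (a composite of left adjoints, cf. Proposition \ref{SPECTRAADJUNCTION PROP}) commutes with $S\otimes(\minus)$ and with $(\minus)\cdot(\partial\Delta^k_+\to\Delta^k_+)$, together with the natural isomorphism of $G\times\Sigma_m$-sets $G\cdot_{\bar G}\left((\bar G\times\Sigma_m)/H\right)\cong(G\times\Sigma_m)/H$ (viewing $H\subset\bar G\times\Sigma_m\subset G\times\Sigma_m$), the image is again a generating cofibration because $H\cap G\times\**=H\cap\bar G\times\**=\**$. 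For $\fgt$ I would apply it to $S\otimes\left((G\times\Sigma_m)/H\cdot(\partial\Delta^k_+\to\Delta^k_+)\right)$ with $H\cap G\times\**=\**$ and analyze the restricted $\bar G\times\Sigma_m$-set: choosing double-coset representatives $g_i\in G\times\Sigma_m$ for $(\bar G\times\Sigma_m)\backslash(G\times\Sigma_m)/H$ it decomposes as $\coprod_i(\bar G\times\Sigma_m)/H_i$ with $H_i=(\bar G\times\Sigma_m)\cap g_iHg_i^{-1}$, and one checks that conjugation acts on coordinates separately, so that a trivial $\Sigma_m$-coordinate forces the whole element to be trivial, giving $H_i\cap G\times\**=g_i(H\cap G\times\**)g_i^{-1}=\**$; hence $\fgt$ carries this generating cofibration to a coproduct of generating cofibrations, in particular to a cofibration.

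With both functors shown to preserve cofibrations and weak equivalences, and hence trivial cofibrations, both adjunctions are Quillen adjunctions. I expect the only genuine obstacle to be the orbit bookkeeping in the second computation --- verifying that the defining isotropy condition ``$H\cap(\text{external group})\times\**=\**$'' survives both induction and restriction --- while everything else (existence of the right adjoints, the reduction to generators, and the weak-equivalence claim) is entirely formal.
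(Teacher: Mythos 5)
Your proof is correct and follows essentially the same approach as the paper: reduce to generating cofibrations, observe that the isotropy condition $H\cap(\text{external group})\times\**=\**$ survives both induction and restriction, and note that weak equivalences are preserved because they are detected on underlying spectra and both functors are harmless there. The paper compresses the $\fgt$ direction to ``$\fgt$ preserves weak equivalences and free actions,'' leaving the double-coset decomposition implicit, while you spell it out; the induction direction is computed identically. One small slip in your bookkeeping: $H_i\cap G\times\**$ is merely \emph{contained} in $g_i(H\cap G\times\**)g_i^{-1}$ (you would need $\bar G\times\**$ to equal $G\times\**$ for equality), but since the right-hand side is trivial the containment already gives $H_i\cap\bar G\times\**=\**$, which is what you need.
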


\begin{proof}
This is immediate for the first adjunction since $\fgt$ preserves weak equivalences and free actions.
For the second one, choose a generating cofibration
\[f = S \otimes \left((\bar{G}\times \Sigma_m)/H \cdot \left(\partial \Delta^{k}_+  \to  \Delta^k_+ \right) \right) \]
so that
\[G\cdot_{\bar{G}}f = S \otimes \left((G\times \Sigma_m)/H \cdot \left(\partial \Delta^{k}_+  \to  \Delta^k_+ \right) \right)\]
which is again a cofibration since $H \cap \bar{G}\times \{\**\}=\**$ implies  $H \cap G\times \{\**\}=\**$.

That $G \cdot_{\bar{G}} (\minus)$ applied to a trivial cofibration yields a weak equivalence follows by forgetting the actions since then $G \cdot_{\bar{G}} (\minus)$ is a wedge over $G/\bar{G}$.
\end{proof}

We now turn to the proof of Theorem \ref{BIQUILLEN THM}. We will make use of the following analogue for bifunctors of the ``universal property of left Bousfield localizations'' in \cite[Prop. 3.3.18(1)]{Hi03}.

\begin{lemma}\label{LOCBIADJ LEM}
Suppose 
\[\C \times \D \xrightarrow{\minus \otimes \minus } \mathcal{E}\]
is a left Quillen bifunctor, that $\S$ is a class of maps between cofibrant objects of $\C$ such that the left Bousfield localization $L_{\S}\C$ exists, and that $\D$ is a cofibrantly generated model category for which the generating cofibrations have cofibrant domains and codomains. Then (recall that as categories $L_{\S}\C=\C$)
\[ L_{\S}\C \times \D \xrightarrow{\minus \otimes \minus}  \mathcal{E}\]
remains a left Quillen bifunctor iff $f \otimes d$ is a weak equivalence in $\mathcal{E}$ for each $f \in \S$ and $d$ a domain or codomain of a generating cofibration of $\D$.
\end{lemma}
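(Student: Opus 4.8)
The plan is to check the two defining conditions of a left Quillen bifunctor (Definition \ref{QUILLEN BIFUNCTOR DEF}) for the composite $L_{\S}\C \times \D \to \mathcal{E}$. The adjointness condition is immediate: since $L_{\S}\C = \C$ as a category and $\D$ is unchanged, the functors $c \otimes (\minus)$ and $(\minus) \otimes d$ are literally the same functors as before, so they still have right adjoints. Thus the entire content is the pushout product axiom. The ``only if'' direction is also essentially immediate: if the localized bifunctor is left Quillen, then by Remark \ref{PUSHPRODADV REM}(i) the functor $f \otimes (\minus)$ sends $f \in \S$ — which is a weak equivalence between cofibrant objects in $L_{\S}\C$ by the definition of left Bousfield localization — together with the fact that domains/codomains of generating cofibrations of $\D$ are cofibrant, to ... more precisely: $f \square (\emptyset \to d)$ is a trivial cofibration in $\mathcal E$ for a cofibration $\emptyset\to d$ with $d$ a (co)domain of a generating cofibration, and $f \square (\emptyset\to d)$ is exactly $f \otimes d$ up to the pushout corner, which here degenerates to $f\otimes d$; hence $f \otimes d$ is a weak equivalence.

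For the ``if'' direction — the substantive half — I would argue as follows. By Remark \ref{PUSHPRODGEN REM} it suffices to check the pushout product axiom on generating (trivial) cofibrations. The cofibrations of $L_{\S}\C$ are the same as those of $\C$, so for $f$ a generating cofibration of $L_{\S}\C$ and $g$ any cofibration of $\D$, the map $f \square g$ is already a cofibration in $\mathcal{E}$ because $\otimes$ was a left Quillen bifunctor on $\C \times \D$; and if $g$ is moreover trivial, then $f \square g$ is a trivial cofibration for the same reason. The only remaining case is $f$ a generating \emph{trivial} cofibration of $L_{\S}\C$ and $g$ a generating cofibration of $\D$: here one must show $f \square g$ is a trivial cofibration in $\mathcal{E}$. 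It is a cofibration (again since $f$ is in particular a cofibration of $\C$), so the issue is that it be a weak equivalence. The standard description of the generating trivial cofibrations of a left Bousfield localization (as in \cite[Thm. 4.1.1]{Hi03}) expresses them as (retracts of transfinite composites of pushouts of) maps built from the generating cofibrations of $\C$ together with the maps in $\S$ — more precisely, each such generating trivial cofibration sits in a factorization involving the horn-filling maps $\Lambda$ on $\S$. So I would reduce, using that $(\minus)\square g = (\minus)\otimes g$-type constructions commute with pushouts, transfinite composition and retracts, and that $\mathcal{E}$ has two-out-of-three and is left proper enough for these colimits to be homotopy colimits along cofibrations, to the case where $f$ is literally a map in $\S$. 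In that case $f \square g$ is the pushout product of $f$ with the cofibration $g$ of $\D$; writing $g\colon d \to \bar d$, the map $f\square g$ fits in a square whose other two sides are $f \otimes d$ and $f \otimes \bar d$, both of which are weak equivalences by hypothesis (as $d, \bar d$ are cofibrant, being built from (co)domains of generating cofibrations, and $f$ is a weak equivalence between cofibrant objects so $f\otimes (\minus)$ is homotopically meaningful on cofibrant objects, using Remark \ref{PUSHPRODADV REM}(i) for the \emph{unlocalized} bifunctor on those). A gluing/two-out-of-three argument in the left proper category $\mathcal{E}$ then forces $f \square g$ to be a weak equivalence.

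The main obstacle I anticipate is the bookkeeping in reducing a general generating trivial cofibration of $L_{\S}\C$ to the maps of $\S$ themselves: Hirschhorn's construction produces these as horns on $\S$ relative to the generating cofibrations of $\C$, so one needs that $(\minus)\square g$ carries this construction to weak equivalences, which requires knowing $(\minus)\square g$ preserves the relevant (homotopy) colimits and that it sends the original generating cofibrations of $\C$ to cofibrations of $\mathcal E$ — the latter is exactly the hypothesis that $\otimes$ was already a left Quillen bifunctor, and the former follows from left properness and cellularity. Once that reduction is in place, the gluing argument is routine. Note this is the same mechanism as \cite[Prop. 3.3.18(1)]{Hi03} but carried out one variable at a time, which is why the hypothesis on $\D$ (cofibrant (co)domains of generating cofibrations) is exactly what is needed to make the fixed-variable instances $f \otimes d$ land among maps between cofibrant objects.
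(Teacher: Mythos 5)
Your high-level plan is on the right track — the one-variable-at-a-time reduction and the closing $2$-out-of-$3$/gluing step are exactly what the paper does, and you even name \cite[Prop.~3.3.18(1)]{Hi03} at the end. But the concrete argument you sketch has a genuine gap in the sufficiency direction, and a smaller one in the necessity direction, both of which the paper avoids by deploying \cite[Prop.~3.3.18(1)]{Hi03} at a different spot.

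\emph{Sufficiency.} You propose to reduce the case of a generating trivial cofibration $f$ of $L_{\S}\C$ to the case $f \in \S$ by unwinding Hirschhorn's construction of the localized generating trivial cofibrations (the $\S$-horn construction attached to \cite[Thm.~4.1.1]{Hi03}) and then pushing $(\minus)\square g$ through transfinite composites, pushouts and retracts. You flag this as the ``main obstacle'' and it is: the localized generating trivial cofibrations are not built from $\S$ alone but from the full augmented set of $\S$-horns, and the class of trivial cofibrations $f$ for which $f\square g$ is trivial is not obviously cellularly generated by the hypothesis on $\S$. In particular, showing that $f\square g$ is a weak equivalence for those horn maps is precisely the kind of claim that needs a separate argument; your sketch does not supply one. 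The paper avoids this entirely. Instead of unwinding generating trivial cofibrations, it fixes $d$ or $\bar d$ cofibrant and notes that $(\minus)\otimes d$ is a left Quillen functor $\C \to \mathcal{E}$ (Remark~\ref{PUSHPRODADV REM}(i)); \cite[Prop.~3.3.18(1)]{Hi03} then says directly that $(\minus)\otimes d$ remains left Quillen on $L_{\S}\C$ precisely because $f\otimes d$ is a weak equivalence for all $f\in\S$. That immediately upgrades $c\otimes d \to \bar c \otimes d$ and $c\otimes\bar d \to \bar c\otimes\bar d$ to trivial cofibrations for every trivial cofibration $f\colon c\to\bar c$ in $L_{\S}\C$ — not just $f\in\S$ — and the $2$-out-of-$3$ argument closes the proof. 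The reduction you struggle with is exactly what \cite[Prop.~3.3.18(1)]{Hi03} handles for you.

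\emph{Necessity.} Your argument invokes the pushout product axiom on $f\square(\emptyset\to d)$ with $f\in\S$, but $f$ is only a weak equivalence between cofibrant objects; it is not assumed to be a cofibration in $L_{\S}\C$. The axiom therefore does not apply. The correct observation is Ken Brown's lemma (or again \cite[Prop.~3.3.18(1)]{Hi03}, which packages it): a left Quillen functor $(\minus)\otimes d \colon L_{\S}\C \to \mathcal{E}$ sends weak equivalences between cofibrant objects to weak equivalences, so $f\otimes d$ is a weak equivalence. Replacing the pushout-product reasoning by this fixes the argument and aligns it with the paper.
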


\begin{proof}
First note that by Remark \ref{PUSHPRODGEN REM} $\otimes$ will remain a left Quillen bifunctor precisely if the pushout product axiom holds when $f \colon c \to \bar{c}$ is a \textit{trivial} cofibration in $L_{\S} \C$ and $g \colon d \to \bar{d}$ is a \textit{generating} cofibration in $\D$.
Since $(\minus) \otimes d$, $(\minus) \otimes \bar{d}$ are left Quillen functors with respect to the original model structure $\C$, \cite[Prop. 3.3.18(1)]{Hi03} shows that the condition in the theorem is necessary and that, if that condition holds, the horizontal maps in
\[\xymatrix@1{
    c \otimes d \ar@{ >->}[r]^{\sim} \ar[d] & \bar{c} \otimes d \ar[d] \\
		c \otimes \bar{d} \ar@{ >->}[r]^{\sim} & \bar{c} \otimes \bar{d}}\]
are trivial cofibrations. The 2-out-of-3 property now implies that $f \square g$ is a weak equivalence, showing that the condition in the theorem is also sufficient.
\end{proof}

\begin{proof}[Proof of Theorem \ref{BIQUILLEN THM}]
The existence of the required right adjoints is formal.

We will prove the remainder of both parts in parallel.

As a first step we prove the analogue result with stable structures replaced by level structures throughout. Since the generating (trivial) cofibrations in the $S$ $\Sigma$-inj $G$-proj level model structure all have the form $S\otimes f$ for some $f$ in $(\mathsf{S}_{\**})^G$ (cf. Remark \ref{SIGMAGENCOF REM}), this reduces to showing the analogue result for the bifunctor
\[\mathsf{S}_{\**}^{G\times \Sigma} \times (\mathsf{Sp}^{\Sigma})^G \xrightarrow{\minus \otimes_{G} \minus} \mathsf{Sp}^{\Sigma},\]
where $\mathsf{S}_{\**}^{G\times \Sigma}$ has the $\Sigma$-inj $G$-proj model structure obtained by combining the $\Sigma_m$-inj $G$-proj model structures of Proposition \ref{NGSPAC PROP} for each $m \geq 0$.

For the monomorphism case, choose (cf. Proposition \ref{GSPAC PROP}) a generating (resp. trivial) cofibration in $\mathsf{S}_{\**}^{G\times \Sigma}$
\[f =  (G\times \Sigma_m)/H \cdot f'\]
($f'$ a generating (resp. trivial) cofibration in 
$\mathsf{S}_{\**}$, $H \cap G \times \** = \**$) and a monomorphism $g$ in $(Sp^{\Sigma})^G$.
Then, using the identification $\Sigma_{m} \times \Sigma_{\bar{m}-m} \subset \Sigma_{\bar{m}}$,
\[\begin{aligned}
(f\square^{\otimes_G}g)_{\bar{m}}\simeq &
  \left((f\square^{\otimes}g)_{\bar{m}}\right)_G \simeq 
  \left(G \times \Sigma_{\bar{m}}\underset{H \times \Sigma_{\bar{m}-m}}{\cdot}
	f'\square^{\wedge}g_{\bar{m}-m}\right)_G \simeq \\
  \simeq & \left(G \backslash (G \times \Sigma_{\bar{m}}) /(H \times \Sigma_{\bar{m}-m})\right)	\cdot
	f'\square^{\wedge}g_{\bar{m}-m},
\end{aligned}\]
where the last step follows since the condition $H \cap G \times \** = \**$ implies $G$ acts freely on cosets $(G \times \Sigma_{\bar{m}}) /(H \times \Sigma_{\bar{m}-m})$. It is now clear that $f\square^{\otimes_G}g$ is a monomorphism, level trivial if either $f'$ or all $g_m$ are.

For the $S$ level case, note first that by the monomorphism case we need no longer worry about trivial cofibrations. For the case of regular cofibrations, choose generating cofibrations $f$ in $\mathsf{S}_{\**}^{G\times \Sigma}$ as above and
\[g = S \otimes \left( (G\times \Sigma_{\bar{m}})/\bar{H} \cdot g'\right)\]
($g'$ a generating cofibration in $\mathsf{S}_{\**}$, any $\bar{H} \leq G \times \Sigma_{\bar{m}}$) in $(\mathsf{Sp}^{\Sigma})^G$. Then, using the identification $\Sigma_{m} \times \Sigma_{\bar{m}} \subset \Sigma_{m+\bar{m}}$,
\[f \square^{\otimes_G}g=\left(f \square^{\otimes} g\right)_G = 
  S \otimes \left( G \backslash (G\times G\times \Sigma_{m+\bar{m}})/(H\times \bar{H}) \cdot f'\square g' \right)
\]
which is indeed a $S$ cofibration, finishing the proof of the analogue level result.

We now turn to the second step, showing that $\wedge_G$ remains a left Quillen bifunctor after stabilizing the model structures. In all cases one is localizing by $\mathcal{S}_G$  (cf. proofs of Theorems \ref{SGSPEC THM}, \ref{GSTABLE THM}, \ref{SUSPEC THM}), and hence by Lemma \ref{LOCBIADJ LEM} it suffices to verify $f \otimes_G A$ is a stable equivalence for $f \in \mathcal{S}_G$ and $A$ a suitably cofibrant $G$-spectrum. It suffices to deal with the case of monomorphism cofibrant $A$ (i.e., any $A$), and since $\mathcal{S}_G= G \cdot \mathcal{S}_{\**}$ this reduces to the case $G=\**$. But for $G=\**$ the claim follows by \cite[Thm. 5.3.7(5)]{HSS}, finishing the proof.
\end{proof}

\subsection{Lax $\Sigma_n$-cofibrancy of $n$-fold pushout products}\label{QSECTION} 


In this section we prove Theorem \ref{SIGMANPUSHPROD THM}. Roughly speaking, the proof will follow by induction using the usual ``retract of a transfinite composition of pushouts of generating cofibrations'' description of cofibrations. The main obstacle is the fact that the $n$-fold pushout product $\square^n$ does not respect compositions of maps. Handling those will require two key technical results, Lemmas \ref{MAIN428} and \ref{COFSQUARE}. 

To prove Lemmas \ref{MAIN428}, \ref{COFSQUARE} and Theorem \ref{SIGMANPUSHPROD THM} we will need some notation.

\begin{definition}\label{DIAGRAMQ}
Let $I$ be a poset and $i \colon I \to \mathsf{Sp}^{\Sigma}$. We denote by $i^{\wedge n}$ the ``cubical'' diagram 
\[ i^{\wedge n} \colon I^{\times n} \xrightarrow{i^{\times n}} (\mathsf{Sp}^{\Sigma})^{\times n} \xrightarrow{\wedge} \mathsf{Sp}^{\Sigma}.\]
Further, for $T \subset I^{\times n}$ any subset, we denote $Q^n_T(i) = \colim_T (i^{\wedge n})$.
Note that when $T$ is closed under the obvious $\Sigma_n$-action on $I^{\times n}$ one obtains an induced $\Sigma_n$-action on $Q^n_T(i)$.
\end{definition}

\begin{remark}\label{QNT RMK}
Borrowing from \cite{Ha09}, we let $Q^n_t(i)$ denote 
 $Q^n_{T_t}(i)$, where $i=X\to Y$ is viewed as a functor $(0 \to 1) \to \mathsf{Sp}^{\Sigma}$ and $T_t$ is the subset of $(0 \to 1)^{\times n}$ of those tuples with at most $t$ $1$-entries.
\end{remark}

The objects $Q^n_T(i)$ are related to latching objects/maps (cf. \cite[Obs. 3.8]{RiVe13}).

\begin{definition}
Given $e \in I^{\times n}$ set $T^n_{e}=\{\bar{e} \in I^{\times n} : \bar{e} < e\}$. Further, given $i \colon I \to \mathsf{Sp}^{\Sigma}$, define the \textit{latching map of $i^{\wedge n}$ at $e$} as the natural map
\[L_e^n(i^{\wedge n})=Q^n_{T_e}(i) \xrightarrow{l_e^n(i^{\wedge n})} i^{\wedge n}(e).\]
\end{definition}

A straightforward computation reveals the following relationship between latching maps and the pushout product (cf. \cite[Example 4.6]{RiVe13}).

\begin{proposition}\label{LATCHPUSH PROP}
Let $e_1 \in I^{\times n_1}$ and $e_2 \in I^{\times n_2}$, so that $(e_1,e_2) \in I^{\times (n_1+n_2)}$. Then
\[l^{n_1+n_2}_{(e_1,e_2)}\left(i^{\wedge(n_1+n_2)}\right) =
 l_{e_1}^{n_1}\left(i^{\wedge n_1}\right) \square l_{e_2}^{n_2}\left(i^{\wedge n_2}\right).\]
\end{proposition}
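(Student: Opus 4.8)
The plan is to reduce the identity to two inputs: the fact (Proposition \ref{SPECTRAADJUNCTION PROP}) that $\wedge$ preserves colimits in each variable separately, and a combinatorial decomposition of the strict down-set of $(e_1,e_2)$ in $I^{\times (n_1+n_2)}$.

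First I would record the combinatorial observation. Writing $I_{\leq e}$ and $I_{<e}$ for the non-strict and strict down-sets of $e$ in a product poset, the object $e$ is terminal in $I_{\leq e}$, so $\colim_{I_{\leq e}}(i^{\wedge n}) \cong i^{\wedge n}(e)$ while $Q^n_{T_e}(i) = \colim_{I_{<e}}(i^{\wedge n})$, and $l^n_e(i^{\wedge n})$ is the map induced by the inclusion $I_{<e} \subset I_{\leq e}$. For $(e_1,e_2) \in I^{\times n_1}\times I^{\times n_2} = I^{\times(n_1+n_2)}$, unwinding the product order gives $I^{\times(n_1+n_2)}_{<(e_1,e_2)} = A \cup B$ with $A = I^{\times n_1}_{<e_1}\times I^{\times n_2}_{\leq e_2}$, $B = I^{\times n_1}_{\leq e_1}\times I^{\times n_2}_{<e_2}$, and $A\cap B = I^{\times n_1}_{<e_1}\times I^{\times n_2}_{<e_2}$. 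One checks directly that $A$, $B$ and $A\cap B$ are down-closed subposets of their union, so every comparison in $A\cup B$ lies entirely within $A$ or entirely within $B$; hence a natural cocone on $i^{\wedge(n_1+n_2)}|_{A\cup B}$ is precisely a pair of cocones on the restrictions to $A$ and to $B$ agreeing on $A\cap B$, and consequently $Q^{n_1+n_2}_{T_{(e_1,e_2)}}(i)$ is the pushout $\colim_A \sqcup_{\colim_{A\cap B}}\colim_B$ of $i^{\wedge(n_1+n_2)}$.

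Next, monoidal associativity identifies $i^{\wedge(n_1+n_2)}(\bar e_1,\bar e_2)$ with $i^{\wedge n_1}(\bar e_1)\wedge i^{\wedge n_2}(\bar e_2)$ naturally, so on each of the product subposets above $i^{\wedge(n_1+n_2)}$ is the external smash of a restriction of $i^{\wedge n_1}$ with a restriction of $i^{\wedge n_2}$. Since $\wedge$ commutes with colimits in each variable, the colimit of such an external smash over a product poset is the smash of the two colimits; together with $\colim_{I_{\leq e}}(i^{\wedge n})\cong i^{\wedge n}(e)$ this yields $\colim_A \cong Q^{n_1}_{T_{e_1}}(i)\wedge i^{\wedge n_2}(e_2)$, $\colim_B \cong i^{\wedge n_1}(e_1)\wedge Q^{n_2}_{T_{e_2}}(i)$ and $\colim_{A\cap B}\cong Q^{n_1}_{T_{e_1}}(i)\wedge Q^{n_2}_{T_{e_2}}(i)$. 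Thus $Q^{n_1+n_2}_{T_{(e_1,e_2)}}(i)$ is exactly the pushout defining the domain of $l^{n_1}_{e_1}(i^{\wedge n_1})\square l^{n_2}_{e_2}(i^{\wedge n_2})$, while the codomain $i^{\wedge(n_1+n_2)}(e_1,e_2)\cong i^{\wedge n_1}(e_1)\wedge i^{\wedge n_2}(e_2)$ matches the codomain of the pushout product. Finally I would note that under these identifications $l^{n_1+n_2}_{(e_1,e_2)}(i^{\wedge(n_1+n_2)})$ and $l^{n_1}_{e_1}(i^{\wedge n_1})\square l^{n_2}_{e_2}(i^{\wedge n_2})$ coincide, since both are the unique map out of the pushout restricting to $l^{n_1}_{e_1}(i^{\wedge n_1})\wedge i^{\wedge n_2}(e_2)$ on $\colim_A$ and to $i^{\wedge n_1}(e_1)\wedge l^{n_2}_{e_2}(i^{\wedge n_2})$ on $\colim_B$.

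I do not expect a genuine obstacle: the content is the down-set decomposition plus the interchange of $\wedge$ with colimits. The only points needing a little care are verifying that $A$ and $B$ are down-closed in $A\cup B$ (so that the colimit over the union is the naive pushout) and checking that the morphisms — not merely the objects — agree under the associativity isomorphisms; both are routine.
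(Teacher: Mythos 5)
Your proof is correct and is exactly the ``straightforward computation'' the paper has in mind (the paper gives no proof, only a pointer to \cite[Example 4.6]{RiVe13}): decompose the strict down-set of $(e_1,e_2)$ into the two down-closed pieces $I^{\times n_1}_{<e_1}\times I^{\times n_2}_{\leq e_2}$ and $I^{\times n_1}_{\leq e_1}\times I^{\times n_2}_{<e_2}$, use that $\wedge$ commutes with colimits in each variable to identify the three resulting colimits, and match the induced maps.
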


The following is the key technical lemma in this section. The proof of this result, which is essentially lifted from the appendix to the author's thesis\footnote{
That appendix proved the analogue claim for $\Sigma_{n}$-projective cofibrations, an ultimately useless fact since the $\Sigma_{n}$-projective analogue of Theorem \ref{SIGMANPUSHPROD THM} fails for generating cofibrations.
}, 
explores generalizations of filtrations found in \cite[Sec. 12]{ElmMan06}, \cite[Def. 4.13]{Ha09} from single maps to compositions of maps. A similar result, with modified hypotheses and conclusions but sharing some of the key ideas in the proof, was proven independently by David White in \cite{Wh14}.

\begin{lemma}\label{MAIN428}
Let $i \colon (0\to 1 \to 2) \to \mathsf{Sp}^{\Sigma}$ be a diagram $Z_0\xrightarrow{f_1} Z_1 \xrightarrow{f_2} Z_2$ such that  
\[f_i^{\square \bar{n}}\colon Q^{\bar{n}}_{\bar{n}-1}(f_i)\to Z_i^{\wedge \bar{n}} ,\quad 0\leq \bar{n} \leq n, i=1,2\]
are $S$ $\Sigma$-inj $\Sigma_{\bar{n}}$-proj cofibrations 
in $(\mathsf{Sp}^\Sigma)^{\Sigma_{\bar{n}}}$.

Choose $T\subset \bar{T} \subset (0\to 1 \to 2)^{\times n}$ symmetric convex (recall $T$ is called convex if $e \in T$ and $\bar{e} \leq e$ implies $\bar{e} \in T$) subsets containing any tuple that has at least one $0$-entry. Then the map
\[Q^n_T(i) \to Q^n_{\bar{T}}(i)\]
is a $S$ $\Sigma$-inj $\Sigma_{n}$-proj stable cofibration.

Additionally, if one also knows that $Z_0^{\wedge \bar{n}}, 0\leq \bar{n} \leq n$ is $S$ $\Sigma$-inj $\Sigma_{\bar{n}}$-proj cofibrant then the conclusion above holds for any symmetric convex $T\subset \bar{T}$.
\end{lemma}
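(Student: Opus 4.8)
The plan is to prove the statement by induction on $n$, and for fixed $n$ by a secondary induction on the ``size'' of the symmetric convex pair $T \subset \bar T$, reducing to the case where $\bar T = T \cup \Sigma_n \cdot e$ adds a single orbit of tuples $e \in (0 \to 1 \to 2)^{\times n}$ of maximal degree in $\bar T$. The starting observation is the standard cube-to-latching dictionary: for such an atomic extension, the map $Q^n_T(i) \to Q^n_{\bar T}(i)$ is obtained from the latching map $l^n_e(i^{\wedge n}) \colon L^n_e(i^{\wedge n}) \to i^{\wedge n}(e)$ of the cubical diagram at $e$ by inducing up along the stabilizer $\Sigma_e \leq \Sigma_n$ of $e$ and pushing out along the map $Q^n_T(i) \to $ (the wedge of $\Sigma_e$-orbit summands already present). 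Concretely, $Q^n_T(i) \to Q^n_{\bar T}(i)$ is a cobase change of $\Sigma_n \cdot_{\Sigma_e} \bigl( l^n_e(i^{\wedge n}) \bigr)$, so by Proposition \ref{PROPINDUCE} and the closure of cofibrations under pushout it suffices to show each latching map $l^n_e(i^{\wedge n})$ is an $S$ $\Sigma$-inj $\Sigma_e$-proj stable cofibration in $(\mathsf{Sp}^\Sigma)^{\Sigma_e}$.

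\textbf{The key computation.} The heart of the argument is then Proposition \ref{LATCHPUSH PROP}: if $e = (e_1, e_2, \dots)$ groups the coordinates of $e$ by value, then $l^n_e(i^{\wedge n})$ factors as an external pushout product of the latching maps of the one-variable and two-variable diagrams at the relevant sub-tuples. Since $T$ contains every tuple with a $0$-entry, in the \emph{first} part of the lemma every coordinate of $e$ is a $1$ or a $2$; grouping the $1$'s (say there are $a$ of them) and the $2$'s (say $b$ of them, $a + b = n$), Proposition \ref{LATCHPUSH PROP} expresses $l^n_e(i^{\wedge n})$ as a pushout product of maps of the form $f_2^{\square \bar n}$ (for the blocks of $2$-coordinates, coming from the diagram $1 \to 2$, which is $f_2$) and of latching-type maps for the constant subdiagram at $Z_1$ in the $1$-coordinates --- but a latching map of a \emph{constant} cubical diagram at $Z_1$ over the poset where all coordinates equal $1$ is, after identifying degrees, exactly $\varnothing \to Z_1^{\wedge a}$ up to a free $\Sigma_a$-action issue, which factors through $f_1^{\square a}$ composed with the cofibration $Z_1^{\wedge a} = Z_1^{\wedge a}$; more carefully, one identifies the $1$-block latching map with $f_1^{\square a} \colon Q^a_{a-1}(f_1) \to Z_1^{\wedge a}$ (this is precisely Remark \ref{QNT RMK} with $t = a-1$, since among the sub-tuples below $e$ in the $1$-coordinates the ``boundary'' is exactly the tuples with at least one strictly-smaller, i.e. $0$, entry). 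Thus $l^n_e(i^{\wedge n})$ is a pushout product of the maps $f_1^{\square a}$ and $f_2^{\square b}$, which are $S$ $\Sigma$-inj $\Sigma_a$-proj and $S$ $\Sigma$-inj $\Sigma_b$-proj cofibrations by hypothesis. Proposition \ref{PROPSMASH} then says their external pushout product is an $S$ $\Sigma$-inj $(\Sigma_a \times \Sigma_b)$-proj cofibration, and $\Sigma_a \times \Sigma_b = \Sigma_e$ here, completing the first part.

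\textbf{The second part and the main obstacle.} For the additional claim, $T$ need no longer contain all tuples with a $0$-entry, so the tuple $e$ may have $0$-coordinates; grouping $e$ into its $0$-block (size $c$), $1$-block (size $a$) and $2$-block (size $b$), Proposition \ref{LATCHPUSH PROP} now produces an extra factor which is the latching map of the constant diagram at $Z_0$ in the $0$-coordinates, and by the same identification this is (up to the free $\Sigma_c$ action) the map $\varnothing \to Z_0^{\wedge c}$, i.e. $Z_0^{\wedge c}$ as a cofibrant object --- which is exactly what the extra hypothesis ``$Z_0^{\wedge \bar n}$ is $S$ $\Sigma$-inj $\Sigma_{\bar n}$-proj cofibrant'' provides. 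So $l^n_e(i^{\wedge n})$ becomes a pushout product of $\varnothing \to Z_0^{\wedge c}$, $f_1^{\square a}$, and $f_2^{\square b}$, again handled by iterating Proposition \ref{PROPSMASH}. The main obstacle --- and the place demanding genuine care rather than routine bookkeeping --- is making the identification of the ``constant-block latching map'' with $f_j^{\square(\cdot)}$ (respectively with $\varnothing \to Z_0^{\wedge(\cdot)}$) \emph{equivariantly correct}: one must track how the full symmetric group $\Sigma_n$ permutes tuples, verify that the stabilizer $\Sigma_e$ really is the product of block symmetric groups, confirm that the induced action on $Q^n_{T_e}(i)$ matches the external-pushout-product action used in Proposition \ref{PROPSMASH} (no hidden twist), and handle the base cases $\bar n \le n$ and the transfinite/limit steps of the secondary induction (which are formal since cofibrations are closed under transfinite composition and the poset is finite here so no transfinite issue actually arises). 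I also need to check the reduction ``every symmetric convex pair is a finite tower of atomic $\Sigma_n$-orbit extensions'' respects convexity at each stage --- choosing $e$ of maximal degree in $\bar T \setminus T$ guarantees $T \cup \Sigma_n \cdot e$ is again convex --- which is a short combinatorial verification.
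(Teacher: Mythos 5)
Your proof follows essentially the same route as the paper's: reduce to the atomic extension $\bar{T} = T \cup \Sigma_n \cdot e$, identify $Q^n_T(i) \to Q^n_{\bar{T}}(i)$ as a cobase change of $\Sigma_n \cdot_{\Sigma_e} l^n_e(i^{\wedge n})$, decompose the latching map via Proposition~\ref{LATCHPUSH PROP} into blocks, and finish with Propositions~\ref{PROPSMASH} and \ref{PROPINDUCE}. The bottom-line identification $l^n_e(i^{\wedge n}) = Z_0^{\wedge n_0} \wedge f_1^{\square n_1} \square f_2^{\square n_2}$ and the case split on whether $n_0=0$ (main claim) or $n_0$ arbitrary (additional claim) are exactly the paper's.

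One expositional slip worth flagging: you describe the $1$-block factor as ``a latching map of a constant cubical diagram at $Z_1$,'' then say it is $\varnothing \to Z_1^{\wedge a}$, before self-correcting to $f_1^{\square a}$. The corrected identification is the right one; the latching poset beneath $(1,\dots,1)$ is exactly the boundary of the $f_1$-cube, not a constant diagram. More substantively, for the $2$-block you assert $l^{n_2}_{e_2}(i^{\wedge n_2}) = f_2^{\square n_2}$ ``coming from the diagram $1\to 2$'' without justification, but the latching poset $T^{n_2}_{e_2}$ here lives over $\{0,1,2\}^{n_2}$, not $\{1,2\}^{n_2}$, so this identification requires observing that the tuples with no $0$-entries are final in $T^{n_2}_{e_2}$; the paper makes this finality remark explicit, and you should too. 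Neither slip affects the validity of the argument.
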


\begin{proof}
We deal with the main and additional cases in parallel. 

Without loss of generality we assume $\bar{T}$ is obtained from $T$ by adding the orbit of some  
$e = (e_0,e_1,e_2) \in  \{0\}^{\times n_0}  \times \{1\}^{\times n_1} \times \{2\}^{\times n_2}$. Then $T_e^n \subset T$ and one has a pushout diagram
\[\xymatrix@1{
   \Sigma_n \underset{\Sigma_{n_0} \times \Sigma_{n_1}\times \Sigma_{n_2}}{\cdot}
    Q^n_{T_e}(i) \ar[r] \ar[d]_-{\Sigma_n \underset{\Sigma_{n_0} \times \Sigma_{n_1}\times \Sigma_{n_2}}{\cdot}l_e^n(i^{\wedge n})}
		& Q^n_T(i) \ar[d] \\
   \Sigma_n \underset{\Sigma_{n_0} \times \Sigma_{n_1}\times \Sigma_{n_2}}{\cdot}
	  Z_0^{n_0}\wedge Z_1^{n_1}\wedge Z_2^{n_2} \ar[r] 
		& Q^n_{\bar{T}}(i),
}\]
so that it suffices to show that the left hand map is a $S$ $\Sigma$-inj $\Sigma_{n}$-proj cofibration, and by Proposition \ref{PROPINDUCE} this reduces to showing that the latching map $l_e^n(i^{\wedge n})$ is a $S$ $\Sigma$-inj $\Sigma_{n_0} \times \Sigma_{n_1}\times \Sigma_{n_2}$-proj cofibration.  
Proposition \ref{LATCHPUSH PROP} then identifies
\[l^{n}_e\left(i^{\wedge n}\right)=
  l^{n_0}_{e_0}(i^{\wedge n_0}) \square l^{n_1}_{e_1}(i^{\wedge n_1}) \square l^{n_2}_{e_2}(i^{\wedge n_2})=
  Z_0^{\wedge n_0} \wedge f_1^{\square n_1} \square f_2^{\square n_2}
\]
(for the identification $l^{n_2}_{e_2}(i^{\wedge n_2})=f_2^{\square n_2}$, note that the tuples without 0-entries are final in $T_{e_2}^{n_2}\subset (0 \to 1 \to 2)^{\times n_2}$).
Now note that in the main case $T$ already contains all tuples with a $0$-entry so that it must be $n_0=0$, while in the additional case $n_0$ can take any value. In either case Proposition \ref{PROPSMASH} finishes the proof.
\end{proof}

\begin{remark}
While it is straightforward to generalize Lemma \ref{MAIN428} to longer compositions (of three or more maps), such generalizations will not be necessary.
\end{remark}

\begin{lemma}\label{COFSQUARE}
Let $Z_0\xrightarrow{f_1} Z_1 \xrightarrow{f_2} Z_2$ be as in Lemma \ref{MAIN428}.

If one knows additionally that $Z_0^{\wedge \bar{n}}, 0\leq \bar{n} \leq n$ is $S$ $\Sigma$-inj $\Sigma_{\bar{n}}$-proj  cofibrant, then the maps (where the $Q^n_{\bar{n}}$ objects are defined in Remark \ref{QNT RMK})
$$Q^n_{\bar{n}}(f_2f_1)\bigvee_{Q^{n}_{\bar{n}}(f_1)} Q^n_{\bar{n}+1}(f_1) \to  Q^n_{\bar{n}+1}(f_2f_1),\quad 0\leq \bar{n} < n$$
are $S$ $\Sigma$-inj $\Sigma_{n}$-proj stable cofibrations.

Further, absent the additional condition, the result still holds when $\bar{n}=n-1$.
\end{lemma}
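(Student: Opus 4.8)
The plan is to realize the map in the statement as a map $Q^n_T(i) \to Q^n_{\bar T}(i)$ between colimits of the ``cubical'' diagram $i^{\wedge n}$ of the three-term diagram $i\colon (0\to 1\to 2)\to \mathsf{Sp}^{\Sigma}$ given by $Z_0\xrightarrow{f_1}Z_1\xrightarrow{f_2}Z_2$, over symmetric convex subsets $T\subseteq \bar T\subseteq (0\to 1\to 2)^{\times n}$, and then to quote Lemma \ref{MAIN428} (whose standing hypotheses on the $f_i^{\square \bar m}$ hold by assumption). For $e\in (0\to 1\to 2)^{\times n}$ write $o(e)$ and $t(e)$ for the numbers of coordinates of $e$ equal to $1$ and to $2$.

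First I would rewrite the three objects in the statement as colimits of $i^{\wedge n}$. The full subposet inclusion $(0\to 1)^{\times n}\hookrightarrow (0\to 1\to 2)^{\times n}$ sending $1\mapsto 1$ carries $(f_1)^{\wedge n}$ to the restriction of $i^{\wedge n}$, whence $Q^n_{\bar n}(f_1)=Q^n_{U_{\bar n}}(i)$ and $Q^n_{\bar n+1}(f_1)=Q^n_{U_{\bar n+1}}(i)$, where $U_j=\set{e:t(e)=0,\ o(e)\leq j}$. Using instead the inclusion $1\mapsto 2$ identifies $Q^n_j(f_2f_1)$ with $\colim_{S_j}i^{\wedge n}$, $S_j=\set{e:o(e)=0,\ t(e)\leq j}$. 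The set $S_j$ is not convex, but I would check that the inclusion $S_j\hookrightarrow \widehat S_j$ into its convex closure $\widehat S_j=\set{e:o(e)+t(e)\leq j}$ is cofinal: for $e\in \widehat S_j$ the tuple $e'$ obtained from $e$ by changing each $1$-entry to a $2$-entry lies in $S_j$, satisfies $e'\geq e$, and is the minimum of the set of elements of $S_j$ lying above $e$, which is therefore connected. Hence $Q^n_j(f_2f_1)=\colim_{S_j}i^{\wedge n}=\colim_{\widehat S_j}i^{\wedge n}=Q^n_{\widehat S_j}(i)$, with $\widehat S_j$ symmetric convex.

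Next I would identify the pushout $P:=Q^n_{\bar n}(f_2f_1)\bigvee_{Q^n_{\bar n}(f_1)}Q^n_{\bar n+1}(f_1)$ with $Q^n_T(i)$ for the symmetric convex set $T:=\widehat S_{\bar n}\cup U_{\bar n+1}$. Indeed $\widehat S_{\bar n}\cap U_{\bar n+1}=U_{\bar n}$, and since $\widehat S_{\bar n}$ and $U_{\bar n+1}$ are convex every comparable pair in their union lies entirely in one of them, so $Q^n_T(i)=\colim_{\widehat S_{\bar n}\cup U_{\bar n+1}}i^{\wedge n}$ is the pushout of $Q^n_{\widehat S_{\bar n}}(i)=Q^n_{\bar n}(f_2f_1)$ and $Q^n_{U_{\bar n+1}}(i)=Q^n_{\bar n+1}(f_1)$ along $Q^n_{U_{\bar n}}(i)=Q^n_{\bar n}(f_1)$. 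One then checks the two legs agree with those in the statement: $Q^n_{U_{\bar n}}(i)\to Q^n_{U_{\bar n+1}}(i)$ is the evident inclusion, while $Q^n_{U_{\bar n}}(i)\to Q^n_{\widehat S_{\bar n}}(i)$, evaluated on a generator $i^{\wedge n}(e)$ with $e\in U_{\bar n}$, factors through the structure map $i^{\wedge n}(e)\to i^{\wedge n}(e')$, which is $f_2$ smashed on the relevant coordinates --- i.e.\ it is the map induced by $f_2$, as required. Likewise the induced map $P\to Q^n_{\bar n+1}(f_2f_1)=Q^n_{\widehat S_{\bar n+1}}(i)$ is the map of colimits coming from $T\subseteq \widehat S_{\bar n+1}$.

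It remains to apply Lemma \ref{MAIN428} to $T\subseteq \bar T:=\widehat S_{\bar n+1}$, both symmetric convex. Under the additional hypothesis that each $Z_0^{\wedge \bar m}$ is $S$ $\Sigma$-inj $\Sigma_{\bar m}$-proj cofibrant, the ``additional case'' of Lemma \ref{MAIN428} applies to any symmetric convex $T\subseteq \bar T$, giving the claim for all $0\leq \bar n<n$. Without that hypothesis take $\bar n=n-1$: then $\widehat S_{n-1}$ consists precisely of the tuples with at least one $0$-entry, so $T\supseteq \widehat S_{n-1}$ contains every such tuple, while $\bar T=\widehat S_n=(0\to 1\to 2)^{\times n}$ certainly does, and the ``main case'' of Lemma \ref{MAIN428} applies. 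The only step that is not pure bookkeeping is the cofinality argument identifying $Q^n_j(f_2f_1)$ with the colimit over the convex set $\widehat S_j$; once that is granted everything reduces to Lemma \ref{MAIN428}.
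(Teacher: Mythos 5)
Your proposal is correct and follows the same route as the paper's proof: you identify $Q^n_{\bar n}(f_1)$, $Q^n_{\bar n}(f_2f_1)$ and the pushout with $Q^n_T(i)$ for the same symmetric convex subsets $T$ (your $U_j$, $\widehat S_j$ are the paper's $T^1_j$, $T^2_j$), and then invoke Lemma \ref{MAIN428}. The only difference is that you supply two small details the paper leaves implicit, namely the cofinality of $S_j\hookrightarrow\widehat S_j$ and the gluing argument identifying $Q^n_{T_1\cup T_2}(i)$ as a pushout when $T_1,T_2$ are downward closed.
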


\begin{proof}
This is a direct consequence of Lemma \ref{MAIN428} by identifying all objects with $Q^n_{T}(i)$ for some $T$. For $Q^n_{k}(f_1)$ this is $T^1_k$, the subset of tuples with no $2$-entries and at most $k$ $1$-entries, while for $Q^n_{k}(f_2f_1)$ it is $T^2_k$, the subset of tuples with at least $n-k$ $0$-entries (or equivalently, at most $k$ $2$-or-$1$-entries). The result then follows by noting that $ T^2_{\bar{n}} \cap T^1_{\bar{n}+1} = T^1_{\bar{n}}$ and $ T^2_{\bar{n}} \cup T^1_{\bar{n}+1} \subset T^2_{\bar{n}+1}$.
\end{proof}

All we are now missing to prove Theorem \ref{SIGMANPUSHPROD THM} is the following lemma, which handles the pushout case (compare with \cite[Prop. 6.13]{Ha09}).  

\begin{lemma}\label{PUSH LEM}
Consider a pushout diagram 
\begin{equation}\label{SQUARE}
\xymatrix@1{A \ar[r]\ar[d]_i & C \ar[d]^f \\
          B \ar[r]         & D.}
\end{equation}
If $i^{\square n}$ is a (trivial) $S$ $\Sigma$-inj $\Sigma_n$-proj cofibration in $(\mathsf{Sp}^\Sigma)^{\Sigma_n}$ then so is $f^{\square n}$.
\end{lemma}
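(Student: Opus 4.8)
The plan is to recognize $f^{\square n}$ as a cobase change of $i^{\square n}$ in $(\mathsf{Sp}^{\Sigma})^{\Sigma_n}$; the conclusion is then immediate, since (trivial) cofibrations in a model category are stable under cobase change.

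To set this up, I would first reorganize the pushout \eqref{SQUARE} into a ``span''. Regard \eqref{SQUARE} as a diagram $W\colon [1]\times[1]\to \mathsf{Sp}^{\Sigma}$ with $W(0,0)=A$, $W(1,0)=B$, $W(0,1)=C$, $W(1,1)=D$, and let $\mathcal{Q}=[1]\times[1]\setminus\{(1,1)\}$ be the underlying span poset ($A$ the minimum, $B$ and $C$ maximal). Since \eqref{SQUARE} is a pushout, $D=\colim_{\mathcal{Q}}(W|_{\mathcal{Q}})$; as $\wedge$ preserves colimits separately in each variable, this globalizes to $D^{\wedge n}=Q^n_{\mathcal{Q}^{\times n}}(W)$ in the notation of Definition \ref{DIAGRAMQ} (applied to $W\colon \mathcal{Q}\to\mathsf{Sp}^{\Sigma}$, viewing $\mathcal{Q}^{\times n}$ as ambient poset). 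Now let $\mathbf{B}=((1,0),\dots,(1,0))\in\mathcal{Q}^{\times n}$ be the tuple constant at $B$. One checks directly that $\mathbf{B}$ is a maximal element, that its strict down-set $T_{\mathbf{B}}=\{e:e<\mathbf{B}\}$ is exactly the set of tuples lying in $\{A,B\}^{\times n}\cong(0\to1)^{\times n}$ with at most $n-1$ coordinates equal to $B$, and that $W^{\wedge n}$ restricted there is $i^{\wedge n}$; hence $Q^n_{T_{\mathbf{B}}}(W)=Q^n_{n-1}(i)$, and iterating Proposition \ref{LATCHPUSH PROP} identifies the latching map $Q^n_{n-1}(i)=Q^n_{T_{\mathbf{B}}}(W)\to W^{\wedge n}(\mathbf{B})=B^{\wedge n}$ with $i^{\square n}$.

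The one genuinely fiddly point --- which I expect to be the main obstacle --- is identifying the domain $Q^n_{n-1}(f)$ with $Q^n_{\hat{\mathcal{Q}}}(W)$, where $\hat{\mathcal{Q}}=\mathcal{Q}^{\times n}\setminus\{\mathbf{B}\}$. Unwinding the definition of $Q^n_{n-1}(f)$ --- the colimit of $f^{\wedge n}$ over tuples of $C$'s and $D$'s with at least one $C$ --- and substituting $D=\colim_{\mathcal{Q}}(W|_{\mathcal{Q}})$ coordinatewise exhibits $Q^n_{n-1}(f)$ as $Q^n_{S}(W)$, where $S\subseteq\mathcal{Q}^{\times n}$ is the (non-convex) full subposet of tuples having at least one coordinate equal to $C$; and since $S$ is cofinal in $\hat{\mathcal{Q}}$ --- every tuple of $\hat{\mathcal{Q}}$ sits below some $C$-containing tuple (switch an $A$-entry to $C$), and the relevant comma posets are connected --- one has $Q^n_{S}(W)=Q^n_{\hat{\mathcal{Q}}}(W)$. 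This is precisely the kind of colimit bookkeeping underlying the filtrations of \cite{ElmMan06} and \cite{Ha09}; the $Q^n_T$ formalism of Section \ref{QSECTION} (and Lemma \ref{MAIN428}, whose proof performs an analogous computation) is meant to make it routine.

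Granting this, $\mathbf{B}$ is a maximal element of $\mathcal{Q}^{\times n}$ with $T_{\mathbf{B}}\subseteq\hat{\mathcal{Q}}$, so the standard ``attach one top cell'' presentation of colimits over posets (as used in the proof of Lemma \ref{MAIN428}) produces a pushout square
\[\xymatrix{
  Q^n_{n-1}(i) \ar[r] \ar[d]_{i^{\square n}} & Q^n_{n-1}(f) \ar[d]^{f^{\square n}} \\
  B^{\wedge n} \ar[r] & D^{\wedge n}.
}\]
Since $\mathbf{B}$ is $\Sigma_n$-fixed and $S$, $T_{\mathbf{B}}$, $\hat{\mathcal{Q}}$, $\mathcal{Q}^{\times n}$ are $\Sigma_n$-stable subsets, everything is $\Sigma_n$-equivariant and this is a pushout in $(\mathsf{Sp}^{\Sigma})^{\Sigma_n}$ with left-hand map the (equivariant) $i^{\square n}$. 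As $i^{\square n}$ is a (trivial) $S$ $\Sigma$-inj $\Sigma_n$-proj cofibration by hypothesis and these are closed under cobase change, so is $f^{\square n}$.
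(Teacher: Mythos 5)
Your proof is correct, and it takes a genuinely different route from the paper's. The paper establishes the pushout square by a purely formal arrow-category argument: the pushout product $\square$ is a bifunctor on arrow categories that preserves pushouts in each variable, so applying it along the chain $i^{\square n}\to f\square i^{\square(n-1)}\to\cdots\to f^{\square n}$ exhibits $f^{\square n}$ as a cobase change of $i^{\square n}$; it also cites \cite[Prop. 6.13]{Ha09}, where the square is left as an exercise. Your argument is, in effect, a careful execution of that exercise using the $Q^n_T$ formalism of Section \ref{QSECTION}: you package the span underlying \eqref{SQUARE} as a diagram $W$ on $\mathcal{Q}$, realize $D^{\wedge n}$, $B^{\wedge n}$, $Q^n_{n-1}(i)$, $Q^n_{n-1}(f)$ as $Q^n_T(W)$ for appropriate $\Sigma_n$-stable $T\subseteq\mathcal{Q}^{\times n}$, and then produce the square by attaching the single orbit of the maximal element $\mathbf{B}$. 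I verified the two nontrivial bookkeeping points you flag: the identification $Q^n_{n-1}(f)=Q^n_{\hat{\mathcal{Q}}}(W)$ (a Fubini-type colimit computation, which you reduce further via finality of the $C$-containing tuples $S\subset\hat{\mathcal{Q}}$), and connectedness of the comma posets $w\downarrow S$, which follows from the zigzag $u\leftarrow u'\rightarrow \mathbf{C}$ where $u'$ replaces the $B$-entries of $u$ by $A$. What each approach buys: the paper's is shorter, requires no poset combinatorics, and generalizes verbatim to any cocontinuous bifunctor; yours stays entirely inside the colimit-over-convex-subposets machinery already developed for Lemma \ref{MAIN428}, at the price of some finality verifications that the paper's route avoids.
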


\begin{proof}\label{BASEQ}
It suffices to show that
\[\xymatrix@1{
 Q_{n-1}^n(i) \ar[r]\ar[d]_{i^{\square n}} & Q^n_{n-1} (f) \ar[d]^{f^{\square n}}  \\
 B^{\wedge n} \ar[r]                       & D^{\wedge n} }\]
is itself a pushout diagram. This is \cite[Prop. 6.13]{Ha09}, where it is left as an exercise. Alternatively, note that the pushout product 
$ \square $ is a bifunctor between arrow categories (cf. \cite[Def. 4.4.]{RiVe13}) which takes pushout diagrams in each arrow variable to pushout diagrams, so that the result follows by considering the arrow category diagram 
$i^{\square n} \to f \square i^{\square (n-1)}\to 
 f^{\square 2} \square i^{\square (n-2)} \to \cdots \to f^{\square n}$.
\end{proof}

We now prove Theorem \ref{SIGMANPUSHPROD THM}. The proof is similar to that of \cite[Prop. 4.28*]{HaCo15} (also, compare \cite[Prop. 4.28]{Ha09}), except now boosted by Lemma \ref{COFSQUARE}.

\begin{proof}[Proof of Theorem \ref{SIGMANPUSHPROD THM}]
Since weak equivalences ignore the $\Sigma_n$-action and the $S$ stable model structure on $\mathsf{Sp}^{\Sigma}$ is monoidal (cf. \cite[Thm. 5.5.1]{HSS}), we need not worry about trivial cofibrations. 

We argue by induction on a description of a positive $S$ cofibration $f$ in $\mathsf{Sp}^{\Sigma}$ as a retract of a transfinite composition of pushouts of generating cofibrations.

The base case is that of a generating cofibration 
$f = S \otimes ( \Sigma_m/H \cdot (\partial \Delta^k_+  \to \Delta^k_+) )$ for some $m \geq 1$, 
$H \leq \Sigma_m$. Then, using the identifications $H^{\times n}\subset (\Sigma_m)^{\times n} \subset \Sigma_{m n}$,
\[
f^{\square n}= S \otimes \left( \Sigma_{m n}/H^{\times n} \cdot 
\left(\partial( \Delta^k)^{\times n}_+  \to (\Delta^k)^{\times n}_+ \right) \right),
\]
which is a $S$ $\Sigma$-inj $\Sigma_n$-proj cofibration since the condition $m \geq 1$ implies the map
\[\left( \Sigma_{m n}/H^{\times n} \cdot \left(\partial( \Delta^k)^{\times n}_+  \to (\Delta^k)^{\times n}_+ \right) \right)\] 
is built by adding only $\Sigma_n$-free simplices.

We now move to the general case. As usual, retracts cause no difficulty, and we hence focus on a transfinite composition 
\begin{equation}\label{TRANSFINITE EQ}
A_0\xrightarrow{f_0} A_1 \xrightarrow{f_1} A_2 \xrightarrow{f_2} A_3 \xrightarrow{f_3} \dots \to A_{\kappa}=\colim_{\beta <\kappa} A_{\beta}
\end{equation}
(we use the convention $A_{\beta}=\colim_{\gamma < \beta} A_{\gamma}$ for each limit ordinal $\beta<\kappa$)
where each $f_{\beta}\colon A_{\beta} \to A_{\beta +1}$ is the pushout of a generating positive $S$  cofibration $i_{\beta}$. Further, for $\beta \leq \kappa$, 
denote by 
$\bar{f}_{\beta} \colon A_0 \to A_{\beta}$ 
the full composite of $\{f_{\gamma}\}_{\gamma<\beta}$.
 Since the $Q^n_t$ constructions preserve filtered colimits (since so does $ \wedge $ in each variable), the main claim will follow if the vertical map of $\kappa$-diagrams ($\kappa$-th map excluded)
\begin{equation}\label{KAPPADIAG EQ}
\xymatrix@1{Q^n_{n-1}(f_0)\ar[r] \ar[d] & Q^n_{n-1}(f_1f_0) \ar[r] \ar[d] & Q^n_{n-1}(f_2f_1f_0) \ar[r] \ar[d] & \dotso \ar[r] &
 Q^n_{n-1}(\bar{f}_{\kappa}) \ar[d] \\
A_1^{\wedge n} \ar[r] & A_2^{\wedge n} \ar[r] & A_3^{\wedge n} \ar[r] & \dotso \ar[r] & A_{\kappa}^{\wedge n}
} 
\end{equation}
is a $\kappa$-projective cofibration between $\kappa$-diagrams with respect to the underlying $S$ $\Sigma$-inj $\Sigma_n$-proj model structure.
One thus reduces to 
\textit{inductively} checking that the relative latching maps 
$A_{\beta}^{\wedge n} \vee_{Q_{n-1}^n(\bar{f}_{\beta})}Q_{n-1}^n(f_{\beta}\bar{f}_{\beta})
\to A_{\beta + 1}^{\wedge n}$
for successor ordinals $\beta+1$ are $S$ $\Sigma$-inj $\Sigma_n$-proj cofibrations (note that $Q^n_{n-1}(\bar{f}_0)=A_0^{\wedge n}$, so that this covers the leftmost map in (\ref{KAPPADIAG EQ}), and that latching conditions for limit ordinals are trivial). This now follows by applying Lemma \ref{PUSH LEM} to $i_{\beta}$, $f_{\beta}$ and Lemma \ref{COFSQUARE} to 
$A_0 \xrightarrow{\bar{f}_{\beta}}A_{\beta} \xrightarrow{f_{\beta}} A_{\beta +1}$ (note that $\bar{f}_{\beta}^{\square n},n \geq 0$ is a $S$ $\Sigma$-inj $\Sigma_n$-proj cofibration by the induction hypothesis), finishing the proof of the main claim.

For the extra claim, note that applying the main claim of the result to the map $\** \to A$ yields that $A^{\wedge \bar{n}}, \bar{n} \geq 0$ is $S$ $\Sigma$-inj $\Sigma_{\bar{n}}$-proj cofibrant (since $Q^{\bar{n}}_{\bar{n}-1}(* \to A)=\**$). The additional conditions in Lemma \ref{COFSQUARE} are hence satisfied and the strengthened conclusions now allow us to conclude the $\kappa$-cofibrancy for $0\leq \bar{n}<n$ of the vertical $\kappa$-diagram map ($\kappa$-th map excluded)
\[
\xymatrix@1{
  Q^n_{\bar{n}}(f_0)\ar[r] \ar[d] & Q^n_{\bar{n}}(f_1f_0) \ar[r] \ar[d] &
	Q^n_{\bar{n}}(f_2f_1f_0) \ar[r] \ar[d] & \dotso \ar[r] &
  Q^n_{\bar{n}}(\bar{f}_{\kappa}) \ar[d] \\
  Q^n_{\bar{n}+1}(f_0)\ar[r] & Q^n_{\bar{n}+1}(f_1f_0) \ar[r] &
	Q^n_{\bar{n}+1}(f_2f_1f_0) \ar[r] & \dotso \ar[r] &
	Q^n_{\bar{n}+1}(\bar{f}_{\kappa}),
} 
\]
thereby showing $Q^n_{\bar{n}}(\bar{f}_{\kappa})\to Q^n_{\bar{n}+1}(\bar{f}_{\kappa})$ is a $S$ $\Sigma$-inj $\Sigma_{n}$-proj cofibration. Since $Q^n_{0}(\bar{f}_{\kappa})=A^{\wedge n}$, this finishes the proof.
\end{proof}





\section{Cofibrancy of operadic constructions}\label{APPLICATIONS SEC}

The goal of this section is to prove Theorems \ref{CIRCO POS THM}, \ref{MODMODELEXIST THM}, \ref{FORGETFUL THM}, \ref{BARCONSTRUCTION THM} and \ref{FIBERSEQ THM}.

Subsection \ref{OPDEFINITIONS SEC} recalls some required operadic terminology and basic results. 

Subsection \ref{FILTRATIONS SEC} proves Proposition \ref{FILT PROP}, a filtration result that is key to the proof of Theorem \ref{CIRCO POS THM}. 

Subsection \ref{POSSYM SEC} extends the model structures of Section \ref{SINJPROJMOD SEC} to the category $\Sym$ of spectral symmetric sequences and proves for them analogues of the key results in Section \ref{PROPERTIES SEC}. 

Finally, the main proofs are found in subsections \ref{MAINPROOF SEC} and \ref{PROOFS SEC}.

\subsection{Definitions: operads, modules and algebras}\label{OPDEFINITIONS SEC}

We now recall some standard operadic terminology. We do so in terms of a general closed symmetric monoidal category $\C$ in order to greatly streamline the proof of Theorem \ref{CIRCO POS THM}. Indeed, even when proving only the algebra case of Theorem \ref{CIRCO POS THM}, Definition \ref{MA DEF} makes it necessary to nonetheless understand left modules, making it convenient to unify the discussion using Proposition \ref{MODRED PROP}.

\begin{definition}
Let $(\C,\otimes,\mathds{1})$ denote a closed symmetric monoidal category. 

The category $\Sym(\C)$ of {\it symmetric sequences in \C} is the category of functors $\Sigma \to \C$.

Further, for $G$ a finite group the category $\Sym^G(\C)$ of {\it $G$-symmetric sequences in \C} is the category of functors $G \to \Sym(\C)$.
\end{definition}

\begin{remark}
A symmetric sequence $X$ is formed by objects $X(r)\in \C, r\geq 0$ each with a left $\Sigma_r$-action. To avoid confusion when $\C=\mathsf{Sp}^\Sigma$ or a related category, we reserve the letter $r$ for this external index and keep $m$ as the internal spectrum index, so that $X_m(r)$ denotes the $m$-th simplicial set of $X(r)$.
\end{remark}

We now recall the two usual monoidal structures on $\Sym(\C)$. For our purposes the composition product $\circ$ is the most important of the two, with the tensor product $\check{\otimes}$ playing an auxiliary role.

\begin{definition}\label{TWOMON DEF}
Given $X,Y\in \Sym(\C)$ we define their {\it tensor product} to be
$$ (X \check{\otimes} Y)(r) = 
 \coprod_{0 \leq \bar{r} \leq r} \Sigma_{r} 
 \underset{\Sigma_{\bar{r}} \times \Sigma_{r-\bar{r}}}{\cdot}
 X(\bar{r}) \otimes Y(r-\bar{r}) $$ 
and their {\it composition product} to be
\begin{equation} \label{CIRC DEF} 
(X \circ Y)(r) = \coprod_{\bar{r} \geq 0} X(\bar{r}) \otimes_{\Sigma_{\bar{r}}} \left(Y^{\check{\otimes} \bar{r}}(r)\right).
\end{equation} 
\end{definition}

One has the following result (for a discussion of reflexive coequalizers see for example \cite[Def. 3.26]{Ha09} and the propositions immediately following it).

\begin{proposition}\label{SYMMON PROP}
Let $(\C,\otimes,\mathds{1})$ be a closed symmetric monoidal category with initial object $\emptyset$. Then
\begin{itemize}
\item $(\Sym, \check{\otimes},\check{\mathds{1}})$ is a closed symmetric monoidal category, with unit
$\check{\mathds{1}}(0)=\mathds{1}$, $\check{\mathds{1}}(r)=\emptyset,r\geq 1$;
\item  $(\Sym, \circ, \mathcal{I})$ is a (non-symmetric) monoidal category, with unit
$\mathcal{I}(1)=\mathds{1}$, $\quad \mathcal{I}(r)=\emptyset, r\neq 1$.

Further, $\circ$ commutes with all colimits in the first variable and with filtered colimits and reflexive coequalizers in the second variable.
\end{itemize}
\end{proposition}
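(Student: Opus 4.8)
The plan is to verify each assertion in Proposition \ref{SYMMON PROP} by unwinding the definitions and reducing the symmetric-monoidal axioms to the corresponding axioms for $(\C,\otimes,\mathds{1})$ together with the combinatorics of the indexing groupoid $\Sigma$. For the first bullet, I would recognize $\check{\otimes}$ as the Day convolution product on $\Sym(\C)=\mathsf{Fun}(\Sigma,\C)$ induced by the disjoint-union symmetric monoidal structure $(\underline{i},\underline{j})\mapsto \underline{i+j}$ on $\Sigma$: the formula in Definition \ref{TWOMON DEF} is exactly the left Kan extension computing Day convolution, and since $(\C,\otimes,\mathds{1})$ is closed symmetric monoidal and $\Sigma$ is symmetric monoidal, the general theory of Day convolution gives that $(\Sym(\C),\check{\otimes},\check{\mathds{1}})$ is closed symmetric monoidal, with unit represented by the monoidal unit $\underline{0}$ of $\Sigma$ — which is precisely $\check{\mathds{1}}$ as described, using that $\C$ has coproducts (hence the empty coproduct $\emptyset$) so the Kan extensions exist. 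Alternatively, if one prefers to avoid invoking Day convolution as a black box, one checks associativity and symmetry of $\check{\otimes}$ by hand: both $(X\check{\otimes}Y)\check{\otimes}Z$ and $X\check{\otimes}(Y\check{\otimes}Z)$ evaluated at $r$ compute $\coprod \Sigma_r \cdot_{\Sigma_{r_1}\times\Sigma_{r_2}\times\Sigma_{r_3}} X(r_1)\otimes Y(r_2)\otimes Z(r_3)$ over ordered partitions $r_1+r_2+r_3=r$, and the coherence isomorphisms come from those of $\otimes$ in $\C$ and the evident bijections of partition sets; the internal hom is $\underline{\Sym}(Y,Z)(r)=\prod_{\bar r\geq 0}\C^{\Sigma_{\bar r}}\big(Y(\bar r),Z(r+\bar r)\big)$ up to the appropriate formula.

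For the second bullet, the cleanest route is to exhibit $(\Sym(\C),\circ,\mathcal{I})$ as a monoid structure coming from the monad-like ``substitution'' operation. Concretely, $Y\mapsto Y^{\check{\otimes}\bar r}$ is a functor and $Y^{\check{\otimes}\bar r}(r)$ carries a $\Sigma_{\bar r}$-action permuting the $\bar r$ tensor factors; formula \eqref{CIRC DEF} then takes the $\Sigma_{\bar r}$-coinvariants after smashing with $X(\bar r)$. Associativity $(X\circ Y)\circ Z\cong X\circ(Y\circ Z)$ is the substitution identity and is proved by the standard ``trees of trees'' bookkeeping: expanding both sides gives coproducts indexed by sequences of arities with compatible symmetric-group actions, and one matches them using that $\check{\otimes}$ is symmetric monoidal (so $(Y\circ Z)^{\check{\otimes}\bar r}$ distributes appropriately) together with the interchange of coends $\otimes_{\Sigma_{\bar r}}$. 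The unit axioms $\mathcal{I}\circ Y\cong Y\cong Y\circ\mathcal{I}$ are immediate from the formula: $\mathcal{I}$ is concentrated in arity $1$ with value $\mathds{1}$, so only the $\bar r=1$ term survives on the left, and on the right $Y^{\check{\otimes}\bar r}$ evaluated against $\mathcal{I}$ in each slot forces each slot to arity $1$. Non-symmetry is simply the observation that no natural isomorphism $X\circ Y\cong Y\circ X$ can exist (e.g. compare arities), so nothing needs to be proved there.

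For the colimit-preservation claims: preservation of all colimits in the first variable is transparent from \eqref{CIRC DEF}, since $(-)\circ Y$ is a coproduct (over $\bar r$) of functors $X(\bar r)\mapsto X(\bar r)\otimes_{\Sigma_{\bar r}}(Y^{\check{\otimes}\bar r}(r))$, each of which is $X\mapsto X(\bar r)\otimes(\text{something})$ followed by a colimit (coinvariants), hence a left adjoint in $X(\bar r)$, and the diagram-to-$X$ assignment is colimit-preserving levelwise; so $(-)\circ Y$ preserves colimits. For the second variable, the point is that $Y\mapsto Y^{\check{\otimes}\bar r}$ does \emph{not} preserve arbitrary colimits (it is a $\bar r$-fold tensor, only a left adjoint in each variable separately), but it \emph{does} preserve filtered colimits and reflexive coequalizers — this is the standard fact that $n$-ary tensor products in a closed symmetric monoidal category commute with filtered colimits and with reflexive coequalizers, the latter by the usual ``a reflexive coequalizer of reflexive coequalizers'' argument (cf.\ the discussion of reflexive coequalizers referenced in the statement, e.g.\ \cite[Def.\ 3.26]{Ha09} and the following propositions). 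Since $\otimes_{\Sigma_{\bar r}}$ and the coproduct over $\bar r$ both commute with filtered colimits and reflexive coequalizers, so does $X\circ(-)$. I expect the main obstacle to be the careful verification of associativity of $\circ$ — keeping track of the iterated symmetric-group actions and the compatibility of the coend $\otimes_{\Sigma_{\bar r}}$ with the Day-convolution structure on $\check{\otimes}$ — while everything else is routine; in practice I would either cite a standard reference for the operad-substitution monoidal structure or relegate this computation to a careful but brief indexing argument.
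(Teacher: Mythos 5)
The paper states Proposition \ref{SYMMON PROP} without proof, treating it as a classical fact about the composition product of symmetric sequences (with a pointer to \cite[Def.\ 3.26]{Ha09} and its sequel only for the background on reflexive coequalizers). Your proposal supplies that missing argument, and the approach you take is the standard one: the Day-convolution recognition for $\check{\otimes}$ (with the hands-on alternative of checking coherence via ordered partitions) is exactly how one would argue the first bullet; the ``substitution'' picture of $\circ$, with its trees-of-trees bookkeeping for associativity and the unit checks from the concentration of $\mathcal{I}$ in arity $1$, is the usual route for the second bullet; and for the third, the split between the first variable (coproducts of functors that are levelwise left adjoints, then coinvariants, so all colimits are preserved) and the second (diagonal of a multilinear functor, hence only sifted colimits such as filtered colimits and reflexive coequalizers are preserved) is correct and well executed. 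The key observation that the diagonal $\Delta \colon I \to I^{\times \bar{r}}$ is final when $I$ is filtered or the reflexive-pair category is precisely what makes $Y \mapsto Y^{\check{\otimes}\bar{r}}$ preserve those colimits even though it is not a left adjoint, and you invoke this correctly.

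One small caveat: your parenthetical guess at the internal hom for $\check{\otimes}$, $\underline{\Sym}(Y,Z)(r) = \prod_{\bar r \geq 0} \C^{\Sigma_{\bar r}}\big(Y(\bar r), Z(r+\bar r)\big)$, is only heuristic as written — one needs the $\Sigma_r$-equivariant structure on $Z(r+\bar r)$ coming from the inclusion $\Sigma_r \times \Sigma_{\bar r} \leq \Sigma_{r+\bar r}$, and the formula should be a $\Sigma_{\bar r}$-fixed-point object of an internal hom in $\C$ rather than a hom set. Since you flag this as ``up to the appropriate formula'' and it is ancillary to the main claims, it is not a gap, but if this were to be written out in full that is the detail to nail down. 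Everything else is sound.
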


\begin{definition}
An \textit{operad} $\O$ in $\C$ is a monoid object in $\Sym(\C)$ with respect to $\circ$, i.e., a symmetric sequence $\O$ together with multiplication and unit maps
$$\O \circ \O \to \O,\qquad \mathcal{I} \to \O$$
satisfying the usual associativity and unit conditions.
\end{definition}

\begin{definition}\label{OPERMOD DEF}
Let $\O$ be an operad in $\C$. A {\it left module} $N$ (resp. {\it right module} $M$) over $\O$ is an object in $\Sym(\C)$ together with a map
$$\O \circ N \to N \qquad (\text{resp. } M \circ \O \to M)$$
satisfying the usual associativity and unit conditions. The category of left modules (resp. right modules) over $\O$ is denoted $\mathsf{Mod}^l_{\O}$ (resp. $\mathsf{Mod}^r_{\O}$).
Further, left modules $X$ over $\O$ concentrated in degree $0$ (i.e. such that $X(r)=\emptyset$ for $r\geq 1$) are called  {\it algebras} over $\O$. The category of algebras over $\O$ is denoted $\mathsf{Alg}_{\O}$.
\end{definition}

\begin{proposition}
The categories $\mathsf{Mod}^r_{\O}$, $\mathsf{Mod}^l_{\O}$ and $\mathsf{Alg}_{\O}$ have all small limits and colimits.

Further, all limits and colimits in $\mathsf{Mod}^r_{\O}$ are underlying in $\Sym(\mathcal{C})$, and likewise for all limits, filtered colimits and reflexive coequalizers in both $\mathsf{Mod}^l_{\O}$ and $\mathsf{Alg}_{\O}$.
\end{proposition}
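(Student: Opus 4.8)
The statement to prove is the final Proposition, asserting that $\mathsf{Mod}^r_{\O}$, $\mathsf{Mod}^l_{\O}$ and $\mathsf{Alg}_{\O}$ have all small limits and colimits, that limits and colimits in $\mathsf{Mod}^r_{\O}$ are computed underlying in $\Sym(\C)$, and that the same holds for limits, filtered colimits and reflexive coequalizers in $\mathsf{Mod}^l_{\O}$ and $\mathsf{Alg}_{\O}$.

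The plan is to treat all three categories uniformly as categories of modules over a suitable monad on $\Sym(\C)$ (or, for algebras, on $\C$ itself viewed as symmetric sequences concentrated in degree $0$), and then invoke standard monadic arguments. Concretely: for right modules the relevant functor is $(\minus)\circ\O$, for left modules it is $\O\circ(\minus)$, and in each case the module structure maps exhibit $\mathsf{Mod}^r_{\O}$ (resp. $\mathsf{Mod}^l_{\O}$) as the category of algebras over the associated monad $T=(\minus)\circ\O$ (resp. $T=\O\circ(\minus)$) on $\Sym(\C)$; the algebra case is the restriction of the left-module monad to symmetric sequences concentrated in degree $0$. I would first note, using Proposition \ref{SYMMON PROP}, that $\C$ closed symmetric monoidal with all small limits and colimits (which is part of the standing hypotheses, since $\C$ being closed forces $\otimes$ to preserve colimits, and for the applications $\C=\mathsf{Sp}^{\Sigma}$ is bicomplete) implies $\Sym(\C)$ is bicomplete, with limits and colimits computed levelwise in $\C$.

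For limits: the forgetful functor from modules to $\Sym(\C)$ creates all limits, because a limit of a diagram of modules, formed underlying, carries a canonical module structure (the structure maps assemble from the structure maps of the diagram, using that $\O\circ(\minus)$ preserves the relevant limits — or more simply, that the structure map is defined by a universal property into the limit), and this underlying limit is readily checked to be the limit in the module category. For colimits the argument is the standard one for categories of algebras over a monad: $\Sym(\C)$ is cocomplete, so by the monadicity/colimit existence theorem (e.g. the argument in \cite[Sec. II]{McL} or the Adámek-style construction), $T$-algebras have all small colimits, constructed via a transfinite/reflexive-coequalizer presentation; one does not get them underlying in general. However, \emph{filtered} colimits and \emph{reflexive} coequalizers of $T$-algebras \emph{are} created by the forgetful functor precisely when $T$ preserves filtered colimits and reflexive coequalizers — and Proposition \ref{SYMMON PROP} states exactly that $\circ$ does so in the second variable (hence $\O\circ(\minus)$ preserves them), which covers $\mathsf{Mod}^l_{\O}$ and $\mathsf{Alg}_{\O}$. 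For $\mathsf{Mod}^r_{\O}$ the monad is $(\minus)\circ\O$, which preserves \emph{all} colimits (again by Proposition \ref{SYMMON PROP}, $\circ$ preserves all colimits in the first variable), so the forgetful functor creates all colimits, giving the stronger ``all limits and colimits are underlying'' conclusion for right modules.

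The main obstacle — really the only non-formal point — is the existence of \emph{general} (non-filtered, non-reflexive) colimits in $\mathsf{Mod}^l_{\O}$ and $\mathsf{Alg}_{\O}$, since there the monad $\O\circ(\minus)$ does not preserve arbitrary colimits. I would handle this by citing the standard result that if $\mathcal{D}$ is cocomplete and $T$ is a monad on $\mathcal{D}$ such that $T$ preserves filtered colimits (more precisely, is accessible/finitary-up-to-a-cardinal), then $\mathcal{D}^T$ is cocomplete; all coproducts and coequalizers are then built from the cocompleteness of $\mathcal{D}$ together with reflexive coequalizers of free algebras, exactly as in \cite[Prop. 3.?]{Ha09} or \cite[Sec. 5.2]{ElmMan06}, to which I would defer for the detailed construction rather than reproducing it. The remaining clauses (limits underlying, filtered colimits and reflexive coequalizers underlying) then follow by the creation-of-(co)limits observations above, completing the proof.

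\begin{proof}[Proof sketch]
All three categories are categories of algebras over a monad on a bicomplete category. Since $\C$ is closed symmetric monoidal and (in our setting) bicomplete, $\Sym(\C)$ is bicomplete with (co)limits computed levelwise in $\C$. A right (resp.\ left) $\O$-module structure on $M$ is precisely an algebra structure for the monad $(\minus)\circ\O$ (resp.\ $\O\circ(\minus)$) on $\Sym(\C)$, and $\mathsf{Alg}_{\O}$ is the restriction of the latter to symmetric sequences concentrated in degree $0$. In each case the forgetful functor creates all limits: a levelwise limit of modules inherits a canonical module structure and satisfies the universal property in the module category. By Proposition \ref{SYMMON PROP}, $\circ$ preserves all colimits in the first variable, so the monad $(\minus)\circ\O$ preserves all colimits and the forgetful functor $\mathsf{Mod}^r_{\O}\to\Sym(\C)$ creates them; this proves the claims for right modules. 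For $\mathsf{Mod}^l_{\O}$ and $\mathsf{Alg}_{\O}$ the monad $\O\circ(\minus)$ preserves filtered colimits and reflexive coequalizers (again Proposition \ref{SYMMON PROP}), hence the forgetful functors create those particular colimits, and they are computed underlying in $\Sym(\C)$. Finally, existence of all small colimits in $\mathsf{Mod}^l_{\O}$ and $\mathsf{Alg}_{\O}$ follows from the standard result that the category of algebras over a filtered-colimit-preserving monad on a cocomplete category is cocomplete, arbitrary coproducts and coequalizers being built from colimits in $\Sym(\C)$ together with reflexive coequalizers of free algebras, exactly as in \cite{ElmMan06}, \cite{Ha09}.
\end{proof}
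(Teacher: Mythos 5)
The paper states this proposition without proof, treating it as a standard consequence of the monadic structure; your write-up supplies exactly the argument the paper implicitly relies on, and it is correct. The main points — $\mathsf{Mod}^r_{\O}$, $\mathsf{Mod}^l_{\O}$, $\mathsf{Alg}_{\O}$ are categories of algebras over monads $(\minus)\circ\O$ and $\O\circ(\minus)$ on the bicomplete category $\Sym(\C)$; the forgetful functor from algebras over a monad always creates limits (no preservation hypothesis on the monad is needed, as you note in your parenthetical; the ``$\O\circ(\minus)$ preserves the relevant limits'' clause is superfluous); the forgetful functor creates precisely those colimits the monad preserves, and Proposition \ref{SYMMON PROP} records that $\circ$ preserves all colimits in the first variable and filtered colimits and reflexive coequalizers in the second; and Linton's theorem closes the gap for general colimits in $\mathsf{Mod}^l_{\O}$ and $\mathsf{Alg}_{\O}$ — are all exactly what one would write here. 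One small presentational nit: bicompleteness of $\Sym(\C)$ with levelwise (co)limits is a consequence of $\Sym(\C)$ being a functor category over bicomplete $\C$, not of Proposition \ref{SYMMON PROP}, which concerns the monoidal structures.
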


\begin{definition}
Given $M \in \mathsf{Mod}^r_{\O}$, $N \in \mathsf{Mod}^l_{\O}$, their \textit{relative composition product} is the reflexive coequalizer
\[M \circ_{\O} N = \colim (M \circ \O \circ N \rightrightarrows M \circ N).\] 
\end{definition}

\begin{lemma}\label{CIRCCOLIM LEM}
Consider the bifunctors  
$$\mathsf{Mod}^l_{\O} \times \Sym(\C) \xrightarrow{\minus \circ \minus} \mathsf{Mod}^l_{\O}, \qquad
  \mathsf{Mod}^r_{\O} \times \mathsf{Mod}^l_{\O} \xrightarrow{\minus \circ_{\O} \minus} \Sym(\C).$$
$\circ$ preserves any colimit in the $\mathsf{Mod}^l_{\O}$ variable and $\circ_{\O}$ preserves reflexive coequalizers and filtered colimits in the $\mathsf{Mod}^l_{\O}$ variable.
\end{lemma}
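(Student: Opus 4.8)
\textbf{Proof plan for Lemma \ref{CIRCCOLIM LEM}.}

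The plan is to reduce both statements to the colimit-preservation properties of $\circ$ already recorded in Proposition \ref{SYMMON PROP}, together with the fact (from the preceding proposition) that the relevant colimits in $\mathsf{Mod}^l_{\O}$ are computed in the underlying category $\Sym(\C)$. First I would treat the functor $\minus \circ \minus \colon \mathsf{Mod}^l_{\O} \times \Sym(\C) \to \mathsf{Mod}^l_{\O}$. Fix $Y \in \Sym(\C)$ and let $N \colon D \to \mathsf{Mod}^l_{\O}$ be an arbitrary small diagram. By the preceding proposition, $\colim_D N$ is computed in $\Sym(\C)$; since $\circ$ commutes with all colimits in its first variable (Proposition \ref{SYMMON PROP}), we get $(\colim_D N) \circ Y \cong \colim_D (N \circ Y)$ as objects of $\Sym(\C)$. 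The only thing to check is that this underlying isomorphism is one of left $\O$-modules, i.e.\ that the $\O$-action maps are compatible; this is immediate because the structure map $\O \circ (N \circ Y) \to N \circ Y$ is built from the $\O$-action on $N$ by whiskering with $(\minus)\circ Y$, and both $\O \circ (\minus)$ and $(\minus) \circ Y$ preserve the colimit (the former by Proposition \ref{SYMMON PROP} again). Hence $(\minus)\circ Y$ preserves all colimits, and so $\circ$ preserves colimits in the $\mathsf{Mod}^l_{\O}$ variable.

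Next I would treat $\minus \circ_{\O} \minus \colon \mathsf{Mod}^r_{\O} \times \mathsf{Mod}^l_{\O} \to \Sym(\C)$. Fix $M \in \mathsf{Mod}^r_{\O}$. By definition, $M \circ_{\O} N$ is the reflexive coequalizer of $M \circ \O \circ N \rightrightarrows M \circ N$, i.e.\ a particular colimit of a diagram functorial in $N$, built out of the functors $M \circ (\minus)$ and $M \circ \O \circ (\minus)$ on $\Sym(\C)$. For a reflexive coequalizer or filtered colimit $\colim_D N$ in $\mathsf{Mod}^l_{\O}$ — which, by the preceding proposition, is again computed in $\Sym(\C)$ — one now wants to commute this colimit past the coequalizer defining $\circ_{\O}$. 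Since $\circ$ commutes with filtered colimits and reflexive coequalizers in its second variable (Proposition \ref{SYMMON PROP}), each of $M \circ (\minus)$ and $M \circ \O \circ (\minus)$ commutes with $\colim_D$; the result then follows from the standard fact that a colimit commutes with a colimit, i.e.\ the interchange $\colim_D \bigl(\operatorname{coeq}(\cdots)\bigr) \cong \operatorname{coeq}\bigl(\colim_D(\cdots)\bigr)$, valid because both $D$ (filtered, or the reflexive-pair shape) and the coequalizer shape are small and we are taking colimits in the cocomplete category $\Sym(\C)$.

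The main obstacle — really the only subtlety — is the bookkeeping at the level of module structures rather than underlying objects: one must confirm that the underlying isomorphisms produced above are isomorphisms in $\mathsf{Mod}^l_{\O}$ (for the first bifunctor) and that the coequalizer diagram defining $\circ_{\O}$ genuinely commutes with the chosen colimit of $N$'s. Both points are handled by invoking, at each stage, the preceding proposition's assertion that the relevant colimits in $\mathsf{Mod}^l_{\O}$ (all colimits for the first statement; filtered colimits and reflexive coequalizers for the second) are underlying in $\Sym(\C)$, so that no separate verification of the module structure on the colimit is needed beyond naturality. Everything else is a formal diagram chase, so I would present it concisely.
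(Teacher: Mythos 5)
The second half of your argument (about $\circ_{\O}$) is sound, but the first half has a genuine gap. You write ``By the preceding proposition, $\colim_D N$ is computed in $\Sym(\C)$'' for an \emph{arbitrary} small diagram $N\colon D\to\mathsf{Mod}^l_{\O}$. That is not what the preceding proposition says: it asserts that \emph{all} colimits in $\mathsf{Mod}^r_{\O}$ are underlying, but for $\mathsf{Mod}^l_{\O}$ (and $\mathsf{Alg}_{\O}$) only limits, filtered colimits and reflexive coequalizers are underlying. General colimits of left modules are definitely \emph{not} computed in $\Sym(\C)$ --- for instance, the coproduct of free modules $\O\circ X$ and $\O\circ Z$ in $\mathsf{Mod}^l_{\O}$ is $\O\circ(X\amalg Z)$, not the underlying coproduct $(\O\circ X)\amalg(\O\circ Z)$, precisely because the monad $\O\circ(\minus)$ fails to preserve coproducts. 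So the very first step of your argument for the ``$(\minus)\circ Y$ preserves all colimits'' claim does not hold.

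The paper gets around this by using the bar resolution: every $N\in\mathsf{Mod}^l_{\O}$ is a reflexive coequalizer $\colim(\O\circ\O\circ N\rightrightarrows\O\circ N)$ of \emph{free} left modules, and more generally every colimit in $\mathsf{Mod}^l_{\O}$ is built from colimits of free modules (computed using the left adjoint $\O\circ(\minus)$, hence reducing to colimits in $\Sym(\C)$) glued by a reflexive coequalizer (which \emph{is} underlying). One then checks the claim on free modules via $(\O\circ X)\circ Y\cong\O\circ(X\circ Y)$ and Proposition \ref{SYMMON PROP}. To repair your proof you would need to replace the appeal to ``all colimits are underlying'' with this reduction to free modules; the rest of your interchange argument (for $\circ_{\O}$ with filtered colimits and reflexive coequalizers) is fine and is in the same spirit as the paper's.
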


\begin{proof}
Since any $M \in \mathsf{Mod}^l_{\O}$ is a reflexive coequalizer $\colim(\O \circ \O \circ M \rightrightarrows \O \circ M)$ of free left modules, it suffices to verify the claim for diagrams of free left modules and free maps, and for those the result follows by Proposition \ref{SYMMON PROP}.
\end{proof}

\begin{remark}
We will also use the analogue of Definition \ref{OPERMOD DEF} for the category $\Sym^G(\C)$. 
One has a formal analogue of Proposition \ref{SYMMON PROP} for $\Sym^G(\C)$ using the same monoidal structures $\check{\otimes}$ and $\circ$ (with diagonal $G$-action) and units (with trivial $G$-action), so that operads and their left modules, right modules and algebras in $\Sym^G(\C)$ are defined just as above.
\end{remark}

Iterating the $\Sym$ construction will allow us to use Proposition \ref{MODRED PROP} to reduce the study of left modules to that of algebras.

\begin{definition}
The category $\BSym(\C)$ of \textit{bi-symmetric sequences in $\C$} is the category $\Sym(\Sym(\C))$ of symmetric sequences of symmetric sequences in $\C$.
\end{definition}

\begin{remark}
Since an object $X\in \BSym(\C)$ is formed by objects $X(r,s)\in \C$, $r,s \geq 0$ with  $\Sigma_r \times \Sigma_s$-actions one has two different inclusions
\[(\minus)^{\mathsf{r}} \colon \Sym(\C) \hookrightarrow \BSym(\C),\qquad 
  (\minus)^{\mathsf{s}} \colon \Sym(\C) \hookrightarrow \BSym(\C)\]
defined by
\[X^{\mathsf{r}}(r,s)= \begin{cases} X(r), & \text{ if } s=0 \\ \emptyset, & \text{ if } s\neq 0 \end{cases} ,\qquad X^{\mathsf{s}}(r,s)= \begin{cases} X(s), & \text{ if } r=0 \\ \emptyset, & \text{ if } r\neq 0 \end{cases}.\]
\end{remark}

Following Definition \ref{TWOMON DEF} one can build two monoidal structures in $\BSym(\C)$ which we denote by $\check{\check{\otimes}}$ and $\check{\circ}^{\mathsf{r}}$. Here we mark the composition product $\check{\circ}^{\mathsf{r}}$ to indicate that $r$ is kept as the operadic index. Note that while $\check{\check{\otimes}}$ behaves symmetrically with respect to the indexes $r$ and $s$, $\check{\circ}^{\mathsf{r}}$ does not.

\vskip 5pt

Both of the following results follow by a straightforward calculation.

\begin{proposition}
$(\minus)^{\mathsf{r}}, (\minus)^{\mathsf{s}}$ are monoidal functors from the symmetric mo\-noidal structure $\check{\otimes}$
to the symmetric monoidal structure $\check{\check{\otimes}}$.

$(\minus)^{\mathsf{r}}$ is a monoidal functor from the monoidal structure $\circ$ to the monoidal structure $\check{\circ}^{\mathsf{r}}$.
\end{proposition}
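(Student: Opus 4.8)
The plan is to recognize $(\minus)^{\mathsf{r}}$ and $(\minus)^{\mathsf{s}}$ as \emph{degree-$0$ inclusions} and to exploit the fact that on objects concentrated in degree $0$ the formulas of Definition \ref{TWOMON DEF} collapse. For any closed symmetric monoidal category $\mathcal{D}$ write $\delta_0 \colon \mathcal{D} \to \Sym(\mathcal{D})$ for the functor sending $d$ to the symmetric sequence with $d$ in degree $0$ and $\emptyset$ in every other degree; being left adjoint to evaluation at $0$, it preserves all colimits. Unraveling the definitions of $(\minus)^{\mathsf{r}}$ and $(\minus)^{\mathsf{s}}$, one checks directly on objects that $(\minus)^{\mathsf{s}}$ is precisely the outer degree-$0$ inclusion $\delta_0 \colon \Sym(\C) \to \Sym(\Sym(\C)) = \BSym(\C)$, whereas $(\minus)^{\mathsf{r}} = \Sym(\delta_0)$, the functor $\Sym(\C) \to \Sym(\Sym(\C))$ obtained by applying the inner $\delta_0 \colon \C \to \Sym(\C)$ levelwise.

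First I would record that $\delta_0 \colon (\mathcal{D},\otimes) \to (\Sym(\mathcal{D}),\check{\otimes})$ is a strong symmetric monoidal functor: the unit $\check{\mathds{1}}$ is by definition $\delta_0(\mathds{1})$, and in $\delta_0(c)\check{\otimes}\delta_0(d)$ only the summand indexed by $\bar{r}=0$ survives, producing a natural isomorphism $\delta_0(c)\check{\otimes}\delta_0(d)\cong\delta_0(c\otimes d)$ whose compatibility with the associativity, unit and symmetry constraints of $\check{\otimes}$ is immediate since in degree $0$ those constraints reduce to the ones of $\otimes$. Taking $\mathcal{D}=\Sym(\C)$ settles the $(\minus)^{\mathsf{s}}$ half of the first claim. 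For $(\minus)^{\mathsf{r}}=\Sym(\delta_0)$ I would then invoke the routine general principle that a strong symmetric monoidal functor $F\colon\C\to\mathcal{E}$ induces $\Sym(F)$, which is strong symmetric monoidal for $\check{\otimes}$ and, provided $F$ additionally preserves coproducts and quotients by finite group actions, strong monoidal for $\circ$; both hypotheses hold for $F=\delta_0$ (strong monoidal by the previous step, colimit-preserving as a left adjoint), so $\Sym(\delta_0)$ is strong symmetric monoidal $\check{\otimes}\to\check{\check{\otimes}}$ and strong monoidal $\circ\to\check{\circ}^{\mathsf{r}}$, the decoration $\mathsf{r}$ recording precisely that $\Sym(\delta_0)$ leaves the operadic index untouched.

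The only step requiring genuine care, and hence the expected main obstacle, is the composition-product assertion. Unwinding $\check{\circ}^{\mathsf{r}}$ via the $\BSym(\C)$-analogue of the formula of Definition \ref{TWOMON DEF}, one must exhibit for all $\bar{r},r\geq 0$ a $\Sigma_{\bar{r}}$-equivariant identification $(Y^{\mathsf{r}})^{\check{\check{\otimes}}\bar{r}}(r)\cong\delta_0\!\left(Y^{\check{\otimes}\bar{r}}(r)\right)$, the group $\Sigma_{\bar{r}}$ permuting tensor factors on each side; granting this, $X^{\mathsf{r}}(\bar{r})\check{\otimes}_{\Sigma_{\bar{r}}}(Y^{\mathsf{r}})^{\check{\check{\otimes}}\bar{r}}(r)$ is computed degreewise and equals $\delta_0\!\left(X(\bar{r})\otimes_{\Sigma_{\bar{r}}}Y^{\check{\otimes}\bar{r}}(r)\right)$, so that summing over $\bar{r}$ recovers $(X\circ Y)^{\mathsf{r}}$ and yields the desired isomorphism $X^{\mathsf{r}}\check{\circ}^{\mathsf{r}}Y^{\mathsf{r}}\cong(X\circ Y)^{\mathsf{r}}$. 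The equivariant identification is forced rather than subtle: after forgetting the actions both sides are $\delta_0$ applied to the coproduct $\coprod\Sigma_r\cdot_{\Sigma_{r_1}\times\cdots\times\Sigma_{r_{\bar{r}}}}Y(r_1)\otimes\cdots\otimes Y(r_{\bar{r}})$ over tuples with $r_1+\cdots+r_{\bar{r}}=r$, and the two $\Sigma_{\bar{r}}$-actions agree under the evident matching of the reindexing of this coproduct with the permutation of the tensor factors. One finally checks these isomorphisms intertwine the associativity and unit constraints of $\circ$ and $\check{\circ}^{\mathsf{r}}$, again a degree-$0$ computation. (No $\circ$-statement is claimed, or would be true, for $(\minus)^{\mathsf{s}}$, since $\check{\circ}^{\mathsf{r}}$ retains $r$ rather than $s$ as the operadic variable.)
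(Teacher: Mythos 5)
Your proposal is correct. The paper itself offers no argument for this proposition, stating only that it "follows by a straightforward calculation," so there is no written proof to compare against; your write-up is essentially the calculation the author leaves to the reader, organized conceptually. The key structural observation --- that $(\minus)^{\mathsf{s}}$ is the outer degree-$0$ inclusion $\delta_0 \colon \Sym(\C) \to \BSym(\C)$ while $(\minus)^{\mathsf{r}} = \Sym(\delta_0)$ is the inner $\delta_0$ applied levelwise --- is exactly the right way to see why the claims hold, and reducing both statements to the two facts that $\delta_0$ is strong symmetric monoidal and, as a left adjoint to evaluation at $0$, preserves all colimits (hence coproducts and quotients by finite group actions) gives a clean route. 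Your explicit check of the $\Sigma_{\bar{r}}$-equivariant identification $(Y^{\mathsf{r}})^{\check{\check{\otimes}}\bar{r}}(r)\cong\delta_0\bigl(Y^{\check{\otimes}\bar{r}}(r)\bigr)$ addresses the one point where an unwary reader could worry the formal pattern might break, and the closing observation that no $\circ$-statement could hold for $(\minus)^{\mathsf{s}}$ is a useful sanity check (indeed $\mathcal{I}^{\mathsf{s}}$ is not even the unit of $\check{\circ}^{\mathsf{r}}$). The only thing I would tighten is the appeal to the "routine general principle" that a colimit-preserving strong symmetric monoidal $F$ yields a strong monoidal $\Sym(F)$ for $\circ$; you do supply the needed computation for $F = \delta_0$ immediately afterwards, so nothing is actually missing, but stating the principle and then reproving the special case is slightly redundant.
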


\begin{proposition}\label{MODRED PROP}
Let $\O$ be an operad in $\C$. There is a natural isomorphism of categories
\[(\minus)^{\mathsf{s}}\colon \mathsf{Mod}^l_{\O}(\C) \xrightarrow{\simeq} \mathsf{Alg}_{\O^{\mathsf{r}}}(\Sym(\C)).\]
\end{proposition}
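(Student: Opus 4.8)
The plan is to realize the claimed isomorphism objectwise: show that $(\minus)^{\mathsf{s}}$ is fully faithful onto the bi-symmetric sequences concentrated in $\mathsf{r}$-degree $0$ — exactly the class of underlying objects of $\O^{\mathsf{r}}$-algebras — and then match the two kinds of structure map. First I would record that $(\minus)^{\mathsf{s}}\colon\Sym(\C)\hookrightarrow\BSym(\C)$ identifies $\Sym(\C)$ with the full subcategory of those $X$ with $X(r,s)=\emptyset$ for $r\geq 1$. Since an $\O^{\mathsf{r}}$-algebra is by definition a left $\O^{\mathsf{r}}$-module in $\BSym(\C)=\Sym(\Sym(\C))$ concentrated in degree $0$ with respect to the operadic index $\mathsf{r}$, every such algebra has underlying object $Y^{\mathsf{s}}$ for a unique $Y\in\Sym(\C)$, namely $Y(s)=X(0,s)$ with its $\Sigma_s$-action.

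The heart of the argument is the natural identification
\[\O^{\mathsf{r}}\,\check{\circ}^{\mathsf{r}}\,Y^{\mathsf{s}}\;\cong\;(\O\circ Y)^{\mathsf{s}},\qquad Y\in\Sym(\C).\]
To obtain it I would first use that $(\minus)^{\mathsf{s}}$ is monoidal from $\check{\otimes}$ to $\check{\check{\otimes}}$ to get $(Y^{\mathsf{s}})^{\check{\check{\otimes}}\,\bar r}\cong(Y^{\check{\otimes}\,\bar r})^{\mathsf{s}}$ for all $\bar r\geq 0$; in particular these bi-symmetric sequences are again concentrated in $\mathsf{r}$-degree $0$. Plugging this into the defining formula for $\check{\circ}^{\mathsf{r}}$ and using that $\O^{\mathsf{r}}(\bar r,-)$ is concentrated in $\mathsf{s}$-degree $0$ — so that $\check{\otimes}$-ing with it inside $\Sym(\C)$ is computed levelwise — the $\bar r$-indexed coproduct collapses onto the $\mathsf{r}$-degree-$0$ level, where it reads
\[\coprod_{\bar r\geq 0}\O(\bar r)\otimes_{\Sigma_{\bar r}}\bigl(Y^{\check{\otimes}\,\bar r}(s)\bigr),\]
which is precisely $(\O\circ Y)(s)$ by (\ref{CIRC DEF}); all other levels vanish. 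Naturality in $Y$ is immediate from the construction.

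With this in hand the correspondence of structures is formal. An $\O^{\mathsf{r}}$-algebra structure on $Y^{\mathsf{s}}$ is a map $\O^{\mathsf{r}}\,\check{\circ}^{\mathsf{r}}\,Y^{\mathsf{s}}\to Y^{\mathsf{s}}$, i.e.\ by the displayed identification a map $(\O\circ Y)^{\mathsf{s}}\to Y^{\mathsf{s}}$, i.e.\ by full faithfulness of $(\minus)^{\mathsf{s}}$ a map $\O\circ Y\to Y$; and since $(\minus)^{\mathsf{r}}$ is monoidal from $\circ$ to $\check{\circ}^{\mathsf{r}}$, the associator and unitor of $\check{\circ}^{\mathsf{r}}$ restrict, along the above identification, to those of $\circ$ and $\mathcal{I}^{\mathsf{r}}$ is the $\check{\circ}^{\mathsf{r}}$-unit, so the associativity and unit axioms on the two sides are equivalent. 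Likewise a morphism of $\O^{\mathsf{r}}$-algebras is a map in $\BSym(\C)$ intertwining the structure maps, which by full faithfulness and naturality of the identification is exactly a map of left $\O$-modules. This produces an inverse to $(\minus)^{\mathsf{s}}$ on the nose, giving the isomorphism of categories.

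I expect the only genuine work to be the bookkeeping in the middle paragraph — tracking the nested coproducts, symmetric-group actions and $\Sigma_{\bar r}$-coinvariants through the two layers of composition and tensor products — but this is a routine unwinding of Definition \ref{TWOMON DEF} with no conceptual obstruction. Equivalently, one can phrase the whole thing monad-theoretically: $\O^{\mathsf{r}}\,\check{\circ}^{\mathsf{r}}\,(\minus)$ preserves the subcategory of $\mathsf{r}$-degree-$0$ bi-symmetric sequences and, transported along $(\minus)^{\mathsf{s}}$, restricts there to the monad $\O\circ(\minus)$ on $\Sym(\C)$, and an isomorphism of monads induces an isomorphism of categories of algebras.
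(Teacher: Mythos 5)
The paper gives no explicit proof, noting only that the result ``follows by a straightforward calculation''; your proposal carries out precisely that calculation, identifying the monad $\O^{\mathsf{r}}\check{\circ}^{\mathsf{r}}(\minus)$ restricted along $(\minus)^{\mathsf{s}}$ with $\O\circ(\minus)$ via the collapse of the $\check{\circ}^{\mathsf{r}}$-formula onto $\mathsf{r}$-degree $0$. This is the intended argument, and the details you supply (the monoidality of $(\minus)^{\mathsf{s}}$, the vanishing of all other $\mathsf{r}$-levels, and the transport of associativity and unitality along the monoidal functor $(\minus)^{\mathsf{r}}$) are correct.
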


\subsection{Filtrations}\label{FILTRATIONS SEC}

This subsection proves Proposition \ref{FILT PROP}, which provides the key filtrations to prove Theorem \ref{CIRCO POS THM}. These filtrations are adapted from \cite{ElmMan06}, \cite{Ha09}, among others, except we here show that such filtrations also hold after applying $M \circ_{\O} (\minus)$ for some $M \in \mathsf{Mod}^r_{\O}$. This is partly enabled by an alternate definition of $\O_A$. 

\begin{definition}\label{MA DEF}
Let $\O$ be an operad in $\C$ and $A\in \mathsf{Alg}_{\O}$ regarded as an element of $\mathsf{Mod}_{\O}^l$. 
We define
\begin{equation}\label{OA DEF EQ}
\O_A=\O \amalg A,
\end{equation}
were the coproduct is taken in $\mathsf{Mod}_{\O}^l$.
Additionally, for $M\in \mathsf{Mod}_{\O}^r$ we define
\[M_A=M\circ_{\O} \O_A.\]
\end{definition} 

\begin{remark}
As noted to the author by Harper, (\ref{OA DEF EQ}) appeared as \cite[Prop. 5.52]{HaHe13}. However, we benefit from using 
(\ref{OA DEF EQ}) as our \textit{definition} of $\O_A$, hence streamlining the proofs of Propositions \ref{MA PROP}, \ref{MOX PROP} versus similar results in \cite{HaHe13}.
\end{remark}

\begin{remark}\label{0ADJ RMK}
There are adjunctions
$$\iota \colon \mathsf{Alg}_{\O}
   \rightleftarrows 
	\mathsf{Mod}_{\O}^l\colon (\minus)(0) \qquad 
	(\minus)(0)\colon \mathsf{Mod}_{\O}^l\rightleftarrows \mathsf{Alg}_{\O}\colon \widetilde{(\minus)}$$
where $\iota$ is the inclusion and $\tilde{A}(0)=A$, $\tilde{A}(r)=\emptyset$ for $r\geq 1$. 
In particular, colimits in $\mathsf{Alg}_{\O}$ can be computed after the inclusion into $\mathsf{Mod}^l_{\O}$ and $\O_A(0)=A$.
\end{remark}

\begin{proposition}\label{MA PROP}
Let $A\in \mathsf{Alg}_{\O}$ and $X\in \Sym(\C)$. Then there is a natural isomorphism of $\mathsf{Mod}^l_{\O}$-valued functors 
\[(\O \circ X) \amalg A \simeq \O_A\circ X = \coprod_{r \geq 0} \O_A(r) \otimes_{\Sigma_r} X^{\check{\otimes} r}.\]
Additionally, for $M \in \mathsf{Mod}_{\O}^r$ there is a natural isomorphism of $\Sym(\C)$-valued functors
\[ M \circ_{\O} ((\O\circ X) \amalg A) \simeq M_A\circ X = \coprod_{r \geq 0} M_A(r) \otimes_{\Sigma_r} X^{\check{\otimes} r}.\]
\end{proposition}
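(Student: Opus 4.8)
The plan is to establish the two isomorphisms in sequence, deriving the second from the first together with the definitions of $M_A$ and $\circ_{\O}$.

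For the first isomorphism, the key observation is that $\O_A = \O \amalg A$ is a \emph{coproduct in} $\mathsf{Mod}^l_{\O}$, so I would compute $\O_A \circ X$ by exploiting how $\circ$ interacts with such coproducts. First I would note that $\O \circ (\minus)$ is left adjoint to a forgetful-type functor (free left $\O$-module on a symmetric sequence), so that $\O \circ X$ is the free left $\O$-module on $X$; more precisely $(\minus) \circ X$, or rather the assignment $Y \mapsto Y \circ X$ for $Y \in \mathsf{Mod}^l_{\O}$, is a left adjoint when restricted appropriately (by Lemma \ref{CIRCCOLIM LEM}, $\circ$ preserves all colimits in the $\mathsf{Mod}^l_{\O}$ variable). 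Using that $\O \amalg A$, as a colimit in $\mathsf{Mod}^l_{\O}$, is preserved by $(\minus) \circ X$, I get $(\O \amalg A) \circ X \simeq (\O \circ X) \amalg (A \circ X)$, where the latter coproduct is in $\mathsf{Mod}^l_{\O}$. Then I would observe that $A \circ X \simeq A$: since $A$ is concentrated in degree $0$ (it is an algebra, so $A(r) = \emptyset$ for $r \geq 1$), the formula $(A \circ X)(r) = \coprod_{\bar r \geq 0} A(\bar r) \otimes_{\Sigma_{\bar r}} X^{\check\otimes \bar r}(r)$ collapses to the $\bar r = 0$ term $A(0) \otimes X^{\check\otimes 0}$, and $X^{\check\otimes 0} = \check{\mathds 1}$ is the unit for $\check\otimes$, giving $A \circ X \simeq A$. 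This yields $\O_A \circ X \simeq (\O \circ X) \amalg A$. For the explicit formula $\O_A \circ X = \coprod_{r \geq 0} \O_A(r) \otimes_{\Sigma_r} X^{\check\otimes r}$, I would simply unwind the definition \eqref{CIRC DEF} of $\circ$ applied with first argument $\O_A$, noting $Y^{\check\otimes r}(r)$ there is shorthand consistent with the stated expression (this is just Definition \ref{TWOMON DEF}).

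For the second isomorphism I would apply $M \circ_{\O} (\minus)$ to the first. By definition $M_A = M \circ_{\O} \O_A$, and $\circ_{\O}$ preserves reflexive coequalizers and filtered colimits in the $\mathsf{Mod}^l_{\O}$ variable (Lemma \ref{CIRCCOLIM LEM}); more to the point, I want to commute $M \circ_{\O} (\minus)$ past $(\minus) \circ X$. The cleanest route is to use the associativity-type isomorphism $M \circ_{\O} (N \circ X) \simeq (M \circ_{\O} N) \circ X$ for $N \in \mathsf{Mod}^l_{\O}$, $X \in \Sym(\C)$, which follows since $\circ$ is associative and relative composition is a coequalizer that $(\minus) \circ X$ commutes with (as $\circ$ preserves colimits, hence reflexive coequalizers, in its first variable — Proposition \ref{SYMMON PROP}). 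Applying this with $N = \O_A$ gives $M \circ_{\O} (\O_A \circ X) \simeq (M \circ_{\O} \O_A) \circ X = M_A \circ X$. Combined with the first isomorphism $\O_A \circ X \simeq (\O \circ X) \amalg A$, this gives $M \circ_{\O}((\O \circ X) \amalg A) \simeq M_A \circ X$, and the explicit formula $M_A \circ X = \coprod_{r \geq 0} M_A(r) \otimes_{\Sigma_r} X^{\check\otimes r}$ is again just Definition \ref{TWOMON DEF}.

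The main obstacle I anticipate is bookkeeping the associativity/colimit-interchange isomorphism $M \circ_{\O}(N \circ X) \simeq (M \circ_{\O} N) \circ X$ carefully: one must check that the coequalizer presentation of $M \circ_{\O} N$ is genuinely preserved by $(\minus) \circ X$ (using that $\circ$ preserves \emph{all} colimits in the first variable, Proposition \ref{SYMMON PROP}) and that the two $\O$-actions being coequalized match up correctly with the $\O$-action used to form $\O_A \circ X$ as a left $\O$-module. Everything else — the collapse $A \circ X \simeq A$, the preservation of the coproduct $\O \amalg A$ by $(\minus) \circ X$, and unwinding the definitions into the displayed explicit formulas — is routine.
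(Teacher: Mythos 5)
Your proof of the first isomorphism is exactly the paper's argument: apply Lemma \ref{CIRCCOLIM LEM} to distribute $(\minus)\circ X$ over the coproduct $\O \amalg A$ in $\mathsf{Mod}^l_{\O}$, then observe $A \circ X \simeq A$ because $A$ is concentrated in degree $0$. For the "additional" second isomorphism the paper simply asserts it is obvious, whereas you supply the actual content — namely the interchange $M \circ_{\O}(N \circ X) \simeq (M \circ_{\O} N)\circ X$, justified by writing $M \circ_{\O}(\minus)$ as a reflexive coequalizer and using that $(\minus)\circ X$ preserves colimits in its first variable (Proposition \ref{SYMMON PROP}); this is the right reason and fills the gap the paper leaves to the reader. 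Your anticipated caveat about matching the two $\O$-actions is worth flagging but is indeed routine, since the left $\O$-action on $\O_A\circ X$ is induced through the first factor, which is precisely the action appearing in both coequalizers.
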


\begin{proof}
We compute (applying Lemma \ref{CIRCCOLIM LEM} to the coproduct $\O \amalg A$)
$$\O_A\circ X=(\O \amalg A) \circ X \simeq
  (\O \circ X) \amalg (A \circ X) = (\O \circ X) \amalg A,$$
where $A \circ X=A$ since $A$ is in degree $0$. The additional claim is obvious.
\end{proof}

\begin{proposition}\label{MOX PROP}
Given $M \in \mathsf{Mod}^r_{\O}$, $X\in \C$ and 
$A \in \mathsf{Alg}_{\O}$ one has natural isomorphisms of $\Sym(\C)$-valued functors
\begin{equation}\label{MAX EQ} 
  M_{\O X \amalg A}(\minus) =
	\left(M\circ_{\O}(\O \amalg \O \circ X \amalg A)\right)(\minus) \simeq 
	\coprod_{r\geq 0}M_A(r+(\minus))\otimes_{\Sigma_{r}}X^{\otimes r}.
\end{equation}
\end{proposition}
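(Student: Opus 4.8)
The plan is to reduce the statement to an iterated application of Proposition~\ref{MA PROP}. The key observation is that $\O X \amalg A$ should be read as $\O \circ X \amalg A$ taken inside $\mathsf{Mod}^l_{\O}$, so that $\O_{\O X \amalg A} = \O \amalg (\O \circ X \amalg A) = \O \amalg \O \circ X \amalg A$ as stated in the display \eqref{MAX EQ}; the first equality there is just unwinding Definition~\ref{MA DEF} together with Remark~\ref{0ADJ RMK}, which guarantees coproducts of algebras with left modules can be computed in $\mathsf{Mod}^l_{\O}$. So the real content is computing $M \circ_{\O}(\O \amalg \O \circ X \amalg A)$.

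First I would rewrite the left module $\O \amalg \O \circ X \amalg A$. Since $\O = \O \circ \mathcal{I}$ and $\circ$ distributes over coproducts in the appropriate variable, one has $\O \amalg \O \circ X \simeq \O \circ (\mathcal{I} \amalg X)$ as left modules, so that $\O \amalg \O \circ X \amalg A \simeq (\O \circ (\mathcal{I} \amalg X)) \amalg A$. This is now precisely of the form handled by Proposition~\ref{MA PROP} (with $\mathcal{I} \amalg X$ in place of $X$), which gives
\[
M \circ_{\O}\bigl((\O \circ (\mathcal{I}\amalg X))\amalg A\bigr) \simeq M_A \circ (\mathcal{I} \amalg X) = \coprod_{r\geq 0} M_A(r)\otimes_{\Sigma_r}(\mathcal{I}\amalg X)^{\check\otimes r}.
\]
The remaining step is to expand $(\mathcal{I}\amalg X)^{\check\otimes r}$ using the binomial-type formula for $\check\otimes$ powers: since $\mathcal{I}$ is concentrated in arity $1$ (with value $\mathds{1}$) and $\check\otimes$ is the ``arity-additive'' tensor product, one gets $(\mathcal{I}\amalg X)^{\check\otimes r} \simeq \coprod_{k=0}^{r} \Sigma_r \cdot_{\Sigma_k\times\Sigma_{r-k}} \mathcal{I}^{\check\otimes k}\otimes X^{\check\otimes(r-k)}$, where $\mathcal{I}^{\check\otimes k}$ is concentrated in arity $k$. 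Plugging this in, applying the adjunction $\Sigma_r\cdot_{\Sigma_k\times\Sigma_{r-k}}(\minus)$ against the $\Sigma_r$-coinvariants $M_A(r)\otimes_{\Sigma_r}(\minus)$, and using that $\mathcal{I}^{\check\otimes k}$ contributes a shift in the operadic index, collapses the double sum over $(r,k)$ into a single sum over $r - k$; re-indexing $r' = r - k$ yields exactly $\coprod_{r'\geq 0} M_A(r' + (\minus))\otimes_{\Sigma_{r'}}X^{\otimes r'}$, where the index shift inside $M_A$ records the $\mathcal{I}^{\check\otimes k}$ legs and the arity-$(\minus)$ evaluation records that $\check\otimes$ sums the arities.

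I expect the main obstacle to be purely bookkeeping: correctly tracking the symmetric group actions through the chain of adjunction isomorphisms, in particular verifying that the composite isomorphism is natural in $(\minus)$ as a symmetric-sequence-valued functor (i.e.\ equivariant for the residual $\Sigma_s$-action on $X^{\check\otimes r}(s)$ that becomes the $\Sigma_{(\minus)}$-action on the target). One should also be careful that $X \in \C$ here (not $\Sym(\C)$), so $X^{\otimes r}$ on the right is the honest $\C$-tensor power sitting in a single arity, which is consistent with $X^{\check\otimes r}$ restricted to arity exactly $r$; the formula is really recording that only the ``top'' arity piece of each $X^{\check\otimes j}$ survives after the $\mathcal{I}$-legs are accounted for. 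Since the paper states this ``follows by a straightforward calculation'' in spirit (cf.\ the analogous Propositions~\ref{MA PROP} and the remark preceding them), I would present the argument at the level of the displayed isomorphisms above and leave the equivariance verification to the reader.
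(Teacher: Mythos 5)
Your proof is correct, but it takes a genuinely different route from the paper's. Both arguments hinge on the same essential trick: rewriting the free piece as $\O \amalg \O\circ X \simeq \O\circ(\mathcal{I}\amalg X)$ (or, in the paper's guise, $\O^{\mathsf{r}}\amalg\O^{\mathsf{s}} = \O^{\mathsf{r}}\check\circ^{\mathsf{r}}(\mathcal{I}^{\mathsf{r}}\amalg\mathcal{I}^{\mathsf{s}})$) so that Proposition~\ref{MA PROP} can be applied. The difference is in where the bookkeeping is done. The paper transfers the whole question to $\BSym(\C)$ via $(\minus)^{\mathsf{r}},(\minus)^{\mathsf{s}}$, applies Proposition~\ref{MA PROP} twice at two different ``levels'' of the bi-index, and deduces the index shift $r\mapsto r+s$ in one stroke from the identification $M^{\mathsf{r}}_{\O^{\mathsf{s}}\amalg A^{\mathsf{s}}}(r,s)\simeq M_A(r+s)$; the only expansion that ever occurs is the degenerate one for $(\mathcal{I}^{\mathsf{r}}\amalg\mathcal{I}^{\mathsf{s}})^{\check{\check{\otimes}}n}$, where both summands are unit-like in a single grading. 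You instead stay in $\Sym(\C)$, apply Proposition~\ref{MA PROP} once with $\mathcal{I}\amalg X$ as the variable, and then do the binomial expansion of $(\mathcal{I}\amalg X)^{\check{\otimes}r}$ explicitly. This is more elementary in the sense of not requiring the $\BSym$ formalism, but it pushes the combinatorics into the last step: in particular, the collapse of the double coproduct depends on the fact that $\mathcal{I}^{\check{\otimes}k}(k)$ is the \emph{free} $\Sigma_k$-object $\Sigma_k\cdot\mathds{1}$ (not just $\mathds{1}$), which is what lets the shuffle $\Sigma_r$-coinvariance cancel against the induction $\Sigma_r\cdot_{\Sigma_{r-s}}(\minus)$ and leave the arity $\Sigma_s$-action on $M_A(r+s)$. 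Your write-up acknowledges this is where the care is needed, and the claim does check out; if you were to write this out in full you should record that freeness explicitly. One small presentational advantage of the paper's route is that it reuses the $(\minus)^{\mathsf{r}},(\minus)^{\mathsf{s}}$ machinery which is needed anyway for the proof of Theorem~\ref{CIRCO POS THM} (via Remark~\ref{MODRED RMK}), so nothing extra has to be set up, whereas your approach would be the more natural one if this proposition were being proved in isolation.
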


\begin{proof}
This follows formally using the $(\minus)^{\mathsf{r}},(\minus)^{\mathsf{s}}$ functors. Combining Proposition \ref{MODRED PROP} to change perspective to 
$\mathsf{Alg}_{\O^{\mathsf{r}}}(\Sym(\C))$ with Proposition \ref{MA PROP} yields
\begin{equation}\label{BLA EQ}
  \left(M\circ_{\O}(\O \amalg \O \circ X \amalg A)\right)^{\mathsf{s}} \simeq
  M^{\mathsf{r}}\circ^{\mathsf{r}}_{\O^{\mathsf{r}}}
	(\O^{\mathsf{s}}\amalg \O^{\mathsf{r}}\circ^{\mathsf{r}}X^{\mathsf{s}} \amalg A^{\mathsf{s}})\simeq
	M^{\mathsf{r}}_{\O^{\mathsf{s}} \amalg A^{\mathsf{s}}}\circ^{\mathsf{r}} X^{\mathsf{s}}. 
\end{equation}
Applying Proposition \ref{MA PROP} and noting $A^{\mathsf{s}}=A^{\mathsf{r}}$ (as $A$ is an algebra) we compute 
\begin{align*} 
  M^{\mathsf{r}}_{\O^\mathsf{s} \amalg A^{\mathsf{s}}} = &
	M^{\mathsf{r}} \circ^{\mathsf{r}}_{\O^{\mathsf{r}}} \left(\O^{\mathsf{r}} \amalg \O^{\mathsf{s}} \amalg A^{\mathsf{s}}\right) =
	M^{\mathsf{r}}\circ^{\mathsf{r}}_{\O^{\mathsf{r}}} 
	\left({\O^{\mathsf{r}}}\circ^{\mathsf{r}}(\mathcal{I}^{\mathsf{r}} \amalg \mathcal{I}^{\mathsf{s}}) 
\amalg A^{\mathsf{s}} \right) \simeq \\
 \simeq &
  M_{A^{\mathsf{s}}}^{\mathsf{r}}\circ^{\mathsf{r}}(\mathcal{I}^{\mathsf{r}}\amalg \mathcal{I}^{\mathsf{s}}) = 
	M_{A^{\mathsf{r}}}^{\mathsf{r}}\circ^{\mathsf{r}}(\mathcal{I}^{\mathsf{r}}\amalg \mathcal{I}^{\mathsf{s}}) \simeq 
	\left(M_{A}\right)^{\mathsf{r}}\circ^{\mathsf{r}}(\mathcal{I}^{\mathsf{r}}\amalg \mathcal{I}^{\mathsf{s}}),
\end{align*} 
showing $M^{\mathsf{r}}_{\O^{\mathsf{s}} \amalg A^{\mathsf{s}}}(r,s) \simeq M_A(r+s)$. Plugging into (\ref{BLA EQ}) finishes the proof.
\end{proof}

We now turn to the key result in the subsection. $Q^r_{r-1}(f)$ is defined in Remark \ref{QNT RMK}.

\begin{proposition}\label{FILT PROP}
Consider any pushout in $\mathsf{Alg}_{\O}(\C)$ of the form
\begin{equation}\label{PUSHOUT EQ}
\xymatrix@1{
\O \circ X \ar[d]_{\O \circ f}\ar[r]^-h& A \ar[d]\\
\O \circ Y \ar[r]                 & B,         } \end{equation}
and let $M\in \mathsf{Mod}^r_{\O}$. Then, in the underlying category $\C$,
\begin{equation}\label{BFILT EQ}
M\circ_{\O}B \simeq \colim \left(A^{M}_0\to A^M_1\to A^M_2\to \cdots \right)
\end{equation}
where $A^M_0=M\circ_{\O}A$ and the $A^M_r$ are built inductively from pushout diagrams
\begin{equation} \label{IND EQ}
\xymatrix@1{
M_A(r)\otimes_{\Sigma_r} Q^{r}_{r-1}(f) \ar[d] \ar[r]& A_{r-1}^M \ar[d]\\
M_A(r)\otimes_{\Sigma_r} Y^{\otimes r}   \ar[r]       & A^M_r.} 
\end{equation}
\end{proposition}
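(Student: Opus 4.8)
### Proof proposal

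The plan is to first reduce the statement about $M\circ_{\O}B$ to a statement about $M\circ_{\O}(-)$ applied to an explicit filtration of $B$ itself in $\mathsf{Alg}_{\O}$, and then identify the terms and pushout squares of that filtration using the structural isomorphisms of Propositions~\ref{MA PROP} and~\ref{MOX PROP}. Concretely, I would begin by invoking Remark~\ref{0ADJ RMK} to rewrite the pushout \eqref{PUSHOUT EQ}: since $A$ is an $\O$-algebra, the map $h\colon \O\circ X\to A$ is adjoint to a map $X\to A(0)=A$ in $\C$, and the pushout $B$ is the coproduct $\O_A\circ_{\O}(\text{stuff})$ — more precisely I would write $B$ as the colimit of the ``standard'' filtration $B\simeq \colim(A=B_0\to B_1\to B_2\to\cdots)$ where $B_r$ is built from $B_{r-1}$ by a pushout of $\O_A(r)\otimes_{\Sigma_r}Q^r_{r-1}(f)\to \O_A(r)\otimes_{\Sigma_r}Y^{\otimes r}$. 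This filtration is exactly the one appearing in \cite{ElmMan06}, \cite{Ha09} (phrased there for $\C$-algebras), and the only genuinely new input on the algebra side is to observe, via Proposition~\ref{MA PROP}, that the relevant ``free-plus-$A$'' objects are $\O_A\circ X = (\O\circ X)\amalg A$, so that the inductive description of $B_r$ is governed by $\O_A$ rather than $\O$; I would state this as a lemma (or cite it from the sources) and take it as the starting point.

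The second and main step is to apply $M\circ_{\O}(-)$ to that filtration. By Lemma~\ref{CIRCCOLIM LEM}, $M\circ_{\O}(-)$ preserves filtered colimits and reflexive coequalizers, and by Proposition~\ref{SYMMON PROP}/Lemma~\ref{CIRCCOLIM LEM} it preserves all colimits in a suitable sense; in particular it preserves the transfinite composition and the pushout squares defining $B_r$. Thus setting $A^M_r := M\circ_{\O}B_r$ immediately gives \eqref{BFILT EQ} with $A^M_0=M\circ_{\O}A$, and applying $M\circ_{\O}(-)$ to the defining pushout of $B_r$ from $B_{r-1}$ yields a pushout square
\[
\xymatrix@1{
M\circ_{\O}\bigl(\O_A(r)\otimes_{\Sigma_r}Q^r_{r-1}(f)\bigr) \ar[d]\ar[r] & A^M_{r-1}\ar[d]\\
M\circ_{\O}\bigl(\O_A(r)\otimes_{\Sigma_r}Y^{\otimes r}\bigr)\ar[r] & A^M_r.}
\]
It remains to identify the two left-hand corners. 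The point is that $\O_A(r)\otimes_{\Sigma_r}Z$, for $Z$ a $\Sigma_r$-object of $\C$, can be rewritten (viewing everything in $\mathsf{Mod}^l_{\O}$) as a free-module expression to which $M\circ_{\O}(-)$ can be pushed through: using the associativity of $\circ$ and the fact that $M\circ_{\O}(\O\circ(-)) = M\circ(-)$, together with Proposition~\ref{MOX PROP} (with $X$ taken to be the relevant $\Sigma_r$-object, unwinding the $\Sigma_r$-coinvariants as a colimit that $\circ_{\O}$ commutes with), one gets $M\circ_{\O}\bigl(\O_A(r)\otimes_{\Sigma_r}Z\bigr)\simeq M_A(r)\otimes_{\Sigma_r}Z$ naturally in $Z$. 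Taking $Z=Q^r_{r-1}(f)$ and $Z=Y^{\otimes r}$ then produces exactly the square \eqref{IND EQ}.

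I expect the main obstacle to be the bookkeeping in this last identification: one must be careful that $\O_A(r)\otimes_{\Sigma_r}(-)$ is genuinely a colimit construction that $M\circ_{\O}(-)$ commutes with (the $\Sigma_r$-coinvariants are a reflexive coequalizer / colimit, so Lemma~\ref{CIRCCOLIM LEM} applies, but this needs to be said cleanly), and that the $\Sigma_r$-equivariance is tracked correctly when invoking Proposition~\ref{MOX PROP} — in particular that the ``$M_A(r+(-))$'' appearing there specializes, after the relevant coinvariants, to the factor $M_A(r)$ acted on by $\Sigma_r$ in \eqref{IND EQ}. A secondary subtlety is making sure the base case $A^M_0 = M\circ_{\O}A$ and the compatibility of the maps $h$, $\O\circ f$ with the adjunction of Remark~\ref{0ADJ RMK} are handled so that the filtration of $B$ one writes down is literally the pushout \eqref{PUSHOUT EQ} and not merely weakly equivalent to it. Once these points are pinned down, the proof is a formal assembly of Propositions~\ref{MA PROP}, \ref{MOX PROP}, Lemma~\ref{CIRCCOLIM LEM} and the classical algebra-filtration of \cite{ElmMan06}, \cite{Ha09}.
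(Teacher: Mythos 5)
Your proposal has a genuine gap, and it is exactly the gap that makes the paper's proof nontrivial. The classical filtration $B\simeq\colim(A=B_0\to B_1\to\cdots)$ from \cite{ElmMan06}, \cite{Ha09} is a filtration in the \emph{underlying} category $\C$: the intermediate terms $B_r$ for $0<r<\infty$ carry no $\O$-algebra (or left $\O$-module) structure. Consequently the expression $M\circ_{\O}B_r$ does not make sense, and neither does ``applying $M\circ_{\O}(-)$ to the pushout square defining $B_r$'' --- those pushouts live in $\C$, outside the domain of $\circ_{\O}$. And even when $M\circ_{\O}(-)$ \emph{is} defined, Lemma~\ref{CIRCCOLIM LEM} only grants preservation of reflexive coequalizers and filtered colimits in the $\mathsf{Mod}^l_{\O}$ variable, not of pushouts; so the sentence ``it preserves all colimits in a suitable sense; in particular it preserves\ldots the pushout squares defining $B_r$'' is a misreading of the lemma and is false. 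The identification $M\circ_{\O}\bigl(\O_A(r)\otimes_{\Sigma_r}Z\bigr)\simeq M_A(r)\otimes_{\Sigma_r}Z$ that you want to invoke has the same problem: $\O_A(r)\otimes_{\Sigma_r}Z$ is an object of $\C$, not a left $\O$-module, so the left-hand side is again undefined.

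The paper sidesteps this precisely. Instead of filtering $B$ by non-module objects and trying to push $M\circ_{\O}(-)$ through, it first writes $B$ as a reflexive coequalizer of \emph{genuine} left $\O$-modules, $B\simeq\colim\bigl((\O\circ(X\amalg Y))\amalg A \rightrightarrows (\O\circ Y)\amalg A\bigr)$, to which $M\circ_{\O}(-)$ \emph{can} be applied by Lemma~\ref{CIRCCOLIM LEM}. The resulting reflexive coequalizer is then re-expressed as a colimit over a richer diagram category $\mathcal{W}$ of ``words'' via a left Kan extension argument, and the filtration \eqref{BFILT EQ}--\eqref{IND EQ} is extracted not from a filtration of $B$ in $\C$ but from a filtration of the \emph{indexing category} $\mathcal{W}$ itself (by word length), using several finality lemmas. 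That architecture is what lets the usual $\O_A$-based algebra filtration be upgraded to an $M_A$-based filtration of $M\circ_{\O}B$ without ever needing $M\circ_{\O}(-)$ to preserve pushouts or to be defined on non-modules. I would suggest revisiting the point at which you claim $A^M_r:=M\circ_{\O}B_r$: making that step rigorous is the entire content of the proposition.
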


\begin{remark} \label{MODRED RMK}
To streamline the proof of Theorem \ref{CIRCO POS THM} we will need to apply Proposition \ref{FILT PROP} to the category 
$\mathsf{Mod}^l_{\O}$, a move enabled by Proposition \ref{MODRED PROP}. 
This is mostly straightforward, with occurrences of $\C$ replaced by $\Sym(\C)$ and $\otimes$ replaced by $\check{\otimes}$, though defining $M_N \in \BSym(\C)$ when $N \in \mathsf{Mod}_{\O}^l$ requires some care. Analyzing Proposition \ref{MODRED PROP} and Definition \ref{MA DEF} leads to the definition 
\begin{equation}\label{MN DEF EQ}
M_{N} = M^{\mathsf{r}}_{N^{\mathsf{s}}}=
  M^{\mathsf{r}} \phantom{|}\check{\circ}^{\mathsf{r}}_{\O^{\mathsf{r}}} (\O^{\mathsf{r}}\amalg N^{\mathsf{s}}).
\end{equation}
Note that this is compatible with Definition \ref{MA DEF} when $N=A$ is an algebra since then $A^{\mathsf{s}}=A^{\mathsf{r}}$ so that 
$M^{\mathsf{r}}_{A^{\mathsf{s}}}=
M^{\mathsf{r}}_{A^{\mathsf{r}}}=
(M_A)^{\mathsf{r}}$.
\end{remark}

\begin{remark}
When $M=\O$, (\ref{MN DEF EQ}) appeared as \cite[Prop. 5.52]{HaHe13}, albeit with transposed indexes due to differing conventions.
Our convention has one nice advantage: filtrations of $M_A$ follow by Remark \ref{MODRED RMK} without a separate proof.
\end{remark}

The remainder of the subsection is dedicated to proving Proposition \ref{FILT PROP}. We essentially adapt the proof of \cite[Prop. 4.20]{Ha09}, although we substantially repackage the argument using a more categorical perspective. 

For motivation we note that, in short, the proof of \cite[Prop. 4.20]{Ha09} follows by noticing that 
$A\amalg_{\O \circ X}\O \circ Y$ 
is built out of terms of the form
\begin{equation}\label{WORDY EQ}
\O_A\left(\left| w \right|\right)\otimes w^{\otimes}(X,Y),
\end{equation}
where $w^{\otimes}(X,Y)$ denotes a \textit{word} (or \textit{non abelian monomial}) for the operation $\otimes$ in the \textit{letters} $X,Y$ (e.g. $X \otimes Y \otimes X$, $Y \otimes Y \otimes X \otimes Y$) and $\left| w \right|$ its length, glued
along certain maps between them\footnote{We note, however, that the need to deal with \textit{non abelian monomials}, rather than abelian ones, is somewhat hidden in the presentation of the proof in \cite{Ha09}. We recommend the reader interested in gleaning why these are needed to focus on the last two paragraphs of that proof. 
}. The filtration (\ref{BFILT EQ}) is then obtained by analyzing a long list of compatibility relations satisfied by those maps. 

One drawback of such an approach is that it can be hard to keep track of the compatibilities that need to be verified. Instead, our approach will be to first identify a ``diagram category of words'' $\mathcal{W}$ together with a functor $F \colon \mathcal{W} \to \mathcal{C}$ defined on objects by (\ref{WORDY EQ}) and for which $A\amalg_{\O \circ X}\O \circ Y = \colim_{\mathcal{W}}F$. 
Then, since $\mathcal{W}$ will encode all the necessary maps and compatibilities, the desired filtration (\ref{BFILT EQ}) will follow from a filtration $\mathcal{W}_{\leq r}$ of $\mathcal{W}$ itself.

\vskip 5pt

To motivate the definition, note that $\mathcal{W}$ needs enough arrows to describe:
\begin{enumerate*}
\item[(i)] the $Q^r_{r-1}$ constructions of Remark \ref{QNT RMK}, i.e., $\mathcal{W}$ should contain the ``$r$-cube categories'' $(x \to y)^{\times r}$; 
\item[(ii)] the $\Sigma_r$-action on $Q^r_{r-1}$;
\item[(iii)] maps between the terms in (\ref{WORDY EQ}) that remove some $X$ letters (induced by $h \colon \O \circ X \to A$ in (\ref{PUSHOUT EQ})). 
\end{enumerate*}

This desiderata will likely remind some readers of Grothen\-dieck constructions (cf. for example \cite[Construction 7.1.9]{Ri14}). 

\begin{definition}
Consider the functor ($\mathsf{Fininj}$ denoting (a skeleton of) finite sets and injections)
$ (x \to y)^{(\minus)} \colon \mathsf{Fininj}^{op} \to \mathsf{Cat} $
 defined by $\underline{r} =\{1,2,\cdots,r \} \mapsto (x \to y)^{ \underline{r}}, r \geq 0$ and  let $\mathcal{G}$ denote the corresponding Grothendieck construction.  
Explicitly, objects of $\mathcal{G}$ are pairs 
$$(\underline{r},w \in (x \to y)^{\underline{r}})$$
and an arrow $(\underline{r},w) \to (\underline{r_{\**}},w_{\**})$ is a pair 
$$(\iota \colon \underline{r_{\**}}\hookrightarrow \underline{r},w \circ \iota \to w_{\**})$$
(note that, since $(x \to y)^{\underline{r_{\**}}}$ is a poset, arrows are determined by their first component) with composition defined in the obvious way.
\end{definition}

\begin{notation}
To ease notation we will when convenient refer to an object of $\mathcal{G}$ by its second component $w$ and to an arrow by its first component $\iota$.
\end{notation}

\begin{remark}\label{GDESC REM}
Objects $(\underline{r},w) \in \mathcal{G}$ can be identified with words $w(x,y)$ on letters $x$ and $y$ where $r=|w|$, the length of the word. Further, we let $|w|_x$ (resp. $|w|_y$) denote the number of $x$'s (resp. $y$'s) in $w$.

Given a word $w=l_1 l_2\cdots l_r$ with $l_i \in \{x,y\}$ and an injection 
$\iota \colon \underline{r_{\**}} \hookrightarrow \underline{r}$, one has
$w \circ \iota= l_{\iota(1)} l_{\iota(2)}\cdots l_{\iota(r_{\**})}$, which we think of as the word obtained by removing the letters of $w$ in positions not in $\iota(r_{\**})$ and suitably shuffling the remaining letters.
An arrow $w \to w_{\**}$ can then be interpreted as an injection 
$\iota \colon \underline{\left|w_{\**}\right|} \hookrightarrow \underline{\left|w\right|}$ such that, after removing and shuffling letters of $w$ to obtain $w \circ \iota$, one can obtain $w_{\**}$ from $w \circ \iota$ by further replacing some $x$'s with $y$'s (now without shuffling).

Note that hence any $\iota \colon w \to w_{\**}$ has a natural factorization $w \to w \circ \iota \to w_{\**}$.
\end{remark}

Comparing the description in Remark \ref{GDESC REM} with the desiderata for $\mathcal{W}$, we see that $\mathcal{G}$ has more arrows than desired, namely those that remove $y$ letters. 

\begin{definition}
$\mathcal{W}$ is the subcategory of $\mathcal{G}$ with all objects and those arrows 
$\iota \colon (\underline{r},w) \to (\underline{r_{\**}},w_{\**})$ such that 
$w\left(\underline{r}-\iota\left(\underline{r_{\**}}\right)\right)
  \subset \{x\}$ 
or, equivalently,
$|w|_y=|w \circ \iota|_y$.
	
Further, for each $r \geq 0$, let $\mathcal{W}_{\leq r}$ (resp. $\mathcal{W}_{r}$) denote the full subcategory of those $w \in \mathcal{W}$ satisfying $|w|\leq r$ (resp. $|w|=r$).
\end{definition}

\begin{notation}
It will be convenient to name certain types of arrows in $\mathcal{W}$:
\begin{itemize}
\item a \textit{shuffle} is an arrow $\sigma \colon w \to w \circ \sigma$ for $\sigma \in \Sigma_{|w|}^{op}$;
\item a \textit{tidy arrow} is an arrow $\pi \colon w=\bar{w}x^a \to w_{\**}$ for 
$\pi$ the inclusion  $ \underline{|w_{\**}|} = \underline{|\bar{w}|} \subset \underline{|w|}$;
\item a \textit{removing arrow} is an arrow $w \to y^{|w|_y}$;
\item a \textit{replacing arrow} is an arrow $w \to y^{|w|}$.
\end{itemize}
\end{notation}

\begin{definition}
$\bar{\mathcal{W}}$ is the subcategory of $\mathcal{W}$ with the same objects but only the shuffles, removing and replacing arrows. 
\end{definition}

\begin{remark}
Keeping the intuition of Remark \ref{GDESC REM}, tidy arrows remove some $x$'s at the end of a word and then replace some $x$'s by $y$'s without any shuffling, removing arrows remove all $x$'s (perhaps shuffling) and replacing arrows replace all $x$'s by $y$'s (perhaps shuffling).
\end{remark}

The key to proving Proposition \ref{FILT PROP} are the following lemmas building $F \colon \mathcal{W} \to \C$ and establishing some categorical results about $\mathcal{W}$.

\begin{lemma}
The pushout diagram (\ref{PUSHOUT EQ}) and $M \in \mathsf{Mod}^r_{\O}(\C)$ naturally induce a functor
\[F^M \colon \mathcal{W} \to \mathcal{C}.\]
\end{lemma}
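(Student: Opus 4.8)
The plan is to construct $F^M$ by exploiting the Grothendieck-construction structure of $\mathcal{G}$ together with the description of the relevant pushout given by Proposition \ref{MA PROP}, and then to restrict along $\mathcal{W} \hookrightarrow \mathcal{G}$. First I would recall, via Remark \ref{GDESC REM}, that an object $w$ of $\mathcal{W}$ is a word in the letters $x,y$, and I would assign to it the object
\[
F^M(w) = M_A\bigl(|w|\bigr) \otimes_{\Sigma_{|w|}} \Sigma_{|w|} \cdot \bigl(w^{\otimes}(X,Y)\bigr),
\]
where $w^{\otimes}(X,Y)$ is the iterated tensor product reading off the letters of $w$; more precisely, since $\mathcal{G}$ is a Grothendieck construction over $\mathsf{Fininj}^{op}$, it is cleanest to package this as $F^M(\underline{r},w) = M_A(r) \otimes \bigl(\bigotimes_{i=1}^{r} L_i\bigr)$ where $L_i = X$ or $Y$ according to whether the $i$-th coordinate of $w \in (x\to y)^{\underline r}$ is $x$ or $y$. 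The symmetric group actions and the gluing are then forced by functoriality.

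Next I would check functoriality on the two basic types of generating arrows, using the factorization $w \to w\circ\iota \to w_{\**}$ of Remark \ref{GDESC REM}. Shuffles $\sigma\colon w \to w\circ\sigma$ act via the $\Sigma_r$-equivariance of $M_A(r)$ together with the symmetry isomorphisms permuting the tensor factors; this is where using the ``induced'' form $M_A(r)\otimes_{\Sigma_r}\Sigma_r\cdot(\minus)$, or equivalently tracking the $\Sigma_r$-action carefully, makes the coherence automatic. For the ``letter-type'' part of an arrow — replacing an $x$ by a $y$ in a fixed position — one uses the structure map $h\colon \O\circ X \to A$ from the pushout (\ref{PUSHOUT EQ}): concretely, on $x$-factors one applies $X\xrightarrow{f} Y$ when the target letter is $y$, and one uses $h$ together with the module structure $M\circ_{\O}(\minus)$ exactly as in Proposition \ref{MA PROP} and Proposition \ref{MOX PROP} to absorb an $x$-factor into the algebra, lowering the arity of $M_A$. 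The crucial point is that in $\mathcal{W}$ (as opposed to $\mathcal{G}$) arrows are only allowed to remove $x$-letters, never $y$-letters, which is precisely what is needed for these absorption maps to be defined — absorbing a $Y$ into $A$ has no meaning since $Y$ does not map to $A$. Compatibility of these assignments under composition reduces, after the $w\to w\circ\iota\to w_{\**}$ factorization, to (a) associativity/equivariance of the $M_A$-module structure, handled by iterating Proposition \ref{MA PROP}, and (b) the obvious naturality of the tensor-permutation isomorphisms; both are routine diagram chases.

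The main obstacle I anticipate is bookkeeping rather than conceptual: making the $\Sigma_r$-actions and the blockwise decompositions match up so that a tidy arrow $\pi\colon \bar w x^a \to w_{\**}$ — which simultaneously truncates $x$-factors at the end and replaces some remaining $x$'s by $y$'s — is sent to a well-defined map independent of how one decomposes $\pi$ into a shuffle, a removing step, and an $f$-application. I would manage this by checking functoriality only on a generating set of arrows of $\mathcal{W}$ (shuffles, single-position replacements $x\mapsto y$, and single-$x$-absorptions) and then invoking the relations among these — which are exactly the relations already present in the Grothendieck construction $\mathcal{G}$ restricted to $\mathcal{W}$ — so that no independent verification of the full composition law is needed. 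Once $F^M$ is in hand, the subsequent lemmas identifying $\colim_{\mathcal{W}}F^M$ with $M\circ_{\O}B$ and producing the filtration by the $\mathcal{W}_{\leq r}$ will follow the outline already sketched in the paper, ultimately giving Proposition \ref{FILT PROP}.
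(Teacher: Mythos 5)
Your overall strategy — define $F^M$ on objects by $F^M(w)=M_A(|w|)\otimes w^{\otimes}(X,Y)$, define it separately on shuffles (via the $\Sigma_{|w|}$-action on $M_A(|w|)$ and the symmetry isomorphisms) and on the ``letter-type'' operations (via $f$, $h$ and Proposition \ref{MA PROP}), and then extend to all arrows using the factorization $w\to w\circ\iota\to w_{\**}$ — is exactly the route the paper takes. Where the paper differs is that it uses a coarser generating family (shuffles and \emph{tidy} arrows $\bar{w}x^a\to w_{\**}$, each of which simultaneously truncates trailing $x$'s and does all the $x\mapsto y$ replacements) rather than your finer list of single-position replacements and single-$x$-absorptions; this is a mild stylistic choice and either generating family would work.

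The genuine gap in the proposal is your last paragraph, specifically the claim that one can ``invoke the relations already present in $\mathcal{G}$ restricted to $\mathcal{W}$ so that no independent verification of the full composition law is needed.'' Verifying that the images of your generators in $\mathcal{C}$ satisfy the relations that hold in $\mathcal{W}$ \emph{is} verifying the composition law, and it is the actual content of the lemma, not a bookkeeping afterthought. Two specific checks cannot be skipped: (i) \textbf{well-definedness}: since the factorization of an arrow as a shuffle followed by a tidy arrow is not unique, one must show the definition is independent of the choice; the paper does this by observing that $\pi\sigma$ is tidy only when $\sigma$ lies in the subgroup $\Sigma_a\subset\Sigma_{|w|}$ permuting the trailing $x$'s, and that such $\sigma$ leave the defining diagram (\ref{FTIDY EQ}) invariant; and (ii) \textbf{compositions with a shuffle on the outside}: for a composite $w\xrightarrow{\pi}w_{\**}\xrightarrow{\sigma_{\**}}w_{\**}\circ\sigma_{\**}$ with $\pi$ tidy, the composite is generally not in ``shuffle-then-tidy'' form, so one must prove $F^M(\sigma_{\**}\pi)=F^M(\sigma_{\**})F^M(\pi)$ separately; the paper does this by the conjugation trick, using that $\sigma_{\**}\pi\sigma_{\**}^{-1}$ is tidy and that (\ref{FTIDY EQ}) is $\Sigma_{|w_{\**}|}$-equivariant. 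Neither of these is a ``routine diagram chase'' in the sense of being automatic; they are the points where the specific shape of the tidy arrows and the equivariance of the $M_A$-construction actually enter. Your proposal flags the well-definedness concern but then waves it away, and it does not mention the second point at all, so as written it does not yet constitute a proof.
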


\begin{proof}
We define $F^M$ on objects in the obvious way as 
\begin{equation}\label{FDEF EQ}
F^M(w(x,y))=M_A(\left|w\right|) \otimes w^{\otimes}(X,Y).
\end{equation}
For arrows, we first declare that for a shuffle $\sigma \colon w \to w \circ \sigma$, 
$$M_A(\left|w\right|) \otimes w^{\otimes}(X,Y) \xrightarrow{F^M(\sigma)} M_A(\left|w\right|) \otimes (w\circ \sigma)^{\otimes}(X,Y)$$
is the map defined by the action of $\sigma^{-1} \in \Sigma_{\left|w\right|}$ on $M_A(\left|w\right|)$ and by shuffling $w^{\otimes}(X,Y)$. Since any arrow can be made tidy by pre-composing with a shuffle, it remains to coherently define $F^M$ on tidy arrows. For a tidy arrow $\pi \colon w = \bar{w}(x,y)x^a \to w_{\**}(x,y)$, define $F^M(\pi)$ via the diagram (with vertical maps the summand inclusions induced by Proposition \ref{MA PROP} and writing $\O (\minus)$ for
$\O \circ (\minus)$) 
\begin{equation}\label{FTIDY EQ}
\begin{tikzcd}[column sep=5.3pc]
M_A(\left|w\right|) \otimes \bar{w}^{\otimes}(X,Y) \otimes X^{\otimes a} \ar{r}{F^M(\pi)} \arrow[hook]{d}   & 
M_A(\left|w_{\**}\right|) \otimes w_{\**}^{\otimes}(X,Y)                                               \arrow[hook]{d}
\\
M \circ_{\O} \left( \O ( \bar{w}^{\amalg}(X,Y)) \amalg \O X^{\amalg a} \amalg A \right) 
\arrow{r}{M \circ_{\O}\left(\O f_{\**} \amalg h_{\**} \amalg id_A\right)}  & 
M \circ_{\O} \left( \O (w_{\**}^{\amalg}(X,Y)) \amalg A \right). 
\end{tikzcd}
\end{equation}
$F^M(\pi)$ is well defined since $\pi \sigma$ is tidy only for $\sigma \in \Sigma_a \subset \Sigma_{|w|}$ and such shuffles do not change (\ref{FTIDY EQ}). It follows that $F^M$ is well defined on all arrows.

We now verify $F^M$ respects compositions. This is clear when composing either two shuffles or two tidy arrows, and since general two-fold compositions factor as 
$w \xrightarrow{\sigma} w \circ \sigma \xrightarrow{\pi} w_{\**} \xrightarrow{\sigma_{\**}} w_{\**} \circ \sigma_{\**} \xrightarrow{\pi_{\**}}w_{\**\**}$ 
with $\sigma,\sigma_{\**}$ shuffles and $\pi,\pi_{\**}$ tidy, it remains to show $F^M(\sigma_{\**} \pi)=F^M(\sigma_{\**}) F^M(\pi)$.
Identifying $\sigma_{\**} \in \Sigma_{|w_{\**}|}\subset \Sigma_{|w|}$, one has $\sigma_{\**} \pi \sigma_{\**}^{-1}$ tidy, so that by definition $F^M(\sigma_{\**}\pi)=F^M(\sigma_{\**}\pi\sigma_{\**}^{-1})F^M(\sigma_{\**})$. The claim now follows since (\ref{FTIDY EQ}) respects the action of $\sigma_{\**}$.
\end{proof}

Recall that a functor $\mathcal{J} \to \mathcal{I}$ between diagram categories is called \textit{final} if for any functor 
$F \colon \mathcal{I} \to \C$ (where $\C$ is a category with all small colimits) one has 
 \[\colim_{\mathcal{I}}F=\colim_{\mathcal{J}}F|_{\mathcal{J}}.\]
We will need several finality conditions for subcategories of $\mathcal{W}$. In all cases we show them by verifying (cf. \cite[IX.3 Thm. 1]{McL}) that for all $i \in \mathcal{I}$ the under categories $i \downarrow \mathcal{J}$ are non-empty (this will always be obvious) and connected.

\begin{lemma}\label{FINAL1 LEM}
The subcategory $\bar{\mathcal{W}}$ is final in $\mathcal{W}$. 
\end{lemma}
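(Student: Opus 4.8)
The plan is to verify the finality criterion directly: by \cite[IX.3 Thm. 1]{McL} it suffices to show that for every object $w \in \mathcal{W}$ the under category $w \downarrow \bar{\mathcal{W}}$ is non-empty and connected. Non-emptiness is immediate since the identity arrow $w \to w$ lies in $\bar{\mathcal{W}}$ (identities are shuffles, taking $\sigma$ to be the identity permutation). So the real content is connectedness.

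First I would set up a normal form for the objects of $w \downarrow \bar{\mathcal{W}}$. Recall from Remark \ref{GDESC REM} that an arrow $\iota \colon w \to w_{\**}$ in $\mathcal{W}$ factors canonically as $w \xrightarrow{\iota} w \circ \iota \to w_{\**}$, where the first map removes $x$-letters (with a possible shuffle) and the second replaces some $x$'s by $y$'s without shuffling. An arrow of $\bar{\mathcal{W}}$ is additionally required to be a shuffle, a removing arrow (target $y^{|w|_y}$), or a replacing arrow (target $y^{|w|}$). The plan is to show that from any object $\alpha \colon w \to v$ of $w \downarrow \bar{\mathcal{W}}$ there is a zigzag in $w \downarrow \bar{\mathcal{W}}$ connecting it to a fixed ``terminal-type'' object, for instance the replacing arrow $\rho \colon w \to y^{|w|}$. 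Concretely: if $\alpha$ is a shuffle $\sigma \colon w \to w\circ\sigma$, then post-composing with the replacing arrow $w \circ \sigma \to y^{|w|}$ gives a morphism in $w\downarrow\bar{\mathcal{W}}$ from $\sigma$ to $\rho$ (the two replacing arrows $w \to y^{|w|}$ obtained this way agree because $y^{|w|}$ has no $x$'s to shuffle, so the composite only depends on $|w|$). If $\alpha$ is the removing arrow $w \to y^{|w|_y}$, then — provided $|w|_x = 0$ — it coincides with $\rho$; and if $|w|_x > 0$ one connects it to $\rho$ by observing both factor appropriately, or more simply one notes there is a morphism $y^{|w|_y} \to y^{|w|}$? — no, that goes the wrong way, so instead I would handle the removing arrow by a short zigzag through $w$ itself via the identity. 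The cleanest argument: every object of $w\downarrow\bar{\mathcal{W}}$ admits a morphism \emph{to} the replacing arrow $\rho$, because $\bar{\mathcal{W}}$ has a morphism from each of its objects to $y^{|w|}$ after the appropriate shuffle — wait, that is only true for objects reachable from $w$.

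Let me restate the key step more carefully, since this is the main obstacle. The issue is that $\bar{\mathcal{W}}$ omits the tidy (non-bijective, non-fully-removing/replacing) arrows, so one must check that the colimit of $F^M$ is unchanged — equivalently that every object $w$ has connected $w\downarrow\bar{\mathcal{W}}$. I expect the proof to proceed by showing $w\downarrow\bar{\mathcal{W}}$ has a \emph{terminal object}, namely the replacing arrow $\rho_w\colon w \to y^{|w|}$: given any $\alpha\colon w\to v$ in $\bar{\mathcal{W}}$, $v$ is either a shuffle $w\circ\sigma$ of $w$, or $y^{|w|_y}$, or $y^{|w|}$; in each case there is a unique replacing arrow $v \to y^{|w|}$ (unique because $|v| = |w|$ in the shuffle and replacing cases, while for $v = y^{|w|_y}$ the relevant arrow is the replacing arrow $y^{|w|_y} \to y^{|w|_y}$... which has the wrong length). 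This exposes the real subtlety: $y^{|w|_y}$ and $y^{|w|}$ need not be connected in a way that targets a single object when $|w|_x > 0$. So I would instead argue connectedness directly: any $\alpha\colon w \to v$ in $\bar{\mathcal{W}}$ is, by definition of $\bar{\mathcal{W}}$'s arrows, such that either $v = w\circ\sigma$ (reachable to $\rho_w$), or $\alpha$ is itself a removing/replacing arrow, and one checks removing and replacing arrows out of $w$ are linked by the zigzag $w\downarrow\bar{\mathcal{W}}\ni (w\to y^{|w|_y}) \leftarrow (w \xrightarrow{\mathrm{id}} w) \to (w \to y^{|w|})$ using that the identity of $w$ is a shuffle hence in $\bar{\mathcal{W}}$, and both the removing and replacing arrows out of $w$ are arrows of $\bar{\mathcal{W}}$ so define morphisms in $w\downarrow\bar{\mathcal{W}}$ from $\mathrm{id}_w$. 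Hence all of $w\downarrow\bar{\mathcal{W}}$ is connected to $\mathrm{id}_w$, giving the result. I would close by remarking that the same under-category computation, now over a general tidy arrow, is exactly what makes $F^M$ well-defined so no compatibility is lost.

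\medskip

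\noindent\emph{Proof plan summary.} By \cite[IX.3 Thm. 1]{McL} it suffices to prove $w\downarrow\bar{\mathcal{W}}$ is non-empty and connected for every $w\in\mathcal{W}$. Non-emptiness holds via $\mathrm{id}_w$, which is a shuffle hence an arrow of $\bar{\mathcal{W}}$. For connectedness, given an object $\alpha\colon w\to v$ of $w\downarrow\bar{\mathcal{W}}$, use the canonical factorization of Remark \ref{GDESC REM} together with the classification of $\bar{\mathcal{W}}$-arrows (shuffles, removing, replacing) to produce a zigzag in $w\downarrow\bar{\mathcal{W}}$ from $\alpha$ to $\mathrm{id}_w$: when $v = w\circ\sigma$ is a shuffle, the shuffle $\sigma$ itself is a morphism $\mathrm{id}_w \to \alpha$; when $\alpha$ is a removing or replacing arrow, it is a morphism $\mathrm{id}_w \to \alpha$ directly. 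Thus every object is connected to $\mathrm{id}_w$, so $w\downarrow\bar{\mathcal{W}}$ is connected, and $\bar{\mathcal{W}}\hookrightarrow\mathcal{W}$ is final. The main obstacle is bookkeeping the arrow classification carefully enough to see each object of the under category receives a morphism from $\mathrm{id}_w$; once that is in hand the result is immediate.
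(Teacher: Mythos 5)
Your reduction to connectedness of the under categories $w \downarrow \bar{\mathcal{W}}$ is the right framework, but the case analysis you run is not exhaustive, and the missing case is the entire content of the lemma. The objects of $w \downarrow \bar{\mathcal{W}}$ are pairs $(v,\alpha)$ with $\alpha \colon w \to v$ an \emph{arbitrary} arrow of $\mathcal{W}$ (since $\bar{\mathcal{W}}$ has all objects, only the morphisms of the under category are constrained to lie in $\bar{\mathcal{W}}$). Your argument repeatedly classifies $\alpha$ as ``a shuffle, a removing arrow, or a replacing arrow,'' which is the classification of arrows of $\bar{\mathcal{W}}$, not of $\mathcal{W}$; at one point you even write ``given any $\alpha\colon w\to v$ in $\bar{\mathcal{W}}$.'' But $\mathcal{W}$ has many more arrows, e.g. a tidy arrow $\pi\colon w=\bar{w}x^a\to w_{\**}$ that removes some trailing $x$'s and replaces some others by $y$'s without being a full removing or replacing arrow. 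For such $\pi$ there is \emph{no} direct morphism $\mathrm{id}_w \to (\pi\colon w\to w_{\**})$ in $w\downarrow\bar{\mathcal{W}}$, because such a morphism would be an arrow $h\colon w\to w_{\**}$ of $\bar{\mathcal{W}}$ with $h\circ\mathrm{id}_w=\pi$, i.e.\ $\pi$ itself would have to lie in $\bar{\mathcal{W}}$. So your ``cleanest argument'' — that every object receives a morphism from $\mathrm{id}_w$ — fails precisely on the arrows that $\bar{\mathcal{W}}$ omits.

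What the paper does, and what your sketch gestures at but never executes, is use the canonical factorization $w \to w\circ\iota \to w_{\**}$ of a general $\iota$ to build a genuine zigzag. Since $|w\circ\iota|_y = |w|_y$ and $|w\circ\iota| = |w_{\**}|$, one can pick a replacing arrow $w_{\**}\to y^{|w_{\**}|}$ and a removing arrow $w\circ\iota \to y^{|w|_y}$, producing the zigzag
\[
(\iota\colon w\to w_{\**}) \;\longrightarrow\; (w\to y^{|w_{\**}|}) \;\longleftarrow\; (w\to w\circ\iota) \;\longrightarrow\; (w\to y^{|w|_y}) \;\longleftarrow\; (\mathrm{id}_w),
\]
where all four connecting morphisms are removing or replacing arrows, hence in $\bar{\mathcal{W}}$. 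The intermediate object $(w\to w\circ\iota)$ is indispensable: it is the pivot from the ``replacing side'' (length $|w_{\**}|$) to the ``removing side'' (length $|w|_y$), and no single morphism bridges these when $|w|_x>0$. You noticed this obstruction yourself (``$y^{|w|_y}$ and $y^{|w|}$ need not be connected in a way that targets a single object when $|w|_x>0$'') but then retreated to the $\bar{\mathcal{W}}$-only case analysis instead of resolving it. Also, a small separate slip: replacing arrows $w\to y^{|w|}$ are not unique (they differ by a shuffle of $y^{|w|}$), so the parenthetical ``the composite only depends on $|w|$'' is false, though harmless since the zigzag only needs connectedness, not uniqueness.
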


\begin{proof} $w \downarrow \bar{\mathcal{W}}$ is connected iff any two arrows 
in $\mathcal{W}$ with source $w$ are connected by a zigzag of post-compositions with arrows in $\bar{\mathcal{W}}$. 
For such an arrow $\iota \colon w \to w_{\**}$ the natural decomposition $w \to w \circ \iota \to w_{\**}$ satisfies $|w|_y=|w \circ \iota|_y$, $|w \circ \iota|=|w_{\**}|$, so that by picking any arrows $w_{\**}\to y^{|w_{\**}|}$ and $w \circ \iota \to y^{|w|_y}$ one has a diagram
\begin{equation}\label{SURPUSEFUL EQ}
\xymatrix@1{ 
            &                 &  w   \ar[d] \ar[rd]^-{\mathsf{rm}}                                   &           \\
	y^{|w_{\**}|}  & w_{\**} \ar[l]^-{\mathsf{rp}}  &  w \circ \iota \ar@/_1.5pc/[ll]_-{\mathsf{rp}}   \ar[r]_-{\mathsf{rm}} \ar[l]           & y^{|w|_y}
}\end{equation}
where arrows marked $\mathsf{rp}$ are replacing and arrows marked $\mathsf{rm}$ are removing. The marked arrows exhibit a zigzag in $w \downarrow \bar{\mathcal{W}}$ between $\iota$ and $id_w$.
\end{proof}

\begin{lemma}\label{WYR LEM}
Let $\bar{\mathcal{W}}_{y^r}$ denote the full subcategory of $\bar{\mathcal{W}}$ of objects that admit arrows to $y^r$. The group $\Sigma_{y^r}$ of shuffles of $y^r$ is final in $\bar{\mathcal{W}}_{y^r}$.
\end{lemma}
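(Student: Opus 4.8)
The plan is to prove finality by verifying, as announced, that for every object $w \in \bar{\mathcal{W}}_{y^r}$ the under category $w \downarrow \Sigma_{y^r}$ is non-empty and connected; this suffices by \cite[IX.3 Thm.~1]{McL}. Non-emptiness is clear: by hypothesis $w$ admits at least one arrow to $y^r$, which (since $|w| = r$ is forced whenever such an arrow exists) must be a replacing arrow $w \to y^r$. So the content is connectedness.

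An object of $w \downarrow \Sigma_{y^r}$ is an arrow $w \to y^r$ in $\bar{\mathcal{W}}$, and a morphism between two such arrows $\alpha, \beta \colon w \to y^r$ is a shuffle $\tau \colon y^r \to y^r$ with $\tau \alpha = \beta$. First I would observe that since $|w| = r$ and arrows in $\bar{\mathcal{W}}$ only remove, shuffle, or replace letters, \emph{every} arrow $w \to y^r$ is a replacing arrow, hence by the description in Remark~\ref{GDESC REM} is determined by its underlying injection, which here is a bijection $\underline{r} \xrightarrow{\sim} \underline{r}$, i.e.\ an element $\sigma \in \Sigma_r^{op}$; writing $w = l_1 \cdots l_r$, the target word $w \circ \sigma$ has all $x$'s replaced by $y$'s, giving $y^r$. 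Thus the objects of $w \downarrow \Sigma_{y^r}$ are in bijection with $\Sigma_r$ (via the underlying permutation of the replacing arrow). Next, given two such objects corresponding to $\sigma_1, \sigma_2 \in \Sigma_r$, the composite $\sigma_2 \sigma_1^{-1}$ acts as a shuffle of $y^r$, and one checks directly from the definition of $F$ on shuffles and the factorization of replacing arrows that this shuffle, viewed as a morphism $y^r \to y^r$ in $\bar{\mathcal{W}}$, carries the first replacing arrow to the second. Hence any two objects of $w \downarrow \Sigma_{y^r}$ are connected by a single morphism, so the under category is (non-empty and) connected — in fact it is a connected groupoid, equivalent to a point.

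The one genuine subtlety — and the step I expect to require the most care — is the bookkeeping in the previous paragraph: namely confirming that a replacing arrow $w \to y^r$ really is, as a morphism of $\mathcal{W} \subset \mathcal{G}$, recorded purely by a permutation, and that post-composing it with a shuffle of $y^r$ corresponds to multiplying that permutation, so that the $\Sigma_r$-action on the set of replacing arrows $w \to y^r$ is simply transitive. This is where one must be careful about the variance (arrows in $\mathcal{G}$ go against injections, shuffles are indexed by $\Sigma_{|w|}^{op}$) and about the fact that when $w$ has repeated letters two different permutations $\sigma$ may yield literally the same word-level arrow; but at the level of $\mathcal{G}$, where arrows are pairs (injection, poset-map), these are still distinct morphisms, so the count is unaffected and simple transitivity holds. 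Once that is pinned down, connectedness is immediate and the lemma follows.
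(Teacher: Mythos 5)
There is a genuine gap in your argument, right at the start: you assert that ``$|w|=r$ is forced whenever such an arrow exists,'' and conclude that every arrow $w\to y^r$ in $\bar{\mathcal W}$ must be a replacing arrow. This is not true. Arrows in $\bar{\mathcal W}$ also include \emph{removing} arrows $w\to y^{|w|_y}$, so a word $w$ with $|w|_y = r$ and $|w| > r$ (for instance $y^r x^a$ with $a\geq 1$) admits an arrow to $y^r$ and hence lies in $\bar{\mathcal W}_{y^r}$, but has $|w|\neq r$. Your proof never treats these objects, so the connectedness of $w\downarrow\Sigma_{y^r}$ is left unverified for an entire class of objects in $\bar{\mathcal W}_{y^r}$.

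The paper's proof avoids this by reducing (up to shuffle) to $w = y^b x^a$ and explicitly distinguishing the two cases: $b=r$ (all arrows $w\to y^r$ are removing arrows, or shuffles when $a=0$) and $a+b=r$ (all arrows are replacing arrows). Your core mechanism — that the replacing arrows $w\to y^r$ form a $\Sigma_r$-torsor under post-composition by shuffles of $y^r$, by tracking the underlying bijection $\underline r\to\underline r$ — is correct as far as it goes, and the analogous statement holds for removing arrows (the underlying injection is a bijection of $\underline r$ onto the set of $y$-positions of $w$, and post-composing with a shuffle again permutes that bijection freely and transitively). So the approach can be completed, but as written the proof is incomplete: you need to drop the false claim that $|w|=r$, and run the simple-transitivity argument separately for the removing-arrow case.
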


\begin{proof}
Since any $w$ is isomorphic up to shuffle to some $y^bx^a$, it suffices to check all $y^bx^a \downarrow \Sigma_{y^r}$ are connected, i.e., that all arrows $y^bx^a \to y^r$ in $\bar{\mathcal{W}}$ are connected by post-composing with a shuffle. Both cases $b=r$ and $a+b=r$ are clear.
\end{proof}

\begin{lemma}\label{FINAL2 LEM}
The subcategory $\mathcal{W}_{\leq (r-1)}$ is final in $\mathcal{W}_{\leq r}-y^r$. 
\end{lemma}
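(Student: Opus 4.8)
The plan is to verify finality by the same criterion used throughout this section: by \cite[IX.3 Thm. 1]{McL} it suffices to check that for every object $w$ of $\mathcal{W}_{\leq r} - y^r$ the under category $w \downarrow \mathcal{W}_{\leq(r-1)}$ is non-empty and connected. I would split into the two cases $|w| \leq r-1$ and $|w| = r$.

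If $|w| \leq r-1$ then $w$ already lies in $\mathcal{W}_{\leq(r-1)}$, so $id_w$ is an initial object of $w \downarrow \mathcal{W}_{\leq(r-1)}$ and there is nothing more to check. If $|w| = r$ and $w \neq y^r$, then $w$ contains at least one $x$, hence $|w|_y \leq r-1$; this makes a removing arrow $w \to y^{|w|_y}$ an object of the under category, so the latter is non-empty. For connectedness I would argue exactly as in the proof of Lemma \ref{FINAL1 LEM}: given any object $\iota \colon w \to w'$ of $w \downarrow \mathcal{W}_{\leq(r-1)}$, the natural factorization $w \to w \circ \iota \to w'$ of Remark \ref{GDESC REM} has first arrow removing only $x$-letters and second arrow replacing, and $|w \circ \iota| = |w'| \leq r-1$, so $w \circ \iota$ is again in $\mathcal{W}_{\leq(r-1)}$; thus the replacing arrow $w\circ\iota \to w'$ is a morphism in $w \downarrow \mathcal{W}_{\leq(r-1)}$ from $(w \to w\circ\iota)$ to $(\iota)$, and post-composing $w \to w\circ\iota$ with a removing arrow $w\circ\iota \to y^{|w\circ\iota|_y} = y^{|w|_y}$ yields a morphism to some removing arrow $w \to y^{|w|_y}$. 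Since any two removing arrows $w \to y^{|w|_y}$ differ by a shuffle of $y^{|w|_y}$, and $y^{|w|_y}$ lies in $\mathcal{W}_{\leq(r-1)}$ because $|w|_y < r$, all of these objects are connected to a single fixed removing arrow, which gives connectedness.

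I do not expect a real obstacle; the only delicate points are the length bookkeeping — one must check that $w\circ\iota$ and every intermediate word appearing genuinely has length $\leq r-1$, which is precisely the place where excising the object $y^r$ from the ambient category is used, since for $w = y^r$ itself no $x$ can be removed and the under category would be empty — and the observation that the factorization $w \to w\circ\iota \to w'$ lies in $\mathcal{W}$ rather than merely in $\mathcal{G}$, i.e. that its first arrow deletes no $y$-letters, which is immediate from the defining condition $|w|_y = |w\circ\iota|_y$ for arrows of $\mathcal{W}$.
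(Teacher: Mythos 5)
Your proof is correct and follows essentially the same approach as the paper's: split into $|w|\leq r-1$ (initial object $\mathrm{id}_w$) and $|w|=r$, then for the latter use the natural factorization $w\to w\circ\iota\to w'$ to zigzag every object of the under category to a removing arrow $w\to y^{|w|_y}$, and finally observe all such removing arrows are connected by shuffles. The only cosmetic difference is that your zigzag is one step shorter because you allow general arrows of $\mathcal{W}_{\leq(r-1)}$ (the arrow $w\circ\iota\to w'$, which is not a replacing arrow in the paper's technical sense), whereas the paper restricts to $\bar{\mathcal{W}}_{\leq(r-1)}$ arrows and so inserts the extra node $y^{|w'|}$ from diagram (\ref{SURPUSEFUL EQ}); both are valid since one only needs some zigzag of $\mathcal{W}_{\leq(r-1)}$-morphisms.
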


\begin{proof} When $|w|\leq r-1$ one has an initial object $id_w$ in $w \downarrow \mathcal{W}_{\leq (r-1)}$, which is hence connected. 
When $|w|=r$, $w \downarrow \mathcal{W}_{\leq (r-1)}$ is connected precisely if any two arrows 
$w \to w_{\**}$ with $|w_{\**}|\leq r-1$ 
 are connected by a zigzag of post-compositions with arrows in $\bar{\mathcal{W}}_{\leq (r-1)}$. 
For any such arrow $\iota \colon w \to w_{\**}$ the natural decomposition $w \to w \circ \iota \to w_{\**}$ satisfies $|w|_y=|w \circ \iota|_y \leq r-1$, $|w \circ \iota|=|w_{\**}| \leq r-1$ so that diagram (\ref{SURPUSEFUL EQ}) exhibits a zigzag between $\iota$ and an arrow $w \to y^{|w|_y}$. As in Lemma \ref{WYR LEM}, all of the latter arrows are connected by post-composing with a shuffle (in fact, it suffices to check this for $w=y^bx^a$).
\end{proof}

\begin{lemma}\label{NERVE LEM}
$\mathcal{W}_{\leq r}=(\mathcal{W}_{\leq r}-y^r) \cup \mathcal{W}_{r}$. In fact, $N(\mathcal{W}_{\leq r})=N(\mathcal{W}_{\leq r}-y^r)\cup N(\mathcal{W}_{r})$. 
\end{lemma}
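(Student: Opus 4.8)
The plan is to read off from Remark \ref{GDESC REM} the exact shape of a morphism of $\mathcal{W}$ and then observe that, inside $\mathcal{W}_{\leq r}$, the object $y^r$ is ``isolated'' in a strong sense: every morphism having $y^r$ as its source or its target forces both of its endpoints to have length $r$. Concretely, recall that a morphism $\iota\colon w\to v$ of $\mathcal{W}$ factors as $w\to w\circ\iota\to v$, where $|w\circ\iota|=|v|\leq|w|$, the first arrow removes only $x$'s (so that $|w|_y=|w\circ\iota|_y$) and the second replaces only $x$'s by $y$'s (so in particular $|v|_y\geq|w|_y$). I would record two consequences of this. First, if $w\in\mathcal{W}_{\leq r}$ admits a morphism $w\to y^r$ in $\mathcal{W}$, then $|w|=r$; indeed $r=|y^r|=|w\circ\iota|\leq|w|\leq r$. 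Second, the only morphisms of $\mathcal{W}$ with source $y^r$ are the shuffles $y^r\to y^r$, since $y^r$ contains no $x$ to remove and none to replace.

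Granting these two facts, the first equality is immediate. On objects, a word of length $\leq r$ either equals $y^r$, and hence lies in $\mathcal{W}_r$, or lies in $\mathcal{W}_{\leq r}-y^r$. A morphism of $\mathcal{W}_{\leq r}$ not involving $y^r$ lies in $\mathcal{W}_{\leq r}-y^r$; a morphism whose target is $y^r$ has source of length $r$ by the first fact, while a morphism whose source is $y^r$ has target $y^r$ by the second fact, so in either of these cases both endpoints lie in $\mathcal{W}_r$ and the morphism does too.

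For the nerve identity only the inclusion $N(\mathcal{W}_{\leq r})\subseteq N(\mathcal{W}_{\leq r}-y^r)\cup N(\mathcal{W}_r)$ requires an argument, the reverse inclusion being automatic. A $k$-simplex of $N(\mathcal{W}_{\leq r})$ is a chain $w_0\to w_1\to\cdots\to w_k$ in $\mathcal{W}_{\leq r}$. If none of the $w_j$ equals $y^r$, the chain lies in $N(\mathcal{W}_{\leq r}-y^r)$. Otherwise choose $j$ with $w_j=y^r$; composing along the chain yields, for each $i<j$, a morphism $w_i\to w_j=y^r$, whence $|w_i|=r$ by the first fact, and, for each $i>j$, a morphism $w_j=y^r\to w_i$, whence $w_i=y^r$ by the second fact. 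Thus every $w_i$ has length $r$ and the chain lies in $N(\mathcal{W}_r)$. The whole argument is purely combinatorial; the one point that genuinely needs the second fact — rather than just ``$y^r$ receives maps only from length-$r$ words'' — is that $y^r$ emits no morphism other than a shuffle, which is exactly what stops a composable chain from ``passing through'' $y^r$ and continuing on to strictly shorter words.
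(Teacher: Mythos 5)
Your proof is correct and follows essentially the same route as the paper's. The paper's one-line argument (``arrows in $\mathcal{W}$ never decrease $|\minus|_y$, hence any chain touching $y^r$ lies in $\mathcal{W}_r$'') implicitly also invokes the fact that arrows never increase length $|\minus|$ (since a morphism of $\mathcal{G}$ is built from an injection $\underline{r_{\**}}\hookrightarrow\underline{r}$), which is what controls the part of the chain preceding $y^r$. You made both halves of the argument explicit — your ``first fact'' handles the predecessors of $y^r$ and your ``second fact'' its successors — so your write-up is really just a more careful version of what the paper asserts tersely. No gap.
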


\begin{proof}
Arrows in $\mathcal{W}$ never decrease $|\minus|_y$, hence any string of arrows in $\mathcal{W}_{\leq r}$ involving the object $y^r$ must in fact be a string of arrows in $\mathcal{W}_{r}$.
\end{proof}

\begin{lemma} 
$$M \circ_\O B \simeq \colim_{\mathcal{W}}F^M.$$
\end{lemma}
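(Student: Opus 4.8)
The plan is to identify the colimit $M\circ_\O B$ with $\colim_{\mathcal{W}}F^M$ by first reducing along finality results already established for $\bar{\mathcal{W}}$, then performing an explicit computation using the filtration results of the previous subsection. The key observation is that, since $F^M$ factors through $\bar{\mathcal W}$ up to the finality of Lemma \ref{FINAL1 LEM}, we have $\colim_{\mathcal W}F^M=\colim_{\bar{\mathcal W}}F^M|_{\bar{\mathcal W}}$, so it suffices to analyze the colimit over $\bar{\mathcal W}$. By Lemma \ref{WYR LEM} the ``$y$-only'' part of $\bar{\mathcal W}$ contributes, via the shuffle group $\Sigma_{y^r}$ of $y^r$, the terms $M_A(r)\otimes_{\Sigma_r}Y^{\otimes r}$, which are precisely the left-hand bottom corners of the pushout squares \eqref{IND EQ}; similarly the mixed words and the removing/replacing arrows assemble the $Q^r_{r-1}(f)$ terms via the identification of $Q^r_{r-1}(f)$ as a colimit over an $r$-cube with the terminal vertex removed (cf. Remark \ref{QNT RMK} and Definition \ref{DIAGRAMQ}).

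Concretely, I would argue as follows. First reduce to computing $\colim_{\bar{\mathcal W}}F^M$ using Lemma \ref{FINAL1 LEM}. Then observe that $\bar{\mathcal W}=\bigcup_r \bar{\mathcal W}_{\leq r}$ is an exhaustive filtration, so $\colim_{\bar{\mathcal W}}F^M=\colim_r\bigl(\colim_{\bar{\mathcal W}_{\leq r}}F^M\bigr)$. Set $A^M_r:=\colim_{\bar{\mathcal W}_{\leq r}}F^M$ (equivalently $\colim_{\mathcal W_{\leq r}}F^M$, again by finality). For $r=0$ the category $\bar{\mathcal W}_{\leq 0}$ has the single object $y^0=\emptyset$ with $F^M(\emptyset)=M_A(0)=M\circ_\O\O_A(0)=M\circ_\O A$, giving $A^M_0=M\circ_\O A$. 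For the inductive step, Lemma \ref{NERVE LEM} lets us compute $\colim_{\mathcal W_{\leq r}}F^M$ as a pushout of $\colim_{\mathcal W_{\leq r}-y^r}F^M$ and $\colim_{\mathcal W_r}F^M$ over $\colim_{(\mathcal W_{\leq r}-y^r)\cap\mathcal W_r}F^M$. By Lemma \ref{FINAL2 LEM}, $\colim_{\mathcal W_{\leq r}-y^r}F^M=\colim_{\mathcal W_{\leq r-1}}F^M=A^M_{r-1}$. By Lemma \ref{WYR LEM} applied to $\mathcal W_r$ together with the summand description \eqref{FDEF EQ}, one computes $\colim_{\mathcal W_r}F^M=M_A(r)\otimes_{\Sigma_r}Y^{\otimes r}$, while the intersection term is $\colim$ of $F^M$ over the ``cube with top vertex removed'' crossed with the $\Sigma_r$-action, which is exactly $M_A(r)\otimes_{\Sigma_r}Q^r_{r-1}(f)$ (using Proposition \ref{MA PROP} to recognize $M_A(r)$ appearing uniformly, and that shuffles act by permuting $M_A(r)$ and the tensor factors of $X,Y$). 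This yields precisely the pushout square \eqref{IND EQ} defining $A^M_r$.

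Finally, one identifies $\colim_r A^M_r$ with $M\circ_\O B$. This uses that $B=A\amalg_{\O\circ X}\O\circ Y$ is the colimit (in $\mathsf{Alg}_\O$, hence also in $\mathsf{Mod}^l_\O$) of $F^{\O}$ over $\mathcal W$ — the case $M=\O$, which is the known statement from \cite{Ha09}, \cite{HaHe13} repackaged through $\mathcal W$ — and that $M\circ_\O(\minus)$ commutes with the relevant colimits: by Lemma \ref{CIRCCOLIM LEM}, $M\circ_\O(\minus)$ preserves reflexive coequalizers and filtered colimits, and here $B$ arises from $\O\circ X,\O\circ Y,A$ by a pushout over free modules, which $M\circ_\O(\minus)$ handles by Proposition \ref{MA PROP} (the key natural isomorphism $M\circ_\O((\O\circ X)\amalg A)\simeq M_A\circ X$). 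Thus $M\circ_\O B\simeq M\circ_\O(\colim_{\mathcal W}F^{\O})\simeq\colim_{\mathcal W}(M\circ_\O F^{\O})=\colim_{\mathcal W}F^M$.

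The main obstacle I anticipate is the inductive step: carefully justifying that the intersection colimit $\colim_{(\mathcal W_{\leq r}-y^r)\cap\mathcal W_r}F^M$ really is $M_A(r)\otimes_{\Sigma_r}Q^r_{r-1}(f)$ rather than something larger or smaller. This requires unwinding that the objects of $\mathcal W_r$ other than $y^r$, together with their shuffles, removing and tidy arrows among themselves, form exactly (a category equivalent to) the $\Sigma_r$-equivariant punctured $r$-cube indexing $Q^r_{r-1}(f)$, and that $F^M$ restricted there agrees with $M_A(r)\otimes_{\Sigma_r}(-)$ applied to the cube diagram $f^{\wedge r}$ of Definition \ref{DIAGRAMQ}. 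The bookkeeping of how the $M_A(r)$ factor is acted on by $\Sigma_r$ simultaneously with the permutation of tensor letters — and the compatibility with the summand inclusions of \eqref{FTIDY EQ} via Proposition \ref{MA PROP} — is where the argument is most delicate; everything else is a formal assembly of finality lemmas and the commutation properties of $\circ_\O$.
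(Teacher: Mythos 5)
Your proposal has a genuine gap at the final step, and along the way it conflates the content of the present lemma with that of Proposition~\ref{FILT PROP}.

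The filtration argument you run in the middle — setting $A^M_r := \colim_{\mathcal{W}_{\leq r}}F^M$ and using Lemmas~\ref{FINAL1 LEM}, \ref{WYR LEM}, \ref{FINAL2 LEM}, \ref{NERVE LEM} to exhibit the pushout squares (\ref{IND EQ}) — is essentially the paper's proof of Proposition~\ref{FILT PROP}, which comes \emph{after} (and depends on) the lemma under discussion. That argument shows $\colim_{\mathcal{W}}F^M$ is built by a filtration of a specific shape, but it does not by itself identify that colimit with $M\circ_\O B$. Something must connect the two, and this is where your proof breaks down.

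Your connecting step asserts two things that fail. First, you take the case $M=\O$, i.e.\ the identification $B\simeq\colim_{\mathcal{W}}F^\O$, as ``known'' from \cite{Ha09}, \cite{HaHe13}. It is not: the category $\mathcal{W}$ and the functor $F^\O$ are constructs of this paper, and repackaging the filtrations of \cite{Ha09} as $\colim_{\mathcal{W}}F^\O$ is precisely the nontrivial content of the lemma — it cannot be assumed. Second, and more fundamentally, you try to conclude $M\circ_\O\bigl(\colim_{\mathcal{W}}F^\O\bigr)\simeq\colim_{\mathcal{W}}\bigl(M\circ_\O F^\O\bigr)$. This does not follow from Lemma~\ref{CIRCCOLIM LEM}: $\mathcal{W}$ is neither filtered nor a reflexive coequalizer shape, $\circ_\O$ does \emph{not} preserve pushouts in the $\mathsf{Mod}^l_\O$ variable (that is exactly the difficulty the whole construction is designed to address), and in any case $F^\O$ takes values in the underlying category $\C$ rather than in $\mathsf{Mod}^l_\O$, so $M\circ_\O F^\O(w)$ is not even defined termwise.

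The paper's proof avoids all of this by never passing $M\circ_\O(-)$ across $\colim_{\mathcal{W}}$. Instead, it presents $B$ as a \emph{reflexive coequalizer} of free-plus-$A$ left modules, $B\simeq\colim\bigl(\O\circ(X\amalg Y)\amalg A\rightrightarrows(\O\circ Y)\amalg A\bigr)$, which $M\circ_\O(-)$ does preserve; then Proposition~\ref{MA PROP} identifies the resulting diagram with $M_A\circ(X\amalg Y)\rightrightarrows M_A\circ Y$. Expanding this coequalizer as a coproduct indexed by bidegrees yields a small category $\mathcal{M}$ of monomials and a functor $\bar{F}^M$ with $M\circ_\O B=\colim_{\mathcal{M}}\bar{F}^M$, and the crucial final step is to show that $\bar{F}^M$ is the left Kan extension of $F^M|_{\bar{\mathcal{W}}}$ along a word-to-monomial functor $\bar{\mathcal{W}}\to\mathcal{M}$ (using Lemma~\ref{WYR LEM} to compute the relevant comma categories). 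Since left Kan extensions preserve colimits, $\colim_{\bar{\mathcal{W}}}F^M=\colim_{\mathcal{M}}\bar{F}^M$, and Lemma~\ref{FINAL1 LEM} finishes. This Kan extension identification is the step your proposal is missing, and it cannot be replaced by a colimit-commutation argument.
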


\begin{proof} Note first that by Lemma \ref{FINAL1 LEM} it suffices to show $M \circ_\O B \simeq \colim_{\bar{\mathcal{W}}}F^M.$

By general considerations one can describe $B$ as a reflexive coequalizer
\[B \simeq \colim \left(  (\O \circ (X \amalg Y) \amalg A) \rightrightarrows (\O \circ Y) \amalg A \right)\]
and hence by Proposition \ref{MA PROP} and Lemma \ref{CIRCCOLIM LEM}
\begin{equation}\label{MOB EQ}
M \circ_{\O} B \simeq
\colim (\xymatrix@1{M_A \circ (X\amalg Y) \ar@<.5ex>[r]^-{f_{\**}} \ar@<-.5ex>[r]_-{h_{\**}} & M_A \circ  Y}).
\end{equation}
Now note that
\[
M_A\circ (X\amalg Y)=\coprod_{i,j\geq 0} 
M_A(i+j)\otimes_{\Sigma_{i}\times \Sigma_{j}}X^{\otimes i} \otimes Y^{\otimes j},
\]
with the reflexive map in (\ref{MOB EQ}) naturally identifying $M_A\circ Y$ with the subobject formed by the $i=0$ summands. Since by naturality of Propositions \ref{MA PROP} and \ref{MOX PROP} the maps being equalized in (\ref{MOB EQ}) send summands to summands, repackaging universal properties allows one to rewrite
\begin{equation}\label{COLIMDA EQ}
M\circ_{\O}B=\colim_{\mathcal{M}} \bar{F}^M. 
\end{equation}
Here $\mathcal{M}$ is the diagram category whose objects we denote by monomials $x^i y^j$, $i,j \geq 0$ together with unique non identity arrows $x^i y^j \to y^{i+j}$, $x^i y^j \to y^j$ for $i \neq 0$ (note that non identity arrows can never be composed). $\bar{F}^M$ is defined on objects by 
\begin{equation}\label{FADEF EQ}
\bar{F}^M(x^i y^j) = M_A(i+j)\otimes_{\Sigma_{i}\times \Sigma_{j}}X^{\otimes i}\otimes Y^{\otimes j},
\end{equation}
is induced on arrows $x^i y^j\to y^{i+j}$ by the map $f_{\**}$ in (\ref{MOB EQ}) and on arrows $x^i y^j\to y^j$ by the map $h_{\**}$.

There is an obvious functor $\bar{\mathcal{W}} \to \mathcal{M}$ defined by $w \mapsto x^{|w|_x}y^{|w|_y}$ (arrows are mapped in the only possible way and functoriality is trivial since non identity arrows in $\mathcal{M}$ can not be composed). We claim $\bar{F}^M=\Lan_{\bar{\mathcal{W}} \to \mathcal{M}} F^M$. By \cite[X.3.1]{McL}
\begin{equation}\label{LKAN EQ}
(\Lan_{\bar{\mathcal{W}} \to \mathcal{M}}F^M)(x^i y^j)=
  \colim_{\bar{\mathcal{W}} \downarrow x^i y^j}F^M|_{\bar{\mathcal{W}} \downarrow x^i y^j}
\end{equation}
When $i \neq 0$, $\bar{\mathcal{W}} \downarrow x^i y^j$ is just the groupoid of words $w$ with $|w|_x=i, |w|_y=j$, while for $y^r$ it is the category $\bar{\mathcal{W}}_{y^r}$ of Lemma \ref{WYR LEM} containing the final group $\Sigma_{y^r}$. In either case, the formula (\ref{LKAN EQ}) computes the quotient of the terms in (\ref{FDEF EQ}) by the obvious shuffle groupoid action and hence coincides with $\bar{F}^M$ on objects. To see (\ref{LKAN EQ}) also coincides with $\bar{F}^M$ on arrows consider the commutative diagrams (with vertical maps induced by codiagonals and writing 
$\O (\minus)$ for $\O \circ (\minus)$)
\[\xymatrix@1{
\O (Y^{\amalg j}\amalg X^{\amalg i}) \amalg A \ar[r]^-{f_{\**}} \ar[d]_{\nabla_{\**}} &
\O Y^{\amalg (i+j)} \amalg A \ar[d]^{\nabla_{\**}} &
\O(Y^{\amalg j}\amalg X^{\amalg i}) \amalg A \ar[r]^-{h_{\**}} \ar[d]_{\nabla_{\**}} &
\O Y^{\amalg j} \amalg A \ar[d]^{\nabla_{\**}}
\\
\O(Y \amalg X) \amalg A \ar[r]^-{f_{\**}} & \O Y \amalg A &
\O(Y \amalg X) \amalg A \ar[r]^-{h_{\**}} & \O Y \amalg A.
}\] 
Since $F^M$ is defined using (shuffles) of the top maps, and $\bar{F}^M$ is defined using the bottom maps, we conclude (\ref{LKAN EQ}) indeed equals $\bar{F}^M$ on maps. Noting that left Kan extensions have the same colimit finishes the proof.
\end{proof}

\begin{proof}[Proof of Proposition \ref{FILT PROP}]
By the previous lemma $M \circ_\O B \simeq \colim_{\mathcal{W}}F^M$. We define
$A^M_r=\colim_{\mathcal{W}_{\leq r}}F^M$, so that (\ref{BFILT EQ}) is immediate since the $\mathcal{W}_{\leq r}$ filter $\mathcal{W}$. It is straightforward to check that Lemma \ref{NERVE LEM} implies one has pushout diagrams 
\[\xymatrix@1{
  \colim_{\mathcal{W}_{r}-y^r}F^M                \ar[r]  \ar[d] &
  \colim_{\mathcal{W}_{ \leq r}-y^r}F^M                  \ar[d] \\
  \colim_{\mathcal{W}_{r}}F^M                            \ar[r] &
  \colim_{\mathcal{W}_{\leq r}}F^M,
}\]
and it hence suffices to verify these diagrams have the form (\ref{IND EQ}). The two diagrams coincide on the bottom right corner by definition and on the top right corner by Lemma \ref{FINAL2 LEM}. The left hand maps of the two diagrams are seen to coincide by direct computation since the tidy arrow subcategory of $\mathcal{W}_{r}$ is precisely $(x \to y)^{\underline{r}}$ and it is easy to check that
$\colim_{\mathcal{W}_{r}}F^M=\left(\colim_{(x \to y)^{\underline{r}}}F^M\right)_{\Sigma_r}$ and similarly for $\mathcal{W}_{r}-y^r$.
\end{proof}

\subsection{Model structures on $\Sym$ and $\Sym^G$}\label{POSSYM SEC}

\begin{notation} 
In what follows we abbreviate $\Sym(\mathsf{Sp}^{\Sigma})$ simply as $\Sym$.
\end{notation}

We now introduce for $\Sym$ the analogues of the model structures in Section \ref{SINJPROJMOD SEC} and show that the main results in 
Section \ref{PROPERTIES SEC} formally imply their $\Sym$ analogues.

\begin{definition}\label{SSYM DEF}
The \textit{$S$ stable (resp. monomorphism stable) model structure} on $\Sym$ is obtained by combining the $S$ stable (resp. monomorphism stable) model structures on $(\mathsf{Sp}^{\Sigma})^{\Sigma_r}$  in all degrees  (cf. Section \ref{MONOS SEC}).
\end{definition}

\begin{definition}\label{POSSYM DEF}
The {\it positive $S$ stable model structure} on $\Sym$ is the model structure obtained by combining the positive $S$ stable model structure in $\mathsf{Sp}^{\Sigma}$ on degree $r=0$ with the $S$ stable model structures on $(\mathsf{Sp}^{\Sigma})^{\Sigma_r}$  in degrees $r\geq 1$ (cf. Sections \ref{STABLEPOSSTABLE SEC} and \ref{MONOS SEC}).
\end{definition}

\begin{remark}
To motivate the use of the word ``positive'' in the previous definition, recall that each $X\in \Sym$ is composed of pointed simplicial sets $X_m(r)$, making it a bi-graded object. Since $\check{\otimes}$ is additive in both gradings, one can think of $m+r$ as the total degree of $X_m(r)$.
\end{remark}

We will also want to have an analogue for $\Sym^G$ of the $\Sigma$-inj $G$-proj $S$ stable model structure on $(\mathsf{Sp}^{\Sigma})^G$.

\begin{definition} \label{SIGMASIGMASIGMA DEF}
The {\it $S$ $\Sigma \times \Sigma$-inj $G$-proj stable model structure} on $\Sym^G$ is the model structure obtained by combining the $S$ $\Sigma \times \Sigma_r$-inj $G$-proj stable model structures on all degrees $r \geq 0$ (cf. Remark \ref{HSIGMAINJGPROJ RMK}).
\end{definition}

What follows are formal analogues for $\Sym$ of Propositions \ref{PROPSMASH} and \ref{PROPINDUCE} and Theorems \ref{BIQUILLEN THM} and \ref{SIGMANPUSHPROD THM}. 

\begin{proposition}\label{PROPSMASH SYM} 
Suppose all categories are equipped with their respective $S$ $\Sigma \times \Sigma$-inj $G$-proj stable model structure. Then the bifunctor
\[\Sym^G\times \Sym^{\bar{G}} \xrightarrow{\minus \check{\wedge} \minus} \Sym^{G \times \bar{G}} \]
is a left Quillen bifunctor.
\end{proposition}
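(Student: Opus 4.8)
The strategy is to mimic the proof of Proposition \ref{PROPSMASH} and, more precisely, the way in which the $\Sym$-level model structures of Definitions \ref{SSYM DEF}--\ref{SIGMASIGMASIGMA DEF} are assembled degreewise from the $(\mathsf{Sp}^{\Sigma})^{G}$-level structures. Since the $S$ $\Sigma \times \Sigma$-inj $G$-proj stable model structure on $\Sym^G$ is cofibrantly generated, with generating (trivial) cofibrations obtained by including the generating (trivial) cofibrations of each $S$ $\Sigma \times \Sigma_r$-inj $G$-proj stable model structure on $(\mathsf{Sp}^{\Sigma})^{G \times \Sigma_r}$ as objects concentrated in external degree $r$ (cf. Remark \ref{HSIGMAINJGPROJ RMK} and Definition \ref{SIGMASIGMASIGMA DEF}), the existence of the right adjoints of $(\minus) \check{\wedge} (\minus)$ in each variable is formal, and by Remark \ref{PUSHPRODGEN REM} it suffices to check the pushout product axiom on such generating (trivial) cofibrations. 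So I would pick generating cofibrations $f$ concentrated in external degree $r$ and $g$ concentrated in external degree $\bar r$; by the definition of $\check{\wedge}$ (the external analogue of $\check{\otimes}$ from Definition \ref{TWOMON DEF}, i.e. $(X \check{\wedge} Y)(t) = \bigvee_{r + \bar r = t} \Sigma_t \cdot_{\Sigma_r \times \Sigma_{\bar r}} X(r) \wedge Y(\bar r)$), the pushout product $f \square^{\check{\wedge}} g$ is concentrated in external degree $r + \bar r$ and is, after forgetting the external $\Sigma_{r+\bar r}$-symmetry down to $\Sigma_r \times \Sigma_{\bar r}$, simply $f \square^{\wedge} g$ computed in $(\mathsf{Sp}^{\Sigma})^{G \times \Sigma_r}$ versus $(\mathsf{Sp}^{\Sigma})^{\bar G \times \Sigma_{\bar r}}$, then induced up along $\Sigma_r \times \Sigma_{\bar r} \subset \Sigma_{r+\bar r}$.

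Concretely, I would combine two inputs. First, Proposition \ref{PROPSMASH}, applied with the groups $G \times \Sigma_r$ and $\bar G \times \Sigma_{\bar r}$ in place of $G, \bar G$, shows that $f \square^{\wedge} g$ is an $S$ $\Sigma$-inj $(G \times \Sigma_r \times \bar G \times \Sigma_{\bar r})$-proj cofibration (a weak equivalence if $f$ or $g$ is), where the relevant product group, after reorganizing, is $G \times \bar G \times \Sigma_r \times \Sigma_{\bar r}$ acting with the $\Sigma_r \times \Sigma_{\bar r}$ part ``free enough'' in the sense of Remark \ref{SIGMAGENCOF REM} (the isotropy groups $H$ of added simplices satisfy $H \cap G \times \bar G \times \{\**\} \times \{\**\} = \**$, in fact they are of the product form $H_1 \times H_2$ with $H_i \cap (\text{grouplike factor}) \times \{\**\} = \**$). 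Second, I would induce up along $\Sigma_r \times \Sigma_{\bar r} \hookrightarrow \Sigma_{r+\bar r}$: this is precisely a ``change of group'' step of the type in Proposition \ref{PROPINDUCE} (the left adjoint $G \cdot_{\bar G}(\minus)$ applied with $\bar G = \Sigma_r \times \Sigma_{\bar r}$, $G = \Sigma_{r+\bar r}$ acting on the $\Sigma$-inj side), together with the observation that $H \cap (\text{fixed group}) \times \{\**\} = \**$ is preserved under such induction exactly as in the proof of Proposition \ref{PROPSMASH} via the identification $H \times \bar H \subset G \times \bar G \times \Sigma_{r + \bar r}$. This shows $f \square^{\check{\wedge}} g$ is an $S$ $\Sigma \times \Sigma_{r+\bar r}$-inj $(G \times \bar G)$-proj cofibration, i.e. a cofibration in $\Sym^{G \times \bar G}$ in external degree $r + \bar r$ (hence in $\Sym^{G \times \bar G}$). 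The trivial-cofibration half follows since the $S$ $\Sigma \times \Sigma$-inj $G$-proj weak equivalences ignore all of the $G$-, $\bar G$- and external symmetric-group actions and reduce to underlying stable equivalences of symmetric sequences of spectra, so the statement for weak equivalences can be deduced by forgetting actions and invoking the $G = \bar G = \**$ case together with $\wedge$ being a left Quillen bifunctor on symmetric spectra with respect to the $S$ stable structure (\cite[Thm. 5.3.7]{HSS}), applied degreewise — or, more economically, by citing the weak-equivalence half of Proposition \ref{PROPSMASH} degreewise.

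The main obstacle, such as it is, is purely bookkeeping: keeping the three kinds of symmetric group factors straight — the two ``grouplike'' factors $G, \bar G$ over which one is taking projective structures, and the two external operadic factors $\Sigma_r, \Sigma_{\bar r}$ over which one is taking injective structures and which must merge into a single external $\Sigma_{r+\bar r}$ over which one again wants an injective structure. The point that needs care is that the merge $\Sigma_r \times \Sigma_{\bar r} \hookrightarrow \Sigma_{r + \bar r}$ happens on the injective (external) side, not the projective side, so the relevant preservation statement is not literally Proposition \ref{PROPINDUCE} but its evident ``inject over a larger symmetric group'' variant; this variant, however, is immediate from the explicit generating-cofibration description in Remark \ref{HSIGMAINJGPROJ RMK} together with the elementary fact that if $H \cap G \times \bar G \times \{\**\} \times \{\**\} = \**$ inside $G \times \bar G \times \Sigma_r \times \Sigma_{\bar r}$ then the same holds for the image of $H$ inside $G \times \bar G \times \Sigma_{r+\bar r}$, which is exactly the computation already carried out in the proof of Proposition \ref{PROPSMASH}. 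Thus the whole proof is essentially ``apply Proposition \ref{PROPSMASH} degreewise and reindex,'' and I would present it at roughly that level of detail.
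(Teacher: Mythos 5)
Your argument is correct and rests on the same idea as the paper's proof: reduce degree by degree to Proposition \ref{PROPSMASH}, using the injectiveness of the $\Sigma \times \Sigma$-part of the model structure to control the external symmetric group actions. The paper packages this more economically. Rather than descending to generating cofibrations, computing $f \square^{\check{\wedge}} g$ explicitly as an induced object $\Sigma_{r+\bar r}\cdot_{\Sigma_r \times \Sigma_{\bar r}}(f\square^{\wedge}g)$, and then re-verifying the isotropy condition through the merging $\Sigma_r \times \Sigma_{\bar r} \hookrightarrow \Sigma_{r+\bar r}$, it simply forgets the external $\Sigma$-actions at the outset (legitimate by the injectiveness recorded in Remark \ref{HSIGMAINJGPROJ RMK}). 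After doing so, the formula for $\check{\otimes}$ exhibits each degree of $X\check{\wedge}Y$ as a finite wedge of terms $X(\bar r)\wedge Y(r-\bar r)$, so $\check{\wedge}$ is degreewise a wedge of $\wedge$-bifunctors to each of which Proposition \ref{PROPSMASH} applies directly. This bypasses the ``induce up on the injective side'' variant of Proposition \ref{PROPINDUCE} that you correctly flag as the one step outside the stated lemmas; once the external actions are discarded, that induction is just a finite wedge and hence automatically preserves cofibrations. Your version is sound, merely less compressed.
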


\begin{proof}
Existence of the right adjoints is formal. Now recall that
$$ (X\check{\otimes}Y)(r) = \bigvee_{0\leq \bar{r} \leq r}\Sigma_{r} \underset{\Sigma_{\bar{r}}\times \Sigma_{r-\bar{r}}}{\cdot} X(\bar{r})\wedge Y(r-\bar{r}).$$
By injectiveness of the model structures (cf. Remark \ref{HSIGMAINJGPROJ RMK}), we can ignore the symmetric group actions, so that $\check{\wedge}$ is a wedge of bifunctors for each of which Proposition \ref{PROPSMASH} applies.  
\end{proof}

\begin{proposition}\label{PROPINDUCE SYM}
Let $\bar{G}\subset G$ be finite groups, and suppose each category is equipped with the respective $S$ $\Sigma \times \Sigma$-inj $G$-proj stable model structure. Then both adjunctions 
\[\fgt\colon \Sym^G  \rightleftarrows 
  \Sym^{\bar{G}}\colon ((\minus)^{G \cdot S})^{\bar{G}} \quad \text{ and } \quad
 G\times_{\bar{G}} (\minus) \colon (\Sym^\Sigma)^{\bar{G}} \rightleftarrows
  (\Sym^\Sigma)^G \colon \fgt\]
are Quillen adjunctions.
\end{proposition}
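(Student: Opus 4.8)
The plan is to reduce this to the non-graded case treated in Proposition \ref{PROPINDUCE}, exactly as Proposition \ref{PROPSMASH SYM} reduces Proposition \ref{PROPSMASH} degreewise. Recall that the $S$ $\Sigma \times \Sigma$-inj $G$-proj stable model structure on $\Sym^G$ (Definition \ref{SIGMASIGMASIGMA DEF}) is assembled from the $S$ $\Sigma \times \Sigma_r$-inj $G$-proj stable model structures on $(\mathsf{Sp}^{\Sigma})^{G \times \Sigma_r}$ in each degree $r \geq 0$, and that the latter structures are themselves built (cf. Remark \ref{HSIGMAINJGPROJ RMK}) exactly like the $S$ $\Sigma$-inj $G$-proj stable model structure of Theorem \ref{SGSPEC THM}, only with an extra $\Sigma_r$ factor that is treated injectively. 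Since cofibrations, fibrations and weak equivalences in $\Sym^G$ are all detected degreewise, it suffices to check that each adjunction, read off in a fixed degree $r$, is a Quillen adjunction between the degree-$r$ structures.

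For the first adjunction, $\fgt \colon \Sym^G \rightleftarrows \Sym^{\bar G}$ preserves weak equivalences (these are underlying stable equivalences, checked degreewise) and sends cofibrations to cofibrations, since forgetting part of the group action does not affect the ``no isotropy meeting $G \times \** \times \**$'' condition that characterizes the generating cofibrations in Remark \ref{HSIGMAINJGPROJ RMK}; hence it is a left Quillen functor, just as in the first half of the proof of Proposition \ref{PROPINDUCE}. For the second adjunction, I would fix $r \geq 0$ and observe that $G \times_{\bar G}(\minus)$ in degree $r$ is precisely the induction functor $G \times_{\bar G} (\minus) \colon (\mathsf{Sp}^{\Sigma})^{\bar G \times \Sigma_r} \to (\mathsf{Sp}^{\Sigma})^{G \times \Sigma_r}$ (the $\Sigma_r$ action comes along for the ride). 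Applying it to a generating cofibration
\[
f = S \otimes \left( (\bar G \times \Sigma_m \times \Sigma_r)/H \cdot \left( \partial \Delta^k_+ \to \Delta^k_+ \right) \right), \qquad H \cap \bar G \times \** \times \** = \**,
\]
yields
\[
G \times_{\bar G} f = S \otimes \left( (G \times \Sigma_m \times \Sigma_r)/H \cdot \left( \partial \Delta^k_+ \to \Delta^k_+ \right) \right),
\]
which is again a cofibration because $H \cap \bar G \times \** \times \** = \**$ forces $H \cap G \times \** \times \** = \**$. That $G \times_{\bar G}(\minus)$ carries trivial cofibrations to weak equivalences follows, as in Proposition \ref{PROPINDUCE}, by forgetting all group actions, after which $G \times_{\bar G}(\minus)$ becomes a finite wedge indexed by $G/\bar G$ and so preserves (underlying) stable equivalences. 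Assembling these degreewise statements gives the two Quillen adjunctions.

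I do not expect a genuine obstacle here: the content is entirely bookkeeping, the only mild point being to confirm that induction along $\bar G \subset G$ really does leave the auxiliary $\Sigma_r$ untouched and that the isotropy condition of Remark \ref{HSIGMAINJGPROJ RMK} behaves monotonically under enlarging $\bar G$ to $G$ — both of which are immediate from the explicit description of the generating cofibrations. In short, this proposition is to Proposition \ref{PROPINDUCE} what Proposition \ref{PROPSMASH SYM} is to Proposition \ref{PROPSMASH}, and the proof is ``apply the non-graded result in each degree $r$''.
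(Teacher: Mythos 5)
Your proof is correct and follows essentially the same route as the paper, which disposes of the statement in a single sentence by observing that the $S$ $\Sigma\times\Sigma$-inj $G$-proj structure is built degreewise as a $\Sigma_r$-injective structure over the $S$ $\Sigma$-inj $G$-proj structure of Proposition \ref{PROPINDUCE} (cf. Remark \ref{HSIGMAINJGPROJ RMK}), so the adjunctions commute with forgetting $\Sigma_r$ and the claim reduces immediately to Proposition \ref{PROPINDUCE}. Your degreewise check on generating cofibrations spells out exactly the bookkeeping the paper leaves implicit.
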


\begin{proof}
This is obvious from Proposition \ref{PROPINDUCE} since we are dealing with injective model structures (cf. Remark \ref{HSIGMAINJGPROJ RMK}).
\end{proof}

\begin{proposition}\label{BIQUILLEN THM SYM}
Consider the bifunctor 
\[\Sym^G\times \Sym^G \xrightarrow{\minus \check{\wedge}_{G} \minus} \Sym,\] 
where the first copy of $\Sym^G$ is regarded as equipped with the $S$ $\Sigma \times \Sigma$-inj $G$-proj stable model structure.
Then $\check{\wedge}_{G}$ is a left Quillen bifunctor if either:
\begin{enumerate}
\item[(a)] Both the second $\Sym^G$ and the target $\Sym$ are equipped with the respective monomorphism stable model structures;

\item[(b)] Both the second $\Sym^G$ and the target $\Sym$ are equipped with the respective $S$ stable model structures.
\end{enumerate}
\end{proposition}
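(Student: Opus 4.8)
The plan is to reduce Proposition \ref{BIQUILLEN THM SYM} to Theorem \ref{BIQUILLEN THM} in exactly the same fashion as Proposition \ref{PROPSMASH SYM} was reduced to Proposition \ref{PROPSMASH}, exploiting that $\check{\wedge}_G$ is built out of $\wedge_G$ by a wedge construction together with quotients by symmetric group actions which cause no trouble because the relevant structures on the first variable are $\Sigma$-injective. First I would recall the formula for the relative smash product of $G$-symmetric sequences, namely that for $X, Y \in \Sym^G$ one has
\[
  (X \check{\wedge}_G Y)(r) = \bigvee_{0 \leq \bar{r} \leq r}
  \Sigma_r \underset{\Sigma_{\bar{r}} \times \Sigma_{r-\bar{r}}}{\cdot}
  \bigl(X(\bar{r}) \wedge_G Y(r-\bar{r})\bigr),
\]
so that $\check{\wedge}_G$ is, degreewise, a wedge of induced-up copies of the bifunctors $\wedge_G \colon (\mathsf{Sp}^\Sigma)^G \times (\mathsf{Sp}^\Sigma)^G \to \mathsf{Sp}^\Sigma$. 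The existence of the required right adjoints is formal, so only the pushout product axiom needs checking.

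Next, since the three model structures involved ($S$ $\Sigma \times \Sigma$-inj $G$-proj on the first factor, and monomorphism stable or $S$ stable on the second factor and target) are all defined degreewise from the corresponding structures on $(\mathsf{Sp}^\Sigma)^{G \times \Sigma_r}$ (resp. $(\mathsf{Sp}^\Sigma)^{\Sigma_r}$, resp. $\mathsf{Sp}^\Sigma$), and since the first factor's structure is $\Sigma_r$-\emph{injective} by construction (cf. Remark \ref{HSIGMAINJGPROJ RMK}), I would observe that the symmetric group actions inside the wedge formula can be ignored when checking that a map is a cofibration or trivial cofibration: an induced map $\Sigma_r \cdot_{\Sigma_{\bar{r}} \times \Sigma_{r-\bar{r}}} (\minus)$ is a cofibration (resp. trivial cofibration) in the $S$ $\Sigma \times \Sigma_r$-inj $G$-proj (resp. monomorphism, resp. $S$) stable structure precisely when the thing being induced up is a cofibration (resp. trivial cofibration) in the appropriate structure on $(\mathsf{Sp}^\Sigma)^{G \times \Sigma_{\bar{r}}}$ (resp. $(\mathsf{Sp}^\Sigma)^{\Sigma_{\bar{r}}}$, resp. $\mathsf{Sp}^\Sigma$) — this is exactly the kind of bookkeeping already carried out in Remark \ref{HSIGMAINJGPROJ RMK} and in the proof of Proposition \ref{PROPSMASH SYM}. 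Granting this, the pushout product $f \square^{\check{\wedge}_G} g$ in degree $r$ decomposes as a wedge over $0 \leq \bar{r} \leq r$ of induced-up copies of pushout products $f(\bar{r}) \square^{\wedge_G} g(r - \bar{r})$, and Theorem \ref{BIQUILLEN THM}(a) (resp. (b)) says each such $f(\bar{r}) \square^{\wedge_G} g(r-\bar{r})$ is a cofibration (resp. trivial cofibration, when $f$ or $g$ is a weak equivalence) in the monomorphism stable (resp. $S$ stable) structure on $\mathsf{Sp}^\Sigma$. Assembling the wedge back up and re-inserting the symmetric group actions then gives that $f \square^{\check{\wedge}_G} g$ is a cofibration (resp. trivial cofibration) in the target model structure on $\Sym$, which is the pushout product axiom. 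As usual it suffices to verify this for $f$, $g$ ranging over generating (trivial) cofibrations by Remark \ref{PUSHPRODGEN REM}, so one may even take $f = S \otimes ((G \times \Sigma_m)/H \cdot (\partial\Delta^k_+ \to \Delta^k_+))$ concentrated in a single external degree and similarly for $g$, making the wedge decomposition finite and completely explicit.

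The main obstacle I anticipate is purely one of carefully matching up the degreewise descriptions: one must make sure that "the $S$ $\Sigma \times \Sigma$-inj $G$-proj cofibration property of $f$ in external degree $\bar{r}$" really does feed into Theorem \ref{BIQUILLEN THM} with the correct group (namely that forgetting the extra $\Sigma_{\bar{r}}$-action and remembering only the $G$-action lands $f(\bar{r})$ in the class of $S$ $\Sigma$-inj $G$-proj cofibrations of $(\mathsf{Sp}^\Sigma)^G$, which is precisely the content of $\Sigma_r$-injectiveness), and likewise that the second variable being a \emph{plain} monomorphism (resp. $S$) cofibration of $\Sym$ translates degreewise into a monomorphism (resp. $S$) cofibration of $\mathsf{Sp}^\Sigma$ with no residual equivariance hypothesis. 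Both of these are exactly analogous to steps already executed in Propositions \ref{PROPSMASH SYM} and \ref{PROPINDUCE SYM} and in Remark \ref{HSIGMAINJGPROJ RMK}, so once the notation is set up the argument is essentially a transcription; no genuinely new idea beyond Theorem \ref{BIQUILLEN THM} itself is required.
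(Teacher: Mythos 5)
Your proposal is correct and takes exactly the same approach as the paper: the paper's proof of Proposition \ref{BIQUILLEN THM SYM} is a one-line reduction that says ``combine the wedge of bifunctors argument from the proof of Proposition \ref{PROPSMASH SYM} with Theorem \ref{BIQUILLEN THM},'' and your write-up spells out precisely that argument — degreewise wedge decomposition of $\check{\wedge}_G$, using $\Sigma$-injectiveness to discard the symmetric group actions, then applying Theorem \ref{BIQUILLEN THM} summand by summand.
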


\begin{proof}
This follows immediately by combining the ``wedge of bifunctors'' argument from the proof of Proposition \ref{PROPSMASH SYM} with Theorem \ref{BIQUILLEN THM}.
\end{proof}

\begin{proposition}\label{SIGMANPUSHPROD THM SYM}
Let $\Sym$ be equipped with the positive $S$ stable model structure and $\Sym^{\Sigma_n}$ with the $S$ $\Sigma \times \Sigma$-inj $\Sigma_n$-proj stable model structure.

Then for $f\colon A\to B$ a cofibration in $\Sym$ its $n$-fold pushout product
\[f^{\square n} \colon Q_{n-1}^n(f) \to B^n\]
is a cofibration in $\Sym^{\Sigma_n}$, which is a weak equivalence when $f$ is.

Furthermore, if $A$ is cofibrant in $\Sym$ then $Q_{n-1}^n(f)$ (resp. $f^{\check{\wedge} n} \colon A^{\check{\wedge}n} \to B^{\check{\wedge}n}$) is cofibrant (resp. cofibration between cofibrant objects) in $\Sym^{\Sigma_n}$.
\end{proposition}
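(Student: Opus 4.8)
The plan is to transcribe the proof of Theorem~\ref{SIGMANPUSHPROD THM} almost word for word, replacing $\mathsf{Sp}^{\Sigma}$ by $\Sym$, the category $(\mathsf{Sp}^{\Sigma})^{\Sigma_n}$ by $\Sym^{\Sigma_n}$, the smash $\wedge$ by $\check{\wedge}$, and invoking Propositions~\ref{PROPSMASH SYM} and~\ref{PROPINDUCE SYM} wherever the original argument used Propositions~\ref{PROPSMASH} and~\ref{PROPINDUCE}. As in that proof, weak equivalences ignore the $\Sigma_n$-action and the $S$ stable structure on $\Sym$ is monoidal (Proposition~\ref{BIQUILLEN THM SYM}(b) with $G=\ast$), so trivial cofibrations need no separate treatment. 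The first point is that the three ingredient lemmas carry over verbatim: Lemma~\ref{MAIN428}, Lemma~\ref{COFSQUARE} and Lemma~\ref{PUSH LEM} are proved using only the formal ``$\Sigma$-inj $G$-proj'' bookkeeping (the $Q^n_T$ colimits of Definition~\ref{DIAGRAMQ} and Remark~\ref{QNT RMK}, the latching identity of Proposition~\ref{LATCHPUSH PROP}, and the two change-of-group statements), all of which have the required $\Sym$-analogues; and the transfinite-composition induction that organizes the proof — which, using that $Q^n_t$ commutes with filtered colimits since $\check{\wedge}$ does in each variable, reduces via Lemmas~\ref{COFSQUARE} and~\ref{PUSH LEM} to relative latching maps of $n$-fold pushout products of generating cofibrations — is purely formal. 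Hence only the base case requires genuinely new input.

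For the base case one must understand $f^{\square n}$ when $f$ is a generating positive $S$ cofibration of $\Sym$. By Definition~\ref{POSSYM DEF} such an $f$ is concentrated in a single external degree $v$: either it is a generating positive $S$ cofibration of $\mathsf{Sp}^{\Sigma}$ sitting in degree $v=0$, or it is a generating $S$ cofibration of $(\mathsf{Sp}^{\Sigma})^{\Sigma_v}$ sitting in degree $v\geq 1$. If $v=0$ then $\check{\wedge}$ restricts to $\wedge$ in external degree $0$, so $f^{\square n}$ coincides with the $n$-fold pushout product of $f$ formed in $\mathsf{Sp}^{\Sigma}$, and the base-case computation in the proof of Theorem~\ref{SIGMANPUSHPROD THM} (which is exactly where positivity of $\mathsf{Sp}^{\Sigma}$ enters) applies unchanged. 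If $v\geq 1$, write $f = S\otimes\bigl((\Sigma_v\times\Sigma_\ell)/H\cdot(\partial\Delta^k_+\to\Delta^k_+)\bigr)$; a direct computation — formally identical to the base-case computation of Theorem~\ref{SIGMANPUSHPROD THM}, but now with the \emph{external} index group $\Sigma_v$ playing the role played there by the spectrum level index — identifies $f^{\square n}$, in external degree $vn$, with an induction $\Sigma_{vn}\cdot_{\Sigma_v^{\times n}}(\minus)$ of a single cell, and the condition $v\geq 1$ forces the attached cells to be $\Sigma_n$-free in the sense of Definition~\ref{SIGMASIGMASIGMA DEF} (isotropy meeting the projective $\Sigma_n$ trivially), the point being that the block embedding $\Sigma_v\wr\Sigma_n\hookrightarrow\Sigma_{vn}\times\Sigma_n$ meets the $\Sigma_n$-factor trivially precisely when $v\geq 1$. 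This is the symmetric-sequence-degree analogue of the phenomenon recorded in Remark~\ref{TWOMON REM}, that an $n$-fold power of something concentrated in a \emph{positive} grading is projectively cofibrant ``in the first sense''.

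Granting the base case, the induction runs as in Theorem~\ref{SIGMANPUSHPROD THM} and yields the main claim. The ``Furthermore'' clause then follows exactly as there: applying the main claim to the initial map $\ast\to A$ (a cofibration since $A$ is cofibrant) and using that $Q^{\bar{n}}_{\bar{n}-1}$ of an initial map vanishes shows that $A^{\check{\wedge}\bar{n}}$ is $S$ $\Sigma\times\Sigma$-inj $\Sigma_{\bar{n}}$-proj cofibrant for all $\bar{n}\geq 0$; this activates the strengthened hypotheses of (the $\Sym$-analogue of) Lemma~\ref{COFSQUARE}, and rerunning the $\kappa$-diagram argument with the strengthened conclusions gives cofibrancy of each $Q^n_{\bar{n}}(f)$, hence of $Q^n_{n-1}(f)$ and of $f^{\check{\wedge}n}\colon A^{\check{\wedge}n}\to B^{\check{\wedge}n}$.

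The main obstacle is the $v\geq 1$ part of the base case: making precise that positivity \emph{in the external symmetric-sequence grading} — which is what Definitions~\ref{POSSYM DEF} and~\ref{SIGMASIGMASIGMA DEF} are designed to exploit — suffices to make the cells of an $n$-fold pushout power $\Sigma_n$-projectively cofibrant, in the same way that positivity in the spectrum level grading does in Theorem~\ref{SIGMANPUSHPROD THM}. Once this, Proposition~\ref{PROPSMASH SYM} and Proposition~\ref{PROPINDUCE SYM} are in hand, the remainder is a faithful transcription of the argument of Section~\ref{QSECTION}.
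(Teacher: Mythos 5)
Your proof is correct, but it takes a genuinely different route from the paper's. The paper does \emph{not} re-run the transfinite-composition machinery of Section~\ref{QSECTION} in the $\Sym$ setting. Instead, it reduces directly to Theorem~\ref{SIGMANPUSHPROD THM} by a one-step computation valid for an \emph{arbitrary} cofibration $f$ in $\Sym$: one expands
\[(X_1\check{\wedge}\cdots\check{\wedge}X_n)(r)=\bigvee_{\phi\colon\underline{r}\to\underline{n}}X_1(\phi^{-1}(1))\wedge\cdots\wedge X_n(\phi^{-1}(n)),\]
observes that the $\Sigma_n$-action on $X^{\check{\wedge}n}$ respects the $\Sigma_n$-coset decomposition of the indexing set of functions $\phi$, and then identifies the component of $f^{\square n}$ on a single coset $(\bar{\phi})$ as an induction $\Sigma_n\cdot_{\Sigma_{\bar{n}}}\left(f(0)^{\square\bar{n}}\square f(\phi^{-1}(\bar{n}+1))\square\cdots\square f(\phi^{-1}(n))\right)$, where $\bar{n}$ is the number of elements of $\underline{n}$ missed by $\bar{\phi}$. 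Positivity is spent exactly once, on the factor $f(0)^{\square\bar{n}}$, to which Theorem~\ref{SIGMANPUSHPROD THM} applies; the remaining factors are smashed on via Proposition~\ref{PROPSMASH} and the induction handled by Proposition~\ref{PROPINDUCE}. The ``Furthermore'' clause then follows by the same decomposition using the additional claims of Theorem~\ref{SIGMANPUSHPROD THM}. The advantage of the paper's route is that it uses Theorem~\ref{SIGMANPUSHPROD THM} as a black box and never needs to re-verify the $\Sym$-analogues of Lemmas~\ref{MAIN428}, \ref{COFSQUARE}, and~\ref{PUSH LEM}. Your route, by contrast, re-runs the whole $\kappa$-diagram induction: this works, since those lemmas do carry over verbatim (using Propositions~\ref{PROPSMASH SYM} and~\ref{PROPINDUCE SYM}), and your base-case analysis --- the $v=0$ case reducing to Theorem~\ref{SIGMANPUSHPROD THM} itself, and the $v\geq 1$ case where the external induction $\Sigma_{vn}\cdot_{\Sigma_v^{\times n}}(\minus)$ forces $\Sigma_n$-freeness because the block embedding $\Sigma_v\wr\Sigma_n\to\Sigma_{vn}\times\Sigma_n$ meets $\{e\}\times\Sigma_n$ trivially precisely when $v\geq 1$ --- is the right computation and matches the base-case pattern of Theorem~\ref{SIGMANPUSHPROD THM}. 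What you lose is concision; what you gain is that, by treating the base case directly, you see very explicitly the role of external-degree positivity, which the paper's decomposition funnels entirely through the $f(0)$ factor instead.
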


\begin{proof}
Note first that by injectiveness (cf. Remark \ref{HSIGMAINJGPROJ RMK}) we need only worry about the $\Sigma_n$-actions and can ignore the $\Sigma_r$-actions.

Computing $X_1{\check{\wedge}\cdots \check{\wedge}}X_n$ iteratively and regrouping terms we get
$$(X_1{\check{\wedge}\cdots \check{\wedge}}X_n)(r)=
\bigvee_{\{\phi \colon \underline{r} \to \underline{n} \}} X_1(\phi^{-1}(1))\wedge\cdots \wedge X_n(\phi^{-1}(n)).$$
Since the shuffle isomorphisms for $\check{\wedge}$ involve a post-composition $\Sigma_n$-action on the set $\{\phi \colon \underline{r} \to \underline{n}\}$ indexing the wedge summands, the $\Sigma_n$-coset decomposition 
\begin{equation}\label{SIGMANDEC EQ}
(X_1{\check{\wedge}\cdots \check{\wedge}}X_n)(r)=
\bigvee_{\left(\bar{\phi}\right)\in \{\phi \colon \underline{r} \to 
\underline{n}\}/\Sigma_n}\bigvee_{\phi \in \left(\bar{\phi}\right) } X_1(\phi^{-1}(1))\wedge\cdots \wedge X_n(\phi^{-1}(n))
\end{equation}
is compatible with those shuffle isomorphisms, so that it suffices to verify the conclusions of the theorem for each of the subfunctors formed by the wedge summands over a single coset  $\left(\bar{\phi}\right)\in \{\phi \colon \underline{r} \to \underline{n}\}/\Sigma_n$. 

Now consider a map $f\colon A \to B$ in $\Sym$. Without loss of generality we can assume that the representative $\bar{\phi}$ misses precisely the first $\bar{n}$ elements in $n$, so that when computing $f^{\square n}$ the $\Sigma_n$-isotropy of the $\bar{\phi}$ wedge summand (i.e. the subgroup sending that summand to itself) is $\Sigma_{\bar{n}}$, and hence the component of $f^{\square n}$ corresponding to the $\left( \bar{\phi} \right)$ subfunctor in (\ref{SIGMANDEC EQ}) can be rewritten as
$$ \Sigma_n\underset{\Sigma_{\bar{n}}}{\cdot} f(0)^{\square^{\wedge} \bar{n}} \square^{\wedge}
 f\left(\phi^{-1}(\bar{n}+1)\right) \square^{\wedge} \cdots \square^{\wedge} f\left(\phi^{-1}(n)\right).$$
We need to show that this is a $S$ $\Sigma$-inj $\Sigma_n$-proj cofibration if $f$ is a positive $S$ cofibration. This follows by first applying Theorem \ref{SIGMANPUSHPROD THM} to $f(0)^{\square^{\wedge} \bar{n}}$, then applying Proposition \ref{PROPSMASH} to conclude 
$f(0)^{\square^{\wedge} \bar{n}} \square^{\wedge} f\left(\phi^{-1}(\bar{n}+1)\right) \square^{\wedge} \cdots \square^{\wedge} f\left(\phi^{-1}(n)\right)$ is a $S$ $\Sigma$-inj $\Sigma_{\bar{n}}$-proj cofibration, and finishing by applying Proposition \ref{PROPINDUCE}. 

The additional claims assuming $A$ is positive $S$ cofibrant follow by the same argument by noting that $f_1 \square^{\wedge} f_2, f_1 \wedge f_2$ are cofibrations between cofibrant objects if so are $f_1,f_2$ and using the additional statements in Theorem \ref{SIGMANPUSHPROD THM}.
\end{proof}

\begin{remark}\label{BSYM RMK}
All definitions and results in this subsection generalize to the category $\BSym=\Sym(\Sym)$. 
Indeed, one can define monomorphism, $S$ and positive $S$ stable model structures on $\BSym$ and $S$ $\Sigma\times \Sigma \times \Sigma$-inj $\Sigma_n$-proj stable model structures on $\BSym^{\Sigma_n}$ by just repeating Definitions \ref{SSYM DEF}, \ref{POSSYM DEF} and \ref{SIGMASIGMASIGMA DEF} except now replacing the initial structures on $\mathsf{Sp}^{\Sigma}$ with their eponymous analogues on $\Sym$. Further, analyzing the proofs of Propositions \ref{PROPSMASH SYM}, \ref{PROPINDUCE SYM}, \ref{BIQUILLEN THM SYM} and \ref{SIGMANPUSHPROD THM SYM} it is clear that those results themselves imply the analogue $\BSym$ results.
\end{remark}

\subsection{Proof of Theorem \ref{CIRCO POS THM}}\label{MAINPROOF SEC}

\begin{proof}[Proof of Theorem \ref{CIRCO POS THM}]
To simplify the discussion and notation somewhat, we first deal with the case where $f_2$ is a map in $\mathsf{Alg}_{\O}\subset \mathsf{Mod}^l_{\O}$. 

Writing $f_2 \colon A \to B$, note first that if $A=\O(0)$, then $f_1 \circ_{\O} A$ is a $S$ cofibration (resp. monomorphism), since 
$f_1 \circ_{\O} \O(0) \simeq f_1 \circ_{\O} \O \circ \** \simeq f_1 \circ * \simeq f_1(0)$. 
Otherwise, the same conclusion follows by first running the full proof for the map $\O(0) \to A$.

We now write $f_2$ as a retract of a transfinite composition of a $\kappa$-diagram $\mathsf{F} \colon \kappa \to \mathsf{Sp}^{\Sigma}$ where each successor map is a pushout of a generating cofibration, just as in (\ref{TRANSFINITE EQ}). As usual, retracts cause no difficulty so we reduce to the case of $f_2$ the transfinite composition of $\mathsf{F}$. Recalling that $\circ_{\O}$ commutes with transfinite compositions in the second variable (cf. Lemma \ref{CIRCCOLIM LEM}) and setting $f_1 \colon M \to N$, one sees that $f_1 \square^{\circ_{\O}} f_2$ will be a suitable cofibration provided that 
$M \circ_{\O} \mathsf{F} \to N \circ_{\O} \mathsf{F}$ is a $\kappa$-projective cofibration between $\kappa$-diagrams. 
Since cofibrancy at $\beta=0$ is satisfied due to the previous paragraph, this amounts to verifying the cofibrancy of  
$f_1 \square^{\circ_{\O}} \mathsf{F}(\beta \to \beta+1)$ for $\beta<\kappa$ (the condition for limit ordinals being automatic since $\circ_{\O}$ commutes with transfinite compositions in the second variable).
One hence reduces to the case where $f_2\colon A\to B$ is the pushout of a generating cofibration $\O\circ X \to \O \circ Y$, such as in Proposition \ref{FILT PROP}. 
Borrowing the notation from that proposition we see that it suffices to show that the vertical map of filtration $\omega$-diagrams (recall that $\omega$ denotes $(0 \to 1 \to 2 \to \cdots)$)
\[
\xymatrix@1{
A^M_{0} \ar[r]  \ar[d]  & A^M_{1} \ar[r]  \ar[d] & A^M_{2} \ar[r]  \ar[d] & A^M_3 \ar[r] \ar[d] & A^M_{4} \ar[d] \ar[r] & \quad \cdots \\
A^N_{0} \ar[r]  & A^N_{1} \ar[r] & A^N_{2} \ar[r] & A^N_3 \ar[r] & A^N_{4} \ar[r] & \quad \cdots}
\]
is a suitable $\omega$-projective cofibration. More explicitly, we need to show that each of the ``pushout corner maps'' 
$A^M_{r}\coprod_{A^M_{r-1}}A^{N}_{r-1}\to A^{N}_{r}$, $r \geq 0$ (note that $A^M_{-1}=A^N_{-1}=\**$) is a $S$ cofibration (resp.  monomorphism).
Using the inductive description (\ref{IND EQ}) this reduces to showing that the ``pushout corner maps'' of the diagrams
\[\xymatrix@1{
  M_A(r) \wedge_{\Sigma_r} Q^r_{r-1} \ar[r] \ar[d] &  M_A(r) \wedge_{\Sigma_r} Y^{\wedge r} \ar[d] \\
  N_A(r) \wedge_{\Sigma_r} Q^r_{r-1}        \ar[r] &  N_A(r) \wedge_{\Sigma_r} Y^{\wedge r}        }\]
are themselves $S$ cofibrations (resp. monomorphisms). 
Combining Theorems \ref{SIGMANPUSHPROD THM} and \ref{BIQUILLEN THM} this reduces to showing that $M_A(r)\to N_A(r)$, $r \geq 0$ is a $S$ cofibration (resp. monomorphism), or rather, that $M_A \to N_A$ is a $S$ cofibration (resp. monomorphism) in $\Sym$. Recalling from Definition \ref{MA DEF} that $M_A \to N_A$ can be written as 
\begin{equation}\label{MOA COF EQ} 
M \circ_{\O}(\O \amalg A) \to N \circ_{\O} (\O \amalg A),
\end{equation} 
we see that this last claim would follow directly from a different instance of the theorem we are trying to prove, namely the case of the maps $f_1\colon M \to N$ in $\mathsf{Mod}^r_{\O}$ and $\tilde{f_2} \colon \O \to \O \amalg A$ in $\mathsf{Mod}^l_{\O}$. Since $A$ is assumed cofibrant, it can be written as a retract of a transfinite composition of pushouts of generating cofibrations, and one hence reduces to the case $A = \colim_{\beta < \kappa} A_{\beta}$ where each $A_{\beta} \to A_{\beta+1}$ is the pushout of some generating positive $S$ cofibration $\O X_{\beta}\to \O Y_{\beta}$ in $\mathsf{Alg}_{\O}$.

Note now that one can repeat all of the arguments so far for $f_1$ and for the filtration 
$\tilde{f}_{2,\beta} \colon \O \amalg A_{\beta} \to \O \amalg A_{\beta+1}$
 of the map $\tilde{f_2} \colon \O \to \O \amalg A$. Firstly, repeating the ``$\kappa$-projective cofibration'' argument, the $\beta = 0$ condition is now that $f_1 \circ_{\O} \O=f_1$ is a $S$ cofibration (resp. monomorphism), which is just one of the hypotheses, and the limit ordinal condition is again automatic. One hence reduces to showing, by \textit{induction} on $\beta < \kappa$, that the theorem holds for $f_1$ and each $\tilde{f}_{2,\beta}$.  
Since $\tilde{f}_{2,\beta}$ is a pushout of $\O X_{\beta} \to \O Y_{\beta}$, one again reduces to showing that the map of filtration diagrams (built using Proposition \ref{FILT PROP} as described in Remark \ref{MODRED RMK})
\[\xymatrix@1{
 (\O \amalg A_{\beta})^M_{0} \ar[r] \ar[d] & (\O \amalg A_{\beta})^M_{1} \ar[r] \ar[d] & 
 (\O \amalg A_{\beta})^M_{2} \ar[r] \ar[d] & (\O \amalg A_{\beta})^M_{3} \ar[r] \ar[d] &  \quad \cdots \\
 (\O \amalg A_{\beta})^N_{0} \ar[r]        & (\O \amalg A_{\beta})^N_{1} \ar[r]        &
 (\O \amalg A_{\beta})^N_{2} \ar[r]        & (\O \amalg A_{\beta})^N_{3} \ar[r]        &  \quad \cdots      
}\]
is a suitable $\omega$-cofibration, and again one reduces to checking that the pushout corner maps of each diagram 
\begin{equation}\label{SECRED EQ}
\xymatrix@1{
  M_{\O \amalg A_{\beta}}(r) \check{\wedge}_{\Sigma_r} Q^r_{r-1,\beta} \ar[r] \ar[d] & 
  M_{\O \amalg A_{\beta}}(r) \check{\wedge}_{\Sigma_r} Y_{\beta}^{\check{\wedge}r}            \ar[d] \\
  N_{\O \amalg A_{\beta}}(r) \check{\wedge}_{\Sigma_r} Q^r_{r-1,\beta}        \ar[r] & 
	N_{\O \amalg A_{\beta}}(r) \check{\wedge}_{\Sigma_r} Y_{\beta}^{\check{\wedge}r}
}\end{equation}
are $S$ cofibrations (resp. monomorphisms) in $\Sym$. Arguing as before (but replacing uses of Theorems \ref{SIGMANPUSHPROD THM} and \ref{BIQUILLEN THM} by uses of their $\Sym$ analogues Propositions \ref{SIGMANPUSHPROD THM SYM} and \ref{BIQUILLEN THM SYM}) one reduces to checking that $M_{\O \amalg A_{\beta}} \to N_{\O \amalg A_{\beta}}$ is a $S$ cofibration (resp. monomorphism) in $\BSym$. The result now follows from the calculation in the proof of Proposition \ref{MOX PROP}, which identifies $M_{\O \amalg A_{\beta}}(r,s) \to N_{\O \amalg A_{\beta}}(r,s)$ with $M_{A_{\beta}}(r+s) \to N_{A_{\beta}}(r+s)$, together with the transfinite induction hypothesis (which, explicitly, states that $(M_{A_{\gamma}} \to N_{A_{\gamma}})_{\gamma \leq \beta}$ is a projective cofibration).

Tracing through the steps above we also see that indeed $f_1 \square^{\circ_{\O}} f_2$  will be a weak equivalence if either $f_1$ or the original $f_2$ is.

Finally, we explain what changes when $f_2$ is a general cofibration between cofibrant objects in $\mathsf{Mod}_{\O}^l$. Using Proposition \ref{MODRED PROP} to transfer the question to $\mathsf{Alg}_{\O^{\mathsf{s}}}(\Sym)$, all of the discussion above follows through by replacing uses of Theorems \ref{SIGMANPUSHPROD THM} and \ref{BIQUILLEN THM} by their $\Sym$ analogues, Propositions \ref{SIGMANPUSHPROD THM SYM} and \ref{BIQUILLEN THM SYM}. The only caveat is that when running the second filtration argument in the proof (specifically, when analyzing (\ref{SECRED EQ})), one instead uses the $\BSym$ analogues mentioned in Remark \ref{BSYM RMK}.
\end{proof}

\subsection{Proofs of Theorems \ref{MODMODELEXIST THM}, \ref{FORGETFUL THM}, \ref{BARCONSTRUCTION THM} and \ref{FIBERSEQ THM}}\label{PROOFS SEC}

We now derive Theorems \ref{MODMODELEXIST THM}, \ref{FORGETFUL THM}, \ref{BARCONSTRUCTION THM}, \ref{FIBERSEQ THM} from our main result, Theorem \ref{CIRCO POS THM}. Some of the proofs will make use of the following model structure on $\mathsf{Mod}^r_{\O}$.

\begin{theorem}\label{OINJMODEL THM}
Let $\O$ be an operad in $\mathsf{Sp}^{\Sigma}$.
There exists a cofibrantly generated model structure on $\mathsf{Mod}^r_{\O}$, which we call the {\bf monomorphism stable model structure}, such that cofibrations and weak equivalences are underlying in the monomorphism stable model structure on $\Sym$. Further, this is a left proper cellular simplicial model category.
\end{theorem}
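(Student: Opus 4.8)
The plan is to follow the same two-step strategy used to prove Theorem~\ref{GSTABLE THM} (of which the present theorem was advertised there as the natural sequel), exploiting throughout the structural fact that, since by Proposition~\ref{SYMMON PROP} the composition product $\circ$ preserves all colimits in its first variable, the forgetful functor $\mathsf{Mod}^r_{\O}\to\Sym$ creates all small colimits. In particular pushouts and transfinite compositions in $\mathsf{Mod}^r_{\O}$ are computed underlying in $\Sym$, and every right $\O$-module is the $\kappa$-filtered colimit of its sub-$\O$-modules of cardinality $<\kappa$ for any regular cardinal $\kappa>\aleph_0+\abs{\O}$; I fix such a $\kappa$ once and for all. (I also note that $(\minus)\circ\O$, the free right $\O$-module functor, carries monomorphisms of symmetric sequences to monomorphisms, since a group quotient restricted to an invariant subobject stays injective.)

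First I would build the corresponding \emph{monomorphism level model structure} on $\mathsf{Mod}^r_{\O}$, whose cofibrations (resp.\ weak equivalences) are the underlying monomorphisms (resp.\ level equivalences) in $\Sym$, by verifying the hypotheses of \cite[Thm.~2.1.19]{Hov98} with $I$ (resp.\ $J$) a set of representatives for the monomorphisms (resp.\ monomorphisms that are level equivalences) between right $\O$-modules of cardinality $<\kappa$. Parts~1--3 are immediate since all objects are small; part~4 holds because $J\subset I$ and colimits are underlying, so that $J$-cell consists of monomorphisms built by underlying pushouts and transfinite composition from level trivial cofibrations of pointed simplicial sets, hence level equivalences; part~5 holds because $I$ contains (representatives of) the $(\minus)\circ\O$ of the generating cofibrations of the monomorphism level model structure on $\Sym$, so that every map in $I$-inj is in particular an underlying level trivial fibration, hence a level equivalence. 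Left properness, cellularity and the simplicial model axioms are then routine, once more because colimits in $\mathsf{Mod}^r_{\O}$ are underlying.

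The only substantial point, and the step I expect to be the main obstacle, is part~6 of \cite[Thm.~2.1.19]{Hov98}: that every underlying monomorphism which is a level equivalence lies in $J$-cof. This is the analogue of \cite[Lemma~5.14(6)]{HSS}, whose proof (already adapted to $(\mathsf{Sp}^\Sigma)^G$ in the proof of Theorem~\ref{GSTABLE THM}) reduces to the ``bounded subobject'' statement \cite[Lemma~5.17]{HSS}: given $M\to N$ in $\mathsf{Mod}^r_{\O}$ an underlying monomorphism and level equivalence and a sub-$\O$-module $C\subseteq N$ with $\abs{C}<\kappa$, one must produce a sub-$\O$-module $D$ with $C\subseteq D\subseteq N$, $\abs{D}<\kappa$, and $D\cap M\to D$ again a level equivalence. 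I would construct $D$ exactly as in \cite[Lemma~5.17]{HSS}, alternately enlarging to account for the relevant relative homotopy classes and closing under the $\O$-action; the ``$FC$''-type subobjects appearing there are chosen to be sub-$\O$-modules, and the process stabilizes after $\kappa$ steps precisely because $\kappa>\aleph_0+\abs{\O}$. Everything else in the adaptation of \cite[Sec.~5]{HSS} is formal.

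Finally I would stabilize: apply \cite[Thm.~4.1.1]{Hi03} to the above level model structure, localizing at (a representative set for) $(\minus)\circ\O$ applied to the set of maps used to build the monomorphism stable model structure on $\Sym$ in Definition~\ref{SSYM DEF}. Since this localizing set consists of free right $\O$-modules on cofibrant objects, a right $\O$-module is local if and only if its underlying symmetric sequence is local in $\Sym$, and mapping spaces may be computed after forgetting to $\Sym$; hence the local equivalences are exactly the underlying stable equivalences, by the argument in the last two paragraphs of the proof of Theorem~\ref{GSTABLE THM}. Left properness, cellularity and the simplicial axioms of the localization are then automatic by \cite[Thm.~4.1.1]{Hi03}, finishing the proof.
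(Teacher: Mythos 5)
Your proposal follows essentially the same two-step ``level structure via \cite[Thm.~2.1.19]{Hov98}, then Bousfield-localize at the free right $\O$-modules on the stabilization maps'' strategy that the paper uses, with the same choice of $\kappa$, the same bounded-subobject reduction of part~6 of \cite[Thm.~2.1.19]{Hov98} to a right-module analogue of \cite[Lemma~5.17]{HSS}, and the same localizing set $\mathcal{S}_{\O}$. The verification of part~5 via the free right $\O$-modules on generating monomorphisms and the observation that $(\minus)\circ\O$ preserves monomorphisms are both sound.

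The one place where your write-up is thinner than the paper is the identification of the $\mathcal{S}_{\O}$-local equivalences at the very end. You assert that ``mapping spaces may be computed after forgetting to $\Sym$,'' but in the monomorphism level structure the forgetful functor is the \emph{left} adjoint and need not send level-fibrant right $\O$-modules to level-fibrant symmetric sequences (the free functor $(\minus)\circ\O$ does not in general preserve level equivalences, only monomorphisms, so it is not obviously a left Quillen functor for the level monomorphism structures). The paper avoids this issue by routing through the $\O$-projective model structure on $\mathsf{Mod}^r_{\O}$ over the $S$ stable model structure on $\Sym$ (which exists by \cite[Lemma~2.3]{SS00} and whose fibrant objects forget to fibrant symmetric sequences), and then using the identity Quillen adjunction between the $\O$-projective and monomorphism level structures to compute derived mapping spaces hybridly, exactly as in the last paragraph of the proof of Theorem~\ref{SGSPEC THM}. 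Your citation of ``the argument in the last two paragraphs of the proof of Theorem~\ref{GSTABLE THM}'' does trace back to that identity-adjunction trick, so you are implicitly invoking it, but you should make the intermediate $\O$-projective structure explicit rather than asserting the local-object comparison directly: without that detour the claim that ``a right $\O$-module is local iff its underlying symmetric sequence is local'' is not literally justified at the level of strictly computed simplicial mapping spaces.
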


\begin{proof}
This is a generalization of Theorem \ref{GSTABLE THM} and the same proof applies with only minor changes, hence we list only those.

Again one starts by proving a level equivalence result by verifying the conditions in \cite[Thm. 2.1.19]{Hov98}.
Choosing $\kappa$ to be an infinite cardinal larger than the number of simplices in $\O$ (counted over operadic, spectral and simplicial gradings), we define the set $I$ (resp. $J$) of generating cofibrations (resp. trivial cofibrations) to be a set of representatives of monomorphisms (resp. monomorphisms that are weak equivalences) between right modules with less than $\kappa$ simplices. Parts 1,2,3,4 of \cite[Thm. 2.1.19]{Hov98} are again immediate, and part 5 follows by noting that $I$ contains the maps $\left(S \otimes \left(\Sigma_m \times \Sigma_r \cdot \left( \partial\Delta^k_+\to \Delta^k_+ \right) \right)\right) \circ \O$. Part 6 reduces to showing a suitable ``$\kappa$ analogue'' of \cite[Lemma 5.1.7]{HSS}, and again the proof in \cite{HSS} generalizes by noting  that all relative homotopy groups have less than $\kappa$ elements and by building the $FC$ subspectra as sub-right modules rather than just subspectra. Left properness, cellularity and the simplicial model structure axioms are again straightforward.

To produce the desired stable version one again applies \cite[Thm. 4.1.1]{Hi03}, now localizing with respect to the set
 $\mathcal{S}_{\O}=\bigcup_{r \geq 0}(\Sigma_r \cdot \mathcal{S}_{\**})\circ \O$. 
That the resulting weak equivalences are as described follows by arguing exactly as in the last paragraph of the proof of Theorem \ref{SGSPEC THM}, using an identity adjunction to compare with the $\O$-projective model structure on $\mathsf{Mod}^r_{\O}$ over the $S$ stable model structure in $\Sym$ (this latter model structure is easily seen to exist by \cite[Lemma 2.3]{SS00}).
\end{proof}

\begin{proof}[Proof of Theorem \ref{MODMODELEXIST THM}]

To show the model structures exist it suffices (cf. \cite[Lemma 2.3]{SS00}) to check that for $J$ a set of generating trivial cofibrations, any transfinite composition of pushouts of maps in $\O \circ J$ is a weak equivalence. Noting that the proof of Theorem \ref{CIRCO POS THM} uses such a decomposition of $f_2$ and setting $f_1=\** \to \O$ it is always the case that $f_1 \circ_{\O} A$ is a monomorphism, so that repeating the first half of that proof one reduces to verifying that $\**=\**_A\to \O_A$ is a monomorphism, which is obviously the case even if $A$ is not cofibrant.

To verify the Quillen equivalence statement it suffices to show that the adjunction unit maps 
$ A\to \bar{\O}\circ_{\O} A$,
or
$(\O \to \bar{\O})\circ_{\O} A$, 
are weak equivalences whenever $A$ is cofibrant. Applying Theorem \ref{CIRCO POS THM} with $f_2=\O(0) \to A$ and noting that 
$f \circ_{\O} \O(0) = f \circ_{\O} \O \circ \** = f \circ \** = f(0)$ shows that the functor $(\minus)\circ_{\O} A$ preserves all weak equivalences that are also monomorphisms. It then follows from Theorem \ref{OINJMODEL THM} combined with Ken Brown's lemma \cite[Cor. 7.7.2]{Hi03} that $(\minus) \circ_{\O} A$ preserves all weak equivalences, finishing the proof.
\end{proof}

\begin{proof}[Proof of Theorem \ref{FORGETFUL THM}]
Apply Theorem \ref{CIRCO POS THM} to $f_1=(\** \to \O)$ and $f_2$ the intended cofibration between cofibrant objects.
\end{proof}

\begin{lemma}\label{CIRC POS LEMMA}
Consider positive $S$ cofibrations $f_i\colon A_i\to B_i$, $ 1\leq i\leq n$ in $\Sym$ all with positive $S$ cofibrant domains. 
Then their pushout product with respect to $\circ$,
$$\square^{\circ}(f_1,f_2,\cdots,f_n)$$
is a positive $S$ cofibration in $\Sym$ between positive $S$ cofibrant objects in $\Sym$, which is a weak equivalence if any of the $f_i$ is.
\end{lemma}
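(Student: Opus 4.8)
The plan is to reduce to the binary case $n=2$ by iterating, and then to prove the binary case via the filtrations of Proposition \ref{FILT PROP}, much as in the proof of Theorem \ref{CIRCO POS THM}. First, since $\square^{\circ}$ is iterated pushout product, the general statement for $n$ letters follows by induction from the case $n=2$ — provided we know the intermediate objects produced remain positive $S$ cofibrant, which is exactly why the conclusion of the lemma is stated to include cofibrancy of the output and not just that the map is a cofibration. So the real content is: if $f_1\colon A_1 \to B_1$ and $f_2\colon A_2 \to B_2$ are positive $S$ cofibrations in $\Sym$ with positive $S$ cofibrant domains, then $f_1 \square^{\circ} f_2$ is a positive $S$ cofibration with positive $S$ cofibrant domain and codomain, and a weak equivalence if either $f_i$ is.

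For the binary case I would first handle the base/unit reductions: writing $\mathcal{I} \to A_1$ and $\mathcal{I} \to A_2$ (recall $\mathcal{I}$ is the unit for $\circ$ and note $\mathcal{I}$ is positive $S$ cofibrant, being $\mathds{1}$ concentrated in degree $1$), the usual pushout-product bookkeeping lets one reduce the cofibrancy of $f_1 \square^{\circ} f_2$ to cofibrancy of objects of the form $C \circ D$ for $C, D$ positive $S$ cofibrant, plus cofibrancy of maps $C \circ f \to C \circ \bar{f}$ and $f \circ D \to \bar{f} \circ D$. Then I would run the filtration machinery: write $f_2$ (or rather $\mathcal{I} \to A_2$, $A_2 \to B_2$, etc.) as a retract of a transfinite composition of pushouts of generating positive $S$ cofibrations in $\Sym$, use that $\circ$ commutes with colimits in the first variable and with filtered colimits/reflexive coequalizers in the second (Proposition \ref{SYMMON PROP}), and apply Proposition \ref{FILT PROP} with the right module $M$ now playing the role of a cofibrant symmetric sequence. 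The key point is that the associated graded of the filtration (\ref{IND EQ}) is built from terms $M_A(r) \check{\wedge}_{\Sigma_r} Q^r_{r-1}(f)$ and $M_A(r)\check{\wedge}_{\Sigma_r} Y^{\check{\otimes} r}$, whose cofibrancy is controlled by the $\Sym$-analogues Propositions \ref{SIGMANPUSHPROD THM SYM} and \ref{BIQUILLEN THM SYM}: $Q^r_{r-1}(f)$ and $Y^{\check{\otimes}r}$ are suitably $S$ $\Sigma\times\Sigma$-inj $\Sigma_r$-proj cofibrant, and smashing down over $\Sigma_r$ against the $S$ $\Sigma\times\Sigma$-inj $\Sigma_r$-proj cofibrant object $M_A(r)$ (cofibrant by induction, as in Theorem \ref{CIRCO POS THM}) lands in positive $S$ cofibrant spectra/symmetric sequences.

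The weak equivalence clause follows by the same argument, tracking that the relevant maps are $S$ trivial cofibrations at each stage (again using the ``weak equivalence'' halves of Theorem \ref{SIGMANPUSHPROD THM}/Proposition \ref{SIGMANPUSHPROD THM SYM} and Theorem \ref{BIQUILLEN THM}/Proposition \ref{BIQUILLEN THM SYM}), exactly as in the last part of the proof of Theorem \ref{CIRCO POS THM}. The main obstacle I anticipate is purely bookkeeping rather than conceptual: the degree-$0$ positivity condition must be threaded through every reduction — in particular one must check that the positivity hypothesis on the \emph{domains} $A_i$ (as opposed to merely $S$ cofibrancy) is what guarantees the pushout-product objects land in the \emph{positive} $S$ stable model structure on $\Sym$ rather than only the $S$ stable one, using the characterization (via the proof of Proposition \ref{LATCHMON PROP}) that a $S$ cofibration is positive iff it is an isomorphism in degree $0$. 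Beyond that, one must be careful that iterating the binary case for $n$ letters only needs each intermediate map to have cofibrant domain, which the binary statement supplies; with that in hand the induction closes immediately.
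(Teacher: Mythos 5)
Your overall direction is right, but you have missed the shortcut the paper actually uses for the case $n=2$: it is simply Theorem \ref{CIRCO POS THM} with $\O = \mathcal{I}$. With $\O = \mathcal{I}$ one has $\mathsf{Mod}^r_{\mathcal{I}} = \mathsf{Mod}^l_{\mathcal{I}} = \Sym$, $\circ_{\mathcal{I}} = \circ$, and the projective positive $S$ structure on $\mathsf{Mod}^l_{\mathcal{I}}$ is exactly the positive $S$ structure on $\Sym$, so the hypotheses of the lemma put you directly in the situation of Theorem \ref{CIRCO POS THM}; the only thing Theorem \ref{CIRCO POS THM} doesn't immediately give is \emph{positivity} of the pushout product, and for that the paper uses the one-line observation, visible from equation (\ref{CIRC DEF}), that $(X \circ Y)_0(0) = *$ whenever $X_0(0) = *$ and $Y_0(0) = *$. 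Your proposal instead re-runs the entire filtration machinery of Proposition \ref{FILT PROP} and Propositions \ref{SIGMANPUSHPROD THM SYM}, \ref{BIQUILLEN THM SYM} — in effect re-proving a special case of a theorem the paper already established. That is not wrong, but it's a lot of redundant work, and it obscures that the lemma is a near-corollary.

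Two further points. First, your sentence about ``the usual pushout-product bookkeeping'' reducing cofibrancy of $f_1 \square^{\circ} f_2$ to cofibrancy of $C \circ D$ plus cofibrancy of $C \circ f$ and $f \circ D$ is not a valid reduction: knowing that $C \circ (\minus)$ and $(\minus) \circ D$ preserve cofibrations does \emph{not} imply the pushout product is a cofibration — this is precisely the cautionary point of Remark \ref{PUSHPRODADV REM}. Since you then abandon that reduction and redo the filtrations, it doesn't sink the argument, but it's a conceptual error. Second, the induction from the binary case to general $n$ needs a little care that you pass over: because $\circ$ preserves colimits in the \emph{first} variable only, the identity
\[
\square^{\circ}(f_1,\ldots,f_n)=\bigl(\square^{\circ}(f_1,\ldots,f_{n-1})\bigr)\square^{\circ}f_n
\]
holds, but the corresponding right-bracketed identity fails. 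You should say explicitly which bracketing you are using and why it is valid; ``since $\square^{\circ}$ is iterated pushout product'' is too glib for a monoidal structure this lopsided. Your observation that the induction needs the intermediate objects to be positive $S$ cofibrant — hence the lemma's stronger conclusion — is correct and well taken.
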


\begin{proof}
The proof follows by induction on $n$.

The case $n=2$ is a essentially a particular case of Theorem \ref{CIRCO POS THM} with $\O=\mathcal{I}$, except with an extra claim about positiveness. The extra claim follows by equation (\ref{CIRC DEF}) which shows that $(X\circ Y)_0(0)=\**$ if both $X_0(0)=\**$ and $Y_0(0)=\**$.

For the induction case, recalling that $\circ$ preserves colimits in the first variable yields  
$$\square^{\circ}(f_1,f_2,\cdots,f_n)=
\left(\square^{\circ}(f_1,f_2,\cdots,f_n-1)\right)\square^{\circ}f_n$$
(note however that the similar equation with brackets on the right fails), 
and the result follows by combining the induction hypothesis with the $n=2$ case.
\end{proof}

\begin{proof}[Proof of Theorem \ref{BARCONSTRUCTION THM}]
Recall that the degeneracies of $B_n(M,\O,N)$ are formed using only the unit map $\eta \colon \mathcal{I}\to \O$. The result now follows from Lemma \ref{CIRC POS LEMMA} since the maps whose cofibrancy must be verified are the maps
\[\square^{\circ}(\** \to M, \eta, \cdots, \eta,\** \to N),\]
where $\eta$ is allowed to appear any number of times.
\end{proof}

\begin{proof}[Proof of Theorem \ref{FIBERSEQ THM}]
By the existence of the monomorphism (resp. $\O$-pro\-jec\-tive $S$) stable model structure on $\mathsf{Mod}^r_{\O}$ (cf. Theorem \ref{OINJMODEL THM} (resp. its proof)) together with the fact that colimits (resp. limits) are underlying, homotopy cofiber (resp. fiber) sequences in $\mathsf{Mod}^r_{\O}$ match the underlying homotopy cofiber (resp. fiber) sequences in $\Sym$. Therefore, homotopy fiber and cofiber sequences in $\mathsf{Mod}^r_{\O}$ coincide since $\Sym$ is stable. Noting that the argument in the proof of Theorem \ref{MODMODELEXIST THM} shows $(\minus) \circ_{\O} A$ is already a left derived functor, and hence preserves homotopy cofiber sequences, finishes the proof.
\end{proof}

\end{document}